\newtheorem{Theorem}{Theorem}[section]
\newtheorem*{Theorem*}{Theorem 1.1}
\newtheorem{Lemma}{Lemma}[section]
\newtheorem{Proposition}{Proposition}[section]
\newtheorem{Corollary}{Corollary}[section]
\newtheorem{Remark}{Remark}[section]
\newtheorem{Definition}{Definition}[section]
\numberwithin{equation}{section}
\theoremstyle{definition}
\newcommand{\id}{\operatorname{id}}
\newcommand{\me}{\mathrm{e}}
\newcommand{\mi}{\mathrm{i}}
\newcommand{\N}{{\mathbb N}}
\newcommand{\R}{{\mathbb R}}
\newcommand{\T}{{\mathbb T}}
\newcommand{\Z}{{\mathbb Z}}
\newcommand{\la}{\langle}
\newcommand{\ra}{\rangle}
\def\B0{{\bold{0}}}
\def\Empty{}
\newcommand\oplabel[1]{
  \def\OpArg{#1} \ifx \OpArg\Empty {} \else
    \label{#1}
  \fi}
\newcommand{\comm}[1]{}
\newcommand{\comment}[1]{}
\begin{document}

\title[Almost Periodic Traveling Fronts for One Dimensional]{Traveling fronts for Fisher-KPP lattice equations in almost periodic media}

\author{Xing Liang}
\address{School of Mathematical Sciences, University of Science and Technology of China, Hefei, Anhui 230026, China}
\email{xliang@ustc.edu.cn}

\author{Hongze Wang}
\address{Chern Institute of Mathematics and LPMC, Nankai University, Tianjin 300071, China}
\email{hongzew@mail.nankai.edu.cn}

\author{Qi Zhou}
\address{
Chern Institute of Mathematics and LPMC, Nankai University, Tianjin 300071, China
}

 \email{qizhou@nankai.edu.cn}

\author{Tao Zhou}
\address{School of Mathematical Sciences, University of Science and Technology of China, Hefei, Anhui 230026, China}
\email{tzhou910@ustc.edu.cn}

\setcounter{tocdepth}{1}

\begin{abstract}

This paper investigates the existence of almost periodic traveling fronts for Fisher-KPP  lattice  equations in one-dimensional almost periodic media.
By the Lyapunov exponent of the linearized operator near the unstable steady state, we give sufficient condition of the existence of minimal speed of traveling fronts.
Furthermore, it is showed that almost periodic traveling fronts share the same recurrence property as the structure of the media.
 As applications, we give some typical examples which have minimal speed, and the proof of this depends on dynamical system approach to almost periodic Schr\"odinger operator.

\end{abstract}

\maketitle

\section{Introduction}
\subsection{Background and main results}
 Since the pioneer works of  \cite{Fisher1937, Kolmogorov1937, Aronson1978}, the traveling fronts of reaction-diffusion equations in unbounded domain have become an important branch of the theory of equations with diffusion.
Specially, traveling fronts of the Fisher-KPP type equation in continuous media
\begin{equation}\label{continuousFKPP}u_{t}-(a(x)u_{x})_{x}=c(x)u(1-u),\quad t\in\mathbb{R},\ x\in\mathbb{R}\end{equation}
 and more general reaction-diffusion equations in heterogeneous media received intense attention in the last few decades.

As a simplest heterogenous case, traveling fronts in spatially periodic media were considered widely.  First,  the definition of spatially periodic traveling waves was provided by
\cite{SKT,  Xin1992} independently, and then \cite{HZ95} proved the existence of spatially periodic traveling waves of Fisher-KPP equations in the distributional sense.
Then, in the series of works \cite{Berestycki2002, Berestycki2005},  the authors investigated traveling fronts of Fisher-KPP type equations in high-dimensional periodic media deeply. The traveling fronts of spatially periodic Fisher-KPP type equations in discrete lattice
\begin{equation}\label{1}
u_{t}(t,n)-u(t,n+1)-u(t,n-1)+2u(t,n)=c(n)u(t,n)(1-u(t,n)),
\end{equation}
were also studied in \cite{G1,LZ}.  Besides above works, a more general framework was given by \cite{LZ, Wein2002} to study  traveling fronts  for Fisher-KPP type equations and more general diffusion systems.

However,  few works on traveling waves of Fisher-KPP equations exist in more complicated media. Matano \cite{Matano} first gave a definition of spatially almost periodic traveling waves and provided some sufficient conditions on the existence of spatially almost periodic traveling fronts of reaction-diffusion equations with the bistable nonlinearity. In \cite{Liang2019}, Liang showed the existence and uniqueness of the spatially almost periodic traveling front of Fisher-KPP equations in one-dimensional almost periodic media with free boundary.
We also notice that the propagation problems of  (temporally) nonautonomous reaction-diffusion equations were studied by Shen in a series of works \cite{ShenCao, She1, She2}.

In this paper, we are concerned with the almost periodic traveling fronts of the Fisher-KPP equation  \eqref{1}.
To introduce the definition of the almost periodic traveling fronts, let we first recall the definition of classical and periodic traveling fronts.
In the case where the media is homogeneous, that is, $c$ is a constant sequence, the classical traveling fronts are defined by a solution $u(t,n)=U(n-wt)$ with an invariant profile $U$ and a speed $w$;  in the case where the media is periodic, that is, $c$ is a periodic sequence with period $N$, the periodic traveling fronts are defined by a solution $u(t,n)=U(n-wt,n)$ with a periodic profile $U$, $U(\xi,n)=U(\xi,n+N)$ and an average speed $w$.  Then it is natural to consider the recurrence of the profile depending on the structure of the media if one tries to generalize the definition of traveling fronts in heterogeneous media. To be exact, in the almost periodic media, the generalized traveling fronts need to inherit the almost periodicity of the media.

Before introducing the definition of generalized traveling fronts with almost periodic recurrence, we give some notions and background.
A sequence $f:\mathbb{Z}\rightarrow\mathbb{R}$ is {\rm almost periodic} if ${\{f(\cdot+k)|k\in\mathbb Z\}}$ has a compact closure in $l^\infty(\mathbb Z)$. Denote by $\mathcal H(f)$ the hull of the almost periodic sequence $f$, i.e., $\mathcal H(f)=\overline{\{f(n+k)|k\in\mathbb Z\}}$, the closure in $l^\infty(\mathbb Z)$.
 We also denote $g\cdot k=g(\cdot+k),\ g\in\mathcal H(f),\ k\in\mathbb Z$, and let
$l^\infty_{loc}(\mathbb Z)$ be the set of all sequences in $\mathbb Z$ with pointwise topology and $C^0(\mathbb R\times\mathbb Z)$ be the set of all continuous functions with locally uniform topology.

To define the almost periodic traveling fronts, we prefer to consider not only \eqref{1}, where  $c(n)$ is an almost periodic sequence and
$\inf\limits_{\mathbb{Z}}c>0$, but also the family of equations
\begin{equation}\label{h(c)}
u_{t}(t,n)-u(t,n+1)-u(t,n-1)+2u(t,n)=g(n)u(t,n)(1-u(t,n)),
\end{equation} for any $g\in \mathcal H(c)$. With these, we have the following precise definition:

\begin{Definition}\label{def_of_APTW}
Let $v(t,n;g):\mathbb R\times \mathbb Z\times \mathcal{H}(c)\to \mathbb R$. We say $v(t,n;g)$ is an {\rm almost periodic traveling front}(with nonzero average speed) if $v(t,n;g)$ is an entire solution of \eqref{h(c)} for any $g\in \mathcal H(c)$, and if the following properties hold:
	\begin{enumerate}
		\item $\{v(\cdot,\cdot;g)|g\in\mathcal H(c)\}$ is a one-cover of $\mathcal H(c)$ in $C^0(\mathbb R\times\mathbb Z)$, i.e. the map $g\in\mathcal H(c)\to v(\cdot,\cdot;g)\in C^0(\R\times\Z)$ is continuous. 
	\item $v(t,n;g)\to 1$ as $n\to-\infty$, $v(t,n;g)\to 0$ as $n\to\infty$, locally uniformly in $t$ and uniformly in $g$.
		\item For any $g\in \mathcal H(c)$, there exists $t(k;g):\mathbb Z\times \mathcal H(c)\to \mathbb R$ such that $\{t(\cdot;g)|g\in\mathcal H(c)\}$ is a one-cover of $\mathcal H(c)$ in $l^{\infty}_{loc}(\mathbb Z)$, and $t(k;g)$ satisfies:
		\begin{equation*}
 t(n;g\cdot k)=t(n+k;g)-t(k;g).
	    \end{equation*}
Moreover, $w(g):=\lim\limits_{|n-k|\to\infty}\frac{n-k}{t(n;g)-t(k;g)}$  exists.

	 \item $v(t+t(k;g),n+k;g)=v(t,n;g\cdot k),\ \forall\ t\in\mathbb R, \ n,k\in\mathbb Z.$
	
	\end{enumerate}	
 The constant $w(g)$ is called the {\rm average wave speed} of $v(t,n;g)$.
\end{Definition}

\begin{figure}
  \centering
  \center{\includegraphics[width=0.5\linewidth]{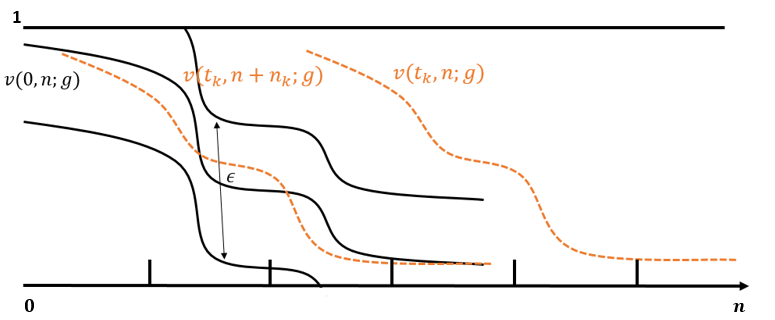}}
  \caption{}\label{Time recurrence}
\end{figure}

\begin{Remark}  The speed $w(g)$ is nonzero.
  Indeed,  $$t(1;g\cdot k)=t(1+k;g)-t(k;g)$$ is bounded with respect to $k$, since $\{t(k;g)|g\in\mathcal H(c)\}$ is a one-cover of  $\mathcal H(c)$ in $l^{\infty}_{loc}(\mathbb Z)$.
    Combining this with the definition of $w(g)$, we deduce that $w(g)$ is nonzero.
        \end{Remark}
 \begin{Remark}  Actually, Definition \ref{def_of_APTW} implies time recurrence of the solution (c.f. Lemma \ref{timerecurrence}): For any $\epsilon>0$, there exist relative dense sets $\{t_k\}_{k}\subset \mathbb R$ and $\{n_k\}\subset\mathbb Z$ such that
    $$\sup\limits_{n\in\mathbb Z}|v(t_k,n_k+n;g)-v(0,n;g)|\leq \epsilon,$$
    as shown in Fig \ref{Time recurrence}.
    \end{Remark}

       It needs to point out that a more general extension of traveling fronts, so-called generalized transition  front, was presented by Berestycki and Hamel \cite{Berestycki2012}.  For  \eqref{1}, the generalized transition front is defined as below.
   \begin{Definition}\label{generalized}
A generalized transition front of  \eqref{1} is an entire solution $u=u(t,n)$ for which there exists a function $N:\mathbb{R}\rightarrow \mathbb{Z}$ such that
\begin{equation}\label{2}
\lim_{n\rightarrow -\infty}u(t,n+N(t))=1,\lim_{n\rightarrow +\infty}u(t,n+N(t))=0,
\end{equation}
 uniformly in $t\in \mathbb{R}$.
We say $u$ has an average speed $w\in \mathbb{R}$, provided
$$\lim_{t-s\rightarrow +\infty}\frac{N(t)-N(s)}{t-s}=w.$$
\end{Definition}

\begin{Remark}
In the periodic case, the almost periodic traveling front in Definition \ref{def_of_APTW} is exactly the classical periodic traveling front  \cite{G1}. However,  there may exist a generalized transition front (Definition \ref{generalized}) which is not  a classical traveling front \cite{Berestycki2012}.
\end{Remark}

A generalized transition front $u=u(t,x)$ of  \eqref{continuousFKPP}  can be defined in the same way by assuming $N=N(t):\mathbb{R}\rightarrow \mathbb{R}$. Nadin and Rossi \cite{Nadin2017} {investigated} the existence of the generalized transition front of \eqref{continuousFKPP} when $a,c$ are almost periodic.
Motivated by the works \cite{Berestycki2005,Nadin2009,G1,LZ}, especially the work of Nadin and Rossi \cite{Nadin2017}, we want to construct the almost periodic traveling fronts  via the eigenvalue problem of  the linearized operator of \eqref{1} near the equilibrium state $u\equiv 0$:
\begin{equation}\label{laplace}
(\mathcal{L}_gu)(n):=u(n+1)+u(n-1) -2 u(n)+g(n)u(n)= Eu(n),\  g\in\mathcal H(c).
\end{equation}
Here the subcript $g$ is to emphasize the dependence of $g$. We will always shorten the notation $\mathcal L_c=\mathcal L.$

One novelty of the paper is that we will use method from dynamical systems to study the operator $\mathcal L_g$, thus to study \eqref{h(c)}. Note that \eqref{laplace} can be rewritten as
$$
\begin{pmatrix}
u(n+1)\\
u(n)
\end{pmatrix}
= A(n)
\begin{pmatrix}
u(n)\\
u(n-1)
\end{pmatrix}
,\ $$ where $A(n)=\begin{pmatrix}
E+2-g(n)&-1\\
1&0
\end{pmatrix}.$ Let $A_n(g)=A(n-1) \cdots A(0)$ be the transfer matrix.
Then the {\it Lyapunov exponent} of $\mathcal L_g$ at energy $E$ is denoted by $L(E)$ and given by
\begin{equation}\label{LE}
L(E):=\lim\limits_{n\rightarrow +\infty}\frac{1}{n}\int_{\mathcal H(c)}\ln\|A_n(g)\|d\mu\geq 0.
\end{equation}
where $\mu$ is the Haar measure on $\mathcal H(c)$. It is known that $L(E)$ is identical for $g\in \mathcal H(c)$ (see Proposition \ref{prepare}). The Lyapunov exponent characterizes the decay rate of any solutions of \eqref{laplace}, and is also a fundamental topic in smooth dynamical systems.

We always use $\Sigma(\mathcal L)$ to denote the spectrum of $\mathcal L$, and denote
$\lambda_1=\max\Sigma(\mathcal L).$ Once we have this, we can state our main results as follows:

\begin{Theorem}\label{Main1}
Denote $$w^{*}:=\inf_{E>\lambda_{1}}\frac{E}{L(E)},\ \underline w:=\lim\limits_{E\searrow \lambda_{1}}\frac{E}{L(E)}.$$  Then for \eqref{h(c)},  the following statements hold:
\begin{enumerate}
\item If $w^*<\underline w$, then for any $w\in (w^*,\underline w)$, there exists a time-increasing almost periodic traveling front with average wave speed $w$;
\item  If $w^*<\underline w$, then there exists a time-increasing generalized transition front with average speed $w^*$;
    \item There is no generalized transition fronts with average speed $w<w^{*}.$
\end{enumerate}
\end{Theorem}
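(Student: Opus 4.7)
The plan is to couple the dynamical-systems description of $\mathcal{L}_g$ with the classical KPP construction of traveling fronts from exponential solutions of the linearized equation near $u\equiv 0$. The starting observation is that for every $E>\lambda_1$ we are outside $\Sigma(\mathcal{L})$, so the Schr\"odinger cocycle $A_n(g)$ admits a Sacker--Sell exponential dichotomy with one-dimensional stable direction contracting at rate $L(E)>0$. Since $E$ lies strictly above the spectrum, Jacobi/Weyl theory on the half-line produces a \emph{positive} decaying solution $\phi_E(n;g)$ of $\mathcal{L}_g\phi=E\phi$, and by the Furstenberg formula $n^{-1}\log\phi_E(n;g)\to -L(E)$. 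The map $g\mapsto\phi_E(\cdot;g)$ will turn out to be continuous on $\mathcal{H}(c)$, reflecting the invariance of \eqref{h(c)} under simultaneous translation of space and of $g\in\mathcal{H}(c)$.

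For part~(1), fix $w\in(w^*,\underline{w})$ and, using continuity of $E\mapsto E/L(E)$, pick $E_*>\lambda_1$ with $E_*/L(E_*)=w$. Then $\overline{u}(t,n;g):=\min\{1,\phi_{E_*}(n;g)e^{E_*t}\}$ is a supersolution of \eqref{h(c)}, while a standard KPP recipe produces a subsolution by subtracting from $\phi_{E_*}e^{E_*t}$ a suitable multiple of a second exponential $\phi_{E_{**}}e^{E_{**}t}$ with $E_{**}\ne E_*$ and $E_{**}/L(E_{**})=w$; such an $E_{**}$ exists because $w^*<w<\underline{w}$ forces $E\mapsto E/L(E)$ to attain the value $w$ at two distinct points. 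Solving \eqref{h(c)} with initial data $\overline{u}(-k,\cdot;g)$ and letting $k\to\infty$, parabolic compactness and the comparison principle produce a time-increasing entire solution $v(t,n;g)$ trapped between the sub- and super-solution, with the correct asymptotics $1$ at $-\infty$ and $0$ at $+\infty$. Running this simultaneously over the compact hull $\mathcal{H}(c)$ and exploiting continuity of $\phi_{E_*}(\cdot;g)$ in $g$ yields the one-cover property. The crossing times $t(k;g)$ are defined implicitly by $v(t(k;g),k;g)=\tfrac{1}{2}$, and the cocycle identity and covariance in items~(3)--(4) of Definition~\ref{def_of_APTW} follow from the translation invariance noted above combined with uniqueness of a normalized time-increasing front of prescribed speed.

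For part~(2), take $w_k\downarrow w^*$, let $v_k$ be the almost periodic fronts just constructed, recenter each at its $\tfrac{1}{2}$-level at $t=0$, and extract a locally uniform limit by parabolic regularity. The limit $v^*$ is an entire solution satisfying the interface profile \eqref{2} with average speed $w^*$; however, almost periodicity of the profile may be lost at the infimum, which is why one obtains only a generalized transition front. For part~(3), suppose a generalized transition front $u$ had average speed $w<w^*$. Viewing $u$ as a subsolution of the linearization $u_t=\mathcal{L}_g u$ and inserting the transfer-matrix estimate $\|A_n(g)\|\lesssim e^{nL(E)}$ (valid for every $E>\lambda_1$) yields $u(t,n)\le C\exp(Et-L(E)|n-N(t)|)$. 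Optimizing in $E>\lambda_1$ and evaluating along the trajectory $n=N(t)+(w^*-\epsilon)t$ forces $u\to 0$, contradicting \eqref{2}.

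The main obstacle, and the place where the lattice almost periodic framework diverges most sharply from the classical Berestycki--Hamel and Nadin--Rossi settings, is the verification in part~(1) that $\{v(\cdot,\cdot;g)\}_{g\in\mathcal{H}(c)}$ is genuinely a continuous one-cover and that the crossing times $t(k;g)$ form a cocycle over $\mathcal{H}(c)$. This reduces to a uniqueness theorem for time-increasing entire fronts of a prescribed speed in $(w^*,\underline{w})$, which in turn rests on uniform Sacker--Sell dichotomy over the hull and a discrete Harnack inequality replacing its parabolic continuous counterpart.
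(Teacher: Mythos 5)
Your overall architecture (sub/super solutions for~(1), a limit for~(2), a speed lower bound for~(3)) matches the paper's, but two of your three steps contain genuine errors, and your route for part~(2) is quite different from the paper's.

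\textbf{The subsolution in part (1) is wrong.} You propose $\underline u=\phi_{E_*}e^{E_*t}-A\,\phi_{E_{**}}e^{E_{**}t}$ where $E_{**}$ is a \emph{second root} of $E/L(E)=w$. Any linear combination of functions of the form $\phi_E(n)e^{Et}$ satisfies $\partial_t\underline u-\mathcal{L}_g\underline u=0$ identically, so
$\underline u_t-\underline u(n+1)-\underline u(n-1)+2\underline u-g\underline u(1-\underline u)=g\underline u^2\geq 0$;
that is a \emph{super}solution inequality, not a subsolution one. To absorb the quadratic error one needs a correction term that decays at an \emph{intermediate} rate and is \emph{not} an eigenfunction of $\mathcal{L}_g$. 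The paper achieves this by using $\theta(n;g)\,\phi_E(n;g)^{1+\epsilon}e^{(1+\epsilon)Et}$ together with the conjugated operator $\mathcal{L}^{(1+\epsilon)\sigma_E}_g$ and Proposition~\ref{prop:pre_lowersol}, which supplies a uniformly positive almost periodic weight $\theta$ with the required strict inequality. The exponent $1+\epsilon$ plays precisely the role of the ``$\gamma\in(1,2)$'' in the classical KPP subsolution; a second root of $E/L(E)=w$ does not. A related point: the paper defines $t(k;g)=-\tfrac1E\ln\phi_E(k;g)$ explicitly, from which the cocycle identity $t(n;g\cdot k)=t(n+k;g)-t(k;g)$ follows immediately from $\phi_E(n;g\cdot k)=\phi_E(n+k;g)/\phi_E(k;g)$; your implicit $\tfrac12$-level definition of $t(k;g)$ would not obviously satisfy the cocycle identity, and the ``uniqueness'' you appeal to is in fact proved in the paper as maximality of $u$ in the sandwiched class $\mathcal{S}_g$ (Propositions~\ref{prop:construct_sol}--\ref{prop:max_of_u}, Theorem~\ref{thm:front1}).

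\textbf{The argument for part (3) proves the wrong inequality.} You dominate $u$ by a supersolution of the linearization to get $u(t,n)\lesssim e^{Et-L(E)(n-N(t))}$, and then evaluate along $n=N(t)+(w^*-\epsilon)t$. But along this ray the exponent is $t\bigl(E-(w^*-\epsilon)L(E)\bigr)$, and since $E/L(E)\geq w^*>w^*-\epsilon$ for every $E>\lambda_1$, this exponent is $\geq\epsilon L(E)\,t>0$: the bound is vacuous, not contradictory. An upper bound on $u$ can only confine the interface from the right; it cannot force $N(t)$ to advance. What is actually needed is a \emph{lower} bound, i.e., a spreading-speed estimate: solutions with Heaviside-type data spread to the right at speed at least $w^*$. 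This is the content of Lemma~\ref{lem:spreading_on_half_line} and Proposition~\ref{prop:unformly-converge}, established via the generalized principal eigenvalue formula $w^*=\inf_{p>0}k(p)/p$ and a perturbation argument; the nonexistence claim is then obtained in Proposition~\ref{prop:S>minS} by comparing a hypothetical slow front with the spreading solution.

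\textbf{Part (2): different route, and yours is under-justified.} You propose recentering the supercritical almost-periodic fronts $v_{w_k}$ at their $\tfrac12$-level and passing to the limit as $w_k\downarrow w^*$. This is plausible but requires a uniform (in $k$) bound on the transition width and on the average speed of the limit; as written you give neither. The paper instead solves the Cauchy problem with translated Heaviside data $H_k$, shifts time so that $u(0,0;0,H_k)=\theta$, and uses the ``steepness'' comparison (Proposition~\ref{wavefronts} / Lemma~\ref{lem:steepest}) to get a unique well-defined limit, then identifies its speed by sandwiching between the spreading-speed lower bound and a comparison with the supercritical fronts (Lemma~\ref{lem:equ_def of TW}, Theorem~\ref{proveMain(3)}). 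Either approach could in principle work, but the Heaviside construction makes the interface-width and speed identification much cleaner; if you pursue your limit approach you would still need the spreading-speed machinery to pin the average speed from below.
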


Note that the sufficient condition for the existence result in Theorem \ref{Main1}(1) is fulfilled up to a constant perturbation of $g$ (Lemma \ref{3.2}), which was first observed in
\cite{Nadin2017}. Since adding a constant to the potential doesn't affect the spectral property of $\mathcal{L}$, in what follows we always assume $w^*<\underline w$.

A remarkable fact is Proposition \ref{prepare}: For any $E>\lambda_1$, there exists a unique positive solution $\phi_E(n)$ of
$$\mathcal L\phi_E=E\phi_E, \phi_E(0)=1,\ \lim\limits_{n\rightarrow\infty}\phi_E(n)=0,$$
and the limit $\mu(E):=-\lim\limits_{n\rightarrow\pm\infty}\frac{1}{n}\ln\phi_{E}(n)=L(E)$. Thus the minimal speed we constructed is the same as that given in \cite{Nadin2017}.

We also should point out that the established almost periodic traveling fronts share the same recurrence property as the potential $c(n)$. To make it precisely, if the frequency of the almost periodic sequence $c(n)$ is $\alpha$, then
 the frequency of $v(\cdot,\cdot; c)$ is also $\alpha$, as we will show in Corollary \ref{Corollary_Main1} and Corollary \ref{Main2}.

\subsection{Applications}

Of course, the interesting thing is to give concrete examples in which we can establish almost periodic traveling fronts for any average wave speed $w>w^*$ (i.e., $\underline w=\infty$).  Based on the work of \cite{Kozlov},  Nadin and Rossi \cite{Nadin2017} showed that if $a,c$ are finitely  differentiable quasi-periodic function with Diophantine frequency $\alpha$, and $c$ is small enough (the smallness must depend on $\alpha$), then the operator $Lu=(a(x)u_{x})_{x}+c(x)u$ has
a positive  almost periodic function. Consequently \eqref{continuousFKPP} has a time-increasing generalized transition front with average speed $w\in (w^*,\infty)$. Here we recall that $\alpha$ is Diophantine (denoted by $\alpha\in \mathrm{DC}_d(\gamma,\tau)$), if there exist $\gamma>0,\tau>d$ such that
$$ \inf\limits_{j\in\mathbb Z}|\langle k,\alpha\rangle-j|>\frac{\gamma}{|k|^\tau},\ 0\neq k\in\mathbb Z^d .$$

Therefore, the natural question is that  whether one can remove the arithmetic condition of $\alpha$, or whether one can remove the smallness of $c$. Now we answer this question as follows:

\begin{Corollary}\label{Corollary_Main1}
Suppose that $c(n)=V(n\alpha)$ where $\alpha\in \T^d$ is rationally independent, $V: \T^d \rightarrow \R$ is a positive real analytic function. Then the following statements hold:
 \begin{enumerate}
 \item  \eqref{1}   has a time increasing almost periodic traveling front with average wave speed $w\in (w^{*}, \frac{\lambda_{1}}{L(\lambda_1)})$.
 \item  The traveling front $u(t,n;c)$ can be rewritten as $u(t,n;c)=U(t+T(n),n\alpha)$, where $U\in C^0(\mathbb R\times\mathbb T^d),\ T\in \ell_{\mathrm{loc}}^\infty(\mathbb Z)$.
 \end{enumerate}
\end{Corollary}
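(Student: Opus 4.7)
The plan for (1) is to apply \thmref{Main1} directly, which reduces matters to verifying its sufficient condition $w^*<\underline w$ in the analytic quasi-periodic setting and to identifying $\underline w$ with $\lambda_1/L(\lambda_1)$. The plan for (2) is to exploit the continuous identification of the hull $\mathcal H(c)$ with $\mathbb T^d$, together with the one-cover property in Definition~\ref{def_of_APTW}(1), to unfold the recurrence relation into an explicit $U$ and $T$.

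For the identification $\underline w=\lambda_1/L(\lambda_1)$, I would invoke the continuity of the Lyapunov exponent for analytic quasi-periodic $\mathrm{SL}(2,\mathbb R)$-cocycles. In the one-frequency case this is the Bourgain--Jitomirskaya theorem, and for general $d$ it follows from its multi-frequency extensions (e.g.\ by Bourgain, by Jitomirskaya--Koslover--Schulteis, or by more recent work). The map $E\mapsto L(E)$ is therefore continuous on $\mathbb R$, and since $L(E)>0$ for $E>\lambda_1$ by Johnson's theorem (uniform hyperbolicity outside the spectrum), continuity at $E=\lambda_1$ yields
$$\underline w=\lim_{E\searrow\lambda_1}\frac{E}{L(E)}=\frac{\lambda_1}{L(\lambda_1)},$$
with the convention that the right-hand side equals $+\infty$ if $L(\lambda_1)=0$.

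The main obstacle is the strict inequality $w^*<\underline w$. One input is the Thouless formula $L(E)=\int\log|E-E'|\,dN(E')$, where $N$ is the integrated density of states; this gives $L(E)\sim\log E$ as $E\to+\infty$, so $E/L(E)\to+\infty$. The task is then to show that $E/L(E)$ is genuinely smaller than its boundary value $\underline w$ somewhere in $(\lambda_1,\infty)$. Near the edge $\lambda_1$ the cocycle fails to be uniformly hyperbolic, and one expects square-root-type singularities of $L(E)$ forcing $(\log L)'(E)>1/E$ on a right-neighborhood of $\lambda_1$; this could be pushed through using Avila's global theory of analytic quasi-periodic cocycles, or by shifting $c$ by a large constant via \lemref{3.2} to reduce to a perturbative regime where the edge behavior is explicit. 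Either route requires fine information about $L$ near the spectral edge and is the genuinely delicate step.

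For (2), the map $\theta\mapsto V(\cdot+\theta)$ furnishes a homeomorphism $\mathbb T^d\cong\mathcal H(c)$ conjugating translation by $k$ on the hull to rotation by $k\alpha$ on the torus. Set $U(s,\phi):=v(s,0;V(\cdot+\phi))$ and $T(n):=-t(n;c)$. Continuity of $U$ on $\mathbb R\times\mathbb T^d$ follows from the one-cover property of $\{v(\cdot,\cdot;g)\}_{g\in\mathcal H(c)}$, while the recurrence relation in Definition~\ref{def_of_APTW}(4), applied with $k=n$ at spatial index $0$, gives
$$u(t,n;c)=v(t,n;c)=v\bigl(t-t(n;c),0;V(\cdot+n\alpha)\bigr)=U\bigl(t+T(n),n\alpha\bigr),$$
and $T\in\ell^\infty_{\mathrm{loc}}(\mathbb Z)$ trivially. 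This completes the identification.
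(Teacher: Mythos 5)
Your identification $\underline w=\lambda_1/L(\lambda_1)$ via the Bourgain and Bourgain--Jitomirskaya continuity theorem for analytic quasi-periodic $\mathrm{SL}(2,\R)$-cocycles (Theorem~\ref{LEcontinuous}), and your proof of part (2) by setting $U(t,\theta)=u(t,0;V(\cdot\,\alpha+\theta))$, $T(n)=-t(n;c)$ and invoking Definition~\ref{def_of_APTW}(1),(4) together with Lemma~\ref{lem:APTF_of_2} and Remark~\ref{re:one-cover}, match the paper's argument exactly.

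Where you go astray is in treating the strict inequality $w^*<\underline w$ as a \emph{required} step of the proof of (1), labelling it ``the genuinely delicate step'' and leaving it unresolved. That step is not part of this corollary. The paper introduces a standing assumption $w^*<\underline w$ immediately after Theorem~\ref{Main1}, justified by Lemma~\ref{3.2} (one can always arrange $w^*<\underline w$ by a constant shift of $c$, a normalization that does not affect the spectral analysis), and Corollary~\ref{Corollary_Main1}(1) is phrased so that if the interval $(w^*,\lambda_1/L(\lambda_1))$ is empty the conclusion holds vacuously. Hence (1) really is a one-line consequence of Theorem~\ref{Main1}(1) together with the continuity of $L(E)$ at $E=\lambda_1$; no Thouless-formula asymptotics, no square-root edge behaviour, and no appeal to global theory is needed or used. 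Drop that paragraph and your proposal is complete.
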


Corollary \ref{Corollary_Main1} (1) reduces $\underline w=\infty$ to $L(\lambda_1)=0$.  For example, based on former results \cite{Avila2010a,Zhou2013}, Corollary \ref{Maini}  shows that  for any irrational $\alpha\in  \mathbb R\backslash \mathbb Q$,  if $V$ is analytic and close to constant, even the closeness is independent of $\alpha$. Hence combining Proposition \ref{criticalLE} and the continuity of the Lyapunov exponent \cite{B2,Bourgain2002}, $L(\lambda_1)=0$. On the other hand,  Corollary \ref{Corollary_Main1} (2) states that if $c(n)$ is quasi-periodic with frequency $\alpha$, then the resulting  traveling front  is also quasi-periodic with frequency $\alpha$.

Corollary \ref{Corollary_Main1} (1) follows from Theorem \ref{Main1} and the continuity of the Lyapunov exponent. Thus  the assumption that $V$ is analytic is necessary,   since the Lyapunov exponent might be discontinuous in smooth topology \cite{WY}.   Furthermore,  Corollary \ref{differential_corollary} also states the results for $V$ is just a finitely differentiable quasi-periodic function  as in \cite{Nadin2017}. To be exact,  if  $\alpha\in DC_d(\gamma,\tau)$,
$V\in C^s(\mathbb T^d,\mathbb R)$ with $s>6\tau+2$, and $V$ is small enough, then \eqref{1} has a time increasing almost periodic traveling front with average wave speed $w\in (w^{*}, \infty)$.  However, it is widely believed that if the regularity is worser, then for generic $V$,  the spectrum of \eqref{laplace} has no absolutely continuous component \cite{AD}. Therefore, most probably $L(\lambda_1)>0$ by the well-known Kotani's theory \cite{K}.

 As a concrete and typical example, we will take $V(\theta)=2\kappa \cos \theta+C$ where $C$ is a constant such that $V(\theta)>0$, then the corresponding linearized operator can be reduced to the well-known \textit{almost Mathieu operator} (AMO):
\begin{equation*}
(\mathcal{L}_{2\kappa\cos-2,\alpha,\theta}u)(n)= u(n+1)+ u(n-1) +  2\kappa\cos(\theta+ n\alpha) u(n).
\end{equation*}
 The AMO was first introduced by Peierls \cite{p},
as a model for an electron on a 2D lattice, acted on by a homogeneous
magnetic field \cite{harper,R}.
 Now, if  $V(\theta)=2\kappa \cos \theta+C$, then we have the following:

\begin{Corollary}\label{AMO}
Suppose that $c(n)=2\kappa \cos(2\pi n\alpha)+C$ where $\alpha\in \mathbb R\backslash \mathbb Q$ and $C$ is a specified constant such that $c$ has a positive infimum. Then the following statements hold:
\begin{enumerate}
\item If $|\kappa|\leq 1$, then \eqref{h(c)}  has a time-increasing quasi-periodic traveling front with average wave speed  $w\in (w^{*}, \infty).$
\item If  $|\kappa|> 1$, then \eqref{h(c)}  has a time-increasing quasi-periodic traveling front with average speed  $w\in (w^{*}, \frac{\lambda_1}{ |\ln  \kappa| }).$
\end{enumerate}
\end{Corollary}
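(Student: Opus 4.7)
The plan is to deduce Corollary~\ref{AMO} from Corollary~\ref{Corollary_Main1}(1), so that the entire argument reduces to computing the Lyapunov exponent $L(\lambda_1)$ at the top of the spectrum. Since $V(\theta)=2\kappa\cos\theta+C$ is real analytic on $\mathbb{T}$ and $\alpha\in\mathbb{R}\setminus\mathbb{Q}$, that corollary immediately produces a quasi-periodic traveling front for every $w\in(w^{*},\lambda_{1}/L(\lambda_{1}))$, under the convention $\lambda_{1}/0=\infty$. I would begin by rewriting the eigenvalue equation \eqref{laplace} with $g(n)=2\kappa\cos(2\pi n\alpha)+C$ in the form
$$u(n+1)+u(n-1)+2\kappa\cos(2\pi n\alpha)u(n)=(E+2-C)u(n),$$
recognising it as the eigenvalue equation of the almost Mathieu operator at the shifted energy $E'=E+2-C$. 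This affine change in the energy variable identifies $\Sigma(\mathcal{L})$ with the AMO spectrum and preserves the Lyapunov exponent, so $L(\lambda_{1})$ equals the AMO Lyapunov exponent at the top of its spectrum, which lies in the spectrum since the latter is closed.

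Next I would invoke the Aubry--Andr\'e--Herman formula: the Lyapunov exponent of the almost Mathieu operator satisfies
$$L(E')=\max\{0,\ln|\kappa|\}\qquad\text{for every }E'\in\Sigma.$$
Herman's subharmonicity argument provides the lower bound $L\geq\ln|\kappa|$ for all energies, while Aubry duality combined with Kotani theory yields the matching upper bound $L\equiv 0$ on the spectrum in the subcritical regime $|\kappa|<1$. Substituting these values into the range supplied by Corollary~\ref{Corollary_Main1}(1) gives exactly the two quoted intervals $w\in(w^{*},\infty)$ when $|\kappa|\leq 1$ and $w\in(w^{*},\lambda_{1}/\ln|\kappa|)$ when $|\kappa|>1$.

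The delicate point, which I expect to require the most care, is the critical coupling $|\kappa|=1$: here neither Herman's estimate nor Aubry duality alone pins down $L(\lambda_{1})=0$, and I would close this case by invoking the joint continuity of the Lyapunov exponent in $(E,\kappa)$ for analytic quasi-periodic cocycles \cite{B2,Bourgain2002}, letting $|\kappa|\nearrow 1$ through the subcritical regime where $L\equiv 0$ on the spectrum has already been established. Everything else in the argument is essentially bookkeeping between the two operators and the conversion from $L(\lambda_{1})$ back to the speed range.
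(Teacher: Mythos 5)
Your argument is correct and follows essentially the same route as the paper: reduce to Corollary~\ref{Corollary_Main1}(1), identify the linearized operator (after the affine energy shift $E'=E+2-C$) with the almost Mathieu operator, invoke the formula $L\equiv\max\{0,\ln|\kappa|\}$ on the spectrum, and read off the speed interval $(w^{*},\lambda_1/L(\lambda_1))$. The only (cosmetic) divergence is in how the AMO Lyapunov exponent formula is justified: the paper simply cites \cite[Theorem 10]{Avila2015} together with \cite[Corollary 2]{Bourgain2002}, whereas you re-derive it through Herman's subharmonicity lower bound, Aubry duality plus Kotani theory for the upper bound in the subcritical range, and Bourgain--Jitomirskaya joint continuity to close the critical case $|\kappa|=1$; this is the classical proof of the same formula, so there is no real gap, just a different chain of citations leading to the same input.
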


Note that AMO plays the central role in the Thouless et al theory of the integer quantum Hall effect \cite{TKNN}.  This model
has been extensively studied not only because of  its importance
in  physics \cite{oa}, but also as a fascinating mathematical object \cite{Avila2009,ayz,Jitomirskaya1999,JLiu}.  For us, the example is interesting since Nadin and Rossi \cite{Nadin2017}
asked is it possible to construct a rigorous example where $\lim_{E\rightarrow \lambda_1}\mu(E)>0$? Then Corollary \ref{AMO} (2) gives an affirmative answer to their question in the discrete setting.  However,  it is still open whether in this case, \eqref{1}  has a time-increasing almost periodic traveling front with average wave speed  $w\in (w^{*}, \infty).$

From this aspect, we should mention that compared to  \cite{Nadin2017},  we are mainly concerned with Fisher-KPP lattice equations,  since the main results we mentioned  above \cite{Avila2010a,Avila2015,B2,Bourgain2002,Jitomirskaya1999} only work in the discrete case. Whether the corresponding results are valid for the continuous case are widely open.

We also remark that we are mainly concerned with almost periodic traveling fronts of \eqref{1} with average wave speed $w\in (w^*,\infty)$, while Nadin and Rossi's example \cite{Nadin2017} and Corollary \ref{Main1} only give examples where $c$ are quasi-periodic. Therefore, it is interesting and important to give concrete examples of  real almost periodic sequence $c(n)$, such that \eqref{1} has almost periodic traveling front with average wave speed  $w\in (w^{*}, \infty).$ To state the result clearly, let's show how to construct the desired almost periodic sequence.

 We assume that the frequency $\alpha=(\alpha_j)_{j\in\N}$ belongs to  the infinite dimensional cube $  \mathcal{R}_0:= [1,2]^{\N}$, and we endow $\mathcal{R}_0$ with the probability measure $\mathcal{P}$  induced by the product measure of the infinite dimensional cube $\mathcal{R}_0$.
We now define the set of Diophantine frequencies  that was first developed by Bourgain \cite{Bourgain2005}:
\begin{Definition}[\cite{Bourgain2005}]\label{Almost}
Given $\gamma\in (0,1),\tau>1$, we denote by $DC_\infty(\gamma,\tau)$ the set of Diophantine frequencies $\alpha\in \mathcal{R}_0$ such that
$$
|\langle k,\alpha\rangle|\geq \gamma\prod\limits_{j\in\mathbb N}\frac{1}{1+|k_j|^\tau\langle j\rangle^\tau},\ \forall k\in\mathbb Z^\infty, 0<\sum\limits_{j\in\mathbb N}|k_j|<\infty.
$$
where  $\langle j\rangle:=\max\{1,|j|\}$.
\end{Definition}

As proved in \cite{Bourgain2005,Biasco2019}, for any $\tau>1$, Diophantine frequencies  $DC_\infty(\gamma,\tau)$ are typical in the set $\mathcal{R}_0$ in the sense that there exists a positive constant $C(\tau)$ such that
$$ \mathcal{P}( \mathcal{R}_0 \backslash DC_\infty(\gamma,\tau) ) \leq C(\tau) \gamma.$$
Now for any $\alpha\in DC_\infty(\gamma,\tau)$,  we say that $c(n):\mathbb Z\rightarrow \mathbb R$ is an  almost periodic sequence with frequency $\alpha$ and analytic in the strip $r>0$ if we may write it in totally convergent Fourier series
\begin{equation}\label{specialalmostperiodic}
c(n)=\sum\limits_{k\in\mathbb Z_{*}^\infty}\hat c(k)\me^{\mathrm i\langle k,\alpha\rangle n}\text{ such that }\hat c(k)\in \mathbb R,\ \sum\limits_{k\in\mathbb Z_*^\infty}|\hat c(k)|\me^{r|k|_1}<\infty,
\end{equation}
where $\mathbb Z_*^\infty:=\{k\in\mathbb Z^\infty:|k|_1:=\sum\limits_{j\in\mathbb N}\langle j\rangle|k_j|<\infty\}$
denotes  the set of infinite integer vectors with finite support.  Since $\alpha$ is rationally independent,  $\mathcal H(c)=\T^{\infty}=\prod\limits _{i\in \mathbb N}\mathbb T^1$ with  infinite product topology. Once we have these, we can introduce our precise results as follows:

\begin{Corollary}\label{Main2}
Let $\gamma\in (0,1),\tau>1$, $r>0$, $\alpha\in DC_\infty(\gamma,\tau)$. Suppose that $c(n)$ is an almost periodic sequence with frequency $\alpha$ and analytic in the strip $r>0$. Furthermore, suppose that there exists $\epsilon=\epsilon(\gamma,\tau,r)$ such that
$$\sum\limits_{k\in\mathbb Z_*^\infty}|\hat c(k)|\me^{r|k|_1}< \epsilon(\gamma,\tau,r).$$
Then  following statements hold:
\begin{enumerate}
\item  \eqref{1}  has a time increasing almost periodic traveling front with average wave speed $w\in (w^{*},\infty)$
\item  The  front $u(t,n;c)$  can be rewritten as $u(t,n;c)=U(t+T(n),n\alpha)$, where $U\in C^0(\mathbb R\times\mathbb T^\infty),\ T\in\ell_{\mathrm{loc}}^\infty(\mathbb Z)$.
\end{enumerate}
\end{Corollary}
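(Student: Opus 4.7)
\textbf{Proof Proposal for Corollary \ref{Main2}.} The plan is to apply Theorem \ref{Main1}(1) and to establish that under the smallness assumption the quantity $\underline w = \lim_{E \searrow \lambda_1} E/L(E)$ equals $+\infty$, so that every $w \in (w^{*}, \infty)$ is the average wave speed of some almost periodic traveling front. Since $\lambda_1 > 0$, this is equivalent to $\lim_{E \searrow \lambda_1} L(E) = 0$, which I would in turn obtain from $L \equiv 0$ on the spectrum $\Sigma(\mathcal L)$ together with continuity of the Lyapunov exponent in $E$ near $\lambda_1$. The route to $L \equiv 0$ on $\Sigma(\mathcal L)$ is almost-reducibility of the associated $\mathrm{SL}(2,\mathbb R)$-cocycle, now transferred to the infinite-dimensional torus.

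Concretely, since $\alpha \in DC_\infty(\gamma,\tau)$ is rationally independent, the hull $\mathcal H(c)$ is homeomorphic to $\mathbb T^\infty$ and $c(n) = V(\theta_0 + n\alpha)$ for an analytic $V : \mathbb T^\infty \to \mathbb R$ with $\sum_k |\hat V(k)| e^{r|k|_1} < \epsilon$. The transfer matrices of $\mathcal L_g - E$ then form an $\mathrm{SL}(2,\mathbb R)$-cocycle over the minimal translation $\theta \mapsto \theta + \alpha$ on $\mathbb T^\infty$. The main step, and the principal obstacle, is an infinite-dimensional KAM reducibility theorem of Bourgain type: for $\epsilon = \epsilon(\gamma,\tau,r)$ sufficiently small and for every $E$ in a neighborhood of $\Sigma(\mathcal L)$, the cocycle $(\alpha, A_E)$ is analytically conjugate on $\mathbb T^\infty$ to a constant elliptic element (or to a rotation). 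The hard part is running the iterative scheme with small divisors $\langle k,\alpha\rangle^{-1}$ indexed by infinite multi-indices $k \in \mathbb Z_*^\infty$; the product-type Diophantine condition of Definition \ref{Almost} combined with the weights $e^{-r|k|_1}$ lets one solve the homological equation at each step with only a geometric loss of analyticity, and the choice of $\epsilon(\gamma,\tau,r)$ absorbs the accumulated losses. This is the infinite-dimensional analog of the Hou--You/Avila reducibility used in Corollary \ref{Corollary_Main1}.

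Once reducibility (hence uniform boundedness of the cocycle) is obtained on $\Sigma(\mathcal L)$, standard arguments give $L \equiv 0$ there, and the continuity of $L$ in $E$ (obtained either as a byproduct of the KAM construction or from an infinite-dimensional Bourgain--Jitomirskaya-type statement) yields $\lim_{E \searrow \lambda_1} L(E) = 0$. Hence $\underline w = \infty$ and Theorem \ref{Main1}(1) produces almost periodic traveling fronts of every average speed $w \in (w^{*}, \infty)$, proving (1).

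For (2), I would unravel the hull structure. The identification $\mathcal H(c) \cong \mathbb T^\infty$ sends $c \cdot k$ to $k\alpha$, and Definition \ref{def_of_APTW}(1) asserts that $g \mapsto v(\cdot,\cdot;g)$ is continuous from $\mathcal H(c)$ to $C^0(\mathbb R \times \mathbb Z)$; composing with the homeomorphism $\theta \mapsto g_\theta$ and setting $U(s,\theta) := v(s,0;g_\theta)$ produces $U \in C^0(\mathbb R \times \mathbb T^\infty)$. Define $T(n) := -t(n;c)$; the one-cover property of $t(\cdot;g)$ in Definition \ref{def_of_APTW}(3) gives $T \in \ell^\infty_{\mathrm{loc}}(\mathbb Z)$. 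The cocycle identity in Definition \ref{def_of_APTW}(4) with $g = c$ and $k = n$ then yields
\[
u(t,n;c) = v(t,n;c) = v\bigl(t - t(n;c),\, 0;\, c \cdot n\bigr) = U\bigl(t + T(n),\, n\alpha\bigr),
\]
which is the claimed representation.
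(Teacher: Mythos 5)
Your plan for part (2) matches the paper exactly (define $U(s,\theta):=v(s,0;g_\theta)$, $T(n):=-t(n;c)$, apply the cocycle identity from Definition~\ref{def_of_APTW}(4)).

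For part (1), however, there is a genuine gap in the route you take. You want to deduce $\underline{w}=\infty$ from $L\equiv 0$ on $\Sigma(\mathcal L)$ plus \emph{continuity of $L$ in $E$ near $\lambda_1$}, noting in passing that this continuity should come "as a byproduct of the KAM construction or from an infinite-dimensional Bourgain--Jitomirskaya-type statement." Neither of these is available. The Bourgain--Jitomirskaya continuity theorem (Theorem~\ref{LEcontinuous}) is proved for $\mathbb T^d$ with $d$ finite, and the paper explicitly flags that the inputs it uses (\cite{Avila2010a,Avila2015,B2,Bourgain2002,Jitomirskaya1999}) are finite-dimensional results; no analogue over $\mathbb T^\infty$ is cited or established. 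KAM reducibility at a single $E$ also does not yield joint continuity of $L$ across the spectral edge, which is exactly where you need it: for $E>\lambda_1$ the cocycle is uniformly hyperbolic and $L(E)>0$, so $\underline L=\lim_{E\searrow\lambda_1}L(E)$ is a limit from the resolvent set, and $L(\lambda_1)=0$ alone does not force $\underline L=0$. A second, smaller, problem: at $E=\lambda_1$ the rotation number is $0$ (Remark following the IDS discussion), so the limiting constant cannot be elliptic; it is parabolic. You also assert reducibility "for every $E$ in a neighborhood of $\Sigma(\mathcal L)$," which is much stronger than needed and would require controlling resonant rotation numbers — a separate and harder problem.

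The paper avoids all of this: it proves reducibility \emph{only at $E=\lambda_1$}, where $\mathrm{rot}(\alpha,S^V_{\lambda_1})=0$ is known a priori (this is the "fix the energy at the supremum of the spectrum" device from the introduction). Reducibility to a parabolic constant with a conjugacy close to identity (Corollary~\ref{ALreducibility} plus Theorem~\ref{resolvent} to rule out the hyperbolic case) feeds into Lemma~\ref{criterion} to produce a \emph{positive almost periodic solution} of $\mathcal L\phi=\lambda_1\phi$ (Corollary~\ref{groundstate}). Then Proposition~\ref{criticalLE} — a direct argument via criticality theory of Schr\"odinger operators (Proposition~\ref{Keller2017}), not via continuity of $L$ in $E$ — gives $\underline L=0$. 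Your argument would become a proof if you replaced "continuity of $L$" by this criticality-theory step, or alternatively established an infinite-dimensional Bourgain--Jitomirskaya continuity theorem, which would be a substantial new result.
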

Finally let's outline the novelty of the proof of these applications. The proof depends crucially on dynamical approach to almost-periodic Sch\"odinger operator,  i.e., in order to study the spectral property of Schr\"odinger operator \eqref{laplace}, one only needs to study the corresponding Schr\"odinger  cocycle (c.f. section \ref{aps}).  For analytic quasi-periodic potentials (Corollary \ref{Main1} and Corollary \ref{AMO}), the result follows from the continuity of the Lyapunov exponent for analytic cocycles. For the almost-periodic case, we need  to prove the existence of positive almost-periodic functions. The key observation is   Lemma \ref{criterion}, which says that if the Schr\"odinger cocycle is reduced to a constant parabolic cocycle, and the conjugacy is close to the identity, then the corresponding Schr\"odinger equation has a positive almost-periodic solution. Here reducibility means the cocycle can be  conjugated to a constant cocycle (c.f. section \ref{reducible}). From this aspect, the powerful method  is KAM.  For almost-periodic Hamiltonian systems, KAM was first developed by P\"oschel \cite{Poschel}. One can consult \cite{Bourgain2005, Biasco2019,Riccardo2021} for more study on similar objects. However, it is well-known traditional KAM method only works for {\it positive measure} parameters,  and here we need to {\it fix} the energy to be the supremum of the spectrum, thus the corresponding cocycle is {\it fixed}. To solve the difficulty, the method is to make good use of {\it fibred rotation number} which was first developed by Herman \cite{Herman1983} for quasiperiodic cocycles (not necessarily quasiperiodic Schr\"odinger  cocycle), and it can be extended to the almost periodic setting.
\subsection{Structure of the paper}
In the section 2, we introduce some preliminary knowledge which will be needed in our proof. In the section 3, we introduce and investigate some properties of the generalized principal eigenvalue and  the Lyapunov exponent which turn out to be powerful techniques in constructing the almost periodic traveling front with any average wave speed $w>w^*$. Moreover, we also show the reason why the existence of positive almost periodic solution of \eqref{laplace} implies $\underline w=\infty$, c.f. Proposition \ref{criticalLE}. 

In the section 4, we prove Theorem \ref{Main1} by the following steps:  First we establish the almost traveling front with any average wave speed $w>w^*$ by constructing super-sub solution, and get the monotonicity of the fronts in $t$, thus proves (1) in Theorem \ref{Main1}. Next we make use of the properties of spreading speed to deduce that even the generalized transition fronts with the average speed $w<w^*$ can not exist, and then (2) is proved. Lastly, we construct a generalized transition front with the critical wave speed $w^*$ by pulling back the solution of the Cauchy problem associated with the initial datum Heaviside function, and then we finish the proof.

In the section 5, we use KAM method to get the positive quasi-periodic (almost-periodic) solution of (\ref{laplace}) with the positive infimum when $c=V(\cdot\alpha+\theta)$ is very close to a constant, where $V\in C^s(\mathbb T^d,\mathbb R) (V\in C^\omega(\mathbb T^\infty,\mathbb R))$. This will help us to prove Corollary \ref{Main2}. At last, we finish all the proofs of applications.

\section{Preliminaries}

\subsection{Maximum principle, existence and uniqueness for the Cauchy Problem}

The maximum principle on the whole space can be stated as follows:
\begin{Proposition}[Maximum principle\cite{ShenCao}]\label{Maximum}
Let $v\in \ell^\infty(\mathbb Z)$. Assume that for any bounded interval $I=[0,t_0]\subset[0,\infty)$, $u$ is bounded in $I\times\mathbb Z$.  
If $u$ satisfies

\begin{equation}\label{elliptic}
\left\{
\begin{aligned}
u_t(n)-u(n+1)-u(n-1)+\big(2-v(n)\big)u(n)&\geq0 &\text{a.e. in }I\times\Z,\\
u(n)&\geq 0 &\text{ in }\{0\}\times\Z,
\end{aligned}
\right.
\end{equation}
then $u\geq 0$ in $I \times\mathbb Z.$
\end{Proposition}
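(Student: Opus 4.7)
The plan is to combine two classical devices: an exponential gauge in time (to make the zero-order coefficient uniformly coercive) and a spatial weight with a clean eigen-identity under the discrete Laplacian (to force the infimum to be attained). First I would set $U(t,n):=e^{-\lambda t}u(t,n)$ with $\lambda>\|v\|_\infty$. A direct computation converts the differential inequality in \eqref{elliptic} into
\[
U_t(n)-U(n+1)-U(n-1)+c(n)U(n)\geq 0\quad\text{a.e. in }I\times\mathbb{Z},
\]
where $c(n):=2+\lambda-v(n)\geq 2+\lambda-\|v\|_\infty>2$ uniformly in $n$, while $U$ remains bounded on $I\times\mathbb{Z}$ with $U(0,\cdot)\geq 0$.

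Next I would perturb by a weight growing at spatial infinity: for small $\varepsilon,\eta>0$, set $\tilde U(t,n):=U(t,n)+\varepsilon\cosh(\eta n)$. The identity $\cosh(\eta(n+1))+\cosh(\eta(n-1))=2\cosh(\eta)\cosh(\eta n)$ upgrades the weak inequality to a strict one:
\[
\tilde U_t(n)-\tilde U(n+1)-\tilde U(n-1)+c(n)\tilde U(n)\geq \varepsilon\bigl(c(n)-2\cosh(\eta)\bigr)\cosh(\eta n)\geq \varepsilon\rho>0,
\]
provided $\eta$ is so small that $\rho:=2+\lambda-\|v\|_\infty-2\cosh(\eta)>0$.

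I would then argue by contradiction. Since $U$ is bounded on $I\times\mathbb{Z}$ while $\cosh(\eta n)\to\infty$ as $|n|\to\infty$, the function $\tilde U$ tends to $+\infty$ uniformly in $t\in I$ along the spatial variable; any negative infimum of $\tilde U$ on $I\times\mathbb{Z}$ is therefore attained at some $(t^*,n^*)$ in a bounded subset. The initial bound $\tilde U(0,\cdot)\geq\varepsilon>0$ forces $t^*>0$. At such a minimum, $\tilde U_t(t^*,n^*)\leq 0$ (since $t\mapsto\tilde U(t,n^*)$ is absolutely continuous and attains a minimum at $t^*$), $\tilde U(t^*,n^*\pm 1)\geq\tilde U(t^*,n^*)$, and $c(n^*)\tilde U(t^*,n^*)\leq 0$ because $c(n^*)>2>0$ and $\tilde U(t^*,n^*)<0$. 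Thus the left-hand side of the strict inequality is bounded above by $(c(n^*)-2)\tilde U(t^*,n^*)\leq 0$, contradicting the lower bound $\varepsilon\rho>0$. Hence $\tilde U\geq 0$ on $I\times\mathbb{Z}$, and letting $\varepsilon\to 0^+$ gives $U\geq 0$, i.e., $u\geq 0$.

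The main obstacle is the non-compactness of $\mathbb{Z}$: without a coercive spatial weight, the infimum of $u$ over $I\times\mathbb{Z}$ need not be attained and the pointwise minimum argument would fail. The choice $\cosh(\eta n)$ is essential because its clean identity under the discrete Laplacian keeps the linear operator effectively diagonal in the perturbation step. The gauge transform is what guarantees that the zero-order term $c(n^*)\tilde U(t^*,n^*)$ at the minimum has the correct (nonpositive) sign, which is not automatic from the raw inequality since $(2-v(n))$ can be negative when $v$ is large.
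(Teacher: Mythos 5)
The paper does not prove Proposition~\ref{Maximum}; it is quoted from~\cite{ShenCao}, so there is no in-paper proof to compare against. Your route — the time gauge $U=e^{-\lambda t}u$ to push the zero-order coefficient strictly above $2$, the barrier $\varepsilon\cosh(\eta n)$ whose discrete-Laplacian eigenidentity upgrades the inequality to a strict one and forces a negative minimum to be attained on a compact set, and the pointwise contradiction at the minimizer — is the standard and essentially correct argument for this discrete parabolic maximum principle, and the algebra (the choice $\lambda>\|v\|_\infty$, the bound $c(n)>2$, the identity $\cosh(\eta(n+1))+\cosh(\eta(n-1))=2\cosh\eta\,\cosh(\eta n)$, the lower bound $\varepsilon\rho$) all checks out.

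There is, however, one genuine gap: the differential inequality for $\tilde U$ holds only for a.e.\ $t$, so you may not invoke it at the particular minimizer $t^*$, and ``absolutely continuous plus a minimum at $t^*$'' does not by itself guarantee that $\tilde U_t(t^*,n^*)$ exists, is $\le 0$, \emph{and} that the inequality holds at that very $t$. The fix is to trade the pointwise evaluation at $t^*$ for an integrated one. Because $U$ is bounded on $I\times\Z$ and $\cosh(\eta n)\to\infty$, the function $h(t):=\min_{n\in\Z}\tilde U(t,n)$ is the minimum of finitely many absolutely continuous functions of $t$, hence itself absolutely continuous, and at a.e.\ $t$ one has $h'(t)=\tilde U_t(t,n(t))$ for any minimizing index $n(t)$. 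At such $t$ the differential inequality together with $\tilde U(t,n(t)\pm1)\ge h(t)$ gives
\[
h'(t)\ \ge\ \bigl(2-c(n(t))\bigr)h(t)+\varepsilon\rho\ \ge\ \varepsilon\rho>0 \qquad\text{whenever } h(t)<0,
\]
since $2-c<0$ and $h(t)<0$ make the first term nonnegative. With $h(0)\ge\varepsilon>0$, this forbids $h$ from ever becoming negative: if $h(t^*)<0$, let $a:=\sup\{t<t^*: h(t)\ge 0\}$, so $h(a)=0$ and $h<0$ on $(a,t^*]$; integrating $h'\ge\varepsilon\rho$ over $(a,t^*)$ gives $h(t^*)\ge\varepsilon\rho(t^*-a)>0$, a contradiction. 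This closes the argument, and the rest of your proposal stands as written.
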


The following Harnack inequality is a very useful technique when we study the properties of the solution of \eqref{1}. We will present it here for the reader's convenience.

\begin{Proposition}[Harnack inequality\cite{Liang2018}]\label{Harnack}
Assume that $u$ is bounded on $(0,\infty)\times\mathbb Z$ and solves \eqref{elliptic}. Then for any $(t,n)\in(0,\infty)\times\mathbb Z,\ T>0$, there exists a positive constant $C=C(T)$ such that
$$u(t,n)\leq C(T)u(t+T,m),\ m\in\{ n\pm 1,n\}.$$
\end{Proposition}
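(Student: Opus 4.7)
The plan is to reduce the differential inequality to the pure lattice heat equation by an exponential gauge transformation, and then derive the Harnack estimate by comparing the resulting nonnegative supersolution with the explicit heat kernel on $\mathbb Z$.

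First, Proposition~\ref{Maximum} immediately yields $u\ge 0$ on $[0,\infty)\times\mathbb Z$, since \eqref{elliptic} imposes $u(0,\cdot)\ge 0$. Set $K:=\inf_{n\in\mathbb Z}v(n)\in\mathbb R$ (finite because $v\in\ell^\infty(\mathbb Z)$) and $\hat u(t,n):=\me^{-Kt}u(t,n)\ge 0$. Writing $\Delta f(n):=f(n+1)+f(n-1)-2f(n)$ for the discrete Laplacian, a direct computation combined with \eqref{elliptic} and $v\ge K$ gives
$$\hat u_t(t,n)-\Delta\hat u(t,n) \;\geq\; \me^{-Kt}\bigl(v(n)-K\bigr)u(t,n) \;\geq\; 0,$$
so $\hat u$ is a bounded nonnegative supersolution of the pure lattice heat equation $\partial_t w=\Delta w$.

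Next, let $p_s(n,k):=\me^{-2s}I_{|n-k|}(2s)$ denote the fundamental solution of $\partial_s p=\Delta p$ on $\mathbb Z$; it is strictly positive for every $s>0$, satisfies $\sum_k p_s(n,k)=1$, and decays exponentially in $|n-k|$. Fix $t>0$ and put $w(s,n):=\sum_{k\in\mathbb Z}p_s(n,k)\hat u(t,k)$ for $s\in[0,T]$. The exponential decay of $p_s$ justifies termwise differentiation and summation by parts, so the bounded function $\phi(s,n):=\hat u(t+s,n)-w(s,n)$ satisfies $\phi(0,\cdot)\equiv 0$ and $\phi_s-\Delta\phi\ge 0$; applying Proposition~\ref{Maximum} to $-\phi$ (with $v\equiv 0$) yields $\phi\ge 0$ on $[0,T]\times\mathbb Z$. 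Retaining only the $k=n$ term in the nonnegative sum gives
$$\hat u(t+T,m)\;\ge\; w(T,m)\;\ge\; p_T(m,n)\hat u(t,n).$$
For $m\in\{n-1,n,n+1\}$ and $T>0$, translation invariance of the heat kernel makes $c(T):=p_T(m,n)$ a strictly positive constant depending only on $T$. Restoring $u=\me^{Kt}\hat u$ yields the desired estimate with $C(T):=\me^{-KT}/c(T)$.

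The single delicate point is the comparison step on the unbounded lattice: one must verify that $w(s,\cdot)$ truly solves the lattice heat equation (interchange of $\partial_s$, $\Delta$, and the infinite sum) and that the maximum principle applies to the bounded function $-\phi$ despite $\mathbb Z$ being unbounded. Both issues are resolved by the boundedness of $\hat u$ together with the exponential decay of $p_s(n,k)$ in $|n-k|$; no further technical work is needed.
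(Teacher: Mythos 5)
Your proof is essentially correct and gives a clean, self-contained argument: the gauge transformation $\hat u=\me^{-Kt}u$ with $K=\inf v$ turns the inequality into a supersolution of the pure lattice heat equation, and comparison with the explicit discrete heat kernel $p_s(n,k)=\me^{-2s}I_{|n-k|}(2s)$ (strictly positive, summable, solves $\partial_s p=\Delta p$) delivers $\hat u(t+T,m)\geq p_T(m,n)\hat u(t,n)$, hence the claimed bound with $C(T)=\me^{-KT}/\min(p_T(0),p_T(1))$. The paper itself only refers to \cite{Liang2018} for this proposition, so there is no in-text proof to compare against line by line, but the heat-kernel comparison is the standard route and your handling of the interchange-of-limits and the unbounded-domain maximum principle issues is correct. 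The one genuine slip is the sentence ``applying Proposition~\ref{Maximum} to $-\phi$ \dots yields $\phi\geq0$''. As written this is backwards: Proposition~\ref{Maximum} turns nonnegative initial data and a one-sided differential inequality into a sign conclusion, so you must apply it to $\phi$ itself (with $v\equiv0$): $\phi(0,\cdot)=0\geq0$ and $\phi_s-\Delta\phi\geq0$ give $\phi\geq0$; applying it to $-\phi$ would require $(-\phi)_s-\Delta(-\phi)\geq0$, which is the opposite inequality. Correct that and the argument stands.

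It is also worth noting that for this particular discrete-in-space, continuous-in-time equation one can bypass the heat kernel entirely. Viewing the lattice equation as a coupled ODE system and dropping the nonnegative neighbor terms at site $n$ gives $u_t(n)\geq-(2-v(n))u(n)$, so $u(t+s,n)\geq\me^{-Ms}u(t,n)$ for $0\leq s\leq T$ where $M:=\sup_n|2-v(n)|$; and at a neighboring site $m=n\pm1$ one keeps the $u(n)$ term, $u_t(m)\geq u(n)-(2-v(m))u(m)$, and integrates the Duhamel formula using the previous lower bound for $u(\cdot,n)$ to get $u(t+T,m)\geq T\me^{-MT}u(t,n)$. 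This avoids any discussion of infinite sums and termwise differentiation; your heat-kernel argument buys a more structural proof that generalizes to higher-dimensional lattices and longer-range jumps at the cost of the Bessel-function machinery. Both are valid.
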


\begin{Remark}
The proof of the Harnack inequality could be found in \cite{Liang2018} with the initial value $u(0,n)$ having a finite support. However, we should notice that the argument can be applied to \eqref{elliptic} similarly with minor modification.
\end{Remark}

Combining the Harnack inequality with the maximum principle, we can deduce the strong maximum principle as follows:
\begin{Corollary}[Strong maximum principle\cite{Liang2018}]\label{StrongMaximum}
Under the assumption of Proposition \ref{Maximum}, either $u\equiv 0$ or $u>0$ in $I\times\mathbb Z$.
\end{Corollary}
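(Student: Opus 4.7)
The plan is to combine the non-negativity $u\ge 0$ already supplied by \propref{Maximum} with an iteration of the Harnack inequality of \propref{Harnack} in both time directions: running it forward propagates strict positivity, while reading it in reverse propagates the zero set.

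Suppose first that $u(t_*,n_*)>0$ at some point. For any $(t,n)\in I\times\mathbb Z$ with $t>t_*$, pick an integer $k\ge |n-n_*|$ and set $T=(t-t_*)/k$. A single application of \propref{Harnack} gives $u(t_*+T,m)\ge u(t_*,n_*)/C(T)>0$ for every $m\in\{n_*-1,n_*,n_*+1\}$, and iterating $k$ times spreads strict positivity outward by one lattice site per step, so after $k$ steps it has reached $(t,n)$. Hence $u>0$ on $(t_*,t_0]\times\mathbb Z$.

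Conversely, suppose $u(t_1,n_1)=0$ with $t_1>0$. Reading the Harnack bound as $u(t_1-T,n)\le C(T)\,u(t_1,n_1)=0$ for each $n$ with $n_1\in\{n-1,n,n+1\}$ and each $T>0$, non-negativity forces $u(t_1-T,n)=0$ for $n\in\{n_1-1,n_1,n_1+1\}$. Iterating over $k$ equal sub-steps of length $(t_1-t)/k$ zeroes out $u(t,m)$ for all $|m-n_1|\le k$; letting $k\to\infty$ with $t$ fixed gives $u(t,\cdot)\equiv 0$ on $[0,t_1)\times\mathbb Z$, and absolute continuity in time of each coordinate $u(\cdot,n)$ (which satisfies a linear ODE for fixed $n$) extends the zero to $u(t_1,\cdot)\equiv 0$ as well.

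These two propagation facts together yield the stated dichotomy: if $u$ is not identically zero on $I\times\mathbb Z$, choose a point of strict positivity and the forward argument rules out any zero at a later time, while the backward argument shows that a hypothetical zero at an earlier (or equal) time would force $u\equiv 0$ on an initial slab, contradicting the chosen positive point once the slab is pushed past it. I expect no real obstacle beyond careful bookkeeping between the two iterations; the key technical observation is just that the one-step Harnack inequality of \propref{Harnack} is symmetric enough to be used in either direction.
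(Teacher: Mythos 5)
Your forward and backward Harnack iterations are both correct and represent exactly what the one-step estimate of Proposition~\ref{Harnack} can give: run forward, it propagates strict positivity; read in reverse, it propagates vanishing. The gap is in the synthesis. If the hypothetical zero $(t_1,n_1)$ lies strictly before your chosen positive point, i.e.\ $0<t_1<t_*$, the backward iteration only yields $u\equiv 0$ on $[0,t_1]\times\mathbb Z$, and there is no mechanism to ``push the slab past'' $t_*$: the inequality $u(t,n)\le C(T)\,u(t+T,m)$ transfers positivity from $t$ to $t+T$ and vanishing from $t+T$ to $t$, but it never transfers vanishing forward in time. Thus $u\equiv 0$ on $[0,t_1]$ together with $u(t_*,n_*)>0$ is not a configuration your two iterations can ever rule out.

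This is not a bookkeeping defect but a genuine obstruction for supersolutions as allowed by Proposition~\ref{Maximum}. Take $v\equiv 0$ and $u(t,n)=\min\{\max\{t-1,0\},1\}$: it is bounded, nonnegative, constant in $n$ (so the discrete Laplacian term vanishes) and $u_t\ge0$ a.e., hence it satisfies \eqref{elliptic} with $u(0,\cdot)\equiv 0\ge 0$, yet it vanishes identically on $[0,1]\times\mathbb Z$ and is strictly positive afterwards; it is neither $\equiv 0$ nor $>0$. What your two propagation facts actually yield is the trichotomy: either $u\equiv 0$, or there is some $\tau\ge 0$ with $u\equiv 0$ on $[0,\tau]\times\mathbb Z$ and $u>0$ on $(\tau,t_0]\times\mathbb Z$. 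Eliminating the intermediate case $\tau>0$ requires an extra input that Harnack cannot supply, for instance that $u$ solves the equation with equality rather than just $\ge$ (so that $u(\tau,\cdot)\equiv 0$ together with uniqueness of the Cauchy problem forces $u\equiv 0$ beyond $\tau$ as well), or the a priori hypothesis $u(0,\cdot)\not\equiv 0$, in which case forward propagation alone already gives $u>0$ on $(0,t_0]\times\mathbb Z$. The places Corollary~\ref{StrongMaximum} is actually used in the paper provide one of these additional pieces of information, but your argument as written does not, so it cannot close.
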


The comparison principle is a consequence of the strong maximum principle, and it is useful for us to construct the almost periodic traveling front. To state it, we first give the definition of super-sub solutions:

Let $\bar u,\underline u\in C(\mathbb R\times\mathbb Z)$ be two bounded functions.
 We say that $\bar u$  is a supersolution of \eqref{1} if for any given $n\in\mathbb Z$, $\bar u$ is absolutely continuous in $t$ and satisfies
$$
\bar u_t-\bar u(n+1)-\bar u(n-1)+2\bar u(n)-c\bar u(1-\bar u)\geq 0 \text{ for a.e. }t\in (0,\infty),
$$
and $\underline u$ is a subsolution  if  for any given $n\in\mathbb Z$, $\underline u$ is absolutely continuous in $t$ and satisfies
$$
\underline u_t-\underline u(n+1)-\underline u(n-1)+2\underline u(n)-c\underline u(1-\underline u)\leq 0 \text{ for a.e. }t\in (0,\infty).
$$

The strong comparison principle is given by
\begin{Proposition}[Strong comparison principle]\label{Strong comparison}
Let $\overline u$ and $\underline u$ be a supersolution and a subsolution of \eqref{1} repectively. If $\underline u(0,n)\leq \overline u(0,n)$ in $\mathbb Z$, then $\underline u<\overline u$ or $\underline u\equiv \overline u$ in $(0,\infty)\times\mathbb Z$.
\end{Proposition}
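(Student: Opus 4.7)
The strategy is to pass to the difference $w := \bar u - \underline u$, derive a linear parabolic-type inequality for it, and then invoke the Maximum Principle (Proposition~\ref{Maximum}) and the Harnack inequality (Proposition~\ref{Harnack}) already at our disposal.

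Subtracting the defining inequalities of the supersolution $\bar u$ and the subsolution $\underline u$, and using the KPP identity
\[
c(n)\bigl[\bar u(1-\bar u) - \underline u(1-\underline u)\bigr] \;=\; c(n)\bigl(1 - \bar u - \underline u\bigr)\,w,
\]
I find that for a.e.\ $t>0$ and every $n\in\Z$,
\[
w_t - w(n+1) - w(n-1) + \bigl(2 - \psi(t,n)\bigr) w(n) \;\geq\; 0, \qquad w(0,\cdot)\geq 0,
\]
where $\psi(t,n) := c(n)\bigl(1 - \bar u(t,n) - \underline u(t,n)\bigr)$ is uniformly bounded on $[0,\infty)\times\Z$ by the boundedness of $\bar u$, $\underline u$, and $c$. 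The weak comparison $w\geq 0$ then follows from a time-dependent extension of Proposition~\ref{Maximum}: the standard rescaling $\tilde w := e^{-Mt} w$ with $M>\|\psi\|_{\infty}$ renders the effective zero-order coefficient non-positive, after which the argument of Proposition~\ref{Maximum} applies verbatim on every finite time strip.

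To obtain the strict dichotomy, suppose that $w(t_1, n_1) = 0$ at some $(t_1, n_1) \in (0, \infty) \times \Z$. Since the non-negative function $w$ satisfies precisely the hypothesis of Proposition~\ref{Harnack} (with time-dependent potential $\psi$), for every $0 < t < t_1$ and every $m \in \{n_1-1, n_1, n_1+1\}$,
\[
0 \;\leq\; w(t, m) \;\leq\; C(t_1 - t)\, w(t_1, n_1) \;=\; 0.
\]
Iterating this along nearest-neighbour chains and at successively earlier times propagates the vanishing of $w$ to the whole backward slab $(0, t_1] \times \Z$, and by continuity also to $t = 0$. Since $\bar u(0,\cdot) \equiv \underline u(0,\cdot)$, the unique solvability of the Cauchy problem for~\eqref{1}, combined with the weak comparison sandwich $\underline u \leq u^{\ast} \leq \bar u$ where $u^{\ast}$ is the unique solution with the common initial trace, then forces $\underline u \equiv u^{\ast} \equiv \bar u$ throughout $(0,\infty) \times \Z$.

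The principal technical subtlety I anticipate is the forward-in-time direction of the dichotomy: Proposition~\ref{Harnack} only propagates zeros backwards, so extending equality beyond $t_1$ must be routed through the Cauchy problem's unique solvability rather than through the inequality for $w$ directly, since a generic sub/supersolution pair sharing an initial trace need not coincide at later times without this additional input.
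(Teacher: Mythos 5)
Your linearization of $w := \bar u - \underline u$ via the KPP identity and the backward-in-time propagation of zeros through Proposition~\ref{Harnack} are sound and essentially what the paper has in mind; it asserts Proposition~\ref{Strong comparison} as a consequence of Corollary~\ref{StrongMaximum} applied to $w$, without writing a separate proof. But the forward-in-time step contains a genuine gap. The chain $\underline u \leq u^* \leq \bar u$ that weak comparison hands you is a sandwich, not a squeeze: for $t > t_1$ nothing forces the three functions to coincide, and the phrase ``then forces'' in your last display is doing no work. Worse, the global dichotomy is actually \emph{false} for a general sub/supersolution pair: take $\underline u \equiv 0$ (a solution) and $\bar u(t,n)=0$ for $t\leq1$, $\bar u(t,n)=\min\{\mathrm e^{M(t-1)}-1,\,1\}$ for $t>1$, with $M\geq \|c\|_{\ell^\infty(\mathbb Z)}$. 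One checks directly that $\bar u$ is a bounded supersolution (the pointwise inequality reduces to $(s-1)^2+1\geq 0$ with $s=\mathrm e^{M(t-1)}\in(1,2)$), that $\bar u(0,\cdot)=\underline u(0,\cdot)$, and yet $w=\bar u-\underline u$ vanishes identically on $[0,1]\times\mathbb Z$ and is strictly positive on $(1,\infty)\times\mathbb Z$, so neither alternative of the dichotomy holds on all of $(0,\infty)\times\mathbb Z$.

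What your backward Harnack argument correctly establishes is the one-sided statement: if $w(t_1,n_1)=0$ for some $t_1>0$, then $\underline u\equiv\bar u$ on $(0,t_1]\times\mathbb Z$. The global dichotomy as written needs the extra hypothesis that both $\underline u$ and $\bar u$ actually \emph{solve}~\eqref{1}; then once $\underline u(t_1,\cdot)=\bar u(t_1,\cdot)$, uniqueness of the Cauchy problem (Theorem~\ref{existence and uniqueness}) restarted at time $t_1$ forces $\underline u\equiv\bar u$ for $t\geq t_1$, and this is the only setting in which the paper in fact invokes the strict alternative of Proposition~\ref{Strong comparison} (e.g.\ in Proposition~\ref{prop:max_of_u}). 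One further caveat worth flagging: Propositions~\ref{Maximum} and~\ref{Harnack} are stated with a \emph{time-independent} potential $v\in\ell^\infty(\mathbb Z)$, whereas your $\psi(t,n)=c(n)\bigl(1-\bar u-\underline u\bigr)$ is genuinely time-dependent; the exponential tilt $\mathrm e^{-Mt}$ fixes the sign of the zero-order term but not its time-dependence, so you must invoke (or reprove) time-dependent versions of both statements rather than asserting that they apply ``verbatim''.
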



Usually, well-behaved Cauchy Problem possesses the property that it admits a unique global solution.  Moreover, existence and uniqueness is vital for us when we construct the generalized transition front with minimal speed $w^*$.
\begin{Theorem}[\cite{Pazy}]\label{existence and uniqueness}
For any initial value $\varphi(n)\in \ell^\infty(\mathbb Z)$, there exists a unique $u\in C^0(\mathbb R\times\mathbb Z)$ with $u(t,\cdot)\in \ell^\infty(\mathbb Z)$ for any $t\in(0,\infty)$ such that
\begin{equation*}
\left\{
\begin{aligned}
&u_t(n)-u(n+1)-u(n-1)+2u(n)=c(n)u(1-u) &\text{ in }(0,\infty)\times\mathbb Z\\
&u(0,n)=\varphi(n).
\end{aligned}
\right.
\end{equation*}
\end{Theorem}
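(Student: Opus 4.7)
The plan is to recast the lattice Cauchy problem as an abstract semilinear evolution equation on the Banach space $X = \ell^\infty(\mathbb Z)$ and then invoke standard semigroup arguments, essentially those in Pazy's book.

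First I would define the discrete Laplacian $(Au)(n) = u(n+1) + u(n-1) - 2u(n)$, which is a bounded linear operator on $X$ (with $\|A\|\le 4$), and the Nemytskii-type map $F: X \to X$ by $F(u)(n) = c(n)u(n)(1-u(n))$. Since $c \in \ell^\infty(\mathbb Z)$, the map $F$ is well defined and locally Lipschitz: for $u,v$ in a ball of radius $R$ one has $\|F(u) - F(v)\|_\infty \le \|c\|_\infty(1+2R)\|u-v\|_\infty$. The equation then reads $u_t = Au + F(u)$ with initial condition $u(0) = \varphi$. Because $A$ is bounded, it generates the uniformly continuous semigroup $e^{tA} = \sum_{k\ge 0} t^k A^k / k!$ on $X$, and a mild solution satisfies the Duhamel formula
\begin{equation*}
u(t) = e^{tA}\varphi + \int_0^t e^{(t-s)A} F(u(s))\,ds.
\end{equation*}

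Next I would apply a Banach fixed-point argument on $C([0,T];X)$ for $T$ small, exploiting the local Lipschitz bound on $F$ restricted to a ball in $X$ containing $\varphi$; this yields a unique local mild solution. Differentiating the Duhamel identity (legal because $A$ is bounded and $F$ is continuous) shows this mild solution is actually classical in $t$ for every fixed $n$, so it solves the lattice ODE system in the stated pointwise sense.

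The final step is global extension. For $0 \le \varphi \le 1$ the comparison/strong maximum principle (Proposition \ref{Maximum}, Proposition \ref{Strong comparison}) confines $u$ to $[0,1]$, giving a global bound and hence global existence. For general $\varphi \in \ell^\infty(\mathbb Z)$ the same principle applied to $u - \|\varphi\|_\infty$ and $u + \|\varphi\|_\infty$ together with the logistic structure of $F$ shows $\|u(t)\|_\infty$ cannot blow up in finite time, so the maximal existence time is $+\infty$. Uniqueness on $[0,\infty)$ follows from the usual Gronwall argument applied to the difference of two solutions on any compact interval, using local Lipschitz continuity of $F$ on bounded sets. The main (very mild) obstacle is simply that $X = \ell^\infty(\mathbb Z)$ is non-separable and $e^{tA}$ is not strongly continuous in the typical $C_0$-semigroup sense relative to weaker topologies; however, since $A$ is bounded, uniform continuity of $t \mapsto e^{tA}$ bypasses all the subtleties that usually force one to work on $c_0(\mathbb Z)$, so Pazy's abstract theorem for semilinear equations with a bounded generator applies directly.
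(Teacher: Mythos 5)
The paper itself gives no proof of this theorem — it simply cites Pazy's book — so there is no ``paper's argument'' to compare against. Your local theory (Duhamel formula for the bounded generator $A$, local Lipschitz estimate for the quadratic Nemytskii map, Banach fixed point on $C([0,T];\ell^\infty)$, bootstrapping mild to classical) is exactly the standard Pazy-style argument and is correct.

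However, the global extension step for general $\varphi\in\ell^\infty(\mathbb Z)$ is not correct, and in fact the theorem is not true at that generality — it holds only for suitably sign-constrained data (in the paper all applications use $\varphi$ with values in $[0,1]$). Your claim that applying the comparison principle to $u\pm\|\varphi\|_\infty$ rules out blow-up fails at the lower end: for a positive constant $M$ one has
\[
\partial_t(-M)-\mathcal D(-M)-c(n)(-M)\bigl(1-(-M)\bigr)=c(n)\,M(1+M)\ \ge\ 0,
\]
so $-M$ is a \emph{super}solution, not a subsolution, and gives no lower barrier. The ``logistic structure'' does not help either, because $s\mapsto s(1-s)$ is unbounded below and $cu(1-u)\sim -c\,u^2\to-\infty$ as $u\to-\infty$. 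Concretely, if $c\equiv c_0>0$ and $\varphi\equiv -1$, the solution is spatially constant, the discrete Laplacian vanishes identically, and $u(t,n)=v(t)$ solves $\dot v=c_0v(1-v)$, $v(0)=-1$, which reaches $-\infty$ at $t^*=\tfrac{1}{c_0}\ln 2$. (Perturbing $c$ to a genuine almost periodic function and $\varphi$ slightly does not remove the blow-up.) To make the final step correct you should restrict to $\varphi\ge0$ (or $\varphi\in[0,1]$), use $0$ as a subsolution and $\max\{1,\|\varphi\|_\infty\}$ as a supersolution, obtain the a priori $\ell^\infty$ bound, and then invoke the continuation criterion for the local solution; this is the version actually used throughout the paper.
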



\subsection{Quadratic form and Critical operator}
Denote $l_c(\mathbb Z)$ the space of real valued functions on $\mathbb Z$ with compact support. The associated bilinear form $l$ of $-\mathcal L$ is defined on $l_c(\mathbb Z)\times l_c(\mathbb Z)$ as
 $$l(\varphi,\psi):=\frac{1}{2}\sum\limits_{n\in\mathbb Z}\sum\limits_{m=\pm 1}\big(\varphi(n)-\varphi(n+m)\big)\big(\psi(n)-\psi(n+m)\big)-c(n)\varphi(n)\psi(n).$$
 We denote by $l(\varphi):=l(\varphi,\varphi)$ the induced quadratic form on $l_c(\mathbb Z)$. Furthermore, we write $l\geq 0$ if $l(\varphi)\geq 0$ for all $\varphi\in l_c(\mathbb Z)$.

\begin{Definition}[\cite{Keller2017}]
Let $l$ be a quadratic form $l$ associated with the Schr{\"o}dinger operator $-\mathcal L$ such that $l\geq 0$. We say the form is critical if there does not exist a positive $\varpi\in l^\infty_{loc}(\mathbb Z)$(i.e. $\varpi\geq 0$ and $\varpi\not\equiv 0$ on $\mathbb Z$) such that $l(\varphi)-\sum\limits_{n\in\mathbb Z}\varpi(n)\varphi(n)^2\geq 0$ for any $\varphi\in l_c(\mathbb Z)$.

The operator $\mathcal L$ is said to be critical, if the quadratic form $l$ associated with $-\mathcal L$ is critical.
\end{Definition}

The following well known formula reveals the connection between the operator $\mathcal L$ and the associated quadratic form $l$.

\begin{Lemma}[Green formula\cite{Keller2017}]\label{Green}
For all $\varphi,\psi \in l_c(\mathbb Z)$, one has
$$
\begin{aligned}
\frac{1}{2}\sum\limits_{n\in\mathbb Z}\sum\limits_{m=\pm 1}\big(\varphi(n)&-\varphi(n+m)\big)\big(\psi(n)-\psi(n+m)\big)-\sum\limits_{n\in\mathbb Z}c(n)\varphi(n)\psi(n)\\
&=-\sum\limits_n(\mathcal L\varphi)(n)\psi(n)=-\sum\limits_n(\mathcal L\psi)(n)\varphi(n).
\end{aligned}
$$
That is, $l(\varphi,\psi)=\langle -\mathcal L\varphi,\psi\rangle$.
\end{Lemma}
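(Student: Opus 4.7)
The identity is a discrete integration-by-parts formula, so the plan is purely computational: expand the quadratic expression on the left, re-index using the compact support of $\varphi,\psi$, and then recognize the resulting sum as a pairing against $\mathcal L$. Since $\varphi,\psi\in l_c(\mathbb Z)$, all sums involved are finite, so no convergence issue arises and every shift of the summation index is justified.

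Concretely, I would first expand for each fixed $n$ and $m\in\{\pm 1\}$ the product
\[
\bigl(\varphi(n)-\varphi(n+m)\bigr)\bigl(\psi(n)-\psi(n+m)\bigr)=\varphi(n)\psi(n)-\varphi(n)\psi(n+m)-\varphi(n+m)\psi(n)+\varphi(n+m)\psi(n+m).
\]
Summing over $m=\pm 1$ and then over $n\in\mathbb Z$, I would use the translation invariance $\sum_n\varphi(n+m)\psi(n+m)=\sum_n\varphi(n)\psi(n)$ and $\sum_n\psi(n)\varphi(n+m)=\sum_n\psi(n-m)\varphi(n)$ (valid by compact support) to collect all four contributions. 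After this reorganization the double sum reduces to
\[
\sum_{n\in\mathbb Z}\Bigl[4\varphi(n)\psi(n)-2\varphi(n)\bigl(\psi(n+1)+\psi(n-1)\bigr)\Bigr]
=2\sum_{n\in\mathbb Z}\varphi(n)\bigl(2\psi(n)-\psi(n+1)-\psi(n-1)\bigr),
\]
using once more the symmetry that $\sum_n\psi(n)(\varphi(n+1)+\varphi(n-1))=\sum_n\varphi(n)(\psi(n+1)+\psi(n-1))$.

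Dividing by $2$ and subtracting $\sum_n c(n)\varphi(n)\psi(n)$ yields
\[
\sum_{n\in\mathbb Z}\varphi(n)\bigl[-\psi(n+1)-\psi(n-1)+2\psi(n)-c(n)\psi(n)\bigr]=-\sum_{n\in\mathbb Z}\varphi(n)(\mathcal L\psi)(n),
\]
which is exactly the first of the two claimed equalities. The second equality, $-\sum_n(\mathcal L\varphi)(n)\psi(n)=-\sum_n(\mathcal L\psi)(n)\varphi(n)$, is immediate from the symmetry of the original bilinear expression in $(\varphi,\psi)$, or alternatively by repeating the same computation with the roles of $\varphi$ and $\psi$ interchanged.

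There is essentially no obstacle here: the only place one must be careful is to justify the index shifts, and this is free because $\varphi,\psi$ have finite support, so every sum is actually a finite sum over some common large interval. The whole proof amounts to a clean re-indexing argument and an algebraic identification with the definition of $\mathcal L$.
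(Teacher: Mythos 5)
Your computation is correct, and it is the standard summation-by-parts argument: expand the quadratic form, shift indices using the compact support of $\varphi,\psi$, and identify the result with the pairing against $\mathcal L$. Note that the paper itself does not give a proof of this lemma but simply cites it from \cite{Keller2017}; your direct calculation is a faithful reconstruction of the canonical argument behind that citation, and every step is justified (in particular the index shifts, which you correctly point out are harmless because all sums are finite).
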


The critical operator has the following important property which is useful for us to reveal the connection between the Lyapunov exponent and positive almost periodic solution (c.f. Proposition \ref{criticalLE}):
\begin{Proposition}\cite{Keller2017}\label{Keller2017}
Let $\mathcal L$ be a critical operator. Then there exists a unique positive function in $l^\infty_{loc}(\mathbb Z)$ such that $\mathcal Lu\leq 0$ (up to scalar multiplication).
\end{Proposition}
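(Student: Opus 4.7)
The plan is to split the proof into existence and uniqueness, both resting on the discrete Allegretto--Piepenbrink principle together with a Picone-type ground state representation, and to bring in the criticality hypothesis only at the final rigidity step.

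For existence, I would exhaust $\mathbb Z$ by finite intervals $\Omega_k=[-k,k]$. The Dirichlet restriction of $-\mathcal L$ to $\Omega_k$ is a finite symmetric matrix whose principal eigenvalue $\lambda_k$ is non-negative, because $l\ge 0$ globally forces $l\ge 0$ on each $\Omega_k$, and Perron--Frobenius provides a strictly positive principal eigenfunction $u_k$ normalised by $u_k(0)=1$. Since $-\mathcal L u_k=\lambda_k u_k\ge 0$ on $\Omega_k$, a discrete Harnack estimate (in the spirit of Proposition~\ref{Harnack}, adapted to the elliptic setting) uniformly bounds $u_k$ on every compact subset. A diagonal subsequence then converges pointwise to some positive $u\in l^\infty_{loc}(\mathbb Z)$ with $\mathcal L u=-\lambda^{\ast}u\le 0$, where $\lambda^{\ast}=\lim\lambda_k\ge 0$; this supplies the positive $\mathcal L$-superharmonic function.

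The quantitative bridge to criticality is the ground state representation. For any positive $u$ and any $\varphi\in l_c(\mathbb Z)$, setting $\psi:=\varphi/u$ and expanding $(\varphi(n)-\varphi(n+m))^2$ in terms of $\psi$ followed by summation by parts using the Green formula (Lemma~\ref{Green}) gives
\begin{equation*}
l(\varphi)=\tfrac12\sum_{n\in\mathbb Z}\sum_{m=\pm 1}u(n)u(n+m)\bigl(\psi(n)-\psi(n+m)\bigr)^2+\sum_{n\in\mathbb Z}\frac{-(\mathcal L u)(n)}{u(n)}\,\varphi(n)^2.
\end{equation*}
If $\mathcal L u\le 0$ then $\varpi(n):=-(\mathcal L u)(n)/u(n)\ge 0$ is a non-negative Hardy weight, and criticality forces $\varpi\equiv 0$. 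Thus every positive $u$ with $\mathcal L u\le 0$ is in fact $\mathcal L$-harmonic, so the candidate from the exhaustion step satisfies $\mathcal L u=0$.

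For uniqueness, let $u_1,u_2$ be two positive solutions of $\mathcal L u=0$, set $\alpha:=\inf_{n}u_1(n)/u_2(n)$, and $w:=u_1-\alpha u_2\ge 0$, which still solves $\mathcal L w=0$. If $\alpha>0$ and the infimum is attained at some $n_0$, the three-term recurrence $w(n+1)+w(n-1)=(2-c(n))w(n)$ together with $w\ge 0$ propagates $w\equiv 0$ from $n_0$, giving $u_1=\alpha u_2$. The delicate case, which I expect to be \emph{the main obstacle}, is when $\alpha=0$ or the infimum escapes to infinity; here plain positivity of $l$ is not enough and one must exploit criticality through a null-sequence characterisation: pick $\varphi_k\in l_c(\mathbb Z)$ with $\varphi_k(0)=1$ and $l(\varphi_k)\to 0$. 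Applying the representation with $u=u_1$ (for which the Hardy term now vanishes) forces the edge-weighted gradient of $\psi_k=\varphi_k/u_1$ to tend to zero, so connectedness of $\mathbb Z$ makes $\psi_k$ converge to a constant and $\varphi_k$ to a scalar multiple of $u_1$; running the same argument with $u_2$ shows the pointwise limit is also proportional to $u_2$, whence $u_1\propto u_2$. Producing such a null sequence and verifying that its pointwise limit is non-trivial are the only non-routine ingredients; everything else reduces to standard discrete Schr\"odinger machinery.
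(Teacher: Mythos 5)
The paper states this proposition as a citation to Keller--Pinchover--Pogorzelski \cite{Keller2017} and gives no internal proof, so there is nothing of the paper's own to compare against; I'll judge your argument on its own terms. The existence half, the ground-state representation identity, and the deduction that criticality upgrades any positive supersolution $\mathcal L u\le 0$ to a genuine solution $\mathcal L u=0$ are all sound: symmetrising the mixed term and applying the Green formula (Lemma~\ref{Green}) gives exactly your displayed identity, and $\varpi:=-(\mathcal L u)/u$ is then a legitimate nonnegative $l^{\infty}_{loc}$ Hardy weight that criticality forces to vanish.

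The gap is in uniqueness, and it is a genuine one. Your sliding comparison $w=u_{1}-\alpha u_{2}$ only closes when $\alpha=\inf u_{1}/u_{2}$ is positive and attained, but on $\mathbb Z$ the Wronskian $u_{1}(n)u_{2}(n+1)-u_{1}(n+1)u_{2}(n)$ of two harmonic functions is constant, so $u_{1}/u_{2}$ is monotone and its infimum is a limit at $\pm\infty$ that is never attained unless $u_{1}\propto u_{2}$ already; the sliding argument therefore handles only the trivial case. Your fallback --- a compactly supported sequence $\varphi_{k}$ with $\varphi_{k}(0)=1$ and $l(\varphi_{k})\to 0$ --- presupposes the null-sequence characterisation of criticality, which is itself one of the main theorems of \cite{Keller2017}; the paper's definition of criticality only rules out nontrivial Hardy weights and does not hand you a null sequence, and you acknowledge that you do not know how to produce one. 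Without that lemma the uniqueness proof stops.

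There is a fix that stays entirely inside the toolbox you have already assembled. Given two positive solutions $u_{1},u_{2}$ of $\mathcal L u=0$, set $v:=\sqrt{u_{1}u_{2}}$ and, for each $n$, write $a_{i}:=\sqrt{u_{i}(n+1)/u_{i}(n)}$, $b_{i}:=\sqrt{u_{i}(n-1)/u_{i}(n)}$. Harmonicity gives $a_{i}^{2}+b_{i}^{2}=2-c(n)>0$, and Cauchy--Schwarz yields
$$\frac{v(n+1)+v(n-1)}{v(n)}=a_{1}a_{2}+b_{1}b_{2}\le\sqrt{a_{1}^{2}+b_{1}^{2}}\,\sqrt{a_{2}^{2}+b_{2}^{2}}=2-c(n),$$
so $\mathcal L v\le 0$. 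Equality at every $n$ forces $(a_{1},b_{1})\parallel(a_{2},b_{2})$, hence $u_{1}(n+1)/u_{2}(n+1)=u_{1}(n-1)/u_{2}(n-1)$ for all $n$, and combined with the three-term recurrence and $2-c(n)>0$, $u_{2}>0$ this makes $u_{1}/u_{2}$ constant. So if $u_{1}\not\propto u_{2}$, the weight $\varpi:=-(\mathcal L v)/v$ is nonnegative, nontrivial, and in $l^{\infty}_{loc}$, and inserting $v$ into your own ground-state representation gives $l(\varphi)\ge\sum_{n}\varpi(n)\varphi(n)^{2}$, contradicting criticality. This closes uniqueness with no null sequences and no escape cases.
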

\subsection{Properties of the almost periodic traveling front}
Almost periodic traveling front also implies the almost periodicity of the time recurrence in the sense of the following lemma.
\begin{Lemma}\label{timerecurrence}
Assume $u(t,n;g)$ is an almost periodic traveling front of \eqref{h(c)}. Then for any $\epsilon>0$, there exist relative dense sets $\{t_k\}\subset \mathbb R$ and $\{n_k\}\subset\mathbb Z$ such that $\sup\limits_{n\in\mathbb Z}|u(t_k,n+n_k;g)-u(0,n;g)|\leq \epsilon$.
\end{Lemma}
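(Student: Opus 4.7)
The plan is to construct the sequences $\{n_k\}$ and $\{t_k\}$ in tandem, using the almost periodicity of $g$ in space to produce spatial near-shifts and then converting them via the cocycle relation (property 4 of Definition \ref{def_of_APTW}) into temporal shifts. Fix $\epsilon>0$. Since $g\in \mathcal H(c)$ is itself almost periodic, Bohr's characterization guarantees that for every $\delta>0$ the set of $\delta$-almost periods $S_\delta:=\{n\in\mathbb Z : \|g\cdot n - g\|_{l^\infty(\mathbb Z)} < \delta\}$ is relatively dense in $\mathbb Z$; I will fix the appropriate $\delta=\delta(\epsilon)$ below and enumerate a strictly increasing $\{n_k\}\subset S_\delta$ with some uniform gap bound $N_0$.

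The main technical step is to upgrade continuity of the one-cover from the locally uniform topology of $C^0(\mathbb R\times\mathbb Z)$ to genuinely uniform control of the profile: I claim the map $h\mapsto u(0,\cdot;h)$ is continuous from $\mathcal H(c)$ into $l^\infty(\mathbb Z)$. This is where property 2 of Definition \ref{def_of_APTW} is essential: the uniform-in-$h$ limits $u(0,n;h)\to 1$ as $n\to-\infty$ and $u(0,n;h)\to 0$ as $n\to+\infty$ yield $N=N(\epsilon)$ such that $|u(0,n;h)-u(0,n;g)|<\epsilon/2$ for all $h\in\mathcal H(c)$ whenever $|n|>N$; meanwhile the one-cover continuity furnishes a modulus $\delta(\epsilon)>0$ with $\|h-g\|_{l^\infty}<\delta(\epsilon)$ implying $\max_{|n|\leq N}|u(0,n;h)-u(0,n;g)|<\epsilon/2$. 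Fixing $\delta:=\delta(\epsilon)$ and setting $t_k:=t(n_k;g)$, property 4 of Definition \ref{def_of_APTW} at $t=0$ reads $u(t_k,n+n_k;g)=u(0,n;g\cdot n_k)$, and since $\|g\cdot n_k-g\|_{l^\infty}<\delta(\epsilon)$, the claim immediately yields $\sup_{n\in\mathbb Z}|u(t_k,n+n_k;g)-u(0,n;g)|<\epsilon$, as required.

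It then remains to verify that $\{t_k\}$ itself is relatively dense in $\mathbb R$. The cocycle relation (property 3) rewrites $t_{k+1}-t_k=t(n_{k+1}-n_k;g\cdot n_k)$; since $\{t(\cdot;h)\}$ is a one-cover into $l^\infty_{\mathrm{loc}}(\mathbb Z)$ and $\mathcal H(c)$ is compact, $\sup\{|t(m;h)| : h\in\mathcal H(c),\ 1\leq m\leq N_0\}$ is finite, so the consecutive gaps $|t_{k+1}-t_k|$ are uniformly bounded by some $M$. Combining $n_k\to\pm\infty$ with $w(g)\neq 0$ (cf. the Remark following Definition \ref{def_of_APTW}) and the definition $w(g)=\lim_{|n-k|\to\infty}(n-k)/(t(n;g)-t(k;g))$ forces $t_k\to\pm\infty$, and a bounded-step bi-infinite sequence unbounded in both directions cannot skip any interval of length exceeding $M$, giving the relative denseness. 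The principal obstacle throughout is the upgrade from locally uniform to globally uniform continuity of $u(0,\cdot;h)$ in the second paragraph; without the uniform-in-$h$ asymptotics of property 2 this step would fail on the noncompact integer line, and all subsequent estimates would collapse.
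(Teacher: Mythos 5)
Your proof is correct and follows essentially the same route as the paper's: pick a relatively dense set of $\delta$-almost-periods $\{n_k\}$ for $g$, set $t_k=t(n_k;g)$, apply the equivariance property (4) to reduce the estimate to $\sup_n|u(0,n;g\cdot n_k)-u(0,n;g)|$, and then prove relative denseness of $\{t_k\}$ via the cocycle identity $t_{k+1}-t_k=t(n_{k+1}-n_k;g\cdot n_k)$ together with compactness of $\mathcal H(c)$. The one place you go beyond the paper's terse presentation is in spelling out the upgrade from locally uniform to globally uniform continuity of $h\mapsto u(0,\cdot;h)$ by patching the uniform-in-$h$ tail behavior from property (2) against the local-uniform one-cover continuity from property (1) on a finite window, and in explicitly noting that $t_k\to\pm\infty$ (via $w(g)\neq 0$) is needed to conclude relative denseness from bounded gaps — both of which the paper leaves implicit.
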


\begin{proof}
From Definition \ref{def_of_APTW} (1), (2) and (4), for any $k>0$, there exists $\{n_k\}\subset\mathbb Z$ such that
$$\sup\limits_{n\in\mathbb Z}|u(0,n;g)-u(0,n;g\cdot n_k)|=\sup\limits_{n\in\mathbb Z}|u(0,n;g)-u(t(n_k;g),n+n_k;g)|\leq \epsilon,$$
if $\sup\limits_{n}|g-g\cdot n_k|\leq \delta.$ Because $g$ is an almost periodic sequence, we can always find such a relative dense set $\{n_k\}$ so that $\sup\limits_{n\in\mathbb Z}|g-g\cdot n_k|\leq \epsilon$. Take $\{t_k\}$ as $\{t(n_k;g)\}$, then it is sufficient to prove $\{t(n_k;g)\}$ is relative dense.

It follows from the Definition \ref{def_of_APTW} (3) that $$t(n_{k+1};g)-t(n_k;g)=t(n_{k+1}-n_k;g\cdot n_k).$$ Since $\{n_k\}$ is a relative dense set, i.e., there exists $L\geq 0$ such that for any point $n$ in $\mathbb Z$, one has $[n-L,n+L]\cap \{n_k\}\neq\emptyset.$ Denote $M:=\max\limits_{-L\leq n\leq L,g\in\mathcal H(c)}t(n;g)$. Since $\{t(n,g)\}$ is a one-cover of $\mathcal H(c)$ in $l_{loc}^\infty(\mathbb Z)$ and $\mathcal H(c)$ is compact, $M$ can be obtained. Hence for any $t\in\mathbb R,$ one has $[-M+t,M+t]\cap\{t(n_k;g)\}\neq\emptyset$, as desired.
\end{proof}

The average wave speed of almost periodic traveling front relates with the recurrence of $c$ in the following way.
\begin{Proposition}\label{average speed constant}
The average wave speed $w(g)$ of the almost periodic traveling front is a constant function on $\mathcal H(c)$.
\end{Proposition}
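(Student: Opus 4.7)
The plan is to express the speed $w(g)$ as the reciprocal of a Birkhoff average of a continuous function on $\mathcal{H}(c)$, and then invoke unique ergodicity of the hull to deduce constancy. The cocycle relation built into Definition \ref{def_of_APTW}(3) is exactly what makes this work.

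First, I would extract two elementary consequences from Definition \ref{def_of_APTW}(3). Setting $n=k=0$ gives $t(0;g)=0$ for every $g\in\mathcal{H}(c)$. Substituting $n\mapsto n-k$ in the same identity yields the cocycle relation
\[
t(n;g)-t(k;g)=t(n-k;\,g\cdot k),\qquad \forall\,n,k\in\mathbb{Z},\ g\in\mathcal{H}(c).
\]
In particular, iterating this relation, if we define $f(g):=t(1;g)$ then
\[
t(m;g)=\sum_{j=0}^{m-1}f(g\cdot j)\ (m\geq 1),\qquad t(m;g)=-\sum_{j=1}^{|m|}f(g\cdot(-j))\ (m\leq -1).
\]
By the one-cover property in Definition \ref{def_of_APTW}(3), the assignment $g\mapsto t(\cdot;g)$ is continuous from $\mathcal{H}(c)$ to $\ell^\infty_{\mathrm{loc}}(\mathbb{Z})$, so evaluation at $n=1$ shows $f\in C(\mathcal{H}(c))$.

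Next I would use the cocycle identity to rewrite the quotient defining $w$. Fixing $g$ and letting $h:=g\cdot k$ and $m:=n-k$, we have
\[
\frac{n-k}{t(n;g)-t(k;g)}=\frac{m}{t(m;h)},
\]
so the defining limit as $|n-k|\to\infty$ reduces to showing $\frac{m}{t(m;h)}$ converges as $|m|\to\infty$ \emph{uniformly} in $h\in\mathcal{H}(c)$. Since $c$ is almost periodic, $\mathcal{H}(c)$ is a compact abelian group on which the shift acts as translation by a topological generator; the action is therefore minimal and equicontinuous, hence uniquely ergodic with respect to the Haar measure $\mu$. Applying the uniform ergodic theorem to the continuous observable $f$, the Birkhoff averages $\frac{1}{m}\sum_{j=0}^{m-1} f(h\cdot j)$ (and analogously the averages over $j=-1,\dots,-m$ under the inverse shift) converge to $\int_{\mathcal{H}(c)} f\,d\mu$ uniformly in $h$ as $m\to\infty$. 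Substituting into the explicit formula for $t(m;h)$ above gives, uniformly in $h$,
\[
\lim_{|m|\to\infty}\frac{m}{t(m;h)}=\frac{1}{\int_{\mathcal{H}(c)} f\,d\mu}.
\]
Hence $w(g)$ exists (this also reconfirms the existence claim in Definition \ref{def_of_APTW}(3)) and equals $1/\!\int f\,d\mu$, which is manifestly independent of $g$.

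The main subtle point is that the limit defining $w(g)$ is a joint limit over $n,k$ with $|n-k|\to\infty$, not merely over $n\to\infty$ with $k$ fixed. The cocycle reduction above shows this is harmless provided the one-variable limit $\lim_{|m|\to\infty} m/t(m;h)$ is \emph{uniform} in the base point $h$, which is why I insist on the uniform form of the ergodic theorem (i.e.\ unique ergodicity). A second alternative would be to establish only shift-invariance $w(g\cdot k)=w(g)$ from the cocycle relation and then combine with continuity of $w$ and density of orbits on $\mathcal{H}(c)$; however, continuity of a limit of discrete-time quotients is not transparent from the definition, whereas the Birkhoff-average formula $w=1/\!\int f\,d\mu$ delivers the constancy directly.
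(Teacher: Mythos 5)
Your proposal is correct and follows essentially the same route as the paper's proof: both use the cocycle identity from Definition \ref{def_of_APTW}(3) to write $1/w(g)$ as the Birkhoff average of the continuous function $g\mapsto t(1;g)$ along the shift orbit on $\mathcal{H}(c)$, and then conclude that this average is independent of $g$ because the hull is a compact group with minimal, hence uniquely ergodic, translation (the paper phrases this as "almost periodicity of the sequence $\{t(1;g\cdot k)\}_k$", which is the same fact). Your write-up is somewhat more explicit about the uniform ergodic theorem and about why the two-parameter limit in the definition reduces to the one-parameter Birkhoff limit, but the underlying argument is identical.
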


\begin{proof}
Recall that $w(g)=\lim\limits_{|n-k|\rightarrow\infty}\frac{n-k}{t(n;g)-t(k;g)}$ exists, and it is not zero. Then $\frac{1}{w(g)}=\lim\limits_{|n-k|\rightarrow\infty}\frac{t(n;g)-t(k;g)}{n-k}$ exists.  By Definition \ref{def_of_APTW}, $\{t(1;g\cdot k)|k\in\mathbb Z\}$ has a compact closure in $l_{loc}^\infty(\mathbb Z)$. Hence it is an almost periodic sequence. Then for any $k\in\mathbb Z$,
\begin{equation*}
\begin{aligned}
1/w(g)&=\lim\limits_{|n|\rightarrow \infty}\frac{t(n+k;g)-t(k;g)}{n}=\lim\limits_{|n|\rightarrow\infty}\sum\limits_{i=k}^{n+k-1}\frac{t(i+1;g)-t(i;g)}{n}\\
&=\lim\limits_{|n|\rightarrow\infty}\sum\limits_{i=k}^{n+k-1}\frac{t(1;g\cdot i)}{n},
\end{aligned}
\end{equation*}
The limit on the right-hand side exists, and it is independent of $g\in\mathcal H(c)$. Then the proof is complete.
\end{proof}

An observation is that an almost traveling front is also a generalized transition front.

\begin{Proposition}\label{prop:transition front}
   	An almost periodic traveling front of \eqref{h(c)} with average wave speed $w(g)$ is a generalized transition front of \eqref{h(c)} with average speed $w(g)$.
\end{Proposition}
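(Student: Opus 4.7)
The plan is to exhibit an explicit function $N:\mathbb R\to\mathbb Z$ that realizes $v(\cdot,\cdot;g)$ as a generalized transition front in the sense of Definition~\ref{generalized}. Assume without loss of generality that $w(g)>0$ (the Remark following Definition~\ref{def_of_APTW} rules out $w(g)=0$, and the case $w(g)<0$ is obtained by reflecting $n\mapsto -n$). Define
$$N(t):=\max\{k\in\mathbb Z:\,t(k;g)\leq t\}.$$
This is well-defined since $t(k+1;g)-t(k;g)=t(1;g\cdot k)$ is bounded (see the proof of Proposition~\ref{average speed constant}), and $t(k;g)\to\pm\infty$ as $k\to\pm\infty$ thanks to property (3) together with $w(g)>0$.

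For the uniform limits in the spatial variable, I would use property (4) of Definition~\ref{def_of_APTW} to write
$$v(t,n+N(t);g)=v(s(t),n;g\cdot N(t)),\qquad s(t):=t-t(N(t);g).$$
By the definition of $N(t)$, $0\leq s(t) < t(N(t)+1;g)-t(N(t);g)=t(1;g\cdot N(t))$. Since $\{t(1;g\cdot k)\}_{k\in\mathbb Z}$ has compact closure in $l^\infty_{\mathrm{loc}}(\mathbb Z)$ (this is immediate from the one-cover hypothesis in property (3), as in the proof of Proposition~\ref{average speed constant}), there exists $M>0$ with $s(t)\in[0,M]$ for every $t\in\mathbb R$. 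Applying property (2), which gives convergence $v(s,n;g')\to 1$ (resp.\ $\to 0$) as $n\to-\infty$ (resp.\ $n\to+\infty$) locally uniformly in $s$ and uniformly in $g'\in\mathcal H(c)$, and plugging in $s=s(t)\in[0,M]$ and $g'=g\cdot N(t)\in\mathcal H(c)$, yields the required uniform-in-$t$ limits \eqref{2}.

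For the average speed, pick $t>s$ with $t-s$ large and set $k_1:=N(t)$, $k_2:=N(s)$; then $k_1-k_2\to+\infty$ as $t-s\to+\infty$. From $t(k_i;g)\leq \cdot<t(k_i;g)+M$ applied to $s,t$, one obtains $|(t-s)-(t(k_1;g)-t(k_2;g))|\leq M$, so
$$\frac{N(t)-N(s)}{t-s}=\frac{k_1-k_2}{(t(k_1;g)-t(k_2;g))+O(1)}\longrightarrow w(g),$$
where the last limit follows from the definition of $w(g)$ in property (3) applied to $|k_1-k_2|\to\infty$. This matches the average speed requirement in Definition~\ref{generalized}.

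The only mildly delicate point is ensuring that the shift $s(t)$ remains bounded and that the uniform convergence of $v$ in property (2) is truly uniform in the base point $g\cdot N(t)$ — both are guaranteed by the one-cover formulation in Definition~\ref{def_of_APTW}, so the argument reduces to bookkeeping. No KPP-specific structure or PDE estimate is needed beyond what is encoded in Definition~\ref{def_of_APTW}.
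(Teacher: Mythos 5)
Your proof is correct and runs along the same line as the paper's: fix $w(g)>0$, build an index function $N(t)$ keyed to the timing sequence $t(\cdot;g)$, bound the residual $s(t)=t-t(N(t);g)$ using compactness of the one-cover $\{t(1;g\cdot k)\}$, and then invoke properties (2) and (4) of Definition~\ref{def_of_APTW} to get the uniform-in-$t$ limits and the limiting ratio $w(g)$. The only difference is cosmetic but worth noting: the paper defines $N(t)$ as a nearest-neighbor, $N(t)=\min\{k:|t-t(k;g)|=\min_n|t-t(n;g)|\}$, and then needs a separate relative-density claim to bound $|t-t(N(t);g)|$ and an extra inequality to deduce $N(t)-N(s)\to\infty$; your floor definition $N(t)=\max\{k:t(k;g)\leq t\}$ gives both the one-sided bound $0\leq s(t)<t(1;g\cdot N(t))\leq M$ and the monotonicity of $N$ directly from the definition, which makes the bookkeeping in the last step (the nested intervals $t(k_i;g)\leq\cdot<t(k_i;g)+M$ and the estimate $|(t-s)-(t(k_1;g)-t(k_2;g))|<M$) slightly cleaner. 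Both rest on the same two facts — boundedness of $t(1;g\cdot k)$ and the limit defining $w(g)$ — so the substance is identical.
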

\begin{proof}
	From Definition \ref{def_of_APTW}(3), $w(g)=\lim\limits_{|n-k|\rightarrow\infty}\frac{n-k}{t(n;g)-t(k;g)}$ exists. Without loss of generality, we will always assume that $w(g)>0$. For any $k\in\mathbb Z$, there exists an absolute constant $L$ such that for any $|n|\geq L$,
\begin{equation}\label{wavere}
\frac{w(g)}{2}\leq \frac{n}{t(k+n;g)-t(k;g)}\leq \frac{3w(g)}{2}.
\end{equation} Thus
\begin{equation}\label{growth of t(n;g)}
\left\{
\begin{aligned}
 t(k+n;g)-t(k;g)\geq \frac{2n}{3w(g)} &\text{ for }n>L, \\
 t(k+n;g)-t(k,g)\leq \frac{2n}{3w(g)}  &\text{ for }n<-L.
 \end{aligned}
 \right.
\end{equation}

From \eqref{growth of t(n;g)}, if $n\rightarrow \pm\infty$, then $t(n;g)\rightarrow\pm\infty$. Hence we can deduce that for any $t\in\mathbb R$, $\sharp\{n:|t-t(n;g)|\leq 1\}<\infty$. Then
	$$N(t):=\min\{k:|t-t(k;g)|=\min\limits_{n}|t-t(n;g)|\}$$
	is well defined.

 \textit{Claim:} For any $t\in\mathbb R$, there exists $M>0$ such that
 \begin{equation}\label{claim}
 |t-t(N(t);g)|\leq\sup\limits_{k}\sup\limits_{|j|\leq M}|t(j+k;g)-t(k;g)|.
 \end{equation}
 \textit{Proof of Claim:} As we have shown in the proof of Lemma \ref{timerecurrence}, $\{t(n;g)\}$ is a relative dense set, i.e. there exists $R>0$ such that for any $t\in\mathbb R$, $(t-R/2,t+R/2)\cap\{t(n;g)\}\neq \emptyset$. Now by \eqref{growth of t(n;g)} with $k=N(t)$, for any $|n|\geq \max\bigr\{ \frac{3w(g)}{2}R,L\bigr\}$,
 $$|t(n+N(t);g)-t(N(t);g)|\geq R.$$
Now taking $M=\max\{\frac{3w(g)}{2}R, L\}$, we finish the proof of the claim.

 Combining \eqref{claim} with Definition \ref{def_of_APTW} (3), there exists an absolute constant $C$ such that
  \begin{equation}\label{sup} |t-t(N(t);g)|\leq \sup\limits_{k}\sup\limits_{|j|\leq M}|t(j;g\cdot k)|\leq C,\end{equation}
 since $\mathcal H(c)$ is compact. At last, Definition \ref{def_of_APTW} (2) shows that $v(t,n;g)\rightarrow 1$ as $n\rightarrow-\infty$, $v(t,n;g)\rightarrow\infty$ as $n\rightarrow\infty$ uniformly in $g$. Thus,
	\begin{equation*}
	v(t,n+N(t);g)=v\big(t-t(N(t);g),n;g\cdot N(t)\big)
	\left\{
	\begin{aligned}
	\to1& \text{ as } n\to-\infty,\\
	\to0& \text{ as } n\to+\infty,
	\end{aligned}
	\right.
	\end{equation*}
	uniformly in $t$. Moreover, by \eqref{claim} and \eqref{wavere}, one has
$$
\begin{aligned}
|t-s|&\leq |t-t(N(t);g)|+|s-t(N(s);g)|+|t(N(t);g)-t(N(s);g)|\\
&\leq 2C+|t(N(t);g)-t(N(s);g)|\leq 2C+\frac{2}{w(g)}(N(t)-N(s)).
\end{aligned}
$$ It follows that $|t-s|\rightarrow\infty$ implies $N(t)-N(s)\rightarrow\infty.$
Now applying \eqref{claim} again, we have\\
$$
\begin{aligned}
&\quad\lim\limits_{t-s\rightarrow\infty}\frac{N(t)-N(s)}{t-s}\\
&=\lim\limits_{t-s\rightarrow\infty}\frac{N(t)-N(s)}{t(N(t);g)-t(N(s);g)}\cdot\frac{t(N(t);g)-t(N(s);g)}{t-s}\\
&=\lim\limits_{t-s\rightarrow\infty}\frac{N(t)-N(s)}{t(N(t);g)-t(N(s);g)}\cdot \frac{(t(N(t);g)-t)-(t(N(s);g)-s)+(t-s)}{t-s}\\
&=\lim\limits_{N(t)-N(s)\rightarrow\infty}\frac{N(t)-N(s)}{t(N(t);g)-t(N(s);g)}=w(g).
\end{aligned}
$$
\end{proof}

\subsection{ $\mathrm{SL}(2,\R)$ cocycles, Uniformly hyperbolic}
Let $X$ be a compact metric space, $(X, \nu, T)$ be ergodic, and $A:X\rightarrow \mathrm{SL}(2,\mathbb R)$ be a continuous map.
 An $\mathrm{SL}(2,\mathbb R)$ cocycle over $(T,X)$ is an action defined on $X\times\mathbb R^2$ such that
$$(T,A):(x,v)\in X\times \mathbb R^2\mapsto (Tx,A(x)\cdot v)\in X\times \mathbb R^2.$$
For $n\in\mathbb{Z}$, $A_n$ is defined by $(T,A)^n=(T^n,A_n)$, where $A_{0}(x)=\id$,
\begin{equation*}\quad
A_{n}(x)=\prod_{j=n-1}^{0}A(T^{j}x)=A(T^{n-1}x)\cdots A(Tx)A(x),\  n\ge1,
\end{equation*}
and $A_{-n}(x)=A_{n}(T^{-n}x)^{-1}$. The Lyapunov exponent is defined as
\begin{equation*}
L(T,A)=\lim_{n\rightarrow\infty}\frac{1}{n}\int_{X}\ln||A_{n}(x)||d\nu.
\end{equation*}

We say an $\mathrm{SL}(2,\mathbb{R})$ cocycle $(T,A)$  is uniformly hyperbolic if, for every $x \in X$, there exists a continuous splitting $\mathbb{R}^2=E_{s}(x)\oplus E_{u}(x)$ such that for every $n\ge0$,
\begin{equation}\label{UHspace}
\begin{split}
|A_{n}(x)v(x)|&\le Ce^{-cn}|v(x)|,\ v(x)\in E_{s}(x),\\
|A_{-n}(x)v(x)|&\le Ce^{-cn}|v(x)|,\ v(x)\in E_{u}(x),
\end{split}
\end{equation}
for some constants $C,c>0$, and  it holds that $A(x)E_{s}(x)=E_{s}(Tx)$ and $A(x)E_{u}(x)=E_{u}(Tx)$ for every $x\in X$. Clearly, if $(T,A)$ is uniformly hyperbolic, then $L(T,A)>0$. If $L(T,A)>0$ and the splitting is not continuous, then we call $(T,A)$ is non-uniformly hyperbolic.

\subsection{ Fibered rotation number}

 Assume $X$ is compact, and $(X,T)$ is uniquely ergodic  with respect to its unique invariant probability measure. For this kind of dynamically defined cocycles $(T,A)$, one can define the rotation number of the cocycle.  Let $\mathbb S^{1}$ be the set of unit vectors of $\mathbb{R}^{2}$, consider a projective cocycle $F_{T,A}$ on $X\times\mathbb{S}^1$: $$(x,v)\mapsto (Tx,\frac{A(x)v}{\|A(x)v\|}).$$
If $A:X\rightarrow \mathrm {SL}(2,\mathbb R)$ is continuous and  homotopic to the identity, then there exists a lift $\tilde F_{(T,A)}$ of $F_{(T,A)}$ to $X\times\mathbb R$ such that $\tilde F_{(T,A)}(x,y)=(Tx,y+\tilde f_{(T,A)}(x,y))$, where $\tilde f_{(T,A)}:X\times \mathbb R\rightarrow \mathbb R$ is a continuous lift that satisfies:
\begin{itemize}
\item $\tilde f_{(T,A)}(x,y+1)=\tilde f_{(T,A)}(x,y)+1;$\\
\item for every $x\in X,\ \tilde f_{(T,A)}(x,\cdot):\mathbb R\rightarrow \mathbb R$ is a strictly increasing homeomorphisim;
\item if $\pi_2$ is the projection map $X\times \mathbb R\rightarrow X\times \mathbb S^1:(x,y)\mapsto (x,\me^{2\pi\mi y})$, then $F_{(T,A)}\circ \pi_2=\pi_2\circ \tilde F_{(T,A)}.$
\end{itemize}
Meanwhile, define the $n$-th iterate as $$\tilde F^n_{(T,A)}(x,y)=(T^nx,y+\tilde f^n_{(T,A)}(x,y)).$$
Then there exists $\rho\in\mathbb R$ such that $$\frac{\tilde f^n_{(T,A)}(x,y)}{n}$$
converges uniformly to $\rho$ in $(x,y)\in X\times\mathbb R$, and it is independent on the lift of $F_{T,A}$, up to an addition of an integer \cite{Herman1983}. Then $\rho$ is called fibered rotation number of $(T,A)$, and we denote it as $\mathrm{rot}(T,A)$.

\begin{Remark}
In the following, we will always take $X=\T^d$ where $d\in \N_+$ or $\infty$ (we endow it with the product topology if $d=\infty$), and consider the quasi-periodic (or almost-periodic) cocycle $(\alpha,A)$.

\end{Remark}

The following two lemmas are are useful for us:
\begin{Lemma}[\cite{HA}]\label{error} For any $A\in \mathrm{SL}(2,\R)$, we have
 $$|\mathrm {rot}(\alpha,Ae^F)-\mathrm{rot}(\alpha,A)|<2\|A\|\|F\|_{0}^{\frac{1}{2}}.
 $$
\end{Lemma}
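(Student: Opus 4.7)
The plan is to prove this as a two-step estimate: first control the $C^0$-size of the cocycle perturbation $Ae^F - A$, then invoke the general $\frac{1}{2}$-H\"older continuity of the fibered rotation number with respect to $C^0$-perturbations of $\mathrm{SL}(2,\R)$-valued cocycles.

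For the first step, I would use the Taylor expansion $e^F = I + F + \tfrac{1}{2}F^2 + \cdots$ to obtain $\|Ae^F - A\|_0 = \|A(e^F - I)\|_0 \leq \|A\|\,\|F\|_0\, e^{\|F\|_0}$, so that for moderately small $\|F\|_0$ the cocycle $Ae^F$ is a $C^0$-perturbation of the constant cocycle $A$ of size comparable to $\|A\|\|F\|_0$.

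For the second step, I would work with the lift $\tilde\phi_B: \T^d \times \R \to \R$ of the projective action on the projective line, using the fact that the fibered rotation number is monotone in $\tilde\phi_B$: iterating pointwise inequalities between the increasing homeomorphisms $\tilde\phi_B(x,\cdot)$ yields $|\mathrm{rot}(\alpha, B_1) - \mathrm{rot}(\alpha, B_2)| \leq \|\tilde\phi_{B_1} - \tilde\phi_{B_2}\|_0$. The angular deviation between $B_1 v$ and $B_2 v$ for a unit vector $v$ is of order $\|(B_1-B_2)v\|/\|B_2 v\|$, which can blow up near the direction most contracted by $B_2$ (where $\|B_2 v\|\sim \|B_2\|^{-1}$); balancing the contribution of the bad region (near that direction) against the good region produces a $\frac{1}{2}$-H\"older estimate $\|\tilde\phi_{B_1} - \tilde\phi_{B_2}\|_0 \leq C\|A\|^{1/2}\|B_1 - B_2\|_0^{1/2}$ for $B_i$ close to $A$.

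Combining the two steps yields the stated inequality after tracking constants. The main obstacle is the sharp coefficient $2\|A\|$: the composition of the linear perturbation bound $\|A\|\|F\|_0$ with a generic $\frac{1}{2}$-H\"older bound produces $C\|A\|\|F\|_0^{1/2}$ only after verifying that $C$ may be taken to be $2$. Achieving this requires using both the constancy of $A$ (so that the lift $\tilde\phi_A$ is independent of $x$ and the worst-case angular distortion is explicit) and the near-identity structure of the multiplicative perturbation $e^F$, rather than treating $Ae^F$ as an arbitrary cocycle merely $C^0$-close to $A$.
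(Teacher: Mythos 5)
The paper does not prove this lemma; it is imported verbatim from Amor's paper \cite{HA} (``H\"older continuity of the rotation number for quasi-periodic cocycles in $\mathrm{SL}(2,\R)$''), so there is no in-paper argument to compare against. What follows is an assessment of your proposal on its own terms.

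Your overall framework is right: pass to the lift $\tilde\phi_B(x,\cdot)$, use the monotonicity of the fibered rotation number in $\tilde\phi$ (comparison with the rotated cocycles $R_{\pm\delta}B$ gives $|\mathrm{rot}(\alpha,B_1)-\mathrm{rot}(\alpha,B_2)|\le\|\tilde\phi_{B_1}-\tilde\phi_{B_2}\|_0$), and then bound the $C^0$-distance between the lifts. However, the way you propose to execute the last step has a real gap. First, your Step~1 bound $\|Ae^F-A\|_0\lesssim\|A\|\|F\|_0$ is a one-way inequality, and the quantity $\|Ae^F-A\|_0$ can be as small as $\|A\|^{-1}\|F\|_0$ depending on the alignment of $F$ with the singular directions of $A$; routing the argument through $\|B_1-B_2\|_0$ therefore destroys information and cannot recover the clean dependence $2\|A\|\|F\|_0^{1/2}$. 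Second, and more seriously, the claimed generic estimate $\|\tilde\phi_{B_1}-\tilde\phi_{B_2}\|_0\le C\|A\|^{1/2}\|B_1-B_2\|_0^{1/2}$ does not follow from the ``angular deviation $\sim\|(B_1-B_2)v\|/\|B_2v\|$'' heuristic: that ratio is only an upper bound on the angle when it is small, and since $\|\tilde\phi_{B_1}-\tilde\phi_{B_2}\|_0$ is a supremum over $v$, there is no averaging over ``good'' and ``bad'' regions to balance — the worst direction is the only one that matters.

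The piece that actually produces the square root, and which your proposal leaves implicit, is the modulus of continuity of the \emph{fixed} map $\tilde\phi_A$ for constant $A\in\mathrm{SL}(2,\R)$. Writing $A=R_{\psi_1}\,\mathrm{diag}(a,a^{-1})\,R_{\psi_2}$ with $a=\|A\|$, the derivative of the projective action is $\tilde\phi_A'(\theta)=\|Ae^{i\theta}\|^{-2}=(a^2\cos^2\theta'+a^{-2}\sin^2\theta')^{-1}$, which is Lipschitz-constant $a^2$ but only on an interval of width $\sim a^{-2}$; integrating over an interval of length $\delta$ gives exactly the worst-case estimate
$$|\tilde\phi_A(\theta+\delta)-\tilde\phi_A(\theta)|\;\le\;2\arctan\!\Bigl(\tfrac{a^2\delta}{2}\Bigr)\;\le\;a\sqrt{2\delta}.$$
This is the ``balancing'' you allude to, made precise. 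Combining it with the elementary bound that the angular displacement of the projective action of $e^{F(x)}$ on any vector is at most $\|F\|_0$ (integrate the angular velocity $v^TJF v/|v|^2\le\|F\|$ along $t\mapsto e^{tF}v$), and using the composition
$$\tilde f_{(\alpha,Ae^F)}(x,y)-\tilde f_{(\alpha,A)}(x,y)=\tilde\phi_A\bigl(\tilde\phi_{e^{F(x)}}(y)\bigr)-\tilde\phi_A(y),$$
gives directly $\|\tilde\phi_{Ae^F}-\tilde\phi_A\|_0\le\|A\|\sqrt{2\|F\|_0}$ (in the appropriate normalization), which together with the monotonicity step yields a bound of the stated form. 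So: keep your monotonicity step, discard the detour through $\|B_1-B_2\|_0$ and the generic H\"older claim, and carry out the composition-plus-$\arctan$ computation for $\tilde\phi_A$ directly.
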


\begin{Lemma}[\cite{Kr}]\label{rotinv}
The rotation number is invariant under the conjugation map which is homotopic to the identity. More precisely,  if $A, B: \T^d \rightarrow \mathrm{SL}(2,\R)$  is continuous and homotopic to the identity, then
$$\mathrm {rot}(\alpha, B(\cdot+\alpha)^{-1}A(\cdot)B(\cdot))=\mathrm {rot}(\alpha, A).$$
\end{Lemma}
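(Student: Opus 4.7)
The plan is to lift the entire picture to the universal cover $\T^d\times\R$ of the projective bundle, exploit the fact that the conjugacy $B$ is homotopic to the identity to produce a lift that differs from the identity by a \emph{bounded} fiberwise perturbation, and then read off equality of rotation numbers by dividing by $n$.

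First, I would build the lift of the conjugacy. Since $B:\T^d\to \mathrm{SL}(2,\R)$ is homotopic to the identity, the induced projective map $G:\T^d\times\S^1\to\T^d\times\S^1$, $G(x,v)=(x,B(x)v/\|B(x)v\|)$, is homotopic to the identity over each fiber, so it admits a continuous lift $\tilde G:\T^d\times\R\to\T^d\times\R$ of the form $\tilde G(x,y)=(x,y+g(x,y))$ with $g$ continuous, $1$-periodic in $y$, and $y\mapsto y+g(x,y)$ strictly increasing. Because $g$ descends to a continuous function on the compact space $\T^d\times\S^1$ (compact in the product topology, including the case $d=\infty$), we get a uniform bound $\|g\|_0\le M$. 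The inverse $\tilde G^{-1}(x,y)=(x,y-g^{*}(x,y))$ is of the same type, with $\|g^{*}\|_0\le M'$.

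Second, I would verify the conjugation identity at the lifted level. A direct projective computation gives
\[
F_{(\alpha,\tilde A)}=G^{-1}\circ F_{(\alpha,A)}\circ G,\qquad \tilde A(\cdot):=B(\cdot+\alpha)^{-1}A(\cdot)B(\cdot).
\]
Lifting both sides to $\T^d\times\R$, uniqueness of lifts up to an integer additive constant produces a fixed $k\in\Z$ with $\tilde F_{(\alpha,\tilde A)}=\tilde G^{-1}\circ\tilde F_{(\alpha,A)}\circ\tilde G+(0,k)$; since the rotation number is defined up to integers we may pick the lift of $\tilde F_{(\alpha,\tilde A)}$ so that $k=0$. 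Then a straightforward induction, using that $\tilde G$ fixes the base, yields
\[
\tilde F^{\,n}_{(\alpha,\tilde A)}=\tilde G^{-1}\circ\tilde F^{\,n}_{(\alpha,A)}\circ\tilde G,\qquad n\in\N.
\]

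Third, I would extract the rotation numbers by comparing the two sides. Reading off the second coordinate, one gets
\[
\tilde f^{\,n}_{(\alpha,\tilde A)}(x,y)=g(x,y)+\tilde f^{\,n}_{(\alpha,A)}\bigl(x,y+g(x,y)\bigr)-g^{*}\bigl(x+n\alpha,\,\ast\bigr),
\]
where $\ast$ denotes the second coordinate of $\tilde F^{\,n}_{(\alpha,A)}(\tilde G(x,y))$. The three extra terms are all uniformly bounded by $M+M'$, independently of $n,x,y$. Dividing by $n$ and letting $n\to\infty$, the uniform convergence (in $(x,y)\in\T^d\times\R$) recorded just above Lemma~\ref{error} forces
\[
\mathrm{rot}(\alpha,\tilde A)=\lim_{n\to\infty}\frac{\tilde f^{\,n}_{(\alpha,\tilde A)}(x,y)}{n}=\lim_{n\to\infty}\frac{\tilde f^{\,n}_{(\alpha,A)}(x,y+g(x,y))}{n}=\mathrm{rot}(\alpha,A),
\]
which is the desired identity.

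The main obstacle I expect is the construction and control of the lift $\tilde G$ in the almost-periodic case $d=\infty$: one has to check that continuity of $B$ in the product topology suffices to produce a continuous lift of the projective action and that the resulting $g$ is indeed bounded (which ultimately comes down to compactness of $\T^{\infty}\times\S^1$ together with homotopy-to-identity). Once this is set up, the compactness-plus-bounded-perturbation argument in Step~3 is routine.
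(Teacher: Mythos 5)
The paper states this lemma with a citation to Krikorian \cite{Kr} and gives no proof, so there is no in-paper argument to compare against; your blind proof stands on its own, and it is correct. The strategy is the standard one: you lift the projective conjugacy $G$ induced by $B$ to $\T^d\times\R$, use the hypothesis that $B$ is homotopic to the identity to obtain a lift of the form $\tilde G(x,y)=(x,y+g(x,y))$ with $g$ $1$-periodic in $y$, invoke compactness of $\T^d\times\S^1$ to bound $g$ and $g^{*}$ uniformly, verify the conjugation identity $F_{(\alpha,\tilde A)}=G^{-1}\circ F_{(\alpha,A)}\circ G$ and iterate it, and then read off that a uniformly bounded fiberwise perturbation cannot affect the linear growth rate $\lim_n \tilde f^n/n$. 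On the one technical point you flag: the lift $\tilde G$ exists by the covering-space lifting criterion, since $B$ null-homotopic means the induced map $\T^d\to\mathrm{PSL}(2,\R)$ kills $\pi_1$, and $\T^d\times\R$ is connected and locally path-connected also when $d=\infty$ with the product topology (basic open sets are products of intervals and full circles), so nothing breaks in the almost-periodic setting. One small simplification worth noting: in the final step you do not actually need the uniform convergence mentioned in the paper's definition of $\mathrm{rot}$; once $(x,y)$ is fixed, the middle term $\tilde f^{\,n}_{(\alpha,A)}(x,y+g(x,y))/n$ is evaluated at a single point, so pointwise convergence already suffices.
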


\subsection{Almost periodic Schr\"odinger operator}\label{aps}
For the almost periodic sequence, we also have the following:
\begin{Proposition}[\cite{Damanik2017}]
A potential $c\in l^\infty(\mathbb Z)$ is almost periodic if and only if it can be represented as
$$c(n)=V_\omega(n)=f(T^n\omega),$$
where $\Omega$ is a compact abelian group, $\omega\in\Omega$, $f:\Omega\rightarrow\mathbb R$ is continuous, and $T:\Omega\rightarrow\Omega$ is a minimal translation, say $T=\cdot+\alpha$.
\end{Proposition}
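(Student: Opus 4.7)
The plan is to prove the two implications separately. For the easier direction, suppose $c(n)=f(T^n\omega)$ where $T=\cdot+\alpha$ is a minimal translation on a compact abelian group $\Omega$ and $f:\Omega\to\mathbb R$ is continuous. Since $\Omega$ is compact, $f$ is uniformly continuous: for any $\epsilon>0$ there exists a neighborhood $U$ of the identity in $\Omega$ such that $|f(x)-f(y)|<\epsilon$ whenever $x-y\in U$. A pigeonhole argument on the compact group shows that the return set $\{k\in\mathbb Z:k\alpha\in U\}$ is syndetic, and for such $k$ one has $|c(n+k)-c(n)|=|f(T^n\omega+k\alpha)-f(T^n\omega)|<\epsilon$ uniformly in $n$. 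This is precisely Bohr's $\epsilon$-almost period condition; combined with boundedness it yields that $\{c(\cdot+k):k\in\mathbb Z\}$ is precompact in $\ell^\infty(\mathbb Z)$, i.e., $c$ is almost periodic.

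For the converse, the strategy is to equip the hull $\mathcal H(c)$ itself with a compact abelian group structure. The shift $T:\mathcal H(c)\to\mathcal H(c)$ defined by $(Tg)(n)=g(n+1)$ is a homeomorphism of the compact metric space $\mathcal H(c)$, with dense orbit $\{T^nc\}_{n\in\mathbb Z}$. To define the group operation, for $g_1,g_2\in\mathcal H(c)$ choose sequences with $c(\cdot+k_n)\to g_1$ and $c(\cdot+m_n)\to g_2$ in $\ell^\infty$, and set $g_1\oplus g_2:=\lim_n c(\cdot+k_n+m_n)$, extracting a subsequence using compactness of $\mathcal H(c)$. The identity is $c$ itself (using $k_n=0$), and inverses come from $-k_n$.

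The main obstacle, as expected, is showing that $\oplus$ is well-defined and continuous. This is exactly where Bohr's characterization does the work: if $c(\cdot+k_n)-c(\cdot+k_n')\to 0$ uniformly, then because every $\epsilon$-almost period of $c$ is an $\epsilon$-almost period of every translate of $c$ (the defining property is translation-invariant), one deduces $c(\cdot+k_n+m_n)-c(\cdot+k_n'+m_n)\to 0$ uniformly in $n$, giving independence of the approximating sequences. The same equicontinuity argument gives joint continuity of $\oplus$ on the compact metric space $\mathcal H(c)$, and commutativity passes from the dense subgroup $\{T^nc\}_{n\in\mathbb Z}$ to the closure.

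It remains to realize $c$ in the desired form. Taking $\Omega=\mathcal H(c)$, $\omega=c$, and $\alpha=Tc$ (the generator), the translation $T_\alpha g:=g\oplus\alpha$ coincides with the shift $T$ on the dense orbit and hence everywhere by continuity; minimality is immediate from density of $\{T^nc\}$. Finally, set $f:\mathcal H(c)\to\mathbb R$ as the evaluation $f(g):=g(0)$, which is continuous in the $\ell^\infty$ topology. Then $f(T^nc)=c(n)$, completing the representation. The delicate technical point throughout is the passage from pointwise/subsequential limits to a well-defined continuous group law, which hinges entirely on the Bohr-type equicontinuity of translates inherent in the definition of almost periodicity.
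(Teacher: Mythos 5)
Your proof is correct, and since the paper cites this proposition from \cite{Damanik2017} without reproducing a proof, there is no in-paper argument to compare against; your construction matches the standard one found in that reference. A couple of simplifications are worth noting. In the hard direction, the well-definedness of $\oplus$ does not actually require the Bohr characterization at all: the $\ell^\infty$ norm is shift-invariant, so for any $m\in\mathbb Z$ one has the identity
\[
\bigl\|c(\cdot+k_n+m)-c(\cdot+k_n'+m)\bigr\|_{\ell^\infty}=\bigl\|c(\cdot+k_n)-c(\cdot+k_n')\bigr\|_{\ell^\infty},
\]
which immediately gives both independence of the approximating sequence and the $1$-Lipschitz estimate $\|g_1\oplus g_2-g_1'\oplus g_2'\|_{\ell^\infty}\le\|g_1-g_1'\|_{\ell^\infty}+\|g_2-g_2'\|_{\ell^\infty}$, from which joint continuity of $\oplus$ follows directly. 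In the easy direction you can likewise bypass Bohr and syndeticity: the map $\Phi:\Omega\to\ell^\infty(\mathbb Z)$, $\Phi(\eta)(n):=f(\eta+n\alpha)$, is continuous by uniform continuity of $f$ on the compact group, $\Omega$ is compact, and $\{c(\cdot+k):k\in\mathbb Z\}=\Phi(\{\omega+k\alpha:k\in\mathbb Z\})\subset\Phi(\Omega)$, a compact set, giving Bochner almost periodicity immediately. Your appeal to Bohr $\Leftrightarrow$ Bochner is not wrong, but it imports a nontrivial equivalence theorem where a one-line compactness argument suffices.
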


As result, we define the almost periodic Schr\"odinger operator as a self-adjoint operator on $l^2(\Z):$
$$(\mathcal L_{f,T,\omega}u)(n)=u(n+1)+u(n-1)-2u(n)+f(T^n\omega)u(n),\ \forall n\in\mathbb Z.$$
It's well known that the spectrum $\Sigma(\mathcal L_{f,T,\omega})$ is a compact set of $\mathbb{R}$. Moreover $\Sigma(\mathcal L_{f,T,\omega})$ is independent of $\omega$, and we shorten the notation as $\Sigma(\mathcal L_{f,T})$.
In particular, if $f=V,\ \Omega=\mathbb T^d,\ T=\cdot+\alpha$, where $\alpha,\theta\in\mathbb T^d$, we denote $$(\mathcal L_{V,\alpha,\theta}u)(n):=u(n+1)+u(n-1)-2u(n)+V(n\alpha+\theta)u(n).$$
 We say $\theta$ is the phase, $V$ is the potential, and $\alpha$ is the frequency.

For almost-periodic Schr\"odinger operator, one can define the {\it integrated density of states} (shorten as IDS), denoted by $k(E)$, as follows:
$$k(E)=\lim\limits_{L\rightarrow \infty}\frac{\sharp\{\text{eigenvalues (counting multiplicity) of $\mathcal L_L\leq $ }E\}}{L-1},$$
where $\mathcal L_L$ is the restriction of $\mathcal L_{V,\alpha,\omega}$ to the set $I=\{1,\cdots,L-1\}$ with boundary conditions $\frac{u(0)}{u(1)}=cot\theta,u(L)=0.$
The integrated density of states will be crucial for our study of the positive almost periodic solution. Furthermore, we have the following elementary fact:
\begin{Remark}\label{idsrot}
Suppose $E$ is at the rightmost of the spectrum. Then by the well known characterization of the biggest eigenvalue $\lambda^{L}$ and $E$
 $$\lambda^{{L}}=\sup\limits_{\substack{u\in l^2(\mathbb Z),u\not\equiv 0,\\
\mathrm{supp} u\subset I}}\frac{\langle \mathcal Lu,u\rangle}{\langle u,u\rangle},\quad E=\sup\limits_{u\in l^2(\mathbb Z),u\not\equiv 0}\frac{\langle \mathcal Lu,u\rangle}{\langle u,u\rangle},$$
 the rightmost of the spectrum $E$ is always large than $\lambda^{L}$. Thus the number of eigenvalues of $\mathcal L_L$ less than or equal to $E$ is always $L-1$. By the definition of the $\mathrm{IDS}$, $k(E)=1.$
\end{Remark}

 Note that a sequence $(u_n)_{n \in \Z}$ is a formal solution of the
eigenvalue equation  $\mathcal L_{f,T,\omega}u=Eu,$ if and only if
 $$
\begin{pmatrix}
u(n+1)\\
u(n)
\end{pmatrix}
= S^f_{E}(T^n\omega)
\begin{pmatrix}
u(n)\\
u(n-1)
\end{pmatrix}
,$$ where
$$S_{E}^f(\omega)=
\begin{pmatrix}
E+2-f(\omega)&-1\\
1&0
\end{pmatrix}
.$$
We call $(T, S^{f}_E)$ an almost-periodic Schr\"odinger cocycle. In this paper, we will mainly consider the following two kinds of Schr\"odinger cocycles.

\begin{itemize}
\item $X_1=\mathbb T^d, T_1\theta=\theta+\alpha, S_E^V=\begin{pmatrix}
E+2-V(\theta) &-1\\
1   &0
\end{pmatrix}
$, where $d\in\mathbb N_+$, $\alpha\in \mathbb T^d$ is rationally independent. Then $(X_1,T_1)$ is uniquely ergodic, and we call  $(\alpha,S_E^V)$  a quasi-periodic Schr\"odinger cocycle.
\item $X_2=\mathbb T^\infty$, that is endowed with the product topology, $T_2\theta=\theta+\alpha$, $S_E^V=
\begin{pmatrix}
E+2-V(\theta) &-1\\
1   &0
\end{pmatrix}
$, where $\alpha\in DC_{\infty}(\gamma,\tau)$,
 Then $(X_2,T_2)$ is uniquely ergodic, and $(\alpha,S_E^V)$ defines an almost-periodic Schr\"odinger cocycle.
\end{itemize}

 The study of the spectral properties of the almost-periodic Schr\"odinger operator is closely related to the dynamics of almost-periodic Schr\"odinger cocycle. For example, we will need the following two important facts:

\begin{Theorem}[\cite{Johnson1983}]\label{idsrot}  $\mathrm{IDS}$ of the Schr\"odinger operator  relates with the fibered rotation number of  Schr\"odinger operator as follows:
$$
k(E)=1-2\mathrm{rot}(\alpha,S_E^f) \quad (\mathrm{mod}\ \mathbb Z).
$$
\end{Theorem}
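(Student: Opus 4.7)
The plan is to use discrete Sturm–Liouville (oscillation) theory together with the dynamical definition of the fibered rotation number, and then average along orbits of the minimal translation $T$ using unique ergodicity.

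First, I would fix an energy $E\in\mathbb R$ and a phase $\theta$, and set $u(0)=\sin\theta,\ u(1)=\cos\theta$ (or more generally prescribe the boundary angle). Writing the one-step transfer via the Schr\"odinger cocycle $(T,S^f_E)$, the solution vector is
\[
\begin{pmatrix} u(n+1)\\ u(n)\end{pmatrix}=S^f_E(T^{n-1}\omega)\cdots S^f_E(\omega)\begin{pmatrix} u(1)\\ u(0)\end{pmatrix}.
\]
A discrete Sturm oscillation argument (which in this one-dimensional setting is just the positivity of the off-diagonal entries of $S^f_E$) shows that the number of eigenvalues of the truncated operator $\mathcal L_L$ with the boundary conditions from the excerpt that are $\le E$ equals the number of sign changes of the sequence $(u(n))_{1\le n\le L-1}$, up to a bounded error depending only on the boundary phases. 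Dividing by $L-1$ and letting $L\to\infty$ yields $k(E)$ on the left-hand side.

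Next, I would interpret the right-hand side. The projectivized cocycle acts on $\mathbb S^1\subset\mathbb R^2$, and its fibered lift $\tilde f_{(T,S^f_E)}(\omega,\cdot)$ measures the angular increment of the vector $(u(n+1),u(n))$ per step. A sign change of $u(\cdot)$ between $n$ and $n+1$ happens precisely when this vector crosses the horizontal axis; since one full revolution in $\mathbb S^1$ corresponds to two such crossings (and the dynamical rotation number is normalized on $\mathbb S^1$ while the direction of the vector lives in $\mathbb{RP}^1$), one obtains
\[
\#\{\text{sign changes of }u(\cdot)\text{ on }[1,L-1]\}=2\cdot\tilde f^{\,L}_{(T,S^f_E)}(\omega,y)+O(1),
\]
where the $O(1)$ absorbs the boundary contributions and the choice of lift. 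Dividing by $L-1$ and letting $L\to\infty$, unique ergodicity of $(T,\omega)$ forces the right-hand side to converge uniformly in $\omega$ to $2\,\mathrm{rot}(\alpha,S^f_E)$. Subtracting from $1$ accounts for the standard normalization of the IDS (total density equals $1$), and modding out by $\mathbb Z$ absorbs the ambiguity in the choice of lift of the projective cocycle.

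The main obstacle is making the zero-counting argument rigorous: one must check carefully that only genuine sign changes are counted (i.e.\ that $u(n)=0$ events are handled correctly via the strict positivity of the subdiagonal entries of the transfer matrix), and that the contribution from the two endpoints and from the choice of boundary condition is indeed $O(1)$ as $L\to\infty$. A secondary technical point is the uniformity of convergence: since $S^f_E$ is continuous on the compact hull and $(X,T)$ is uniquely ergodic, Birkhoff averages of the continuous cocycle $\tilde f_{(T,S^f_E)}$ converge uniformly, which justifies taking the limit independently of the starting phase $\omega$ and thus identifies $k(E)$ intrinsically on both sides of the formula.
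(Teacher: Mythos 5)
The paper gives no proof of this theorem — it is cited from Johnson--Moser — so there is no internal argument to compare against; your route (discrete Sturm oscillation, counting angular crossings of the projective cocycle, unique ergodicity for the Birkhoff average) is the standard one in the discrete literature.

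However, the oscillation theorem is misstated and the resulting ``$1-$'' is misattributed. With the paper's convention $(\mathcal L u)(n)=u(n+1)+u(n-1)-2u(n)+f(T^n\omega)u(n)$, the truncated operator $\mathcal L_L$ is a symmetric Jacobi matrix whose off-diagonal entries are all $+1$, and its Sturm sequence $p_k(E)=\det\big(EI_k-(\mathcal L_L)_{[1,k]}\big)$ agrees, up to a bounded boundary correction coming from the $\cot\theta$ condition, with $u(k+1)$ for the solution you prescribe. The number of sign changes of $(u(1),\dots,u(L))$ then equals the number of eigenvalues of $\mathcal L_L$ lying \emph{strictly above} $E$, not those $\le E$ as you write. (Sanity check: near the top of the spectrum the solution hardly oscillates, yet essentially all $L-1$ eigenvalues lie below $E$.) Hence
$$
\sharp\{\text{eigenvalues of }\mathcal L_L\le E\}=(L-1)-\sharp\{\text{sign changes of }u\text{ on }[1,L]\}+O(1),
$$
and dividing by $L-1$ and using your (correct) identification of the asymptotic sign-change density with $2\,\mathrm{rot}(\alpha,S_E^f)$ via unique ergodicity produces $k(E)=1-2\,\mathrm{rot}(\alpha,S_E^f)$ outright. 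The ``$1-$'' is thus the complementary count in the Sturm theorem, not ``the standard normalization of the IDS''; your argument as written yields $k(E)=2\,\mathrm{rot}(\alpha,S_E^f)$ and then adjusts by an ad hoc subtraction whose stated justification is not the right one. A small additional point: the structural input for the discrete Sturm argument is the positivity of the off-diagonal entries of the Jacobi matrix $\mathcal L_L$, not ``positivity of the off-diagonal entries of $S_E^f$'' --- the latter has a $-1$ in its $(1,2)$ position, and it is the $+1$ in the $(2,1)$ slot (equivalently the monotone dependence of the image direction on the source direction) that makes the zero-counting and rotation-counting work.
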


\begin{Theorem}[\cite{Johnson1986}]\label{resolvent}
Let $\mathcal L_{f,T,\omega}$ be an  almost periodic Schr\"odinger operator. 
Then  we have the following: $$\mathbb R\backslash \Sigma= \{E\in\mathbb R | (T_2,S_E^f) \text{ is  uniformly hyperbolic}\}.$$
\end{Theorem}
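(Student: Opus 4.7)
The plan is to prove both inclusions by relating the resolvent set of $\mathcal L_{f,T,\omega}$ to the dichotomy structure of the Schr\"odinger cocycle, exploiting the well-known equivalence between exponential dichotomy for the transfer matrix cocycle and exponential decay of the Green's function.

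For the inclusion $\{E:(T,S_E^f)\text{ is UH}\}\subseteq \mathbb R\setminus \Sigma$, I would assume $(T,S_E^f)$ is uniformly hyperbolic with continuous splitting $\mathbb R^2=E_s(\omega)\oplus E_u(\omega)$ and rates $C,c>0$ as in \eqref{UHspace}. For each $\omega$, choose nonzero $v_\pm\in E_{s/u}(\omega)$ and form two linearly independent formal solutions $u_\pm$ of $\mathcal L_{f,T,\omega}u=Eu$ via the transfer matrix, decaying exponentially at $\pm\infty$ respectively. Their Wronskian is a nonzero constant, so the discrete variation-of-parameters formula produces a Green's function $G_E(m,n)$ with uniform off-diagonal exponential decay. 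Schur's test on the matrix $\bigl(G_E(m,n)\bigr)$ then yields a bounded inverse of $\mathcal L_{f,T,\omega}-E$ on $\ell^2(\mathbb Z)$, whence $E\in\mathbb R\setminus\Sigma$.

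For the reverse inclusion, fix $E\notin\Sigma$. The Combes--Thomas estimate, applied to the banded self-adjoint operator $\mathcal L_{f,T,\omega}$, shows that the resolvent $R(E)$ has matrix elements decaying exponentially in $|m-n|$ with rate depending only on $\mathrm{dist}(E,\Sigma)$ and $\|f\|_\infty$, both of which are $\omega$-independent (using that $\Sigma$ does not depend on $\omega$). Define $E_s(\omega)\subset\mathbb R^2$ as the set of initial data $v$ such that $S_n^f(\omega)v$ lies in $\ell^2(\mathbb N;\mathbb R^2)$, and $E_u(\omega)$ analogously for the backward orbit. Because $E\notin\Sigma$ admits no $\ell^2(\mathbb Z)$ eigensolution, one has $E_s(\omega)\cap E_u(\omega)=\{0\}$; applying $R(E)$ to $\delta_n$ exhibits a nonzero element of each subspace, so each is one-dimensional. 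Invariance under the cocycle is automatic from the definition, and the Combes--Thomas rate translates directly into uniform contraction/expansion rates $C,c$.

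The principal obstacle is upgrading the pointwise dichotomy built from the resolvent to a uniformly continuous one on the compact hull $\Omega$, i.e.\ verifying that $\omega\mapsto(E_s(\omega),E_u(\omega))$ is continuous and that the rates can be taken independent of $\omega$. The key point is that the Combes--Thomas bound depends only on $\mathrm{dist}(E,\Sigma)$ and $\|f\|_\infty$, which are global constants; combined with the continuity of $\omega\mapsto S_E^f(\omega)$ and compactness of $\Omega$, this forces the projections onto $E_{s/u}(\omega)$ (expressible via a limit of normalized iterates of the cocycle) to be continuous. A secondary subtlety is handling the possibility that the cocycle is reducible to a parabolic rather than hyperbolic normal form at spectral endpoints — but the hypothesis $E\notin\Sigma$ precisely rules out this degeneracy.
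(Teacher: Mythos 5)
The paper does not prove this theorem; it is cited directly from Johnson (1986), so there is no in-text argument to compare against. Your proposal is nevertheless a recognizable and essentially sound route to the result: the easy direction (uniform hyperbolicity $\Rightarrow$ resolvent set) via the Green's function built from the decaying solutions in $E_s$ and $E_u$, and the hard direction (resolvent set $\Rightarrow$ uniform hyperbolicity) via Combes--Thomas decay and the uniform transversality of the one-dimensional subspaces. This is closer to the operator-theoretic/Green's-function approach one finds in Carmona--Lacroix or Zhang's survey than to Johnson's original paper, which works in the continuous setting and relies more heavily on exponential-dichotomy machinery and the rotation number; but both routes pass through the same pivot, namely the equivalence between uniform off-diagonal decay of the resolvent kernel and an exponential dichotomy of the transfer cocycle. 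A point worth noting: the $\omega$-independence of the constants in Combes--Thomas ultimately rests on the minimality of $T$ (so that $\Sigma$ is $\omega$-independent), and you correctly invoke this.

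One step is stated too loosely and should be fixed. You write that ``applying $R(E)$ to $\delta_n$ exhibits a nonzero element of each subspace, so each is one-dimensional.'' The vector $R(E)\delta_n$ is not itself a solution of $\mathcal L u = Eu$ and does not lie in either $E_s(\omega)$ or $E_u(\omega)$: it satisfies $(\mathcal L - E)R(E)\delta_n = \delta_n$, which fails precisely at the site $n$. What the Combes--Thomas bound gives you is that the \emph{one-sided restrictions} of $G_E(\cdot,n)$ to $\{m>n\}$ and $\{m<n\}$ solve the homogeneous equation there and decay exponentially; propagating these one-sided solutions through the whole line via the cocycle produces the genuine decaying solutions whose initial data span $E_s$ and $E_u$. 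To verify these are nonzero and suitably normalized one uses the equation at $m=n$ together with the bound $|G_E(n,n)|\le \mathrm{dist}(E,\Sigma)^{-1}$. Similarly, in the forward direction the passage from the exponential decay of $u_\pm$ to the Schur bound on $G_E(m,n)=u_+(\max)u_-(\min)/W$ needs the uniform angle estimate between $E_s(\omega)$ and $E_u(\omega)$ (from continuity and compactness of $\Omega$) and the Schr\"odinger-specific fact that $|u(n)|+|u(n+1)|$ is comparable to $\|S_n^f(\omega)v\|$, neither of which you state. These are standard lemmas and do not undermine the strategy, but as written the sketch elides the only genuinely delicate estimates in the argument.
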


\section{Properties of the Linearized Problem}

\subsection{Generalized principal eigenvalue for more general operator}
In this section, we will define and study the properties of generalized principal eigenvalue of a more general operator since it will be needed in our proof. Remarkably, generalized principal eigenvalue theory for elliptic operator is of its own interest, and it turns out to be very useful in studying maximum principle \cite{Berestycki1994}.

Now we consider the operator $$\mathcal M_{a,b,c}\phi(n):=a(n)\phi(n+1)+b(n)\phi(n-1)+c(n)\phi(n),$$
with $a,b,c$ being almost periodic sequences and $\inf a>0,\ \inf b>0$.
In particular, $\mathcal M_{1,1,g-2}=\mathcal L_g$ defined in (\ref{laplace}).


For any (maybe unbounded) interval $I\subset \mathbb Z$, we define the generalized principal eigenvalue for $\mathcal M_{a,b,c}$ as
\begin{equation}\label{1gpe6}
\lambda_{1}(\mathcal{M}_{a,b,c},I)=\inf\{\lambda\in \mathbb{R}:\ \exists \phi>0 \text{ in }\mathbb Z,\mathcal{M}_{a,b,c;I}\phi\leq \lambda\phi \text{ in }I \},
\end{equation}
where $\mathcal M_{a,b,c;I}$ is the restriction of $\mathcal M_{a,b,c}$ to the set $I\subset \mathbb Z$. If $I$ is bounded, it is exactly the classical principal eigenvalue (the largest eigenvalue). In the case $I=\mathbb{Z},\ a=b\equiv 1,$ $c$ is replaced by $c-2$, we will show below that it coincides with $\lambda_{1}=\max\Sigma(\mathcal L)$. From (\ref{1gpe6}), $\lambda_1(\mathcal M_{a,b,c},I)$ is nondecreasing with respect to the inclusion of intervals $I.$


\begin{Proposition}\label{monotonicity}
There holds
\begin{equation}
\lambda_{1}\bigr(\mathcal{M}_{a,b,c},(-N,N)\bigr)\nearrow\lambda_{1}(\mathcal{M}_{a,b,c},\mathbb{Z}) \text{ as }N\rightarrow \infty.
\end{equation}
\end{Proposition}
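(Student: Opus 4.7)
Monotonicity of $\lambda_N:=\lambda_1(\mathcal{M}_{a,b,c},(-N,N))$ in $N$ is immediate from the definition \eqref{1gpe6}: if $I\subset J$, any admissible pair $(\lambda,\phi)$ for $J$ restricts to one for $I$, so $\lambda_1(\mathcal{M}_{a,b,c},I)\leq\lambda_1(\mathcal{M}_{a,b,c},J)$. Hence $\{\lambda_N\}$ is non-decreasing and bounded above by $\lambda_1(\mathcal{M}_{a,b,c},\mathbb{Z})$, so the limit $\lambda^*:=\lim_N\lambda_N$ exists and satisfies $\lambda^*\leq\lambda_1(\mathcal{M}_{a,b,c},\mathbb{Z})$. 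The real content of the proposition is the reverse inequality, which I plan to establish by contradiction combined with a compactness/diagonal argument.

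Assume $\lambda^*<\lambda_1(\mathcal{M}_{a,b,c},\mathbb{Z})$ and fix $\lambda\in(\lambda^*,\lambda_1(\mathcal{M}_{a,b,c},\mathbb{Z}))$. For every $N$ large enough, $\lambda>\lambda_N$, so by definition of the infimum there exists $\phi_N>0$ on $\mathbb{Z}$ with $\mathcal{M}_{a,b,c}\phi_N\leq\lambda\phi_N$ on $(-N,N)$; normalize $\phi_N(0)=1$. Since the remaining terms are positive, discarding one of the two off-diagonal contributions in the subsolution inequality yields
\begin{equation*}
a(n)\phi_N(n+1)\leq(\lambda-c(n))\phi_N(n),\qquad b(n)\phi_N(n-1)\leq(\lambda-c(n))\phi_N(n),
\end{equation*}
for $n\in(-N,N)$. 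In particular $\lambda>c(n)$ on $(-N,N)$; combining this with $\inf a,\inf b>0$ and boundedness of $c$ produces a constant $C>0$ independent of $N$ such that $\phi_N(n\pm 1)\leq C\,\phi_N(n)$ on $(-N,N)$, and iterating from $\phi_N(0)=1$ yields the a priori bound $\phi_N(n)\leq C^{|n|}$ for all $|n|\leq N$.

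A standard diagonal extraction now produces a subsequence $\phi_{N_k}$ converging pointwise on $\mathbb{Z}$ to a nonnegative function $\phi_\infty$ with $\phi_\infty(0)=1$; passing to the limit in the subsolution inequality at each fixed $n$ (valid as soon as $N_k>|n|$) gives $\mathcal{M}_{a,b,c}\phi_\infty\leq\lambda\phi_\infty$ on all of $\mathbb{Z}$. The decisive step, which I expect to be the main obstacle, is to upgrade $\phi_\infty\geq 0$ to $\phi_\infty>0$: if $\phi_\infty(m)=0$ at some $m$, the inequality at $m$ reduces to $a(m)\phi_\infty(m+1)+b(m)\phi_\infty(m-1)\leq 0$, and since $a(m),b(m)>0$ and $\phi_\infty\geq 0$ this forces $\phi_\infty(m\pm 1)=0$; iterating yields $\phi_\infty\equiv 0$, contradicting $\phi_\infty(0)=1$. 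Hence $\phi_\infty$ is an admissible positive test function showing $\lambda_1(\mathcal{M}_{a,b,c},\mathbb{Z})\leq\lambda$, contradicting the choice $\lambda<\lambda_1(\mathcal{M}_{a,b,c},\mathbb{Z})$ and completing the argument. The positivity-propagation step is essentially a discrete strong maximum principle built into the nearest-neighbor structure of $\mathcal{M}_{a,b,c}$ under the assumption $\inf a,\inf b>0$.
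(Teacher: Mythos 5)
Your proof is correct, and it follows the same strategy that the paper points to via its references (Berestycki–Nirenberg–Varadhan and Berestycki–Hamel–Rossi): easy monotonicity from the definition, then a compactness argument extracting a positive global subsolution from the finite-interval ones. The lattice structure lets you replace the Harnack inequality and elliptic regularity used in those continuous references by the elementary bound $\phi_N(n\pm1)\le C\,\phi_N(n)$, which simultaneously supplies the a priori bound needed for diagonal extraction and the positivity-propagation step for the limit, so this is essentially the intended proof adapted cleanly to the discrete setting.
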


\begin{Remark}
The proof of this Proposition is similar to the proof in \cite[Proposition 2.3]{Berestycki1994} and \cite[Proposition 4.2]{2007Liouville} which works for continuous elliptic operator.
\end{Remark}



\begin{Proposition}\label{var}
Let $b(n)=a(n-1), I$ be an interval in $\mathbb Z$. Then
\begin{equation}\label{32}
\lambda_{1}(\mathcal M_{a,b,c},I)=\sup_{\substack{v\in l^2(\mathbb Z),v\not\equiv 0,\\
\mathrm{supp} v\subset I}}\frac{\langle M_{a,b,c}v,v\rangle}{\langle v,v\rangle}.
\end{equation}
where $\langle \cdot \rangle$ denotes the inner product on $l^2(\mathbb Z)$. Moreover, $\lambda_1(\mathcal L,\mathbb Z)=\lambda_1=\max\Sigma(\mathcal L)$.
\end{Proposition}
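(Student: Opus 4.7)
The plan is to reduce \eqref{32} to the classical Rayleigh-quotient characterisation of the Dirichlet principal eigenvalue on finite intervals, and then pass to the limit using Proposition~\ref{monotonicity}. The hypothesis $b(n)=a(n-1)$ is precisely the symmetry condition: a direct index shift in $\sum_n(\mathcal{M}_{a,b,c}v)(n)w(n)$ shows $\langle \mathcal{M}_{a,b,c}v,w\rangle = \langle v,\mathcal{M}_{a,b,c}w\rangle$ for compactly supported $v,w$, so $\mathcal{M}_{a,b,c}$ extends to a bounded self-adjoint operator on $l^2(\mathbb{Z})$.

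For a bounded interval $I\subset\mathbb{Z}$, the restriction $\mathcal{M}_{a,b,c;I}$ (acting on sequences supported in $I$) is a finite symmetric matrix with positive off-diagonal entries. Writing $\mu_1(I)$ for the right-hand side of \eqref{32}, Courant--Fischer identifies $\mu_1(I)$ with the top eigenvalue of $\mathcal{M}_{a,b,c;I}$, and Perron--Frobenius yields a strictly positive associated eigenfunction $\psi_I$ on $I$. I then prove $\lambda_1(\mathcal{M}_{a,b,c},I)=\mu_1(I)$ via two matching inequalities. For $\lambda_1\leq\mu_1$, I extend $\psi_I$ by $\tilde\psi_\epsilon\equiv\epsilon$ outside $I$: at interior points of $I$ this remains an exact eigenfunction of $\mathcal{M}_{a,b,c}$, while at the two boundary points only a single extra term $O(\epsilon)$ appears, giving $\mathcal{M}_{a,b,c}\tilde\psi_\epsilon\leq(\mu_1+\eta(\epsilon))\tilde\psi_\epsilon$ on $I$ with $\eta(\epsilon)\to 0$. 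For $\mu_1\leq\lambda_1$, given any admissible $\phi>0$ on $\mathbb{Z}$ with $\mathcal{M}_{a,b,c}\phi\leq\lambda\phi$ on $I$, the truncation $v:=\phi\cdot\mathbf{1}_I$ satisfies $\mathcal{M}_{a,b,c;I}v\leq\mathcal{M}_{a,b,c}\phi\leq\lambda v$ on $I$, because the dropped boundary contributions $b(n)\phi(n-1)$ and $a(n)\phi(n+1)$ are non-negative; pairing with $\psi_I$ (using symmetry once more) yields $\mu_1\langle\psi_I,v\rangle\leq\lambda\langle\psi_I,v\rangle$, whence $\mu_1\leq\lambda$.

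For unbounded $I$, Proposition~\ref{monotonicity} combined with the observation that the sup defining $\mu_1(I\cap(-N,N))$ increases to the sup defining $\mu_1(I)$ as $N\to\infty$ (by density of compactly supported sequences and monotonicity of sup) shows that both $\lambda_1(\mathcal{M}_{a,b,c},I)$ and the right-hand side of \eqref{32} arise as the same limit of $\mu_1(I\cap(-N,N))$. The second assertion then follows at once: specialising \eqref{32} to $\mathcal{M}_{a,b,c}=\mathcal{L}$ (so $a\equiv b\equiv 1$ and $c$ replaced by $c-2$) and $I=\mathbb{Z}$ identifies $\lambda_1(\mathcal{L},\mathbb{Z})$ with $\sup_{v\in l^2(\mathbb{Z}),\,v\neq 0}\langle\mathcal{L}v,v\rangle/\langle v,v\rangle$, which coincides with $\max\Sigma(\mathcal{L})=\lambda_1$ for the bounded self-adjoint operator $\mathcal{L}$.

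The main obstacle is reconciling the two definitions on finite intervals: the side-condition ``$\phi>0$ on all of $\mathbb{Z}$'' in \eqref{1gpe6} must be matched with the Dirichlet-type Rayleigh quotient in \eqref{32}. The upper bound $\lambda_1\leq\mu_1$ demands an outward perturbation of the Perron eigenfunction $\psi_I$ with a careful estimate of the boundary error, whereas the lower bound $\mu_1\leq\lambda_1$ relies on symmetry to produce a positive eigenfunction against which the truncated admissible $\phi$ can be tested. Once these two calibrations are in place, the passage to unbounded $I$ is essentially automatic given Proposition~\ref{monotonicity}.
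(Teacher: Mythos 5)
Your proof is correct, and it follows the same overall strategy as the paper's: reduce \eqref{32} to the finite-interval case and then pass to the limit using Proposition~\ref{monotonicity}. The difference lies in the finite-interval step. The paper simply invokes the variational characterisation of the top eigenvalue of the bounded self-adjoint matrix $\mathcal{M}_{a,b,c;N}$ as the ``classical principal eigenvalue'' and then runs the chain
$\lambda_{1}^{N}=\sup_{\mathrm{supp}\,v\subset[-N,N]}\langle\mathcal{M}_{a,b,c;N}v,v\rangle/\langle v,v\rangle
=\|\mathcal{M}_{a,b,c;N}\|$, concluding by letting $N\to\infty$. You instead prove the finite-interval identity from scratch: self-adjointness from the index shift, Courant--Fischer plus Perron--Frobenius to get a strictly positive top eigenfunction $\psi_I$, and then two calibrating inequalities (the $\epsilon$-extension of $\psi_I$ for $\lambda_1\leq\mu_1$; truncation and pairing against $\psi_I$ for $\mu_1\leq\lambda_1$). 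This buys you some robustness the paper's chain does not quite have: the equality $\sup\langle\mathcal{M}v,v\rangle/\langle v,v\rangle=\|\mathcal{M}\|$ used in \eqref{variation} holds only if the right edge of the spectrum dominates the left in modulus, which is left implicit in the paper; your route stays with the Rayleigh quotient (whose supremum is $\max\Sigma$, not the norm) throughout and never needs that hypothesis. You also comment on general unbounded $I$, whereas the paper explicitly treats only $I=\mathbb{Z}$; your remark that the same monotone limit handles it (granting the obvious extension of Proposition~\ref{monotonicity} to exhausting sequences of a half-line) is a mild, reasonable addition. The ``moreover'' part is identical in both: specialise $a\equiv b\equiv 1$, $c\mapsto c-2$, $I=\mathbb{Z}$, and read off $\lambda_{1}(\mathcal{L},\mathbb{Z})=\max\Sigma(\mathcal{L})$ from the Rayleigh characterisation of a bounded self-adjoint operator.
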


\begin{proof}
 For simplicity, we shorten the notation $\mathcal M_{a,b,c;N}:=\mathcal M_{a,b,c;[-N,N]},$ $\lambda_1^{N}:=\lambda_1(\mathcal M_{a,b,c};[-N,N])$.

We only prove the case $I=\mathbb Z.$
 Since $b(n)=a(n-1)$, it is clear that $\mathcal M_{a,b,c;N}$ is a bounded self-adjoint operator. Then for any $N\in\mathbb N_+,$
 \begin{equation}\label{variation}
 \lambda_1^{N}=\sup\limits_{\substack{v\in l^2(\mathbb Z),v\not\equiv 0,\\
\mathrm{supp} v\subset [-N,N]}}\frac{\langle\mathcal M_{a,b,c;N}v,v\rangle}{\langle v,v\rangle}=\|\mathcal M_{a,b,c;N}\|,
\end{equation}
here $\|\cdot\|$ denotes the norm in the Banach space of linear bounded operators from $l^2(\mathbb Z)$ to $l^2(\mathbb Z).$

Since $a,b,c$ is almost periodic, hence bounded. By direct calculation,
$$\lim\limits_{N\rightarrow\infty}|\|\mathcal M_{a,b,c}\|-\|\mathcal M_{a,b,c;N}\||\leq \lim\limits_{N\rightarrow \infty}\|\mathcal M_{a,b,c}-\mathcal M_{a,b,c;N}\|=0.$$
 From (\ref{variation}) and Proposition \ref{monotonicity}, we have $$
 \begin{aligned}
 \lambda_1(\mathcal M_{a,b,c},\mathbb Z)&=\lim\limits_{N\rightarrow\infty}\lambda_1(\mathcal M_{a,b,c;N})=\lim\limits_{N\rightarrow\infty}\|\mathcal M_{a,b,c;N}\|\\
 &=\|\mathcal M_{a,b,c}\|=\sup\limits_{v\in l^2(\mathbb Z),v\not\equiv 0}\frac{\langle M_{a,b,c}v,v\rangle}{\langle v,v\rangle}.
 \end{aligned}
 $$ In particular, $\lambda_1(\mathcal M_{a,b,c},\mathbb Z)$ lies in the rightmost of the spectrum.
\end{proof}

From Proposition \ref{var}, we can deduce that $\lambda_1(\mathcal L,\mathbb Z)\geq \inf c.$ Indeed, taking the test function $v_{2N}(n)
 =\left\{
\begin{aligned}
 1& &|n|\leq 2N\\
 0& &\text{ else }
\end{aligned}
 \right.
 $ in (\ref{32}), we deduce the generalized principal eigenvalue $\lambda_{1}(\mathcal L,[-N,N])\geq \inf c.$ By Proposition \ref{monotonicity}, we have $\lambda_1(\mathcal L,\mathbb Z)\geq \inf c,$ as desired.

 Other generalizations of principal eigenvalue will be listed below, and they are all indispensable to our proof. Define the following quantities:
\begin{equation}\label{gpe12}
\begin{aligned}
&\underline\lambda_{1}(\mathcal M_{a,b,c}):=\sup\left\{\lambda:\exists \phi\in \mathscr S, \mathcal{M}_{a,b,c}\phi\geq\lambda\phi\text{ in }\mathbb{Z}\right\};\\
&\overline\lambda_{1}(\mathcal M_{a,b,c}):=\inf\left\{\lambda:\exists \phi\in \mathscr S, \mathcal{M}_{a,b,c}\phi\leq\lambda\phi\text{ in }\mathbb{Z}\right\};\\
\end{aligned}
\end{equation}

\begin{equation}\label{gpe56}
\begin{aligned}
&\underline\lambda_1'(\mathcal M_{a,b,c}):=\sup\{\lambda: \exists \phi>0,\ \phi\in l^\infty(\mathbb Z), \mathcal M_{a,b,c}\phi\geq \lambda\phi\text{ in }\mathbb Z\};\\
&\lambda_1(\mathcal M_{a,b,c}):=\lambda_1(\mathcal M_{a,b,c};\mathbb Z)=\inf\{\lambda: \exists \phi>0, \mathcal{M}_{a,b,c}\phi\leq \lambda\phi \text{ in }\mathbb Z\};
\end{aligned}
\end{equation}

\begin{equation}\label{gpe34}
\begin{aligned}
&\underline\mu_1(\mathcal M_{a,b,c}):=\sup\{\mu:\exists \phi\in l^\infty(\mathbb Z),\ \inf\limits_{n\in\mathbb Z}\phi>0, \mathcal M_{a,b,c}\phi\geq \mu\phi \text{ in }\mathbb{Z}\};\\
&\overline\mu_1(\mathcal M_{a,b,c}):=\inf\{\mu:\exists \phi\in l^\infty(\mathbb Z),\ \inf\limits_{n\in\mathbb Z}\phi>0, \mathcal M_{a,b,c}\phi\leq \mu\phi \text{ in }\mathbb{Z}\};\\
\end{aligned}
\end{equation}
where $\mathscr S=\bigr\{\phi>0:\lim\limits_{|n|\rightarrow+\infty}\frac{\log\phi(n)}{n}=0,\bigr\{\frac{\phi(n+1)}{\phi(n)}\bigr\}_{n\in\mathbb Z}\in l^\infty(\mathbb Z)\bigr\}.$
We denote the hull of triple $(a,b,c)$ by $\mathcal H(a,b,c)$:
$$\biggr\{(a^{*},b^*,c^*): a\cdot n_i\rightarrow a^{*},b\cdot n_i\rightarrow b^*,c\cdot n_i\rightarrow c^*\text{ for some }\{n_i\}_{i\in\mathbb Z}\biggr\}.$$

\begin{Proposition}\label{gperelation}
The following hold:
\begin{itemize}
\item[(1)] $\underline\lambda_1(\mathcal M_{a,b,c})=\overline\lambda_1(\mathcal M_{a,b,c})=\underline\mu_1(\mathcal M_{a,b,c})=\overline\mu_1(\mathcal M_{a,b,c})$.
Moreover, if $a(n)=b(n+1)$, then they all coincide with $\lambda_1(\mathcal M_{a,b,c})$ and $\underline\lambda_1'(\mathcal M_{a,b,c})$.

\item[(2)] $\lambda_1(\mathcal M_{a',b',c'}),\ \underline\mu_1(\mathcal M_{a',b',c'}), \ \overline\mu_1(\mathcal M_{a',b',c'}),\ \overline\lambda_1(\mathcal M_{a',b',c'})$ are constant functions with respect to $(a,b,c)$ on $\mathcal H(a,b,c)$. 
     If $a(n)=b(n+1)$, then $\lambda_1(\mathcal M_{a,b,c})$, $\underline\lambda'_1(\mathcal M_{a,b,c})$ are also constant functions.\\
     In particular, $\lambda_1(\mathcal L_g,\mathbb Z)$ is a constant function with respect to $g$ on $\mathcal H(c).$
\end{itemize}
\end{Proposition}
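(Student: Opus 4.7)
The plan is as follows. For Part~(1), first establish the chain of inequalities
$$\underline\mu_1 \leq \underline\lambda_1 \leq \overline\lambda_1 \leq \overline\mu_1$$
together with $\underline\mu_1 \leq \underline\lambda_1'$ and $\lambda_1 \leq \overline\mu_1$ by the inclusion of admissible test-function classes: a larger class makes a sup larger and an inf smaller. The nontrivial middle inequalities $\underline\lambda_1 \leq \overline\lambda_1$ and $\underline\mu_1 \leq \overline\mu_1$ come from a ratio/comparison argument: given a supersolution $\mathcal M_{a,b,c}\phi_1 \geq \lambda \phi_1$ and a subsolution $\mathcal M_{a,b,c}\phi_2 \leq \mu \phi_2$, form $w=\phi_1/\phi_2$ and inspect a near-supremum sequence $n_k$ of $w$. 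Expanding $\mathcal M\phi_1(n_k)/\phi_2(n_k) \geq \lambda w(n_k)$, using $w(n_k\pm 1)/w(n_k)\leq 1+o(1)$, and substituting the subsolution bound (which forces the ratios $\phi_2(n_k\pm 1)/\phi_2(n_k)$ to be uniformly bounded since $a,b>0$), one deduces $\lambda\leq\mu$. In the $\mathscr S$-case where $w$ may be unbounded, apply the trick to a subexponentially damped ratio $w_\epsilon(n)=\phi_1(n)e^{-\epsilon|n|}/\phi_2(n)$ and let $\epsilon\to 0$.

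The reverse inequality $\overline\mu_1 \leq \underline\mu_1$ requires producing a single bounded positive eigenfunction $\phi$ with $\inf\phi > 0$ solving $\mathcal M_{a,b,c}\phi = \lambda^*\phi$ for $\lambda^* := \lambda_1(\mathcal M_{a,b,c})$. The plan is the standard finite-volume approach: take the Dirichlet principal eigenpairs $(\lambda_1^N,\phi^N)$ on $[-N,N]$, use Proposition~\ref{monotonicity} to obtain $\lambda_1^N \nearrow \lambda^*$, normalize $\phi^N(0)=1$, and extract a pointwise subsequential limit $\phi > 0$ via Harnack-type estimates (Proposition~\ref{Harnack}). Almost-periodicity of $(a,b,c)$, via minimality of the hull translation, then forces $\sup\phi<\infty$ and $\inf\phi > 0$: indeed, if $\phi$ blew up or approached zero along some $n_j$, one could pass to a limit of normalized translates $\phi(\cdot+n_j)/\phi(n_j)$ to produce a nontrivial nonnegative eigenfunction at some $(a',b',c')\in\mathcal H(a,b,c)$ possessing an interior zero, contradicting the discrete strong maximum principle. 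When $a(n)=b(n+1)$, the operator is self-adjoint on $l^2$ and Proposition~\ref{var} identifies $\lambda_1$ with $\sup\Sigma(\mathcal M_{a,b,c})$; a Rayleigh-quotient approximation using compactly supported truncations of a bounded positive supersolution gives $\underline\lambda_1'\leq\sup\Sigma$, placing both $\lambda_1$ and $\underline\lambda_1'$ into the chain.

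For Part~(2), constancy on the hull is almost formal once Part~(1) is established. If $\phi$ is any witness at $(a,b,c)$ satisfying a super- or sub-solution bound, then the shift $\phi(\cdot+k)$ is a witness at $(a,b,c)\cdot k$, so each quantity is invariant under the $\mathbb Z$-action on $\mathcal H(a,b,c)$. For a general $(a',b',c')=\lim_i(a,b,c)\cdot k_i$ in the hull, extract a Harnack-based pointwise subsequential limit of the normalized shifts $\phi(\cdot+k_i)/\phi(k_i)$; this produces a witness at $(a',b',c')$ inheriting the relevant bounds (boundedness, positive infimum, or subexponentiality) from $\phi$. Semicontinuity plus minimality of the hull yields constancy. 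The specialization $a\equiv b\equiv 1$, $c=g-2$ gives the claim for $\lambda_1(\mathcal L_g,\mathbb Z)$. The main obstacle throughout is the construction in Part~(1) of the bounded positive eigenfunction with $\inf\phi > 0$: this is precisely the step where the almost-periodic structure enters essentially, upgrading mere positivity to the uniform two-sided bounds demanded by the $\mu$-type definitions.
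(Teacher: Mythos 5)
Your chain of elementary inclusions in Part~(1), the ratio argument for $\underline\lambda_1\leq\overline\lambda_1$, and the Rayleigh-quotient observation for the self-adjoint case $a(n)=b(n+1)$ are all reasonable and match the structure of the paper's proof (which delegates these to \cite[Proposition 2.1]{Liang2018} and \cite[Theorem 1.7]{Berestycki1994}). The gap is in the step you correctly identify as the crux: proving $\overline\mu_1\leq\underline\mu_1$ by constructing an exact bounded positive eigenfunction $\phi$ with $\inf\phi>0$ solving $\mathcal M_{a,b,c}\phi=\lambda^*\phi$. Such a $\phi$ need not exist. By Lemma~\ref{4.4}(3), $\inf\phi>0$ for a bounded positive eigenfunction is equivalent to $\phi$ being almost periodic, and Proposition~\ref{criticalLE} shows that the existence of a positive almost periodic eigenfunction at $\lambda_1$ forces $\underline L=0$. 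But Proposition~\ref{gperelation} must hold \emph{unconditionally} for almost periodic coefficients; in particular it is used for the almost Mathieu potential with $|\kappa|>1$ (Corollary~\ref{AMO}(2)), where $\underline L=\ln|\kappa|>0$ and hence no bounded positive eigenfunction with positive infimum exists. Your Dirichlet ground-state limit is genuinely positive and genuinely an eigenfunction, but almost periodicity does not upgrade it to having $\inf>0$, and your proposed contradiction does not materialize: if $\phi(n_j)\to\infty$ or $\phi(n_j)\to 0$, the normalized translates $\phi(\cdot+n_j)/\phi(n_j)$ equal $1$ at the origin and, by the two-term recurrence and boundedness of the coefficients, converge subsequentially to a \emph{strictly positive} eigenfunction of a hull operator; no interior zero ever appears, so the strong maximum principle is never violated.

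The paper sidesteps the exact eigenfunction entirely. It invokes \cite[Lemma 5.2]{Liang2018}, which supplies, for every $\epsilon>0$, an \emph{almost periodic} function $u_\epsilon$ solving the penalized equation $\mathcal M_{a,b,c}\me^{u_\epsilon}=\epsilon u_\epsilon\me^{u_\epsilon}$ with the a priori bounds $-\|c\|_{\infty}/\epsilon<u_\epsilon<(2+\|c\|_{\infty})/\epsilon$, and with $\epsilon u_\epsilon\to\lambda$ uniformly on $\Z$ as $\epsilon\to 0$. The test function $\me^{u_\epsilon}$ is bounded with positive infimum and is simultaneously an approximate super- and sub-solution at eigenvalue $\lambda\pm\delta$, which immediately squeezes $\underline\mu_1\geq\lambda\geq\overline\mu_1$. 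Constancy on the hull in Part~(2) then reuses the same penalized family evaluated along translates $u_\epsilon\cdot n_i$, exploiting its almost periodicity, rather than attempting a Harnack compactness argument on a (possibly nonexistent) exact eigenfunction with two-sided bounds. Without the penalized almost periodic family, neither Part~(1) nor Part~(2) of your outline closes.
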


\begin{proof}
(1) It is straightforward to check by \eqref{gpe12},\eqref{gpe56} and \eqref{gpe34} that
  \begin{equation}\label{gperel:latter two}
 \underline\mu_1(\mathcal M_{a,b,c})\leq \underline\lambda_1'(\mathcal M_{a,b,c}),\ \lambda_1(\mathcal M_{a,b,c})\leq \overline \mu_1(\mathcal M_{a,b,c})
 \end{equation}
 and
 \begin{equation}\label{gperel:formal four}
 \underline\mu_1(\mathcal M_{a,b,c})\leq \underline\lambda_1(\mathcal M_{a,b,c}),\ \overline\lambda_1(\mathcal M_{a,b,c})\leq \overline \mu_1(\mathcal M_{a,b,c}).
 \end{equation}
Moreover, by the standard argument in \cite[Theorem 1.7]{Berestycki1994}, we can prove that $\underline\lambda_1'(\mathcal M_{a,b,c})\leq\lambda_1(\mathcal M_{a,b,c})$ if $a(n)=b(n+1)$. From \cite[Proposition 2.1]{Liang2018}, we also have $\underline\lambda_1(\mathcal M_{a,b,c})\leq \overline\lambda_1(\mathcal M_{a,b,c})$.

For any $\epsilon>0,$ \cite[Lemma 5.2]{Liang2018} guarantees that the existence of an almost periodic function $u_\epsilon$ which satisfies $$\mathcal M_{a,b,c}\me^{u_\epsilon}=\epsilon u_\epsilon\me^{u_\epsilon}, \ -\frac{\|c\|_{l^\infty(\mathbb Z)}}{\epsilon}<u^\epsilon<\frac{2+\|c\|_{l^\infty(\mathbb Z)}}{\epsilon}.$$
Now we choose $\me^{u_\epsilon}\in l^\infty(\mathbb Z)$ with $\inf\limits_{n\in\mathbb Z}\me^{u_\epsilon}>0$ as a test function in \eqref{gpe34}. Meanwhile, one has $\epsilon u_\epsilon\rightarrow \lambda$ uniformly by \cite[Lemma 5.2]{Liang2018}. Thus for any $\delta>0$, there exists $\epsilon_\delta$ such that $|\epsilon_\delta u_{\epsilon_{\delta}}-\lambda|<\delta,$ and
$$(\lambda-\delta)\me^{u_{\epsilon_\delta}}\leq \mathcal M_{a,b,c}\me^{u_{\epsilon_\delta}}=\epsilon_\delta u_{\epsilon_\delta}\me^{u_{\epsilon_\delta}}\leq (\lambda+\delta)\me^{u_{\epsilon_\delta}}.$$
Then letting $\delta\rightarrow 0$, $\underline\mu_1(\mathcal M_{a,b,c})\geq \lambda\geq\overline\mu_1(\mathcal M_{a,b,c})$ follows by the definition. Thus by (\ref{gperel:formal four}), we obtain $\lambda=\underline \mu_1(\mathcal M_{a,b,c})=\underline\lambda_1(\mathcal M_{a,b,c})=\overline \lambda_1(\mathcal M_{a,b,c})=\overline\mu_1(\mathcal M_{a,b,c})$. Consequently, the desired result is obtained by (\ref{gperel:latter two}).

(2).
 Analogous to the above argument, for any sequences $\{n_i\}_{i\in\mathbb Z}$ such that $$a\cdot n_i\rightarrow a^{*},b\cdot n_i\rightarrow b^*,c\cdot n_i\rightarrow c^*,$$ we have $$\mathcal M_{a\cdot n_i,b\cdot n_i,c\cdot n_i}\me^{u_\epsilon\cdot n_i}=\epsilon u_{\epsilon}\cdot n_i \me^{u_\epsilon\cdot n_i}.$$ Notice when $\epsilon\rightarrow 0$, $\epsilon u_\epsilon\rightarrow \lambda$ uniformly. Hence for any $\delta>0$, there exists $\epsilon_{\delta}$ such that
 $|\epsilon_\delta u_{\epsilon_\delta}\cdot n-\lambda|\leq \delta$ holds for any $n.$ Thereby
  $$(\lambda+\delta)\me^{u_{\epsilon_\delta}\cdot n_i}\geq\mathcal M_{a\cdot n_i,b\cdot n_i,c\cdot n_i}\me^{u_{\epsilon_\delta}\cdot n_i}=\epsilon_\delta u_{\epsilon_\delta}\cdot n_i\me^{u_{\epsilon_\delta}\cdot n_i}\geq (\lambda-\delta)\me^{u_{\epsilon_\delta}\cdot n_i}.$$

  Note that $u_\varepsilon$ is almost periodic for any $\epsilon>0$. Passing along a subsequence $i_k\rightarrow \infty$, for any $\delta>0$, it follows from (1) that $$\lambda-\delta\leq\underline\mu_1(\mathcal M_{a^*,b^*,c^*})=\underline\lambda_1(\mathcal M_{a^*,b^*,c^*})=\overline\lambda_1(\mathcal M_{a^*,b^*,c^*})=\overline\mu_1(\mathcal M_{a^*,b^*,c^*})\leq \lambda+\delta.$$
 Finally, letting $\delta\rightarrow 0$, $$\lambda=\underline\mu_1(\mathcal M_{a^*,b^*,c^*})=\underline\lambda_1(\mathcal M_{a^*,b^*,c^*})=\overline\lambda_1(\mathcal M_{a^*,b^*,c^*})=\overline\mu_1(\mathcal M_{a^*,b^*,c^*}).$$

If $a(n)=b(n+1)$, apply the above argument to $\underline\lambda_1'(\mathcal M_{a,b,c}),\ \lambda_1(\mathcal M_{a,b,c})$, then we are done.
\end{proof}

\subsection{The Lyapunov exponent of the linearized operator}

The Lyapunov exponent is crucial to determine the average wave speed of the almost periodic traveling front. We will discuss its properties here and illustrate the connection with the existence of positive almost periodic solution.  
 Recall $$(\mathcal L_gu)(n)=u(n+1)+u(n-1)-2u(n)+g(n)u(n),$$ and $\mathcal L=\mathcal L_c$. Note that Proposition \ref{gperelation} tells us $\lambda_1(\mathcal L_g,\mathbb Z)=\lambda_1(\mathcal L,\mathbb Z)$, for any $g\in\mathcal H(c)$. As a consequence of Lemma \ref{var}, $\lambda_1=\max\Sigma(\mathcal L)=\lambda_1(\mathcal L,\mathbb Z)$. Hence we do not distinguish them with a slight abuse of notation in the forthcoming paragraphs. First we state the following technical lemma which will be frequently used, and it is an immediate consequence of \cite[Lemma 6.2]{Liang2018}.

\begin{Lemma}\label{2.2}
Let $E>\lambda_{1}$, $ n_{0}\in\mathbb{Z}$, and $\phi$ defined in $[n_{0},+\infty)\cap\mathbb Z$ satisfy
$$\mathcal{L}\phi\geq E\phi \text{ in }(n_{0},+\infty)\cap\mathbb Z,\lim_{n\rightarrow +\infty}\phi(n)=0.$$
Then there are positive constants $C, \delta$ only depending on $E, \lambda_{1}$ and $\|c(n)\|_{l^\infty}$ such that
$$\phi(n)\leq C\max\{\phi(n_{0}),0\}e^{-\delta(n-n_0)}.$$
Moreover, $\lim\limits_{E\to\lambda_{1}}\delta=0$ and $\lim\limits_{E\to+\infty}\delta=+\infty$.
\end{Lemma}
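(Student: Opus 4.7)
The plan is to dominate $\phi$ on $[n_0,\infty)\cap\mathbb{Z}$ by an exponentially decaying positive supersolution of the linear equation $(\mathcal{L}-E)u=0$, built from the generalized principal eigenfunction provided by Proposition \ref{gperelation}, and then to invoke a discrete Berestycki--Nirenberg-type maximum principle for operators with negative principal eigenvalue (here realized concretely by a ground-state substitution).

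\textbf{Construction of the exponential supersolution.} Fix $\lambda^{*}:=(E+\lambda_1)/2\in(\lambda_1,E)$. Using the family $\{u_\epsilon\}$ constructed in the proof of Proposition \ref{gperelation}, for $\epsilon=\epsilon(E,\lambda_1,\|c\|_{\ell^\infty})$ small enough the positive almost periodic function $\psi:=\me^{u_\epsilon}$ satisfies $\mathcal{L}\psi\leq\lambda^{*}\psi$ with $0<m\leq\psi\leq M<\infty$, where $m,M$ depend only on $\epsilon$ and $\|c\|_{\ell^\infty}$. Setting $w_\delta(n):=\psi(n)\,\me^{-\delta(n-n_0)}$, a one-line calculation yields
\[
(\mathcal{L}w_\delta-Ew_\delta)(n)\leq \me^{-\delta(n-n_0)}\bigl[(\lambda^{*}-E)m+(\me^{\delta}-1)M\bigr],
\]
which is strictly negative for any $\delta<\delta_0:=\ln\bigl(1+(E-\lambda^{*})m/M\bigr)$. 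Fix $\delta:=\delta_0/2>0$; then $w_\delta$ is a strict positive supersolution of $(\mathcal{L}-E)u=0$ on $[n_0,\infty)\cap\mathbb{Z}$.

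\textbf{Comparison.} Let $A:=\max\{\phi(n_0),0\}/m$ and $W:=A\,w_\delta-\phi$. By linearity $(\mathcal{L}-E)W\leq 0$, while $W(n_0)\geq Am-\phi(n_0)\geq 0$, $W(n)\to 0$ at $+\infty$ (both $w_\delta$ and $\phi$ do), and $W$ is bounded. I would perform the ground-state substitution $\tilde W:=W/\psi$; then $\tilde W$ is bounded, $\tilde W(n_0)\geq 0$, $\tilde W(n)\to 0$ at $+\infty$ (using $\psi\geq m>0$), and a direct rearrangement of $\mathcal{L}(\tilde W\psi)-E\,\tilde W\psi$ gives
\[
(\mathcal{L}W-EW)(n)=\psi(n+1)[\tilde W(n+1)-\tilde W(n)]+\psi(n-1)[\tilde W(n-1)-\tilde W(n)]+(\mathcal{L}\psi-E\psi)(n)\,\tilde W(n).
\]
At any putative strictly negative interior minimum $n^{*}>n_0$ of $\tilde W$, the first two summands are $\geq 0$ by minimality, and the third is the product of the two strictly negative quantities $(\mathcal{L}\psi-E\psi)(n^{*})\leq(\lambda^{*}-E)m<0$ and $\tilde W(n^{*})<0$, hence strictly positive; this forces $(\mathcal{L}W-EW)(n^{*})>0$, contradicting the supersolution inequality. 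Combined with the values at the boundary and at infinity, this yields $\tilde W\geq 0$, and therefore
\[
\phi(n)\leq A\,w_\delta(n)\leq (M/m)\max\{\phi(n_0),0\}\,\me^{-\delta(n-n_0)},
\]
which is the claimed bound with $C:=M/m$.

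\textbf{Asymptotics and main obstacle.} As $E\searrow\lambda_1$, one must let $\epsilon\to 0$ to maintain $\epsilon u_\epsilon\leq\lambda^{*}$, so $m/M=\me^{-(2+2\|c\|_{\ell^\infty})/\epsilon}\to 0$ super-exponentially while $E-\lambda^{*}\to 0$; hence $\delta_0\to 0$. As $E\to+\infty$ one may instead keep $\epsilon$ bounded away from $0$ (admissible since $\lambda^{*}\to+\infty$), so $m/M$ stays positive while $E-\lambda^{*}\to+\infty$, giving $\delta_0\to+\infty$. The delicate point is the comparison step: the naive discrete maximum-principle argument at an interior negative minimum of $W$ only yields $\mathcal{L}W(n^{*})\geq c(n^{*})W(n^{*})$, which is inconclusive whenever $\sup c\geq E$. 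The crucial trick is to substitute by the bounded positive $\psi$ (not by $w_\delta$, which decays and could make the quotient blow up), converting the failure of coercivity at points where $c(n)\geq E$ into the strictly signed zeroth-order term $(\mathcal{L}\psi-E\psi)\tilde W$; this is precisely what restores the maximum principle and closes the argument.
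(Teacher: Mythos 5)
Your proof is correct and is a genuine, self-contained argument, whereas the paper simply cites \cite[Lemma 6.2]{Liang2018} and offers no details. The route you take is the natural discrete analogue of the Berestycki--Nirenberg--Varadhan philosophy: you manufacture a strict exponential supersolution $w_\delta=\psi\,\me^{-\delta(\cdot-n_0)}$ of $(\mathcal L-E)u=0$ from the $\epsilon$-regularised principal function $\psi=\me^{u_\epsilon}$ of Proposition \ref{gperelation}, and you close the comparison by a ground-state substitution $\tilde W=W/\psi$, which turns the uncontrolled zeroth-order term $c(n)-E$ (problematic wherever $c(n)\geq E$) into the sign-definite factor $(\mathcal L\psi-E\psi)/\psi\leq\lambda^*-E<0$. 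The identity for $\mathcal L W-EW$ in terms of $\tilde W$ and $\psi$ is correct, and the minimum of $\tilde W$ (if negative) is attained in the interior because $\tilde W(n_0)\geq 0$, $\tilde W\to 0$ at $+\infty$, and $\Z$ is discrete, so the maximum-principle contradiction goes through. The dependence of $C=M/m$ and of $\delta$ solely on $E,\lambda_1,\|c\|_{\ell^\infty}$ is correctly traced through the Liang 2018 bounds $-\|c\|_\infty/\epsilon<u_\epsilon<(2+\|c\|_\infty)/\epsilon$ and the uniform convergence $\epsilon u_\epsilon\to\lambda_1$, and your asymptotics $\delta\to 0$ as $E\searrow\lambda_1$ (forced $\epsilon\to 0$ makes $m/M\to0$ faster than $E-\lambda^*\to 0$) and $\delta\to+\infty$ as $E\to\infty$ (fixed $\epsilon$, $E-\lambda^*\to\infty$) are both justified. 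Two small remarks: you could state explicitly that the existence of a finite interior minimiser for $\tilde W$ follows from $\tilde W(n_0)\geq 0$, $\tilde W\to0$ at $+\infty$, and discreteness (you only allude to it with \lq\lq combined with the values at the boundary and at infinity\rq\rq); and your closing paragraph correctly identifies why a naive maximum principle on $W$ itself fails when $\sup c\geq E$ and why dividing by the bounded-below $\psi$ (rather than by $w_\delta$, which decays) is what rescues it. The paper's reliance on an external lemma is replaced here by a clean, verifiable proof that adds value to the exposition.
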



Now we study the Lyapunov exponent $L(E)$ of $\mathcal L$ defined in (\ref{LE}) and our observation is stated as follows:
\begin{Proposition}\label{prepare}
For all $ E>\lambda_1$, there exists a unique positive solution $\phi_E(\cdot;g)$ of
\begin{equation}\label{equation}
\mathcal L_g\phi=E\phi \text{ in }\mathbb Z,\ \phi(0)=1,\ \lim\limits_{n\rightarrow\infty}\phi(n)=0.
\end{equation}
and the limit $$L(E)=-\lim\limits_{n\rightarrow\pm\infty}\frac{1}{n}\ln\phi_{E}(n;g)>0, \text{ for any }g\in\mathcal H(c),$$
the convergence is uniform in $g\in\mathcal H(c).$
\end{Proposition}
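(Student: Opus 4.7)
The plan is to identify $\phi_E(\cdot;g)$ with the normalized positive Weyl solution coming from the stable bundle of the Schr\"odinger cocycle, and to read off both its positivity and its exponential rate directly from uniform hyperbolicity. Since $E>\lambda_1=\max\Sigma(\mathcal L)$, Theorem~\ref{resolvent} says that the cocycle $(T,S_E^f)$ is uniformly hyperbolic over $\mathcal H(c)$, so there is a continuous invariant splitting $\R^2=E_s(g)\oplus E_u(g)$ with uniform exponential contraction on $E_s$ and expansion on $E_u$. A formal solution of $\mathcal L_g\phi=E\phi$ tending to $0$ at $+\infty$ is precisely a scalar multiple of the one whose initial vector $(\phi(1),\phi(0))^\top$ lies in the one-dimensional stable bundle $E_s(g)$, so existence and uniqueness up to scalar come for free; I write $\phi_E^+(\cdot;g)$ and $\phi_E^-(\cdot;g)$ for the Weyl solutions decaying at $+\infty$ and $-\infty$ respectively.

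For positivity and the normalization $\phi_E(0;g)=1$, I would use that the full-line resolvent has a strictly positive kernel when $E$ lies above the spectrum. The semigroup $e^{t\mathcal L_g}$ is positivity preserving because the off-diagonal entries of $\mathcal L_g$ are nonnegative and the parabolic maximum principle of Proposition~\ref{Maximum} applies; connectedness of $\Z$ upgrades this to strict positivity of the heat kernel $e^{t\mathcal L_g}(n,m)>0$ for every $t>0$. Since $E>\lambda_1$, the Laplace representation $(E-\mathcal L_g)^{-1}=\int_0^\infty e^{-tE}e^{t\mathcal L_g}\,dt$ converges in operator norm and yields a strictly positive Green's kernel $G(n,m;g)$. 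The classical Wronskian formula $G(n,m;g)=W^{-1}\phi_E^+\bigl(\max\{n,m\};g\bigr)\phi_E^-\bigl(\min\{n,m\};g\bigr)$ then forces $\phi_E^+(\cdot;g)$ to have constant sign, hence $\phi_E^+(0;g)\neq 0$, and setting $\phi_E:=\phi_E^+/\phi_E^+(0;g)$ produces the unique positive normalized decaying solution. An equivalent route uses Proposition~\ref{monotonicity}: since $E>\lambda_1(\mathcal L_g,[-N,N])$ for every $N$, the Dirichlet Green's functions on $[-N,N]$ are positive (a standard Jacobi-matrix fact above the spectrum), and a limit recovers the same $\phi_E$.

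For the Lyapunov identity, uniform hyperbolicity together with the $\mathrm{SL}(2,\R)$ constraint forces the stable Lyapunov exponent to be exactly $-L(E)$. By unique ergodicity of the minimal translation $T$ on $\mathcal H(c)$, continuity of the stable section $g\mapsto v_s(g)\in E_s(g)$, and the uniform convergence of Birkhoff averages available for uniformly hyperbolic cocycles over uniquely ergodic bases, one obtains
\begin{equation*}
\lim_{n\to+\infty}\tfrac{1}{n}\ln\|A_n(g)v_s(g)\|=-L(E)\qquad\text{uniformly in }g\in\mathcal H(c).
\end{equation*}
Positivity gives $\phi_E(n+1;g)/\phi_E(n;g)=\phi_E(1;T^n g)$, a continuous positive function of $T^n g\in\mathcal H(c)$, hence uniformly bounded above and below; combined with the fact that $\phi_E(n;g)$ is a continuous nonzero multiple of the second coordinate of $A_n(g)v_s(g)$, the uniform limit above transfers to $-\lim_{n\to+\infty}\tfrac{1}{n}\ln\phi_E(n;g)=L(E)$. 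For $n\to-\infty$ the same vector $v_s(g)$ is unstable for the inverse cocycle and expands at rate $L(E)$, yielding the same limit. Positivity $L(E)>0$ is intrinsic to uniform hyperbolicity.

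The step I expect to be the main obstacle is the full-line positivity/nonvanishing-at-$0$ argument, since it is precisely what allows the continuous normalization $\phi_E(0;g)=1$ to depend continuously on $g\in\mathcal H(c)$, and it is what pins down the sign of $\phi_E^+(\cdot;g)$ uniformly in $g$; the semigroup argument, or equivalently the Dirichlet-approximation route via Proposition~\ref{monotonicity}, should deliver it cleanly.
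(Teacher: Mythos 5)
Your proposal is correct, and it follows the same broad dynamical strategy as the paper -- reduce to the uniform hyperbolicity of $(T,S^E)$ via Theorem~\ref{resolvent}, then use unique ergodicity of the base to promote the Lyapunov decay rate on the stable bundle to a uniform limit. The genuine difference lies in how you obtain the positive decaying solution and its continuous normalization. The paper simply cites \cite[Lemma~6.3]{Liang2018} for the existence, uniqueness and positivity of $\phi_E(\cdot;g)$, and then verifies a posteriori (via Lemma~\ref{2.2}) that the initial vector $v^g_E=(\phi_E(1;g),\phi_E(0;g))$ lies in $E_s(g)$. You instead start from the stable bundle, identify $\phi_E^+$ with the Weyl solution it spans, and derive positivity from the strictly positive full-line Green's function via the semigroup Laplace representation and the Wronskian formula (or, alternatively, from Dirichlet approximation using Proposition~\ref{monotonicity}). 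This buys you a self-contained proof of existence and positivity and makes explicit why the normalization $\phi_E(0;g)=1$ depends continuously on $g$, a point the paper leaves implicit; the cost is that you must supply the sign-constancy argument for $\phi_E^+$ from $G>0$, which is a short but nontrivial step. For the Lyapunov identity both arguments are essentially the same: you invoke the Walters/Furman-type uniform convergence (the paper's Lemma~\ref{RulleOseledec}) and the $\mathrm{SL}(2,\mathbb R)$ constraint to pin the stable rate at $-L(E)$, and both use the uniformly bounded ratio $\phi_E(n+1;g)/\phi_E(n;g)$ (which in your framing is the continuous function $\phi_E(1;T^ng)$) to transfer the rate from the cocycle norm to $\phi_E(n;g)$ itself, with the backward-time expansion giving the $n\to-\infty$ limit.
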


First we need the following lemma:
 \begin{Lemma}[\cite{Walters2006}]\label{RulleOseledec}
 Let $T:X\rightarrow X$ be a uniquely ergodic homeomorphism of the compact metric space $X$ and $(T,A)$ be an $\mathrm{SL}(2,\mathbb R)$ cocycle over the probability space $(X,\mu)$. If $(T,A)$ is uniformly hyperbolic, then $\frac{1}{n}\log\|A_n(x)\|$ converges uniformly to a constant.
 \end{Lemma}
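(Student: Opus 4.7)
The plan is to exploit the continuous invariant splitting $\mathbb{R}^2=E_s(x)\oplus E_u(x)$ provided by uniform hyperbolicity \eqref{UHspace}, reduce $\frac{1}{n}\log\|A_n(x)\|$ to a Birkhoff sum of a continuous function under $T$, and then invoke the uniform ergodic theorem for uniquely ergodic homeomorphisms (the Oxtoby theorem).

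First I would note that, since $E_s$ and $E_u$ are continuous line fields on the compact space $X$, the angle $\angle(E_s(x),E_u(x))$ is bounded below uniformly by some $\theta_0>0$. I would then define continuous nonnegative functions $\lambda_u(x):=\|A(x)|_{E_u(x)}\|$ and $\lambda_s(x):=\|A(x)|_{E_s(x)}\|$; these are well-defined even when $E_u,E_s$ admit no global continuous unit section, since the sign ambiguity of a basis vector in a $1$-dimensional subspace is invisible to the operator norm. Because each subbundle is $A$-invariant and $1$-dimensional, $\|A_n(x)|_{E_u(x)}\|=\prod_{k=0}^{n-1}\lambda_u(T^k x)$, and similarly for the stable side. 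Uniform hyperbolicity \eqref{UHspace} guarantees $\lambda_u(x)\geq \mathrm{e}^c>1$ and $\lambda_s(x)\leq \mathrm{e}^{-c}<1$, after passing if necessary to a fixed iterate $A_N$ (which is harmless for Ces\`aro limits).

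Second, I would sandwich $\|A_n(x)\|$ by the unstable-direction expansion. Any unit vector $v\in\mathbb{R}^2$ decomposes as $v=\alpha e_u(x)+\beta e_s(x)$ with unit vectors $e_u,e_s$ in the respective bundles and $|\alpha|,|\beta|\leq (\sin\theta_0)^{-1}$; applying $A_n(x)$ and using the uniform angle bound on the target side $T^n x$ yields constants $c_1,c_2>0$ depending only on $\theta_0$ such that
\begin{equation*}
c_1\prod_{k=0}^{n-1}\lambda_u(T^k x)\leq \|A_n(x)\|\leq c_2\prod_{k=0}^{n-1}\lambda_u(T^k x)\Bigl(1+\mathrm{e}^{-2cn}\Bigr).
\end{equation*}
Taking $\frac{1}{n}\log$, the correction term vanishes uniformly in $x$, so $\frac{1}{n}\log\|A_n(x)\|-\frac{1}{n}\sum_{k=0}^{n-1}\log\lambda_u(T^k x)\to 0$ uniformly on $X$.

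Finally, since $\log\lambda_u\in C(X)$ and $(X,T)$ is uniquely ergodic, Oxtoby's theorem yields that the Birkhoff averages $\frac{1}{n}\sum_{k=0}^{n-1}\log\lambda_u(T^k x)$ converge uniformly on $X$ to the constant $\int_X\log\lambda_u\,d\mu$; combining with the previous step shows $\frac{1}{n}\log\|A_n(x)\|$ converges uniformly to this constant, and integrating against $\mu$ and comparing with the defining formula of $L(T,A)$ identifies the limit as the Lyapunov exponent. The hard part will be establishing the lower bound in the sandwich with a constant $c_1$ independent of $x,n$: it relies crucially on the \emph{uniform} angle bound between the stable and unstable bundles, which in turn uses both compactness of $X$ and continuity of the splitting. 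Without uniform hyperbolicity the angle could degenerate and the uniformity of the conclusion would break down.
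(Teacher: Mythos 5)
The paper does not prove this lemma; it is cited from Walters. Your argument is a correct, self-contained proof and is the standard one: use uniform hyperbolicity to reduce the subadditive quantity $\frac{1}{n}\log\|A_n(x)\|$ to an ordinary Birkhoff sum of the continuous function $\log\lambda_u$, then invoke the uniform Birkhoff (Oxtoby) theorem for uniquely ergodic systems, which supplies the uniform convergence that the subadditive ergodic theorem alone cannot.

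Two small remarks. First, your closing sentence has the difficulty located backwards: the \emph{lower} bound $\|A_n(x)\|\ge\prod_{k=0}^{n-1}\lambda_u(T^kx)$ is immediate with $c_1=1$ (simply test $A_n(x)$ on a unit vector in $E_u(x)$) and uses no angle bound at all; it is the \emph{upper} bound that requires the uniform transversality of $E_s$ and $E_u$, to control the coefficients $|\alpha|,|\beta|\le(\sin\theta_0)^{-1}$ in the decomposition $v=\alpha e_u+\beta e_s$ and hence to absorb the contribution of the stable direction. Second, the step of ``passing to a fixed iterate $A_N$'' to arrange $\lambda_u>1$ pointwise is unnecessary---and in principle delicate, since a power $T^N$ of a uniquely ergodic homeomorphism need not itself be uniquely ergodic. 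You never actually use $\lambda_u(x)>1$ pointwise: the estimates $\prod_{k=0}^{n-1}\lambda_u(T^kx)\ge C^{-1}\mathrm{e}^{cn}$ and $\prod_{k=0}^{n-1}\lambda_s(T^kx)\le C\mathrm{e}^{-cn}$ follow directly from \eqref{UHspace} and already give the decay $C^2\mathrm{e}^{-2cn}$ of the correction term, so you can drop the iterate entirely and apply Oxtoby directly to $(X,T)$.
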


\begin{proof}[Proof of Proposition \ref{prepare}]
The existence and uniqueness of the positive solution follows from \cite[Lemma 6.3]{Liang2018}.

Now let $X=\mathcal H(c), T:\mathcal H(c)\rightarrow \mathcal H(c)$ with $Tg=g\cdot 1$ for any $g\in \mathcal H(c)$, $S^E(g)=\begin{pmatrix}
E+2-g(0) &-1\\
1        &0
\end{pmatrix}
\in C^0(\mathcal H(c),\mathrm{SL}(2,\mathbb R)).$ Then $(T,S^E)$ is a cocycle defined on $(X,\mu)$, where $\mu$ is the Haar measure on $\mathcal H(c).$
 Note that $T$ is uniquely ergodic on $\mathcal H(c)$ because $\mathcal H(c)$ is a compact Abelian group and $T$ is minimal.

 Since $E>\lambda_1$, by Theorem \ref{resolvent}, $(T,S^E)$ is uniformly hyperbolic. Then as a consequence of Lemma \ref{RulleOseledec}, the limit $L^E:=\lim\limits_{n\rightarrow\infty}\frac{1}{n}\ln\|S^E_n(g)\|$ exists, and it is independent of $g\in\mathcal H(c).$ Moreover, the convergence is uniform in $g\in\mathcal H(c).$  
Thus we can deduce that for any $g\in\mathcal H(c)$,
$$L^E=\lim\limits_{n\rightarrow\infty}\frac{1}{n}\ln\|S^E_n(g)\|=\lim\limits_{n\rightarrow\infty}\frac{1}{n}\int_{\mathcal H(c)}\ln\|S^E_n(g)\|d\mu=L(T_2,A)=L(E)$$
 where the last equality follows from the direct examination of the definition.

Note that in the proof of Lemma \ref{RulleOseledec}, we can deduce that the limit $\lim\limits_{n\rightarrow\infty}\frac{1}{n}\ln|S_n^E(g)\cdot v|$ exists for any $v\in\mathbb R^2\backslash\{0\}$. Now it follows from \eqref{UHspace} that uniform hyperbolicity implies that for any $g\in\mathcal H(c)$, and every $n\geq 0$, only $v(g)\in E_s(g)\subset\mathbb R^2$ satisfies $\lim\limits_{n\rightarrow\infty}\frac{1}{n}\ln|S_{n}^E(g)v(g)|<0$. Otherwise, all $v\in E_s(g)$ violates \eqref{UHspace}.

Denote $v^g_E=(\phi_E(1;g),\phi_E(0;g))$ for $g\in\mathcal H(c)$.
 By Lemma \ref{2.2} and \eqref{equation}, one has
\begin{equation}\label{growth of S_n^E}
\lim\limits_{n\rightarrow\infty}\frac{1}{n}\ln|S^E_n(g)\cdot v_E^g|=\lim\limits_{n\rightarrow\infty}\frac{\ln (\phi_E^2(n;g)+\phi_E^2(n-1;g))}{2n}<0.
\end{equation}
Hence  $v_E^g\in E_s(g)$, and $\lim\limits_{n\rightarrow\infty}\frac{1}{n}\ln|S_n^E(g)\cdot v_E^g|=-L(E)$ since $S^E_n(g)$ takes value in $\mathrm{SL}(2,\mathbb R)$.

 Meanwhile, since $S_{-n}^E(g)v_E^g\in E_s(T^{-n}g)$ in \eqref{UHspace}, one has $$|v_E^g|=|S_n^E(T^{-n}g)S_{-n}^E(g)v^g_E|\leq Ce^{-nL(E)}|S_{-n}^Ev_E^g|.$$ Then we can deduce that
 \begin{equation}\label{growth of S_n^E2}
 \lim\limits_{n\rightarrow\infty}\frac{1}{n}|S_{-n}^E(g)v_E^g|=\lim\limits_{n\rightarrow\infty}\frac{\ln(\phi^2_E(-n)+\phi_E^2(-n+1))}{n}=L(E).
 \end{equation}
From equation $(\mathcal L_g\phi_E(\cdot;g))(n)=E\phi_E(n;g)$, we have $$\frac{\phi_E(n+1;g)}{\phi_E(n;g)}+\frac{\phi_E(n-1;g)}{\phi_E(n;g)}=2+g(n)\leq M,$$ where $M$ depends on $\|g\|_{l^\infty}$. It follows that $$\frac{1}{M}\phi_E(n;g)\leq \phi_E(n+1;g)\leq M\phi_E(n;g).$$ Inserting this inequality into \eqref{growth of S_n^E} and \eqref{growth of S_n^E2}, $L(E)=\lim\limits_{n\rightarrow\pm\infty}-\frac{\ln\phi_E(n;g)}{n}$ follows directly. Then the proof is complete.
\end{proof}

The concavity and monotonicity of the Lyapunov exponent $L(E)$ will be needed in our construction of the almost periodic traveling front, and it is given by:
\begin{Lemma}\label{2.5}
The function $E\mapsto L(E)$ defined on $(\lambda_{1},+\infty)$ is concave, nondecreasing and there exists $C>0$ such that, for $E>\lambda_{1}$,
\begin{equation}\label{18}
\delta\leq L(E)\leq C\sqrt{E},
\end{equation}
where $\delta$ was given in Lemma \ref{2.2} and $L(E)>\underline{L}:=\lim\limits_{E\searrow \lambda_{1}}L(E).$
\end{Lemma}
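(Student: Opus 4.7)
The plan is to prove the four assertions in the order (i) lower bound $L(E) \geq \delta$, (ii) upper bound $L(E) \leq C\sqrt{E}$, (iii) concavity and monotonicity on $(\lambda_1, \infty)$, and (iv) the strict inequality $L(E) > \underline{L}$. Steps (i) and (ii) are immediate from Proposition \ref{prepare} together with elementary transfer-matrix estimates. The real content is in (iii).

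For (i), I would combine the exponential decay estimate of Lemma \ref{2.2} with the identity $L(E) = -\lim_{n\to\infty} \frac{1}{n}\ln \phi_E(n;g)$ proved in Proposition \ref{prepare}: since $\phi_E(n;g) \leq Ce^{-\delta n}$ with $\delta = \delta(E,\lambda_1,\|c\|_\infty) > 0$, passing to the limit yields $L(E) \geq \delta$. For (ii), I would bound the transfer matrix operator norm by $\|A(n)\| \leq E + C'$ with $C'$ depending only on $\|c\|_\infty$, giving $\|A_n(g)\| \leq (E+C')^n$ and hence $L(E) \leq \ln(E+C')$ via \eqref{LE}. Since $\lambda_1 > 0$ (as observed after Proposition \ref{var}), the ratio $\ln(E+C')/\sqrt{E}$ stays bounded on $(\lambda_1,\infty)$, yielding $L(E) \leq C\sqrt{E}$.

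For (iii), the plan is to invoke the Thouless formula for ergodic Jacobi operators. Since $(\mathcal H(c), T, \mu)$ is uniquely ergodic and $\mathcal L$ is an almost periodic Schr\"odinger operator, classical theory (see e.g.\ \cite{Damanik2017}) gives
\begin{equation*}
L(E) \;=\; \int_{\mathbb R} \ln|E - E'|\, dk(E'),
\end{equation*}
where $k$ is the IDS introduced in \S\ref{aps}, a probability measure supported on $\Sigma(\mathcal L) \subset (-\infty, \lambda_1]$. For $E > \lambda_1$ the integrand is smooth in $E$ and dominated by an integrable function on $\Sigma$, so differentiation under the integral sign is legitimate and yields
\begin{equation*}
L'(E) \;=\; \int_{\Sigma} \frac{dk(E')}{E - E'} \;>\; 0, \qquad L''(E) \;=\; -\int_{\Sigma} \frac{dk(E')}{(E - E')^2} \;<\; 0,
\end{equation*}
so that $L$ is strictly increasing and strictly concave on $(\lambda_1, \infty)$. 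Assertion (iv) is then immediate: for any $E > \lambda_1$, pick $E_0 \in (\lambda_1, E)$; strict monotonicity gives $L(E_0) < L(E)$ while monotonicity gives $\underline{L} = \lim_{E'\searrow\lambda_1} L(E') \leq L(E_0)$, whence $\underline{L} < L(E)$.

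The main obstacle is justifying (iii). The Thouless formula is the cleanest route and is well-documented in the references already cited, but if one prefers to stay strictly within the dynamical framework of the paper, an alternative is to analyze the Riccati variable $r_E(n;g) := \phi_E(n;g)/\phi_E(n-1;g)$, whose Birkhoff average equals $-L(E)$ by Proposition \ref{prepare}, and exploit the monotone dependence of $r_E$ on $E$ dictated by the recursion $r_E(n+1;g) = E+2-g(n) - 1/r_E(n;g)$, together with strict monotonicity inherited from the ergodic decomposition. Either path delivers strict monotonicity and strict concavity, from which (iv) is a one-line consequence.
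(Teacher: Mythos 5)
Your proposal is correct, but it takes a genuinely different route from the paper for the heart of the lemma (concavity, monotonicity, and the strict inequality). The paper's own proof simply delegates concavity and nondecreasingness to \cite[Lemma 2.5]{Nadin2017}, which runs a maximum-principle argument on the geometric mean $\phi_{E_1}^{1/2}\phi_{E_2}^{1/2}$ to show $\phi_E \leq \phi_{E_1}^{1/2}\phi_{E_2}^{1/2}$ when $E=(E_1+E_2)/2$; it proves monotonicity by observing that $\phi_{E_2}$ is a subsolution of the $E_1$-equation and then invoking Lemma \ref{2.2}; and it gets the upper bound $L(E)\leq\sqrt{E-\inf c}$ by comparing $\phi_E$ against the explicit subsolution $e^{-n\sqrt{E-\inf c}}$. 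You instead invoke the Thouless formula $L(E)=\int\ln|E-E'|\,dk(E')$ and differentiate under the integral sign — a classical spectral-theoretic tool, well-documented in \cite{Damanik2017} and valid for the almost-periodic operators considered here. Your route has the advantage of delivering \emph{strict} monotonicity and \emph{strict} concavity directly (since $L'(E)=\int(E-E')^{-1}dk>0$ and $L''(E)=-\int(E-E')^{-2}dk<0$ once one notes $\supp dk\subset(-\infty,\lambda_1]$), which makes the claim $L(E)>\underline{L}$ a one-liner; in the paper's PDE approach one must additionally combine nondecreasingness, concavity, and the fact that $L(E)\to\infty$ (from $\delta(E)\to\infty$) to exclude $L$ being locally constant. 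Your upper bound is obtained by the cruder transfer-matrix estimate $L(E)\leq\ln(E+C')$ rather than the paper's comparison function $h_E$, but since $\lambda_1\geq\inf c>0$ the ratio $\ln(E+C')/\sqrt{E}$ is indeed bounded on $(\lambda_1,\infty)$, so this suffices for the stated inequality $L(E)\leq C\sqrt{E}$ (and is in fact sharper for large $E$). Both approaches are sound; yours sits more naturally in the spectral-theory framework, while the paper's stays inside the elementary comparison/maximum-principle toolkit it has already built up.
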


\begin{proof}
$L(E)$ is nondecreasing and concave follows from \cite[Lemma 2.5]{Nadin2017}.

For $E_1<E_2$, let $\phi_{E_1}$ and $\phi_{E_2}$ be obtained in Proposition \ref{prepare}. It is straightforward to check that $\phi_{E_{2}}$ is a subsolution of the equation satisfied by $\phi_{E_{1}}$ in $[0,\infty)$. Applying Lemma \ref{2.2} to $\phi_{E_2}-\phi_{E_1}$, monotonicity follows directly. Also Lemma \ref{2.2} shows that,
$$
\begin{aligned}
L(E)&=-\lim\limits_{n\rightarrow+\infty}\frac{1}{n}\ln\phi_{E}(n;g)\geq -\lim\limits_{n\rightarrow+\infty}\frac{C}{n}+\delta=\delta.\\
\end{aligned}
$$

Let $h_{E}(n;g)=e^{-n\sqrt{E-\inf c}}-\phi_{E}(n;g)$ with $\phi_{E}(\cdot;g)$ satisfying
$$\mathcal{L}_g\phi=E\phi \text{ in }\Z,\ \phi_{E}(0)=1, \lim\limits_{n\rightarrow\infty}\phi(n)=0.$$
We can check $h_{E}$ satisfies $\mathcal{L}_gh_{E}\geq E h_{E}$ and $\lim\limits_{n\rightarrow \infty}h_E(n;g)=0.$ By Lemma \ref{2.2}, we have $\phi_{E}(n;g)\geq e^{-n\sqrt{E-\inf c}}$, whence $L(E)\leq \sqrt{E-\inf c}\leq C\sqrt{E}$, as desired.
\end{proof}

 The existence of positive almost periodic solution always implies the Lyapunov exponent $L(E)$ will decay to 0 as $E\searrow\lambda_1$, and it is crucial for us to determine in which case we can establish the almost periodic traveling front with average wave speed $w\in(w^*,\infty)$ (c.f. Corollaries \ref{AMO} and \ref{Main2}).
 First we need a preliminary lemma about critical operator.

\begin{Lemma}\label{4.4}
  Suppose that $\mathcal{L}$ admits a positive bounded solution $\varphi$ of $\mathcal L\phi=E\phi$. Then the associated eigenvalue is the generalized principal eigenvalue $\lambda_{1}$ and there hold:
 \begin{itemize}
\item[(1)] $\mathcal{L}-\lambda_{1}$ is critical;
\item[(2)] if $\inf \varphi>0$, then $\mathcal{L}_{g}-\lambda_{1}$ is critical for any $g\in\mathcal H(c)$;
\item[(3)] $\inf\varphi>0$ if and only if $\varphi$ is almost periodic.
\end{itemize}
\end{Lemma}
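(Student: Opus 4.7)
The plan is to prove the statements in the order $E=\lambda_1$, then (1), (3), (2), since each builds on the previous. To identify the eigenvalue, $\varphi$ is admissible in the test classes of Proposition \ref{gperelation}: as a positive bounded solution of $\mathcal L\varphi=E\varphi$, it satisfies $\mathcal L\varphi\leq E\varphi$ (giving $\lambda_1(\mathcal L,\mathbb Z)\leq E$) and $\mathcal L\varphi\geq E\varphi$ (giving $\underline\lambda_1'(\mathcal L)\geq E$). Proposition \ref{gperelation}(1), applicable because $\mathcal L$ has $a(n)=b(n+1)=1$, forces $\underline\lambda_1'=\lambda_1$, hence $E=\lambda_1$.

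For (1), the quadratic form $l+\lambda_1\|\cdot\|^2$ of $-\mathcal L+\lambda_1$ is nonnegative on $l_c(\mathbb Z)$ since $\lambda_1=\max\Sigma(\mathcal L)$. To upgrade to criticality, I would assume for contradiction a nonnegative nontrivial weight $\varpi$ with $l(\phi)+\lambda_1\|\phi\|^2\geq\sum_n\varpi(n)\phi(n)^2$ for every $\phi\in l_c$. The discrete ground-state substitution $\phi=\varphi\psi$ together with the Green identity (Lemma \ref{Green}) rewrites the left-hand side as the weighted Dirichlet energy $\sum_n\varphi(n)\varphi(n+1)(\psi(n)-\psi(n+1))^2$; choosing $\psi_N(n)=\max\{0,1-|n|/N\}$ and letting $N\to\infty$ drives this energy to zero while $\sum_n\varpi\varphi^2\psi_N^2$ stays bounded below by $\sum_{|n|\leq N/2}\varpi(n)\varphi(n)^2>0$, a contradiction. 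This is the standard discrete Allegretto--Piepenbrink / Pinchover criticality argument, and the same proof applies verbatim to any $\mathcal L_g$ that admits a positive bounded $\lambda_1$-solution.

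Turning to (3), the direction ``$\inf\varphi>0$ implies $\varphi$ almost periodic'' uses Gottschalk--Hedlund. For any $g\in\mathcal H(c)$, pick $n_k$ with $c\cdot n_k\to g$ and extract an $l^\infty_{\mathrm{loc}}$-limit of $\varphi\cdot n_k/\varphi(n_k)$ to obtain a positive solution of $\mathcal L_g\phi=\lambda_1\phi$ sandwiched between $\inf\varphi/\sup\varphi$ and $\sup\varphi/\inf\varphi$; normalized by $\phi_g(0)=1$, it is unique by (1) applied to $\mathcal L_g$ and Proposition \ref{Keller2017}, so $g\mapsto\phi_g$ is a continuous map on $\mathcal H(c)$. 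The sequence $\phi_{c\cdot n}$ coincides with $\varphi(n+\cdot)/\varphi(n)$ on the orbit, so $\log\varphi(n)-\log\varphi(0)=\sum_{k=0}^{n-1}h(c\cdot k)$ with $h(g):=\log\phi_g(1)$ continuous on $\mathcal H(c)$. Boundedness of $\log\varphi$ makes these Birkhoff sums uniformly bounded, and Gottschalk--Hedlund on the minimal system $(\mathcal H(c),T)$ yields continuous $F:\mathcal H(c)\to\mathbb R$ with $h=F-F\circ T$; hence $\log\varphi(n)=\mathrm{const}-F(c\cdot n)$, which is almost periodic. Conversely, if $\varphi$ is almost periodic but $\inf\varphi=0$, pick $n_k$ with $\varphi(n_k)\to 0$ and extract (using almost periodicity) $\varphi\cdot n_k\to\psi$ in $l^\infty$: then $\psi\geq 0$, $\psi(0)=0$, and $\|\psi\|_\infty=\|\varphi\|_\infty>0$. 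Since $\psi$ solves $\mathcal L_g\psi=\lambda_1\psi$ for $g=\lim c\cdot n_k$, the recursion $\psi(1)+\psi(-1)=(2-g(0)+\lambda_1)\psi(0)=0$ forces $\psi(\pm 1)=0$, and induction gives $\psi\equiv 0$, contradicting $\|\psi\|_\infty>0$.

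Finally, (2) follows by propagation: under $\inf\varphi>0$, (3) makes $\varphi$ almost periodic, so for every $g\in\mathcal H(c)$ the limiting procedure of the (3) argument produces a positive bounded solution $\varphi_g$ of $\mathcal L_g\varphi_g=\lambda_1\varphi_g$ with $\inf\varphi_g>0$; applying (1) to $\mathcal L_g$ yields the criticality of $\mathcal L_g-\lambda_1$. The main obstacle is the criticality upgrade in (1): the null-sequence argument via the ground-state substitution needs the cutoff Dirichlet energy weighted by $\varphi(n)\varphi(n+1)$ to vanish, which is where the hypothesis that $\varphi$ be \emph{bounded} (and not merely subexponential) enters in a subtle way. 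A secondary subtlety is the transfer of the two-sided bound on $\phi_g$ from the orbit $\{c\cdot n\}$ to the full hull, needed to make $g\mapsto h(g)$ continuous so that Gottschalk--Hedlund can be invoked on the compact minimal system $(\mathcal H(c),T)$.
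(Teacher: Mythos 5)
Your proposal is correct and, in contrast with the paper, is self-contained; the paper proves this lemma by citation. For the eigenvalue identification you use Proposition \ref{gperelation} exactly as the paper does. For (1), the paper argues by contradiction and then invokes \cite[Theorem 4]{Damanik2005} as a black box to conclude $\varpi\equiv 0$, whereas you run the Allegretto--Piepenbrink machinery directly: the ground-state substitution $\phi=\varphi\psi$ together with Lemma~\ref{Green} turns $\langle(\lambda_1-\mathcal L)\phi,\phi\rangle$ into the weighted discrete Dirichlet form $\sum_n\varphi(n)\varphi(n+1)(\psi(n+1)-\psi(n))^2$, and the hat cutoffs $\psi_N$ kill it at rate $O(1/N)$ precisely because $\varphi$ is bounded, while the putative weight term stays bounded away from zero. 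That is essentially a proof of the cited Damanik--Killip--Simon result in this setting, so what you gain is transparency about exactly where boundedness of $\varphi$ enters (a point you correctly flag). For (2)--(3), the paper defers to \cite[Proposition 1.7]{Nadin2017}; you instead propagate the bounded ground state along the hull by a compactness argument, use the already-established criticality to get uniqueness of the normalized positive solution $\phi_g$ and hence continuity of $g\mapsto\phi_g(1)$, and then apply Gottschalk--Hedlund on the minimal system $(\mathcal H(c),T)$ to solve the cohomological equation $\log\varphi(n+1)-\log\varphi(n)=h(c\cdot n)$ with a continuous transfer function, which gives almost periodicity of $\varphi$. This is a clean dynamical-systems replacement of the analytic-compactness route in \cite{Nadin2017}, and it also makes (2) a free byproduct (you have already shown criticality of $\mathcal L_g-\lambda_1$ for every $g$ in the course of the uniqueness step), which is a tidier logical arrangement than proving (2) independently. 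One cosmetic remark: your (1) argument is really a lemma about \emph{any} $\mathcal L_g$ with a positive bounded $\lambda_1$-solution, and it would be cleaner to state it in that generality up front rather than saying the proof applies ``verbatim''; as written the uniqueness step in (3) quietly depends on this more general form, so surfacing it explicitly removes the appearance of circularity with (2).
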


\begin{proof}

Taking $\varphi$ as the test function in (\ref{gpe56}), it follows from Proposition \ref{gperelation} the associated eigenvalue is exactly the generalized principal eigenvalue $\lambda_1$.

(1). Assume by contradiction that $\mathcal L-\lambda_1$ is not critical. Denote the quadratic form associated with $\lambda_1-\mathcal L$ by $h$. Then by Lemma \ref{Green}, there exists a positive function $\varpi\in l^\infty_{loc}(\mathbb Z)$ such that $h(u)=\langle(\lambda_1-\mathcal L)u,u\rangle \geq \langle \varpi u,u\rangle$ for any $u\in l_c.$ That is, $\mathcal L-\lambda_1\pm\varpi$ are bounded above by 0(the supreme of the spectrum of $\mathcal L-\lambda_1$, see Proposition \ref{var}, i.e. the ground state energe of $\mathcal L-\lambda_1$). It follows from \cite[Theorem 4]{Damanik2005} that $\varpi\equiv 0$, this is impossible since $\varpi$ positive. Hence $\mathcal L-\lambda_1$ is critical.

The proofs of (2) and (3) are similar to that of \cite[Propsition 1.7]{Nadin2017}.
\end{proof}
Although the positive solution of $\mathcal L\phi=E\phi$ may not be almost periodic, almost periodicity can still be revealed in the following way:
\begin{Lemma}\label{Almostperiodic}
For all $E>\lambda_{1}$, the function $\frac{\phi_{E}(n+1;g)}{\phi_{E}(n;g)}$ is almost periodic.
\end{Lemma}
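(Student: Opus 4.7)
The plan is to reduce the claim to showing that the map $g \mapsto \phi_E(1;g)$ is continuous on $\mathcal{H}(c)$, via the cocycle identity $\phi_E(n+1;c)/\phi_E(n;c) = \phi_E(1;\,c\cdot n)$. Once this is in place, almost periodicity follows from the standard hull characterization: a continuous function on $\mathcal{H}(c)$ pulled back along the orbit $n \mapsto c \cdot n$ is automatically almost periodic.

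\textbf{Step 1 (cocycle identity from uniqueness).} Fix $g\in \mathcal{H}(c)$ and $k\in\mathbb{Z}$, and define $\psi(n) := \phi_E(n+k;g)/\phi_E(k;g)$. A direct computation shows that $\mathcal{L}_{g\cdot k}\psi = E\psi$ on $\mathbb{Z}$, that $\psi(0)=1$, and that $\psi(n)\to 0$ as $n\to\infty$ (the decay follows from Proposition \ref{prepare} since $\phi_E(n+k;g)\to 0$ as $n\to\infty$ while $\phi_E(k;g)$ is a positive constant). By the uniqueness in Proposition \ref{prepare}, $\psi = \phi_E(\cdot;g\cdot k)$. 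Taking $n=1$ and then specializing $g=c$ gives
\[
\frac{\phi_E(k+1;c)}{\phi_E(k;c)} \;=\; \phi_E(1;\,c\cdot k), \qquad k\in\mathbb{Z}.
\]

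\textbf{Step 2 (continuity of $g\mapsto \phi_E(1;g)$ on $\mathcal{H}(c)$).} Let $g_j\to g$ in $\mathcal{H}(c)$. The bound $\phi_E(n;h)\le C e^{-\delta n}$ from Lemma \ref{2.2} together with $\phi_E(0;h)=1$ and the recursion $\phi_E(n+1;h)=(E+2-h(n))\phi_E(n;h)-\phi_E(n-1;h)$ (applied at $n=0$) gives a uniform bound on $\phi_E(1;h)$ for $h\in\mathcal{H}(c)$. Let $a$ be any subsequential limit of $\phi_E(1;g_j)$. By the recursion, $\phi_E(\cdot;g_j)$ converges pointwise on $\mathbb{Z}$ to the unique formal solution $\psi$ of $\mathcal{L}_g\psi=E\psi$ with $\psi(0)=1$, $\psi(1)=a$. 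The uniform convergence $-\frac{1}{n}\ln \phi_E(n;g_j)\to L(E)>0$ in Proposition \ref{prepare} persists in the limit, yielding $\psi(n)\to 0$ as $n\to\infty$. Uniqueness in Proposition \ref{prepare} then forces $\psi=\phi_E(\cdot;g)$, i.e.\ $a=\phi_E(1;g)$. Since every subsequential limit is the same, the full sequence converges, proving continuity.

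\textbf{Step 3 (conclusion).} Write $F(g):=\phi_E(1;g)$. By Step 2, $F\in C^0(\mathcal{H}(c),\mathbb{R})$. Since $\mathcal{H}(c)$ is the hull of the almost periodic sequence $c$ under the translation $T\,g = g\cdot 1$, the sequence $n\mapsto F(c\cdot n)$ is almost periodic. By Step 1 this sequence coincides with $\phi_E(n+1;c)/\phi_E(n;c)$, which is the claim.

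The only nontrivial point is Step 2; everything there is powered by Proposition \ref{prepare} (uniqueness plus uniform exponential decay on the hull) together with Lemma \ref{2.2}. One could alternatively deduce Step 2 from Theorem \ref{resolvent} and continuity of the stable bundle of the uniformly hyperbolic cocycle $(T,S^E)$, but the direct uniqueness-compactness argument above is shorter and avoids invoking the cocycle formalism.
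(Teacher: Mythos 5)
Your proof is correct, and the core ingredients match the paper's stated ones (the paper's proof is only a one-line pointer to \cite[Lemma~2.4]{Nadin2017} plus Lemma~\ref{2.2}): uniqueness and decay of $\phi_E$ from Proposition~\ref{prepare}, the uniform decay estimate of Lemma~\ref{2.2}, and compactness of the hull $\mathcal{H}(c)$. What you do differently is package the argument cleanly as ``continuity on the hull'' (Step~2) followed by the standard hull characterization of almost periodicity (Step~3), rather than running the sliding double-extraction argument directly on the ratios as Nadin--Rossi do; in effect you are anticipating the later Proposition~\ref{prop:phi_t_k_g}(1)--(3) of the paper, and your Step~1 is exactly Proposition~\ref{prop:phi_t_k_g}(1). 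Your factorization is a bit more modular and perhaps easier to reuse, while the paper's direct contradiction argument stays entirely in terms of the sequence.

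One small gap worth filling in Step~2: before invoking the uniqueness of Proposition~\ref{prepare} you should check that the limit $\psi$ is positive (the uniqueness there is stated for positive solutions). This is easy: $\psi\geq 0$ as a pointwise limit of positive functions, $\psi(0)=1>0$, and if $\psi(n_0)=0$ for some $n_0$ then the difference equation at $n_0$ forces $\psi(n_0+1)+\psi(n_0-1)=(E+2-g(n_0))\psi(n_0)=0$, hence $\psi(n_0\pm1)=0$, and by induction $\psi\equiv 0$, contradicting $\psi(0)=1$. Also, the uniform bound on $\phi_E(1;h)$ follows directly from Lemma~\ref{2.2} with $n_0=0$; the recursion at $n=0$ you cite is not actually needed there.
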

\begin{proof}
The method is similar to the proof of \cite[Lemma 2.4]{Nadin2017}, and Lemma \ref{2.2} is needed in the proof.
\end{proof}

Once we have this, the following result follows:
\begin{Proposition}\label{criticalLE}
Assume that $\mathcal L$ admits a positive almost periodic solution $\phi$ of $\mathcal L\phi=\lambda_{1}\phi$.
Then we have $\underline L:=\lim\limits_{E\rightarrow \lambda_{1}}L(E)=0.$
\end{Proposition}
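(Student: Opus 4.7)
The plan is to express the Lyapunov exponent as an integral of a continuous multiplicative cocycle over the hull, and then pass to the limit $E\searrow\lambda_1$ using the almost periodic solution at $E=\lambda_1$. Since $\phi$ is almost periodic and positive, Lemma~\ref{4.4}(3) gives $\inf_n\phi(n)>0$, and Lemma~\ref{4.4}(2) implies that $\mathcal{L}_g-\lambda_1$ is critical for every $g\in\mathcal H(c)$. Writing $\phi(n)=F(T^n c)$ with $F:\mathcal H(c)\to(0,\infty)$ continuous on the compact abelian group $\mathcal H(c)$ (where $T$ is the unit shift), one gets for each $g\in\mathcal H(c)$ a positive almost periodic solution $\phi^g(n):=F(T^n g)$ of $\mathcal{L}_g\phi^g=\lambda_1\phi^g$, which by Proposition~\ref{Keller2017} is unique up to a positive scalar multiple.

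For $E>\lambda_1$ and $g\in\mathcal H(c)$, let $\phi_E(\cdot;g)$ be the positive solution from Proposition~\ref{prepare} normalized by $\phi_E(0;g)=1$, and set $R_E(g):=\phi_E(1;g)$. Using the uniqueness clause of Proposition~\ref{prepare}, one checks that $R_E(T^j g)=\phi_E(j+1;g)/\phi_E(j;g)$, hence
\[
\phi_E(n;g)=\prod_{j=0}^{n-1}R_E(T^j g),\qquad n\geq 0.
\]
The map $R_E$ is continuous on $\mathcal H(c)$, since it records the stable direction of the uniformly hyperbolic Schr\"odinger cocycle $(T,A_E)$ from Theorem~\ref{resolvent}. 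Combining the uniform convergence in Proposition~\ref{prepare} with unique ergodicity of the translation on $\mathcal H(c)$ yields
\[
L(E)=-\lim_{n\to\infty}\frac{1}{n}\ln\phi_E(n;c)=-\int_{\mathcal H(c)}\ln R_E\,d\mu.
\]

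The main step — and the main obstacle — is to show that $R_E\to R$ uniformly on $\mathcal H(c)$ as $E\searrow\lambda_1$, where $R(g):=F(Tg)/F(g)$. I argue by contradiction: if uniform convergence fails, there exist $E_n\searrow\lambda_1$, $g_n\in\mathcal H(c)$ with $g_n\to g^*$, and $\epsilon>0$ such that $|R_{E_n}(g_n)-R(g_n)|\geq\epsilon$. The recurrence $\phi_E(k+1;g)=(E+2-g(k))\phi_E(k;g)-\phi_E(k-1;g)$, together with $\phi_E(0;g)=1$, bounds $\phi_{E_n}(k;g_n)$ locally uniformly in $k$; a diagonal extraction produces a subsequence with $\phi_{E_n}(k;g_n)\to\psi(k)$ where $\mathcal{L}_{g^*}\psi=\lambda_1\psi$, $\psi>0$, $\psi(0)=1$. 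Criticality of $\mathcal{L}_{g^*}-\lambda_1$ and Proposition~\ref{Keller2017} force $\psi=\phi^{g^*}/F(g^*)$, so $R_{E_n}(g_n)=\phi_{E_n}(1;g_n)\to R(g^*)$; meanwhile $R(g_n)\to R(g^*)$ by continuity of $R$, contradicting the assumed gap.

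With uniform convergence established, the upper bound $R_E(g)\leq E+2+\|g\|_{l^\infty}$ and the uniform positive lower bound inherited from $\inf_g R(g)>0$ imply $\ln R_E\to\ln R$ uniformly on $\mathcal H(c)$. Therefore
\[
L(E)=-\int_{\mathcal H(c)}\ln R_E\,d\mu\;\longrightarrow\;-\int_{\mathcal H(c)}\bigl[\ln F(Tg)-\ln F(g)\bigr]d\mu(g)=0
\]
by $T$-invariance of Haar measure, which proves $\underline{L}=0$.
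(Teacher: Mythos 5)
Your argument is correct, and it follows a genuinely different route from the paper's proof. The paper pairs the right-decaying solution $\phi_E$ with its left-decaying analogue $\tilde\phi_E$, forms the geometric mean $\varphi_E=\sqrt{\tilde\phi_E\phi_E}$, shows by a compactness/criticality contradiction that the discrepancy $q_E$ in $(\mathcal L-E)\varphi_E=-\tfrac12\varphi_Eq_E$ tends to zero uniformly, and then deduces $\tilde L(E)+L(E)\to0$ by a telescoping sum. You instead exploit unique ergodicity of $(\mathcal H(c),T)$: you express $L(E)=-\int_{\mathcal H(c)}\ln R_E\,d\mu$ with $R_E(g)=\phi_E(1;g)$ the (continuous) ratio from the stable direction of the uniformly hyperbolic cocycle, prove $R_E\to R$ uniformly by the same compactness/criticality contradiction (now used to pin down the unique normalized positive solution of $\mathcal L_{g^*}u=\lambda_1u$), and observe that the limit $\ln R$ is a coboundary, so its integral vanishes. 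Both proofs hinge on Lemma~\ref{4.4}(2)--(3) and Proposition~\ref{Keller2017}, but your formulation makes the mechanism transparent: $L(E)$ is the integral of a continuous function that degenerates to a coboundary at $E=\lambda_1$. The one step you gloss over is the assertion $\phi(n)=F(T^nc)$ with $F$ continuous on $\mathcal H(c)$; almost periodicity of $\phi$ alone does not immediately give a continuous factorization through $\mathcal H(c)$, and this requires either a Gottschalk--Hedlund argument (the Birkhoff sums of $\ln R$ along the orbit of $c$ are $\ln\phi(n)-\ln\phi(0)$, hence bounded, so $\ln R$ is a continuous coboundary) or, more simply, you can dispense with $F$ entirely by defining $R(g):=\phi^g(1)$ for the unique normalized positive $\lambda_1$-solution $\phi^g$ and computing $\int\ln R\,d\mu=\lim_{n}\tfrac1n(\ln\phi(n)-\ln\phi(0))=0$ directly via unique ergodicity and boundedness of $\ln\phi$. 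With that repair, the proof stands.
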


\begin{proof}
Denote $\phi_E(\cdot):=\phi_E(\cdot;c)$ which can be obtained in Proposition \ref{prepare}.
Consider the analogue $\tilde\phi_{E}$ of $\phi_E$ but with the initial condition as follows:
$$\mathcal{L}\tilde\phi_{E}=E\tilde\phi_{E} \text{ in }\mathbb{Z},\ \tilde{\phi}_{E} (0)=1,\ \lim\limits_{n\rightarrow-\infty}\tilde{\phi}_{E}(n)=0.$$
The function $\tilde{\phi}_{E}(-n)$ shares the same properties with $\phi_{E}$. Particularly, the limit $\tilde{L}(E):=\lim\limits_{n\rightarrow\pm\infty}\frac{1}{n}\ln\tilde{\phi}_{E}(n)$ exists and it is positive. Let $\varphi_{E}:=\sqrt{\tilde{\phi}_{E}\phi_{E}}$. Then by the direct computation that

Thus,
\begin{equation}\label{criticalapp}
(\mathcal{L}-E)\varphi_{E}=-\frac{1}{2}\varphi_{E}q_{E},
\end{equation}
with $q_{E}=-\left(\frac{\phi_E^{\frac{1}{2}}(n+1)}{\phi_E^{\frac{1}{2}}(n)}-\frac{\tilde{\phi}_E^{\frac{1}{2}}(n+1)}{\tilde{\phi}_E^{\frac{1}{2}}(n)}\right)^{2}-
\left(\frac{\phi_E^{\frac{1}{2}}(n-1)}{\phi_E^{\frac{1}{2}}(n)}-\frac{\tilde{\phi}_E^{\frac{1}{2}}(n-1)}{\tilde{\phi}_E^{\frac{1}{2}}(n)}\right)^{2}.$

We claim that $q_{E}$ converges uniformly to 0 in $\mathbb{R}$ as $E\rightarrow \lambda_{1}$. Otherwise, there exist $\epsilon>0$ and two sequences $\{E_{i}\}_{i}$ and $\{n_{i}\}_{i}$ such that $E_{i}\rightarrow \lambda_{1}$ and $|q_{E_{i}}(n_{i})|\geq \epsilon$ for all $i\in\mathbb{N}$. According to Lemma \ref{Almostperiodic}, $\frac{\phi_E(\cdot+1)}{\phi_E(\cdot)}$ is almost periodic, hence $\{q_{E_{i}}\}$ is uniformly bounded. Passing along a subsequence $n_{i_k}$ in (\ref{criticalapp}), $\frac{\varphi_{E}(\cdot+n_{i_k})}{\varphi(n_{i_k})}$ converges pointwise to a positive solution $\varphi^{*}$ of
$$(\mathcal{L}_{g_*}-\lambda_{1})\varphi^{*}=-\frac{1}{2}\varphi^{*}q$$
where $g_*=\lim\limits_{k\rightarrow \infty}c\cdot n_{i_k}$ and $q$ is the limit of $\{q_{E_{i_k}}(n+n_{i_k})\}_k$. Since $|q(0)|\geq \epsilon$, $\mathcal L_{g_*}$ admits a supersolution which is not a solution. And from the assumption, Lemma \ref{4.4} (2),(3) and Proposition \ref{Keller2017}, $\mathcal L_{g_*}$ admits a unique positive supersolution. However, as will see, $\varphi$ and $\phi$ are both positive supersolutions. This is impossible!
Therefore we have $q_{E}\rightarrow 0$ unifromly as $E\rightarrow \lambda_1$. Finally we have
$$\tilde{L}(E)+L(E)=\lim\limits_{n\rightarrow+\infty}\frac{1}{n}\sum\limits_{0}^{n}\left(-\ln\frac{\phi_E(n+1)}{\phi_E(n)}+\ln\frac{\tilde \phi_E(n+1)}{\tilde\phi_E(n)}\right)$$
tends to 0 as $E\rightarrow \lambda_{1}$. Then they both tend to 0 as $E\rightarrow \lambda_{1}$ due to the positivity of $L(E)$ and $\tilde{L}(E)$.
\end{proof}

\section{Construction of the fronts}
\subsection{Construction of almost periodic fronts}
In this section, we start to prove Theorem \ref{Main1}. First we will consider (1) in Theorem \ref{Main1} in this subsection.
The basic idea is to apply the super-sub solution method.

From now on, we set
$$\sigma_{E}(n;g):=-\ln\frac{\phi_{E}(n+1;g)}{\phi_{E}(n;g)} \text{ for any }n\in\mathbb Z,\ g\in\mathcal H(c),$$
where $\phi_{E}(n;g)$ is given by Proposition \ref{prepare}. Notice that  $\sigma_E(n;g)$ is almost periodic by  Lemma \ref{Almostperiodic}.

Before constructing the the almost periodic traveling front, we should first notice the following facts:
\begin{Lemma}\label{3.2}
Let $w^{*},\ \underline w$ be defined in Theorem \ref{Main1}.
Then
	\begin{enumerate}
\item $w^{*}<\underline w$ provided $g$ in the equation (\ref{h(c)}) replaced by $g+c_{0}$ with $c_0$ large enough.
\item $w^{*}$ is a minimum provided $w^{*}<\underline w$. Moreover, for all $w\in(w^{*},\underline{w}),$ there exists $E>\lambda_{1}$ such that $w=\frac{E}{L(E)}$ and $w>\frac{E'}{L(E')}$ for $E'-E>0$ small enough.
	\end{enumerate}
\end{Lemma}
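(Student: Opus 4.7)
My plan for part~(1) is to track how $w^*$ and $\underline{w}$ transform under the substitution $g \mapsto g + c_0$. Since $\mathcal{L}_{g+c_0} = \mathcal{L}_g + c_0 I$, the generalized principal eigenvalue translates to $\lambda_1 + c_0$ and the Lyapunov exponent to $L_{c_0}(E) = L(E - c_0)$. Substituting $E' = E - c_0$, I would then rewrite
\[
w^*_{c_0} \;=\; \inf_{E' > \lambda_1} \frac{E' + c_0}{L(E')}, \qquad \underline{w}_{c_0} \;=\; \lim_{E' \searrow \lambda_1} \frac{E' + c_0}{L(E')}.
\]
If $\underline{L} := \lim_{E' \searrow \lambda_1} L(E') = 0$, then $\underline{w}_{c_0} = +\infty$ while $w^*_{c_0}$ stays finite by the bound $L(E') \leq C\sqrt{E'}$ in Lemma~\ref{2.5}, so the inequality is automatic. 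If $\underline{L} > 0$, I would fix any $E_0 > \lambda_1$; Lemma~\ref{2.5} gives $L(E_0) > \underline{L}$ strictly, and the desired inequality $(E_0+c_0)/L(E_0) < (\lambda_1 + c_0)/\underline{L}$ rearranges to $c_0(L(E_0) - \underline{L}) > E_0\underline{L} - \lambda_1 L(E_0)$, which clearly holds for $c_0$ large.

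For part~(2), I would set $f(E) := E/L(E)$ on $(\lambda_1, \infty)$. By Lemma~\ref{2.5}, $f$ is continuous, tends to $\underline{w}$ at $\lambda_1^+$, and satisfies $f(E) \geq \sqrt{E}/C \to \infty$ as $E \to \infty$. Hence when $w^* < \underline{w}$, the infimum is trapped in a compact subinterval of $(\lambda_1,\infty)$, so it is attained and $w^*$ is a minimum. The key structural fact, and where I expect the main work to go, is an interval (unimodality) property: for every $w$, the sublevel set $\{f \leq w\}$ is an interval. I would deduce this from the concavity of $L$ in Lemma~\ref{2.5}: if $L(E_i) \geq E_i/w$ for $i=1,2$, then for any $E \in [E_1,E_2]$ concavity bounds $L(E)$ below by the linear interpolant of $L(E_1), L(E_2)$, which in turn dominates the linear interpolant of $E_1/w, E_2/w$, namely $E/w$; hence $E \in \{f \leq w\}$. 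The same argument gives that $\{f < w\}$ is an open interval $(a,b)$, and for $w \in (w^*, \underline{w})$ this interval is non-empty (it contains any minimizer) and bounded inside $(\lambda_1, \infty)$ (since $f \to \underline{w} > w$ at $\lambda_1^+$ and $f \to \infty$).

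Finally I would take $E := a$. Since $f \geq w$ on the complement $(\lambda_1, a] \cup [b, \infty)$, continuity forces $f(a) = w$; while for $E'$ just to the right of $a$ one has $E' \in (a, b)$, hence $f(E') < w = f(E)$, as required. The main obstacle is precisely this concavity-based unimodality argument: without the concavity of $L$, the level sets of $f$ could have multiple components, and the ``leftmost crossing'' $a$ need not have $f$ strictly below $w$ in a right neighborhood, which is the property driving the super/sub-solution construction of the almost periodic fronts.
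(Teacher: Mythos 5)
Your argument is correct, and both parts are complete. For part~(1), the observation $\mathcal L_{g+c_0}=\mathcal L_g+c_0 I$ does give the exact transformation rules $\lambda_1\mapsto\lambda_1+c_0$, $L_{c_0}(E)=L(E-c_0)$, and the two cases $\underline L=0$ and $\underline L>0$ are handled correctly. One small slip: the bound $L(E')\le C\sqrt{E'}$ is not what makes $w^*_{c_0}$ finite --- finiteness of the infimum is immediate from $L(E')>0$ at a single test energy; the $\sqrt{E}$ bound is rather what forces $E/L(E)\to\infty$ and so prevents the infimum from ``escaping to $+\infty$'' (which you do use correctly in part~(2)). For part~(2), the unimodality of $E\mapsto E/L(E)$ via convexity of the sublevel sets $\{E/L(E)\le w\}$, deduced from concavity of $L$, is a clean and correct device; taking the left endpoint $a$ of the open interval $\{E/L(E)<w\}$ and using continuity to force $f(a)=w$ while $f<w$ just to the right of $a$ is exactly the structure needed.

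As for comparison with the paper: the paper itself only cites Nadin--Rossi (Lemmas 1.6 and 3.2 there) and does not reproduce the argument. The approach sketched there works instead with the auxiliary function $f(E):=wL(E)-E$, which is itself concave, and extracts the desired $E$ as the \emph{left} zero of $f$ between $\lambda_1$ and the minimizer $E^*$ of $E/L(E)$; the property $w>E'/L(E')$ for $E'-E$ small is then read off from $f$ being increasing up to its maximum. Your version rephrases the same concavity input as an interval property of the sublevel sets of $E/L(E)$. These are essentially equivalent, but your framing avoids any discussion of monotonicity zones of the concave function $f$ and works purely with set-theoretic convexity and continuity, which is marginally more elementary. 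Both approaches hinge on Lemma~\ref{2.5} (concavity, the lower bound $\delta$, and the $\sqrt{E}$ upper bound), and you have identified precisely the role of each of those estimates.
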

\begin{proof}
The proofs can be referred to \cite[Lemmas 1.6 and 3.2]{Nadin2017}.
\end{proof}

Now afterwards, we take $w\in(w^{*},\underline w)$, and  let $E>\lambda_{1}$ be as in Lemma \ref{3.2}.
For a given almost periodic sequence $\sigma$, we define the operator
\begin{equation}
\begin{aligned}
\mathcal{L}^{\sigma}_g\phi&:=\me^{\sum\limits_{0}^{n-1}\sigma}\mathcal{L}_g(\me^{-\sum\limits_{0}^{n-1}\sigma}\phi)\\
&=\me^{-\sigma(n)}\phi(n+1)+\me^{\sigma(n-1)}\phi(n-1)+g(n)\phi(n).
\end{aligned}
\end{equation}

\begin{Proposition}\label{prop:pre_lowersol}
For any $g\in\mathcal H(c)$, there exist absolute constants $\delta>0,\ \epsilon\in(0,1)$ and a function $\theta\in l^{\infty}(\mathbb Z)$ satisfying
$$\inf\limits_{\mathbb{Z}}\theta>0,\ -\mathcal{L}^{(1+\epsilon)\sigma_{E}(\cdot;g)}_g\theta\geq (\delta-(1+\epsilon)E)\theta\text{ in }\mathbb{Z}.$$
\end{Proposition}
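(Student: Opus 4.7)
\medskip
\noindent\textbf{Proof plan.} The idea is to identify the generalized principal eigenvalue of the conjugated operator $\mathcal{L}_g^{(1+\epsilon)\sigma_E}$ via an explicit positive eigenfunction in the class $\mathscr S$ of Section~3, show it lies strictly below $(1+\epsilon)E$, and then invoke \propref{gperelation} together with the Liang almost-periodic construction from its proof to realize the inequality with a bounded-above-and-below $\theta$.

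For the first step, since $L$ is continuous, nondecreasing, and unbounded on $(\lambda_1,\infty)$ (\lemref{2.5} together with the upper estimate $L(E)\leq C\sqrt E$ and lower bound $L(E)\geq \delta>0$), for every sufficiently small $\epsilon\in(0,1)$ one can pick $E^*>E$ with $L(E^*)=(1+\epsilon)L(E)$, and $E^*\to E$ as $\epsilon\to 0^+$. Because $w=E/L(E)$ lies in the open interval $(w^*,\underline w)$, \lemref{3.2}(2) gives $E^*/L(E^*)<E/L(E)$ for $E^*$ close to $E$; multiplying by $L(E^*)=(1+\epsilon)L(E)$ yields $E^*<(1+\epsilon)E$, hence $\delta:=(1+\epsilon)E-E^*>0$. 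This is the conceptually delicate step: the strict margin $\delta>0$ is exactly what one extracts from the minimality characterisation of $E$ in \lemref{3.2}(2), and it would collapse outside of $(w^*,\underline w)$.

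Next, introduce the explicit positive candidate $\theta_1(n;g):=\phi_{E^*}(n;g)/\phi_E(n;g)^{1+\epsilon}$. Since $\sum_{k=0}^{n-1}\sigma_E(k;g)=-\log\phi_E(n;g)$, the operator $\mathcal{L}_g^{(1+\epsilon)\sigma_E}$ is the conjugate $\phi_E^{-(1+\epsilon)}\mathcal{L}_g(\phi_E^{1+\epsilon}\,\cdot\,)$, so $\mathcal{L}_g^{(1+\epsilon)\sigma_E}\theta_1=\phi_E^{-(1+\epsilon)}\mathcal{L}_g\phi_{E^*}=E^*\theta_1$. \propref{prepare} forces $\log\theta_1(n)/n\to -L(E^*)+(1+\epsilon)L(E)=0$ as $|n|\to\infty$ (uniformly in $g$), and \lemref{Almostperiodic} shows that $\theta_1(n+1)/\theta_1(n)=\exp((1+\epsilon)\sigma_E(n;g)-\sigma_{E^*}(n;g))$ is bounded above and away from $0$. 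Thus $\theta_1\in\mathscr S$. Testing $\theta_1$ in the definitions of $\underline\lambda_1$ and $\overline\lambda_1$ forces $\underline\lambda_1(\mathcal{L}_g^{(1+\epsilon)\sigma_E})=\overline\lambda_1(\mathcal{L}_g^{(1+\epsilon)\sigma_E})=E^*$, and \propref{gperelation}(1) then gives $\overline\mu_1(\mathcal{L}_g^{(1+\epsilon)\sigma_E})=E^*$ as well.

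Finally, the coefficients $e^{\pm(1+\epsilon)\sigma_E(\cdot;g)}$ of $\mathcal{L}_g^{(1+\epsilon)\sigma_E}$ are almost periodic and bounded away from $0$, so the Liang construction used in the proof of \propref{gperelation} furnishes, for every $\eta>0$, an almost periodic $u_\eta$ with $\mathcal{L}_g^{(1+\epsilon)\sigma_E}e^{u_\eta}=\eta u_\eta e^{u_\eta}$ and $\eta u_\eta\to E^*$ uniformly as $\eta\to 0$. Choosing $\eta$ so small that $|\eta u_\eta-E^*|<\delta/2$ everywhere and setting $\theta:=e^{u_\eta}$ (almost periodic, hence $\theta,1/\theta\in\ell^\infty(\mathbb Z)$), one obtains
$$\mathcal{L}_g^{(1+\epsilon)\sigma_E}\theta = \eta u_\eta\,\theta \leq \bigl(E^*+\tfrac{\delta}{2}\bigr)\theta = \bigl((1+\epsilon)E-\tfrac{\delta}{2}\bigr)\theta,$$
which, after renaming $\delta/2$ as $\delta$, is exactly the required inequality.
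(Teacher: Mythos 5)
Your proposal is correct and follows essentially the same route as the paper: choose $\epsilon$ via Lemma \ref{3.2}(2), locate $E^*$ (the paper's $E+\kappa$) with $L(E^*)=(1+\epsilon)L(E)$, show $E^*<(1+\epsilon)E$, take $\zeta=\phi_{E^*}/\phi_E^{1+\epsilon}$ as a zero-growth eigenfunction of the conjugated operator at eigenvalue $E^*$, and then invoke Proposition \ref{gperelation} and Liang's construction to replace $\zeta$ by an almost periodic $\theta$ bounded away from $0$. The only cosmetic difference is that the paper finds $E^*=E+\kappa$ by an explicit intermediate-value argument on $F(\kappa)=\frac{1}{L(E)}-\frac{1+\epsilon}{L(E+\kappa)}$, whereas you define $E^*$ first and then apply Lemma \ref{3.2}(2) to get $E^*<(1+\epsilon)E$; both steps rest on the same input.
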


\begin{proof}
As we choose $E$ in the Lemma \ref{3.2}, there exists $\epsilon\in(0,1)$ such that
$$\frac{E}{L(E)}>\frac{(1+\epsilon)E}{L((1+\epsilon)E)}.$$
Now we define
$$F(\kappa):=\frac{1}{L(E)}-\frac{1+\epsilon}{L(E+\kappa)}.$$
Then $F(E\epsilon)=\frac{1}{L(E)}-\frac{1+\epsilon}{L((1+\epsilon)E)}>0$ and $F(0)=-\frac{\epsilon}{L(E)}<0$. As $L(E)$ is concave in $(\lambda_1,\infty)$, it is continuous. Hence $F$ is continuous. Then there exists $\kappa\in (0,\epsilon E)$ so that $F(\kappa)=0.$

Consider now the function
$$\zeta(n;g):=\frac{\phi_{E+\kappa}(n;g)}{\phi_{E}^{1+\epsilon}(n;g)}\text{ for all }n\in\mathbb{Z}.$$
First, since $e^{-(1+\epsilon)\sum\limits_{0}^{n-1}\sigma_{E}(\cdot;g)}=\phi_{E}^{1+\epsilon}(n;g),$ the positive function $\zeta$ satisfies
$$
\begin{aligned}
\mathcal{L}_g^{(1+\epsilon)\sigma_E(\cdot;g)}\zeta&=\me^{(1+\epsilon)\sum\limits_{0}^{n-1}\sigma_{E}(\cdot;g)}\mathcal{L}_g\biggr(\me^{-(1+\epsilon)\sum\limits_{0}^{n-1}\sigma_{E}(\cdot;g)}\zeta\biggr)\\
&=\frac{\mathcal{L}_g(\zeta\phi_{E}^{1+\epsilon}(\cdot;g))}{\phi_{E}^{1+\epsilon}(n;g)}=\frac{\mathcal{L}_g(\phi_{E+\kappa}(\cdot;g))}{\phi_{E}^{1+\epsilon}(n;g)}=(E+\kappa)\zeta.
\end{aligned}
$$
Then it follows that
$$-(\mathcal{L}_g^{(1+\epsilon)\sigma_{E}(\cdot;g)}-(1+\epsilon)E)\zeta=(\epsilon E-\kappa)\zeta \text{ in }\mathbb{Z}.$$
Moreover,
$$
\begin{aligned}
\lim\limits_{n\rightarrow \pm\infty}\frac{1}{n}\ln\zeta(n;g)&=\lim\limits_{n\rightarrow\pm\infty}\frac{1}{n}\ln\phi_{E+\kappa}(n;g)-\lim\limits_{n\rightarrow\pm\infty}\frac{1+\epsilon}{n}\ln\phi_{E}(n;g)\\
&=-L(E+\kappa)+(1+\epsilon)L(E)=0,
\end{aligned}
$$
where the last equality follows from the definition of $\kappa$.

Next, it follows from Lemma \ref{Almostperiodic} that $\frac{\zeta(\cdot+1;g)}{\zeta(\cdot;g)}\in l^\infty(\mathbb Z)$, and the coefficients of $\mathcal M_{a,b,c}=\mathcal{L}_g^{(1+\epsilon)\sigma_{E}(\cdot;g)}-(1+\epsilon)E$ are almost periodic. Applying Proposition \ref{gperelation} with $\mathcal M_{a,b,c}$, then $\lambda:=\overline\lambda_1(\mathcal M_{a,b,c})=\underline\lambda_1(\mathcal M_{a,b,c})=\kappa-\epsilon E$. Let $\delta'=\epsilon E-\kappa>0$. Thus for any $0<\delta<\delta'$, there exists a function $\theta:=\me^{u_{\epsilon'}(\cdot;g)}$ with $|\epsilon' u_{\epsilon'}-\lambda|<\delta'-\delta$ which is defined in the proof of Proposition \ref{gperelation} satisfies
$$\mathcal L_g^{(1+\epsilon)\sigma_E(\cdot;g)}\theta-(1+\epsilon)E\theta=\epsilon' u_{\epsilon'}\theta\leq (\lambda+\delta'-\delta)\theta\leq -\delta\theta \text{ in }\mathbb{Z}.$$
Moreover $0<\inf\limits_{\mathbb{R}}\theta\leq\sup\limits_{\mathbb{R}}\theta<+\infty$ since $u_{\epsilon'}(\cdot;g)$ is bounded.
\end{proof}

It is clear that the choice of $\theta$ is not unique. Let us now define $\{\theta(\cdot;g)\}_{g\in\mathcal H(c)}$ in the following certain way. Note that the function $\theta=e^{u_{\epsilon'}}$ with $|\epsilon'u_{\epsilon'}-\lambda|<\delta'-\delta$ in Proposition \ref{prop:pre_lowersol} is the unique almost periodic solution (see \cite[Lemma 5.1]{Liang2018}) of
    \begin{equation}\label{eq:theta_g}
    \big(\mathcal{L}^{(1+\epsilon)\sigma_{E}(n;g)}_g-(1+\epsilon)E\big)\theta= \epsilon' u_{\epsilon'}\theta\text{ in }\mathbb{Z}.
    \end{equation}
Indeed, almost periodicity follows from the inequality
    $$\|u_{\epsilon'}\cdot n_i-u_{\epsilon'}\cdot n_j\|_{l^\infty}\leq \frac{C}{\epsilon'}\max\{\|a\cdot n_i-a\cdot n_j\|_{l^\infty},\|b\cdot n_i-b\cdot n_j\|_{l^\infty},\|c\cdot n_i-c\cdot n_j\|_{l^\infty}\},$$
    for any sequence $\{n_i\}_i,\ \{n_j\}_j$, where $a=\me^{-(1+\epsilon)\sigma_E(n;g)},\ b=\me^{(1+\epsilon)\sigma_E(n-1;g)}$. The details can be found in the proof of \cite[Lemma 5.2]{Liang2018}. Since $\epsilon'u_{\epsilon'}(n)\to \lambda$ uniformly in $n$ which follows from \cite[Lemma 5.2]{Liang2018}, one can choose a fixed $\epsilon_0>0$ sufficiently small such that $|\epsilon_0u_{\epsilon_0}-\lambda|<\delta'-\delta$, and it is independent of $g\in\mathcal H(c)$. In this way, we always denote $\theta(n;g)=\me^{u_{\epsilon_0}(n;g)}$.


Define for all $(t,n)\in \mathbb{R}\times\mathbb{Z},g\in\mathcal H(c)$:
\begin{equation}
    \begin{aligned}
    \overline u(t,n;g)&:=\min\{1,\phi_{E}(n;g)\me^{Et}\},\\
    \underline u(t,n;g)&:=\max\{0,\phi_{E}(n;g)\me^{Et}-A\theta(n;g)\phi_{E}^{1+\epsilon}(n;g)\me^{(1+\epsilon)Et}\},
    \end{aligned}
    \end{equation}
    where $\epsilon$ and $\theta(\cdot;g)$ are given by Propositions \ref{prop:pre_lowersol} and $A$ is a positive constant that is to be specified. Notice that $\epsilon$ is independent of $g$.

 Denote
    $$\mathcal S_{g}=\{\tilde{u} \text{ is an entire solution of }\eqref{h(c)}|\underline u(t,n;g)\leq \tilde{u}(t,n)\leq \overline u(t,n;g)\text{ in }\mathbb R\times\mathbb Z.\}$$

\begin{Proposition}\label{prop:construct_sol}
There exists a solution $u$ of \eqref{h(c)} satisfying $u\in \mathcal S_g$. Moreover, $u=u(t,n;g)$ is increasing in $t.$
\end{Proposition}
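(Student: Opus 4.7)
The plan is the classical super/sub solution construction: (i) verify that $(\underline u,\overline u)$ is an ordered sub/super pair for $A$ large enough; (ii) extract the entire solution $u$ as the monotone limit of Cauchy problems started with initial data $\underline u(\tau,\cdot;g)$ at $\tau \to -\infty$; (iii) deduce time monotonicity of $u$ from a maximum principle for the time increment $w(t,n):=u(t+s,n;g)-u(t,n;g)$ with $s>0$, using the leading-edge asymptotics $u(t,n;g) \sim \phi_E(n;g) e^{Et}$ as $t \to -\infty$.

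For Step (i), the constant $1$ is trivially a supersolution, and $\phi_E(n;g)e^{Et}$ satisfies $\partial_t - \mathcal{L}_g = 0$, so it is a supersolution of \eqref{h(c)} because its defect $g(\phi_E e^{Et})^2 \geq 0$ is non-negative; the pointwise minimum preserves this property. For $\underline u$, I work on the branch $v := \phi_E e^{Et} - A\theta \phi_E^{1+\epsilon}e^{(1+\epsilon)Et}$ where it is positive. The conjugation identity $\mathcal{L}_g(\theta\phi_E^{1+\epsilon}) = \phi_E^{1+\epsilon}\mathcal{L}_g^{(1+\epsilon)\sigma_E(\cdot;g)}\theta$ together with Proposition \ref{prop:pre_lowersol} gives
\[
v_t - \mathcal{L}_g v \;\leq\; -\delta A\theta \phi_E^{1+\epsilon} e^{(1+\epsilon)Et},
\]
so the subsolution inequality $v_t - \mathcal{L}_g v + gv^2 \leq 0$ reduces to $gv^2 \leq \delta A\theta \phi_E^{1+\epsilon}e^{(1+\epsilon)Et}$. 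Since $v>0$ forces $A\theta \phi_E^\epsilon e^{\epsilon Et}<1$, one has $\phi_E e^{Et}<(A\inf\theta)^{-1/\epsilon}$ and hence $v^2 \leq (\phi_E e^{Et})^{1-\epsilon}\, \phi_E^{1+\epsilon}e^{(1+\epsilon)Et} \leq (A\inf\theta)^{-(1-\epsilon)/\epsilon}\, \phi_E^{1+\epsilon}e^{(1+\epsilon)Et}$. Choosing $A$ large, uniformly in $g\in\mathcal{H}(c)$ via the uniform lower bound on $\inf_n\theta(n;g)$ supplied by the construction in \eqref{eq:theta_g}, absorbs $gv^2$ into the $\delta$-gain. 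The same size estimate forces $v>0 \Rightarrow \phi_E e^{Et}<1$, hence $\underline u \leq \overline u$ on $\mathbb{R}\times\mathbb{Z}$.

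For Step (ii), let $U_\tau(t,n)$ be the unique global solution of \eqref{h(c)} (Theorem \ref{existence and uniqueness}) with $U_\tau(\tau,\cdot) = \underline u(\tau,\cdot;g)$. The strong comparison principle (Proposition \ref{Strong comparison}) yields $\underline u \leq U_\tau \leq \overline u$ on $[\tau,\infty)\times\mathbb{Z}$, and for $\tau_1<\tau_2$ the inequality $U_{\tau_1}(\tau_2,\cdot) \geq \underline u(\tau_2,\cdot;g) = U_{\tau_2}(\tau_2,\cdot)$ propagates to $U_{\tau_1}\geq U_{\tau_2}$ on $[\tau_2,\infty)\times\mathbb{Z}$. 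Thus the pointwise limit $u(t,n;g):=\lim_{\tau\to-\infty}U_\tau(t,n)$ exists, lies in $\mathcal{S}_g$, and is an entire solution of \eqref{h(c)}, the passage to the limit being justified by the Harnack inequality (Proposition \ref{Harnack}) upgrading pointwise to locally uniform convergence.

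For Step (iii), $w$ satisfies the linear equation $w_t - w(n+1) - w(n-1) + \bigl(2 - g(n) + g(n)(u(t+s,n)+u(t,n))\bigr)w = 0$ with bounded coefficients. In the leading-edge region $\phi_E(n;g)e^{Et}\leq c_0$ for a small constant $c_0$, the sandwich forces $u(t,n;g) = \phi_E(n;g)e^{Et}\bigl(1+O((\phi_E(n;g)e^{Et})^\epsilon)\bigr)$, so $w(t,n) \geq \tfrac{1}{2}(e^{Es}-1)\phi_E(n;g)e^{Et} > 0$ once $c_0$ is small enough. Applying the maximum principle (Proposition \ref{Maximum}) for the linear equation of $w$ on $(-\infty,T]\times\mathbb{Z}$, with this leading-edge positivity playing the role of lateral boundary data and the exponential decay $\phi_E e^{Et}\to 0$ as $t\to-\infty$ controlling the behavior at the lateral boundary, yields $w \geq 0$ throughout $\mathbb{R}\times\mathbb{Z}$, i.e., $u$ is nondecreasing in $t$. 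The main obstacle is precisely this monotonicity step: the sandwich furnishes useful lower bounds for $w$ only in the leading-edge region (and is useless where $\overline u = 1$ while $\underline u = 0$), so the maximum principle must be invoked as a lateral-boundary argument, leveraging the exponential decay of the supersolution at $t\to-\infty$ to rule out a negative infimum of $w$.
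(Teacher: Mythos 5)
Steps (i)--(ii) of your plan are sound and essentially parallel the paper's verification of the super/sub pair and the limiting construction (you start the iteration from $\underline u$ rather than from $\overline u$; that choice is harmless for existence, and your Harnack-based upgrade to local uniform convergence could equally be replaced by the obvious equicontinuity in $t$ coming from the equation).

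Step (iii) contains a genuine gap. The "lateral boundary'' invocation of Proposition~\ref{Maximum} on $(-\infty,T]\times\mathbb Z$ cannot be made to close. Proposition~\ref{Maximum} is a forward-in-time statement that requires sign information at a \emph{finite} initial time; to push that initial time to $-\infty$ you would need $w(t,\cdot)=u(t+s,\cdot;g)-u(t,\cdot;g)$ to tend to $0$ \emph{uniformly in $n$} as $t\to-\infty$. That fails: $\overline u(t,n)=\min\{1,\phi_E(n;g)e^{Et}\}$ satisfies $\sup_{n}\overline u(t,n)=1$ for every $t$, because $\phi_E(n;g)\to\infty$ as $n\to-\infty$, so the exponential decay of $\phi_E e^{Et}$ at fixed $n$ is not uniform in $n$. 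Moreover, at the stage of this proposition you do not yet know $u(t,n;g)\to 1$ as $n\to-\infty$ (that is proved only later, in Lemma~\ref{lem:front0} and Theorem~\ref{thm:front1}), so the sandwich gives you nothing at the far left, where $\underline u=0$ and $\overline u=1$. Finally, the discrete Laplacian has no finite speed of propagation, so the leading-edge positivity of $w$ at $n\gg 1$ does not bound $w$ at $n\ll -1$ through any light-cone argument. In short, the infimum of $w$ could in principle sit along a sequence $(t_k,n_k)$ with $n_k\to-\infty$ or $t_k\to+\infty$, and nothing in your argument excludes this.

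The paper's monotonicity argument is different and avoids all of this. It defines $u_i$ by solving the Cauchy problem with initial data $\overline u(-i,\cdot;g)$ at time $-i$ (the supersolution, not the subsolution), and exploits that $\overline u$ is \emph{nondecreasing in $t$}, because $\phi_E(n;g)e^{Et}$ is strictly increasing in $t$ and $\min\{1,\cdot\}$ is monotone. This gives, for $j<i$ and $0<h<1$,
\[
u_j(-j,n;g)=\overline u(-j,n;g)\ \geq\ \overline u(-j-h,n;g)\ \geq\ u_i(-j-h,n;g),
\]
and Proposition~\ref{Strong comparison} then propagates this to $u_j(t,n;g)\geq u_i(t-h,n;g)$ for $t>-j$; letting $i,j\to\infty$ yields $u(t,n;g)\geq u(t-h,n;g)$, i.e.\ nondecreasing, and strict monotonicity follows from Corollary~\ref{StrongMaximum}. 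This trick does not transfer to your construction because $\underline u$ is \emph{not} monotone in $t$: a direct computation shows $\partial_t\big(\phi_E e^{Et}-A\theta\phi_E^{1+\epsilon}e^{(1+\epsilon)Et}\big)=E\phi_E e^{Et}\big(1-(1+\epsilon)A\theta\phi_E^\epsilon e^{\epsilon Et}\big)$ changes sign on the set where $\underline u>0$. To repair your proof, either switch the initial data of the approximating Cauchy problems from $\underline u$ to $\overline u$ and run the paper's time-shift comparison, or find a genuinely new mechanism for the monotonicity that does not rest on uniform decay at $t\to-\infty$.
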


\begin{proof}
By the calculation, $\phi_{E}(n;g)\me^{Et}$ is a supersolution on the $\mathbb{R}\times\mathbb{Z}$ of $(\ref{1})$. Then $\overline u$ is also a  supersolution of the equation (\ref{1}). Take $(t,n)\in \mathbb{R}\times\mathbb{Z}$ so that $\underline u(t,n;g)>0$ and set $\zeta:=\phi_{E}(n;g)\me^{Et}$. Then we have:
\begin{equation*}
\begin{aligned}
&\underline u_{t}(t,n;g)-\underline u(t,n+1;g)-\underline u(t,n-1;g)+2\underline u(t,n;g)-g(n)\underline u(t,n;g)\\
=&-(1+\epsilon)AE\theta(\cdot;g)\phi_{E}^{1+\epsilon}(\cdot;g)\me^{(1+\epsilon)Et}+Ae^{(1+\epsilon)Et}\phi_{E}^{1+\epsilon}(\cdot;g)\mathcal L_g^{(1+\epsilon)\sigma_E(\cdot;g)}\theta(\cdot;g)\\
=&A\zeta^{1+\epsilon}[\mathcal{L}_g^{(1+\epsilon)\sigma_{E}(\cdot;g)}\theta(\cdot;g)-(1+\epsilon)E\theta(\cdot;g)]\\
\leq &-A\delta \theta\zeta^{1+\epsilon}.
\end{aligned}
\end{equation*}
Therefore, as 0 obviously solves (\ref{1}), for $\underline u$ to be a subsolution it is sufficient to choose $A$ so large that, for all $(t,n)$ such that $\underline u(t,n;g)>0$, one has
$$A\delta\theta(\cdot;g)\zeta^{1+\epsilon}\geq g\zeta^{2}.$$
Observe that $\underline u(t,n;g)>0$ if and only if $A\theta(\cdot;g)\zeta^{\epsilon}(t,n;g)<1$, that is, $\zeta^{\epsilon-1}(t,n)>(A\theta(n;g))^{\frac{1}{\epsilon}-1}$. Therefore, we can choose $A$ so that $A\geq \frac{\sup_{\mathbb{R}}g^{\epsilon}}{\delta^{\epsilon}\inf_{\mathbb{R}}\theta(\cdot;g)}.$

The above argument also shows that $\underline u<(A\theta(\cdot;g))^{-\frac{1}{\epsilon}}$. $A$ can be chosen such that $\underline u<1,$ whence $\underline u\leq\overline u$.

Define the sequence $\{u_{i}\}$ as follows: $u_{i}$ is the solution of (\ref{h(c)}) for $t>-i$ with initial condition $u_{i}(-i,n;g)=\overline u(-i,n;g).$ By the comparison principle Proposition \ref{Strong comparison}, $u_{i}$ satisfies
$$\forall t>-i,\ n\in\mathbb{Z}, \ \underline u(t,n;g)\leq u_{i}(t,n;g)\leq \overline u(t,n;g).$$
Thus, for $i,j\in\mathbb{N}$ with $j<i$ and for any $0<h<1$, using the monotonicity of $\overline u$, we will get
$$u_{j}(-j,n;g)=\overline u(-j,n;g)\geq \overline u(-j-h,n;g)\geq u_{i}(-j-h,n;g).$$
Note that $u_{i}(\cdot-h,\cdot;g)$ is also a solution of (\ref{h(c)}). The comparison principle Proposition \ref{Strong comparison} gives us
$$\forall j<i, \ 0<h<1,t>-j,\ n\in\mathbb{Z},\ g\in\mathcal H(c) \ u_{j}(t,n;g)\geq u_{i}(t-h,n;g).$$
Now by the arguments before, we can prove $\{u_{i}\}_{i}$ converges locally uniformly to a global function $\underline u\leq u\leq \overline u$ of (\ref{h(c)}). Then passing to the limit as $i,j\rightarrow \infty$,  $u(t,n;g)\geq u(t-h,n;g)$ for all $(t,n)\in\mathbb{R}\times\mathbb{Z}$ and $0<h<1$, $g\in\mathcal H(c)$. This means that $u$ is nondecreasing in $t$. If the monotonicity were not strict, then the parabolic maximum principle Corollary \ref{StrongMaximum} would imply that $u$ is constant in time, which contradicts $\underline u\leq u\leq \overline u$. Then we finish the proof.
\end{proof}

One should notice the following fact:
\begin{Proposition}\label{prop:max_of_u}
For any $\tilde{u}\in\mathcal S_{g}$, either $\tilde{u}(t,n)<u(t,n;g)$, or $\tilde{u}(t,n)\equiv u(t,n;g)$ in $\mathbb R\times\mathbb Z$.
\end{Proposition}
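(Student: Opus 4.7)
The plan is to show that $u(\cdot,\cdot;g)$ is the maximal element of $\mathcal S_g$, and then invoke the strong maximum principle on the difference $u-\tilde u$.

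First, take any $\tilde u\in\mathcal S_g$. By definition, $\tilde u(t,n)\leq \overline u(t,n;g)$ for every $(t,n)\in\R\times\Z$; in particular, for each $i\in\N$,
\begin{equation*}
\tilde u(-i,n)\leq \overline u(-i,n;g)=u_i(-i,n;g),\qquad n\in\Z.
\end{equation*}
Both $\tilde u$ and $u_i$ are bounded solutions of \eqref{h(c)} on $(-i,\infty)\times\Z$, so the strong comparison principle (Proposition~\ref{Strong comparison}, applied on the shifted time interval) yields $\tilde u(t,n)\leq u_i(t,n;g)$ for all $t>-i$ and $n\in\Z$. Letting $i\to\infty$ and using the locally uniform convergence $u_i\to u(\cdot,\cdot;g)$ established in Proposition~\ref{prop:construct_sol}, we conclude
\begin{equation*}
\tilde u(t,n)\leq u(t,n;g),\qquad (t,n)\in\R\times\Z.
\end{equation*}

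Second, set $w(t,n):=u(t,n;g)-\tilde u(t,n)\geq 0$. Since both $u$ and $\tilde u$ solve \eqref{h(c)}, a direct computation gives
\begin{equation*}
w_t(n)-w(n+1)-w(n-1)+\bigl(2-g(n)\bigl[1-u(t,n;g)-\tilde u(t,n)\bigr]\bigr)w(n)=0
\end{equation*}
in $\R\times\Z$. Because $u$ and $\tilde u$ are both bounded (they lie between $\underline u$ and $\overline u$), the coefficient $v(n):=g(n)\bigl[1-u(t,n;g)-\tilde u(t,n)\bigr]$ is bounded. Hence $w$ satisfies an inequality of the form \eqref{elliptic}, so the strong maximum principle of Corollary~\ref{StrongMaximum} applies: on each time interval of the form $[t_0,t_0+T]\times\Z$, either $w\equiv 0$ or $w>0$ throughout. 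A standard connectedness/iteration argument in $t_0$ then shows that the dichotomy $w\equiv 0$ on $\R\times\Z$ versus $w>0$ on $\R\times\Z$ is global, which is precisely the conclusion of the proposition.

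The only slightly delicate point is justifying the application of Corollary~\ref{StrongMaximum} on the whole line: the cited statement is for the half-line $I\times\Z$ with nonnegative initial data. Since $w\geq 0$ everywhere and $w$ solves the above linear equation with bounded time-dependent coefficient, we may apply the corollary starting from any time $t_0\in\R$; if $w$ vanishes at some $(t_1,n_1)$ with $t_1>t_0$, then Corollary~\ref{StrongMaximum} forces $w\equiv 0$ on $[t_0,t_1]\times\Z$, and since $t_0$ is arbitrary, on $(-\infty,t_1]\times\Z$. Running the same argument forward in time from any such point of vanishing (using the Harnack inequality Proposition~\ref{Harnack} to propagate zeros forward as well) gives $w\equiv 0$ on all of $\R\times\Z$. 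This propagation of zeros is the main place one needs to be careful, but it follows directly from the standard lattice strong maximum principle and Harnack inequality recalled in Section~2.
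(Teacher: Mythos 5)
Your proof takes essentially the same route as the paper's: first obtain $\tilde u\le u$ by comparing with the approximants $u_j$ via Proposition~\ref{Strong comparison}, then upgrade the weak inequality to the strict dichotomy via a strong comparison/maximum principle. The paper applies Proposition~\ref{Strong comparison} directly to the pair $(\tilde u,u)$; you pass to the linearized difference $w=u-\tilde u$ and invoke Corollary~\ref{StrongMaximum}, which is equivalent (the former is derived from the latter), and you are more explicit than the paper about letting $t_0\to-\infty$ to globalize the dichotomy.

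One small slip: the parenthetical claim that Proposition~\ref{Harnack} lets you "propagate zeros forward" is backwards. The bound $u(t,n)\le C(T)\,u(t+T,m)$ propagates vanishing from $t+T$ to $t$, i.e.\ \emph{backward} in time. Fortunately no forward device is needed: apply Corollary~\ref{StrongMaximum} on $[t_0,T]\times\Z$ for arbitrary $t_0<t_1<T$; since $w(t_1,n_1)=0$ with $(t_1,n_1)\in[t_0,T]\times\Z$, the dichotomy on that whole interval already forces $w\equiv0$ on $[t_0,T]\times\Z$, and letting $t_0\to-\infty$, $T\to\infty$ finishes. (Alternatively, once the backward step yields $w(t_1,\cdot)\equiv 0$, uniqueness of the Cauchy problem gives $w\equiv0$ for $t>t_1$.)
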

\begin{proof}
	Since $\tilde{u}\in\mathcal S_{g}$, then, as constructed in Proposition \ref{prop:construct_sol},
	$$\forall j\in\mathbb Z_{+},\ \tilde{u}(-j,n)\leq \overline u(-j,n;g)=u_{j}(-j,n;g).$$
	Therefore, the comparison principle Proposition \ref{Strong comparison} gives $\tilde{u}(t,n)\leq u_{j}(t,n;g)$ for $t>-j$. Taking $j\to\infty$, we have
	$$\tilde{u}(t,n)\leq u(t,n;g),\ \forall (t,n)\in\mathbb R\times\mathbb Z.$$
 Let $\underline u=\tilde u(t,n), \overline u=u(t,n;g)$. By Proposition \ref{Strong comparison}, one has either $\tilde{u}(t,n)<u(t,n;g)$, or $\tilde{u}(t,n)\equiv u(t,n;g)$ in $\mathbb R\times\mathbb Z$.
	Thus the proof is complete.	
\end{proof}

 In fact, $u(t,n;g)$ is an almost periodic traveling front of \eqref{h(c)} as we will show afterwards. To prove it, we shall notice some facts which will be represented below.

\begin{Proposition}\label{prop:phi_t_k_g}
	Let $E>\lambda_1$. For any $g\in\mathcal H(c)$, the following properties hold:
	\begin{enumerate}
		\item $\phi_{E}(n;g\cdot k)=\frac{\phi_{E}(n+k;g)}{\phi_{E}(k;g)}$.
		\item $\{\phi_{E}(n;g)|g\in\mathcal H(c)\}$ is a one-cover of $\mathcal H(c)$ in $l^{\infty}_{loc}(\mathbb Z)$.
		\item $\big\{\frac{\phi_E(n+1;g)}{\phi_E(n;g)}|g\in\mathcal H(c)\big\}$ is a one-cover of $\mathcal H(c)$ in $l^{\infty}(\mathbb Z)$.
         \item $\theta(n+k;g)=\theta(n;g\cdot k)$.
         \item $\{\theta(n;g)|g\in\mathcal H(c)\}$ is a one-cover of $\mathcal H(c)$ in $l^{\infty}(\mathbb Z)$.
	\end{enumerate}	
\end{Proposition}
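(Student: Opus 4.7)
My plan is to establish the five assertions in the order $(1)\to(2)\to(4)\to(3)\to(5)$, since (3) and (5) both rely on an upgrade of $l^\infty_{\mathrm{loc}}$ continuity (from (2)) to $l^\infty$ continuity via almost periodicity. For (1), I set $\tilde{\phi}(n):=\phi_E(n+k;g)/\phi_E(k;g)$ and verify directly that $\mathcal{L}_{g\cdot k}\tilde{\phi}=E\tilde{\phi}$ (using $g\cdot k(n)=g(n+k)$), that $\tilde{\phi}(0)=1$, and that $\tilde{\phi}(n)\to 0$ as $n\to\infty$. The uniqueness clause of Proposition~\ref{prepare} applied at $g\cdot k\in\mathcal{H}(c)$ then forces $\tilde{\phi}\equiv\phi_E(\cdot;g\cdot k)$. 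A useful corollary is the shift-equivariance $\sigma_E(n;g\cdot k)=\sigma_E(n+k;g)$, which will be invoked in (3), (4), and (5).

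For (2), I prove continuity of $g\mapsto\phi_E(\cdot;g)$ into $l^\infty_{\mathrm{loc}}(\mathbb{Z})$ by a compactness/uniqueness argument: given $g_j\to g^*$ in $l^\infty$, Lemma~\ref{2.2} furnishes a uniform exponential decay bound for $\phi_E(\cdot;g_j)$ on each set of the form $[-N,\infty)$ (the decay rate $\delta$ depending only on $E$, $\lambda_1$, and $\|c\|_{l^\infty}$); extracting a pointwise convergent subsequence via a diagonal argument and passing to the limit in $\mathcal{L}_{g_j}\phi_E(\cdot;g_j)=E\phi_E(\cdot;g_j)$ yields a positive solution for $g^*$ satisfying the same normalization and decay at $+\infty$, which by the uniqueness part of Proposition~\ref{prepare} must equal $\phi_E(\cdot;g^*)$. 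Since every subsequential limit coincides, the whole family converges locally uniformly.

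For (4), I observe that under the substitution $g\mapsto g\cdot k$, the coefficients of the operator $\mathcal{L}_g^{(1+\epsilon)\sigma_E(\cdot;g)}-(1+\epsilon)E$---namely $\me^{-(1+\epsilon)\sigma_E(n;g)}$, $\me^{(1+\epsilon)\sigma_E(n-1;g)}$, and $g(n)-(1+\epsilon)E$---all transform by a $k$-shift in $n$, thanks to (1) and the definition of $g\cdot k$. Thus if $u_{\epsilon_0}(\cdot;g)$ is the unique almost periodic solution (furnished by \cite[Lemma~5.2]{Liang2018}) of the associated log-transformed equation on the whole hull, then $n\mapsto u_{\epsilon_0}(n+k;g)$ solves the corresponding shifted equation, which is precisely the defining equation for $u_{\epsilon_0}(\cdot;g\cdot k)$. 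Uniqueness forces $u_{\epsilon_0}(n;g\cdot k)=u_{\epsilon_0}(n+k;g)$, and exponentiating gives (4).

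For (3) and (5), the strategy is identical: upgrade $l^\infty_{\mathrm{loc}}$-continuity to $l^\infty$-continuity via almost periodicity. By Lemma~\ref{Almostperiodic}, $\sigma_E(\cdot;c)$ is almost periodic, so the shift-orbit $\{n\mapsto\sigma_E(n+k;c):k\in\mathbb{Z}\}$ has compact closure in $l^\infty(\mathbb{Z})$; by (1), this set equals $\{\sigma_E(\cdot;c\cdot k):k\in\mathbb{Z}\}$. Given any sequence with $c\cdot k_j\to g$ in $\mathcal{H}(c)$, any $l^\infty$-subsequential limit of $\sigma_E(\cdot;c\cdot k_j)$ must agree pointwise with $\sigma_E(\cdot;g)$ by (2); hence the full sequence converges in $l^\infty$, and the map $g\mapsto\sigma_E(\cdot;g)$ extends continuously to all of $\mathcal{H}(c)$. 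Replacing $\sigma_E$ by $\theta$ (which is almost periodic by construction via \cite[Lemma~5.1]{Liang2018}) and (1) by (4) yields (5). The main technical obstacle is precisely this upgrade from $l^\infty_{\mathrm{loc}}$- to $l^\infty$-continuity; the compactness/uniqueness argument alone is too weak, and one must exploit the almost periodicity of the shift-orbits together with the density of $\{c\cdot k\}$ in $\mathcal{H}(c)$ to close the gap.
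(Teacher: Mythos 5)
Your proof is correct and follows essentially the same route as the paper: uniqueness from Proposition~\ref{prepare} (resp.\ from \cite[Lemma~5.1]{Liang2018}) for (1) and (4); a decay bound from Lemma~\ref{2.2} plus diagonal extraction plus uniqueness for (2); and an upgrade from $l^\infty_{\mathrm{loc}}$- to $l^\infty$-convergence via almost periodicity for (3) and (5). The only (cosmetic) difference is in (3): the paper argues by contradiction, sliding to the window $m_k+n_k$ and using $g\cdot(m_k+n_k)-g^*\cdot m_k\to 0$ together with the local continuity from (2), whereas you instead invoke precompactness of the $l^\infty$-orbit closure of the almost periodic sequence $\sigma_E(\cdot;c)$ and identify all subsequential limits via (2); these are two ways of stating the same uniform-continuity-on-a-compact-hull mechanism.
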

 	Before proving this proposition, we give an observation about one-cover.
 	\begin{Remark}\label{re:one-cover}
	We claim that $\{\Phi_{g}|g\in\mathcal{H}(c)\}$ is said to be a one-cover of $\mathcal{H}(c)$ in some metric space $X$ if and only if
	\begin{equation}\label{eq:one-cover_iff}
	\Phi_{g\cdot n_k}\to\Phi_{g^*}\ \text{ in $X$ provided that } g\cdot n_k\to g^*.
	\end{equation}
	In fact, \eqref{eq:one-cover_iff} holds straightforward if $\{\Phi_{g}|g\in\mathcal H(c)\}$ is a one-cover. Assume that $g_{k}\to g^*$ and $\varrho$ is the metric of $X$.
	Then for any $k\in\Z$, there exists $\{n_{k}\}$ such that $\varrho(\Phi_{g\cdot n_k},\Phi_{g_{k}})+\|g\cdot n_k-g_{k}\|_{\ell^{\infty}}<1/k$.
	Therefore, $g\cdot n_k\to g^*$ since $g_{k}\to g^*$, and thus $\Phi_{g\cdot n_k}\to\Phi_{g^*}$ by \eqref{eq:one-cover_iff}. Hence $\Phi$ is continuous since $$\varrho(\Phi_{g^*},\Phi_{g_{k}})\leq\varrho(\Phi_{g\cdot n_k},\Phi_{g_{k}})+\varrho(\Phi_{g\cdot n_k},\Phi_{g^*})\to0 \text{ as } k\to\infty.$$	
\end{Remark}

\begin{proof}[Proof of Proposition \ref{prop:phi_t_k_g}]
     (1) follows from the uniqueness of $\phi_E(n;g)$ which follows from Proposition \ref{prepare}.

     (2). 
     By Remark \ref{re:one-cover}, it is sufficient to prove $\Phi_{g\cdot n_k}\to\Phi_{g^*}\ \text{ provided that } g\cdot n_k\to g^*.$
       For $g\cdot n_k\to g^*$, $\phi_{E}(n;g\cdot n_k)$ converges  locally uniformly, up to some subsequence, to some function $\tilde{\phi}_{E}(n)$. Moreover, we have $\tilde{\phi}_E(n)\leq Ce^{-\delta n}$ for $n>0$
     since $\phi_{E}(n;g\cdot n_k)\leq Ce^{-\delta n}$ from Lemma \ref{2.2}. Therefore $\tilde{\phi}$ satisfies
     $$
     \mathcal L_{g^*}\phi=E\phi \text{ in }\Z, \ \phi(0)=1,\ \lim\limits_{n\rightarrow\infty}\phi(n)=0,
     $$
     which yields that $\tilde{\phi}=\phi_{E}(n;g^*)$ by uniqueness. That is to say, all the convergence subsequences converge to the same limit. Hence $\phi_{E}(n;g\cdot n_k)\to\phi_E(n;g^*)$ locally uniformly in $n\in\Z$.


     (3). We want to prove that
     	$\frac{\phi_{E}(\cdot+1;g\cdot n_k)}{\phi_{E}(\cdot;g\cdot n_k)}\to\frac{\phi_{E}(\cdot+1;g^*)}{\phi_{E}(\cdot;g^*)}$ in $l^{\infty}(\mathbb Z)$ if $g\cdot n_k\to g^*$.
     	If not, then there exists a sequence $\{m_{k}\}_k$ such that, up to extraction,
     	$$\lim\limits_{k\rightarrow\infty}\bigg|\frac{\phi_{E}(m_{k}+1;g\cdot n_k)}{\phi_{E}(m_{k};g\cdot n_k)}-\frac{\phi_{E}(m_{k}+1;g^*)}{\phi_{E}(m_{k};g^*)}\bigg|>0,$$
     i.e., by (1),
     \begin{equation}\label{eq:3of_prop:phi_t_k_g}
     \lim\limits_{k\rightarrow\infty}\big|\phi_{E}(1;g\cdot (m_{k}+n_{k}))-\phi_{E}(1;g^*\cdot m_{k})\big|>0.
     \end{equation}
     Note that
     $\lim\limits_{k\to\infty}g\cdot(m_{k}+n_{k})=\lim\limits_{k\to\infty}\lim\limits_{l\to\infty}g\cdot n_{l}\cdot m_{k}=\lim\limits_{k\to\infty}g^*\cdot m_k$ since $g$ is almost periodic. Then using (2), we have
     $$\lim\limits_{k\rightarrow\infty}\big|\phi_{E}(1;g\cdot (m_{k}+n_{k}))-\phi_{E}(1;g^*\cdot m_{k})\big|\to0$$
     as $k\to\infty$, which contradicts \eqref{eq:3of_prop:phi_t_k_g}.

(4). 
$\theta(n+k;g)=\theta(n;g\cdot k)$ follows from the uniqueness.

(5). By the similar argument in (2), $\{\theta(n;g)|g\in\mathcal H(c)\}$ is a one-cover of $\mathcal H(c)$ in $l^{\infty}(\mathbb Z)$.
\end{proof}


    With this at hand, we can prove that $u$ satisfies (4) of Definition \ref{def_of_APTW}.

    \begin{Lemma}\label{lem:APTF_of_2}
    	For any $g\in \mathcal H(c)$, we have
    	$$u(t+t(k;g),n+k;g)=u(t,n;g\cdot k)\ \forall t\in\mathbb R,\ n,k\in\mathbb Z,$$
    	where $t(k;g)=-\frac{1}{E}\ln\phi_{E}(k;g)$.
    \end{Lemma}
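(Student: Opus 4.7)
The plan is to apply Proposition \ref{prop:max_of_u} to the translated function
\[
\tilde u(t,n):=u(t+t(k;g),n+k;g),\qquad t(k;g)=-\tfrac{1}{E}\ln\phi_E(k;g),
\]
in the problem associated with the shifted potential $g\cdot k$. Concretely, I will first check that $\tilde u$ is an entire solution of \eqref{h(c)} with $g$ replaced by $g\cdot k$, then verify that $\tilde u$ lies between $\underline u(\cdot,\cdot;g\cdot k)$ and $\overline u(\cdot,\cdot;g\cdot k)$, so that $\tilde u\in\mathcal S_{g\cdot k}$; finally I will apply Proposition \ref{prop:max_of_u} in both directions to upgrade the resulting inequality to an equality.

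For the first point, since $u(t,n;g)$ solves \eqref{h(c)} for the potential $g$, a direct substitution gives
\[
\tilde u_t(t,n)-\tilde u(t,n+1)-\tilde u(t,n-1)+2\tilde u(t,n)
=g(n+k)\tilde u(1-\tilde u)=(g\cdot k)(n)\,\tilde u(1-\tilde u),
\]
so $\tilde u$ is an entire solution of the equation indexed by $g\cdot k$.

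For the second point, I would use Proposition \ref{prop:phi_t_k_g}(1) and (4), which give $\phi_E(n+k;g)=\phi_E(n;g\cdot k)\phi_E(k;g)$ and $\theta(n+k;g)=\theta(n;g\cdot k)$. Since $\me^{Et(k;g)}=\phi_E(k;g)^{-1}$, the first term in $\overline u$ transforms as
\[
\phi_E(n+k;g)\me^{E(t+t(k;g))}=\phi_E(n;g\cdot k)\,\me^{Et},
\]
and the quantity $A\,\theta(n+k;g)\phi_E^{1+\epsilon}(n+k;g)\me^{(1+\epsilon)E(t+t(k;g))}$ simplifies, using $\me^{(1+\epsilon)Et(k;g)}=\phi_E(k;g)^{-(1+\epsilon)}$, to $A\,\theta(n;g\cdot k)\phi_E^{1+\epsilon}(n;g\cdot k)\me^{(1+\epsilon)Et}$. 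Hence $\overline u(t+t(k;g),n+k;g)=\overline u(t,n;g\cdot k)$ and $\underline u(t+t(k;g),n+k;g)=\underline u(t,n;g\cdot k)$, so $\tilde u\in\mathcal S_{g\cdot k}$.

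Applying Proposition \ref{prop:max_of_u} gives $\tilde u\le u(\cdot,\cdot;g\cdot k)$. To obtain equality I would repeat the argument with $g$ replaced by $g\cdot k$ and $k$ replaced by $-k$; since $(g\cdot k)\cdot(-k)=g$ and Proposition \ref{prop:phi_t_k_g}(1) gives $\phi_E(-k;g\cdot k)=1/\phi_E(k;g)$, so that $t(-k;g\cdot k)=-t(k;g)$, the symmetric application yields $u(t,n;g\cdot k)\le u(t+t(k;g),n+k;g)=\tilde u(t,n)$. Combining the two inequalities finishes the proof. The only mildly delicate step is the bookkeeping with the cocycle-type relation $\phi_E(n+k;g)=\phi_E(n;g\cdot k)\phi_E(k;g)$ to verify that the super- and subsolutions are equivariant; everything else is a direct application of the uniqueness/maximality statement already established.
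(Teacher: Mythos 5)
Your proof is correct and follows essentially the same strategy as the paper's: show that the shifted $\tilde u(t,n)=u(t+t(k;g),n+k;g)$ lies in $\mathcal S_{g\cdot k}$ via the equivariance of $\overline u$ and $\underline u$ (coming from Proposition~\ref{prop:phi_t_k_g}), apply Proposition~\ref{prop:max_of_u} to get one inequality, and obtain the reverse inequality by the symmetric argument with the cocycle identity $t(-k;g\cdot k)=-t(k;g)$. The only difference is cosmetic: the paper applies the maximality principle to $u(t-t(k;g),n-k;g\cdot k)\in\mathcal S_g$ directly, whereas you re-run the argument with $(g,k)$ replaced by $(g\cdot k,-k)$; these are the same step. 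Your explicit invocation of Proposition~\ref{prop:phi_t_k_g}(4) for the $\theta$-term and the check that $\tilde u$ solves the shifted equation are correct and in fact make the verification slightly more complete than the paper's one-line assertion.
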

    \begin{proof}
    From the definitions of $\overline u(t,n;g)$ and $\underline u(t,n;g)$, together with Proposition \ref{prop:phi_t_k_g} (1), we can verify that
    $$\overline u(t+t(k;g),n+k;g)=\overline u(t,n;g\cdot k),$$
    $$\underline u(t+t(k;g),n+k;g)=\underline u(t,n;g\cdot k).$$
    Combining these two equations, we can check that
    $$u(t+t(k;g),n+k;g)\in \mathcal S_{g\cdot k}, u(t-t(k;g),n-k;g\cdot k)\in\mathcal S_{g}.$$
    Therefore, $u(t+t(k;g),n+k;g)\leq u(t,n;g\cdot k)$	and $u(t-t(k;g),n-k;g\cdot k)\leq u(t,n;g)$ by Proposition \ref{prop:max_of_u}, which gives
    $u(t+t(k;g),n+k;g)=u(t,n;g\cdot k)$ for any $(t,n)\in\mathbb{R}\times \mathbb{Z}$.
    \end{proof}

    To prove $u$ satisfying (2) of Definition \ref{def_of_APTW}, we will consider $u(t,0;g\cdot n)$.
    \begin{Lemma}\label{lem:front0}
    	The function $u(t,0;g\cdot n)$ satisfies
    	\begin{equation}\label{eq:front}
    	\lim\limits_{t\rightarrow -\infty}u(t,0;g\cdot n)=0,\ \lim\limits_{t\rightarrow +\infty}u(t,0;g\cdot n)=1,\ \text{ uniformly in }n\in\mathbb{Z}.
    	\end{equation}
    \end{Lemma}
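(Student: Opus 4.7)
The plan exploits the special value $\phi_E(0;g')=1$, which makes both the super- and subsolutions at $n=0$ uniformly controlled across the hull, and then upgrades pointwise convergence to uniformity by Dini's theorem on $\mathcal H(c)$.

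The limit $t\to -\infty$ is immediate: the supersolution gives $u(t,0;g\cdot n)\leq\overline u(t,0;g\cdot n)=\min\{1,e^{Et}\}$, which tends to $0$ uniformly in $n$. For $t\to +\infty$, I would first examine
$$\underline u(t,0;g\cdot n)=\max\{0,\ e^{Et}-A\theta(0;g\cdot n)e^{(1+\epsilon)Et}\},$$
whose maximum over $t$ equals $\tfrac{\epsilon}{1+\epsilon}[(1+\epsilon)A\theta(0;g\cdot n)]^{-1/\epsilon}$, attained at a time $t^*(n)$ which, together with the maximum value itself, is bounded uniformly in $n$ because $\theta$ is almost periodic with $\inf\theta>0$. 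Combined with monotonicity of $u$ in $t$ from Proposition~\ref{prop:construct_sol}, this produces constants $T_0$ and $m_0>0$ with $u(t,0;g\cdot n)\geq m_0$ for all $t\geq T_0$ and all $n\in\mathbb Z$. The same analysis at an arbitrary site $m$ yields $\liminf_{t\to\infty}u(t,m;g')\geq m_0$ for every $m$ and $g'\in\mathcal H(c)$, and by monotonicity the limit $U(m;g'):=\lim_{t\to\infty}u(t,m;g')$ exists and is a stationary solution of~\eqref{h(c)} with $\inf_m U(m;g')\geq m_0$.

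Next I would show $U\equiv 1$. Setting $v=1-U$, one has $0\leq v\leq 1-m_0<1$ and $v(m+1)+v(m-1)-2v(m)=g'(m)v(m)(1-v(m))$. Taking $m_k$ with $v(m_k)\to M:=\sup_m v$ and passing to a subsequence along which $g'\cdot m_k$ converges in $\mathcal H(c)$ to some $h$ and $v(\cdot+m_k)$ converges pointwise to a profile $V$ with $V(0)=M=\sup V$, the stationary equation passes to the limit. At $m=0$, $V(1)+V(-1)-2M\leq 0$ combined with the equation gives $h(0)M(1-M)\leq 0$; since $h(0)\geq\inf c>0$ and $1-M\geq m_0>0$, this forces $M=0$, so $v\equiv 0$ and $U\equiv 1$, proving pointwise convergence $u(t,0;g')\to 1$ on the whole hull.

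Finally, uniformity in $n$ would follow from Dini's theorem on the compact space $\mathcal H(c)$, once one checks that $g'\mapsto u(t,0;g')$ is continuous for each fixed $t$. This continuity comes from the construction $u=\lim_i u_i$ in Proposition~\ref{prop:construct_sol}, the continuity in $g'$ of the initial data $\overline u(-i,\cdot;g')$ granted by Proposition~\ref{prop:phi_t_k_g}, and continuous dependence of the Cauchy problem on its data. The main technical obstacle is that $u$ is \emph{a priori} only upper semicontinuous as a monotone-decreasing limit of the $u_i$; full continuity requires a symmetric construction starting from the subsolution $\underline u(-i,\cdot;g')$ and the maximality property of $u$ from Proposition~\ref{prop:max_of_u}.
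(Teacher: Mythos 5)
Your argument for the $t\to-\infty$ limit is the same as the paper's, and your lower bound at $n=0$ (computing the maximum of $\underline u(t,0;g\cdot n)$ in $t$, noting that $\theta(0;g\cdot n)=\theta(n;g)$ is bounded away from $0$ and $\infty$) is essentially the paper's inequality (4.27)--(4.28), giving a uniform constant $m_0>0$. Where you diverge is in the passage from $m_0$ to $1$: the paper works directly with $\vartheta:=\lim_{t\to\infty}\inf_n u(t,0;g\cdot n)$, picks $n_k$ nearly attaining the infimum, shifts to $p^k(t,n):=u(t+k,n;g\cdot n_k)$, extracts a sub-limit $p$, and applies the minimum principle at $(0,0)$ where $p$ attains its infimum $\vartheta$. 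You instead compute the pointwise limit $U(m;g')$, show it is a stationary solution with $\inf U\geq m_0$, and run a sliding/maximum-principle argument on the stationary profile. Both $\vartheta=1$ arguments are correct and use the same KPP mechanism; yours is a clean alternative.

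The genuine gap is in the Dini step. Dini's theorem on $\mathcal H(c)$ requires continuity of $g'\mapsto u(t,0;g')$ for each fixed $t$, and at this point in the paper that continuity is precisely what has \emph{not} yet been established (it is the content of Theorem~\ref{thm:front1}, whose proof \emph{uses} Lemma~\ref{lem:front0}, so invoking it would be circular). You acknowledge that the construction $u=\lim_i u_i$ from Proposition~\ref{prop:construct_sol} only gives upper semicontinuity in $g'$ (decreasing limit of continuous functions), and propose a ``symmetric construction from below'' plus Proposition~\ref{prop:max_of_u} to close the gap. But the bottom construction $\tilde u:=\lim_i\tilde u_i$, with $\tilde u_i$ solving the Cauchy problem from $\underline u(-i,\cdot;g')$, yields an \emph{increasing} limit of continuous functions, hence a lower semicontinuous entire solution in $\mathcal S_{g'}$; Proposition~\ref{prop:max_of_u} then only gives $\tilde u\leq u$, not $\tilde u=u$. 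Without a uniqueness statement (which the paper obtains only later, via the sliding argument of Theorem~\ref{thm:front1}), there is no reason $\tilde u=u$, so continuity of $u(t,0;\cdot)$ is not secured and Dini does not apply. The paper's trick of taking $\inf_n$ \emph{inside} the $t$-limit and extracting subsequences sidesteps this continuity question entirely, which is why that route is the one that closes.
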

    \begin{proof}
    	For any $(t,n)\in\mathbb{R}\times \mathbb{Z}$, we have
    	$$u(t,0;g\cdot n)\leq \overline u(t,0;g\cdot n)\leq \phi_{E}(0;g\cdot n)e^{Et}=e^{Et},$$
    Meanwhile,	
    \begin{equation*}
    	\begin{aligned}
    	u(t,0;g\cdot n)&\geq \underline u(t,0;g\cdot n)\geq e^{Et} \phi_{E}(0;g\cdot n)-A\theta(0;g\cdot n)\phi_{E}^{1+\epsilon}(0;g\cdot n)e^{(1+\epsilon)Et}\\
    	&\geq e^{Et}-A(\sup\limits_{n}\theta(n;g))e^{(1+\epsilon)Et},
    	\end{aligned}
    	\end{equation*}
where the last inequality follows from $\theta(n+k;g)=\theta(n;g\cdot k).$
    	Therefore,
    	\begin{equation}\label{27}
    	\forall (t,n)\in\mathbb{R}\times\mathbb{Z},\ e^{Et}(1-Me^{\epsilon Et})\leq u(t,0;g\cdot n)\leq e^{Et},
    	\end{equation}
    	for some positive constant $M$. From the second inequality, we deduce that
    	$$\lim\limits_{t\rightarrow -\infty}u(t,0;g\cdot n)=0\ \text{ uniformly in }n\in\mathbb{Z}.$$
    	
    	From the first inequality of \eqref{27}, we can see that $\inf_{n\in\mathbb{Z}}u(t,0;g\cdot n)>0$ for $t<0$ small enough. Therefore, it follows from the monotonicity of $u$ in $t$ that
    	\begin{equation}\label{28}
    	\forall t\in\mathbb{R},\ \inf_{[t,+\infty)\times \mathbb{Z}}u(t,0;g\cdot n)>0,
    	\end{equation}
    	and the quantity $\vartheta:=\lim\limits_{t\rightarrow +\infty}\inf\limits_{n\in\mathbb{Z}}u(t,0;g\cdot n)>0$ is well defined.
    	To conclude the proof we only need to prove $\vartheta=1$.
    	Let $\{n_{k}\}_{k\in\mathbb Z_{+}}\subset\mathbb{Z}$ be such that $u(k,0;g\cdot n_k)\rightarrow \vartheta$ as $k\rightarrow \infty.$
    	Consider the family of functions $\{p^{k}\}_{k\in\mathbb Z_{+}}$
    	\begin{equation*}
    	p^{k}(t,n):=u(t+k,n;g\cdot n_k)=u\big(t+k-t(n;g\cdot n_k),0;g\cdot (n+n_{k})\big),
    	\end{equation*}
    where the last equality follows from Lemma \ref{lem:APTF_of_2}.
    	Then we have $p^{k}(0,0)=u(k,0;g\cdot n_k)\rightarrow \vartheta$ as $k\rightarrow \infty$, and, for any $(t,n)\in \mathbb{R}\times\mathbb{Z},$
    	\begin{equation}\label{eq:reach_min}
    	\liminf\limits_{k\rightarrow\infty}p^{k}(t,n)=\liminf\limits_{k\rightarrow\infty}u\big(t+k-t(n;g\cdot n_k),0;g\cdot (n+n_{k})\big)\geq \vartheta
    	\end{equation}
        by \eqref{28}.
    	Moreover, the sequence $\{p^k\}_k$ converges, up to sequences, to a function $p$ satisfying $$p_t(t,n)-p(t,n+1)-p(t,n-1)+2p(t,n)=g^{*}(n)p(t,n)(1-p(t,n))\text{ in } \mathbb{R}\times \mathbb{Z},$$ where $g^{*}\in\mathcal H(c)$.
    	Note that $p$ reaches its minimum $\vartheta$ at $(0,0)$. Then we have $g^{*}(0)\vartheta(1-\vartheta)\leq 0.$
    	Therefore, $\vartheta=1$ since $g^{*}(0)\geq \inf_\mathbb{Z}c>0$ and $0<\vartheta\leq 1.$
    \end{proof}

    Now we need the uniform convergence in $\mathbb{R}\times\mathbb Z$ to explain why $u(t,n;g)$ is an almost periodic traveling front. Before that, the following lemma is needed.

    \begin{Lemma}\label{lem:weak_Harnack}
    	Let two bounded uniformly continuous functions $u^1$ and $u^2$ be a subsolution and a supersolution of \eqref{h(c)}, respectively, i.e.,
    	\begin{equation*}
    	u_{t}^{1}(t,n)-u^{1}(t,n+1)-u^{1}(t,n-1)+2u^{1}(t,n)\leq g(n)u^{1}(t,n)(1-u^{1}(t,n)),\text{ } t\in\mathbb{R},
    	\end{equation*}
    	\begin{equation*}
    	u_{t}^{2}(t,n)-u^{2}(t,n+1)-u^{2}(t,n-1)+2u^{2}(t,n)\geq g(n)u^{2}(t,n)(1-u^{2}(t,n)),\text{ } t\in\mathbb{R},
    	\end{equation*}
    	and satisfy $0\leq u^{1}\leq u^{2}$ in $\mathbb Z\times\mathbb R$.

    If $\inf\limits_{n\in\mathbb{Z}}(u^{2}-u^{1})(t_{0}+t(n;g),n)=0$ for some $t_{0}\in\mathbb R$,
    	then
    	$$\forall t<t_{0},\ \inf_{n\in\mathbb{Z}}(u^{2}-u^{1})(t+t(n;g),n)=0.$$
    \end{Lemma}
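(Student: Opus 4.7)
The plan is to transfer the infimum-zero condition into a pointwise identity at a limiting point via translation and compactness, and then propagate it backward in time using the strong maximum principle. First, set $z := u^2 - u^1 \geq 0$ and subtract the super- and sub-solution inequalities; using the identity $u^2(1-u^2) - u^1(1-u^1) = z(1 - u^1 - u^2)$, I get for a.e.\ $t \in \mathbb{R}$ and every $n \in \mathbb{Z}$,
\[
z_t(t,n) - z(t,n+1) - z(t,n-1) + \bigl(2 - g(n)(1 - u^1(t,n) - u^2(t,n))\bigr) z(t,n) \geq 0,
\]
so $z$ is a non-negative supersolution of a linear parabolic inequality with coefficients bounded uniformly in $(t,n)$.

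Next, pick a sequence $\{n_k\} \subset \mathbb{Z}$ with $z(t_0 + t(n_k;g), n_k) \to 0$ (given by the hypothesis), and translate by $(s,m) \mapsto (s + t(n_k;g), m + n_k)$, setting
\[
z_k(s,m) := z(s + t(n_k;g), m + n_k), \qquad u^i_k(s,m) := u^i(s + t(n_k;g), m + n_k), \ i = 1,2.
\]
Each $z_k$ satisfies the analogous linear inequality with $g$ replaced by $g \cdot n_k$, and $z_k(t_0,0) \to 0$. Since $u^1, u^2$ are bounded, the equation forces $\partial_t u^i$ to be bounded as well, so the families $\{u^i_k\}$ are uniformly bounded and equi-continuous in $t$. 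Using compactness of $\mathcal{H}(c)$, Arzelà--Ascoli, and a diagonal extraction over $m \in \mathbb{Z}$, I pass to a subsequence along which $g \cdot n_k \to g^*$ in $\ell^\infty(\mathbb{Z})$ and $u^i_k \to u^{i,*}$ locally uniformly. The limit $z^* := u^{2,*} - u^{1,*} \geq 0$ then satisfies the corresponding linear differential inequality with coefficient involving $g^*, u^{1,*}, u^{2,*}$, and $z^*(t_0, 0) = 0$.

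To conclude, for any $t_1 < t_0$ I apply Corollary \ref{StrongMaximum} to $z^*$ on $[t_1, t_0] \times \mathbb{Z}$: since $z^*(t_1, \cdot) \geq 0$, the strong maximum principle gives either $z^* \equiv 0$ or $z^* > 0$ on that strip, and the equality $z^*(t_0, 0) = 0$ forces the former. Hence $z^* \equiv 0$ on $(-\infty, t_0] \times \mathbb{Z}$. For any fixed $t < t_0$ this yields
\[
0 \leq \inf_{n \in \mathbb{Z}} z(t + t(n;g), n) \leq z(t + t(n_k;g), n_k) = z_k(t, 0) \longrightarrow z^*(t, 0) = 0,
\]
which is exactly the conclusion of the lemma.

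The main obstacle is the compactness step: since the shifts $t(n_k;g)$ may be unbounded, one must carefully produce the limit $(g^*, u^{1,*}, u^{2,*}, z^*)$ of the translated quantities and verify that $z^*$ genuinely inherits the supersolution property together with $z^*(t_0, 0) = 0$. Once this is established, the argument reduces to a clean backward-in-time application of the strong maximum principle, as made available by Corollary \ref{StrongMaximum}.
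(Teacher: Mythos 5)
Your proof is correct, but it takes a genuinely different route from the paper's. The paper applies the quantitative Harnack inequality (Proposition~\ref{Harnack}) directly to the translated differences $w_k(t,n) = (u^2-u^1)(t+t(n_k;g),\,n+n_k)$, after the exponential substitution $v_k = e^{Mt}w_k$ that makes the zeroth-order coefficient favorable; this yields the $k$-uniform bound $w_k(t,0) \leq e^{M(t_0-t)}C(t_0-t)\,w_k(t_0,0) \to 0$ for $t < t_0$, with no passage to a limit problem at all. Your proof instead uses compactness of $\mathcal H(c)$ together with Arzel\`a--Ascoli to extract a limit triple $(g^*, u^{1,*}, u^{2,*})$, converting the infimum-zero hypothesis into a genuine touching point $z^*(t_0,0) = 0$ for the limit supersolution $z^*$, and then runs the strong maximum principle (Corollary~\ref{StrongMaximum}) backward in time on $[t_1,t_0]$ for arbitrary $t_1 < t_0$. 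The paper's argument buys brevity: it stays at the level of the original solutions and so sidesteps the need to justify that the differential inequality survives the limit --- which you correctly flag as the main technical point, and which must be handled via the integrated form of the inequality since $\partial_t z_k$ need not converge pointwise. Your version buys conceptual familiarity (touching point plus backward propagation of zero). Both rest on the same engine, since Corollary~\ref{StrongMaximum} is itself a consequence of Proposition~\ref{Harnack}. One small point to tighten: the strong maximum principle as stated applies to \eqref{elliptic} with a time-independent potential $v \in \ell^\infty(\mathbb Z)$, whereas here the zeroth-order coefficient involves $u^{1,*}, u^{2,*}$ and is time-dependent; you need the (routine, uniformly bounded) time-dependent variant --- but note that the paper's own proof implicitly requires the same extension of Proposition~\ref{Harnack}, so this is not a defect special to your approach.
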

    \begin{proof}
    	Let $\{n_{k}\}_{k}\subset\Z$ be such that $$u^2(t_{0}+t(n_{k};g),n_{k})-u^1(t_{0}+t(n_{k};g),n_{k})\rightarrow 0\text{ as } k\rightarrow \infty.$$ Consider $$u_{k}^{i}(t,n):=u^{i}(t+t(n_{k};g),n+n_{k}),\ i=1,2.$$
       Then the nonnegative function $w_{k}:=u_{k}^{2}-u_{k}^{1}$ satisfies $\lim\limits_{k\to\infty}w_{k}(t_{0},0)=0$ and
    	$$\frac{d}{dt}w_k(t,n)-\mathcal Dw_k(t,n)\geq (g\cdot n_{k})(n)(1-u^{2}_k(t,n)-u^{1}_k(t,n))w_k(t,n),$$
    where $\mathcal Dw_k(t,n)=w_k(t,n+1)-w_k(t,n-1)+2w_k(t,n).$
    	Take $M$ large enough such that $M+\sup\limits_{n\in\mathbb Z}(g\cdot n_k)(1-u^{2}_k-u^{1}_k)>1$.
    	Then the nonnegative function $v_{k}(t,n):=e^{Mt}w_{k}(t,n)$ satisfies $$\frac{d}{dt}v_k(t,n)-v_k(t,n+1)-v_k(t,n-1)+2v_k(t,n)\geq v_k(t,n).$$
    	Now from Proposition \ref{Harnack} we have $v_{k}(t,n)\leq C(t_{0}-t)v_{k}(t_{0},n)$, where $C(t_{0}-t)$ is a constant which is independent of $k$.
    	Therefore, $w_{k}(t,n)\leq e^{M(t_{0}-t)}C(t_{0}-t)w_{k}(t_{0},n)$, and this yields that
    	$$0\leq\lim\limits_{k\to\infty} w_{k}(t,0)\leq\lim\limits_{k\to\infty}e^{M(t_{0}-t)}C(t_{0}-t)w_{k}(t_{0},0)=0.$$
    	Therefore,
    	\begin{equation*}
    	\begin{aligned}
    	0&\leq \inf_{n\in\mathbb{Z}}(u^{2}-u^{1})(t+t(n;g),n)\leq \inf_{k\in\mathbb{Z}}(u^{2}-u^{1})(t+t(n_{k};g),n_{k})\\
    	&=\inf_{k\in\mathbb{Z}} w_{k}(t,0)\leq\lim\limits_{k\to\infty} w_{k}(t,0)=0.
    	\end{aligned}
    	\end{equation*}
    	Thus the proof is complete.
    \end{proof}

Using Lemma \ref{lem:weak_Harnack}, we have the following result about uniform convergence.

\begin{Theorem}\label{thm:front1}
	Assume that $g^*=\lim\limits_{k\to\infty}g\cdot n_k$ for some sequence $\{n_{k}\}_{k\in\mathbb Z_{+}}$. Then $u(t+t(n;g\cdot n_k),n;g\cdot n_k)\to u(t+t(n;g^*),n;g^*)$ uniformly in $\mathbb{R}\times\mathbb Z$.
\end{Theorem}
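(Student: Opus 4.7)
By Lemma \ref{lem:APTF_of_2} the conclusion is equivalent to
$$u(t,0;(g\cdot n_k)\cdot n)\longrightarrow u(t,0;g^*\cdot n)\quad\text{uniformly in }(t,n)\in\mathbb R\times\mathbb Z.$$
Because the $\mathbb Z$-shift acts isometrically on $\mathcal H(c)$, we have $\sup_n\|(g\cdot n_k)\cdot n-g^*\cdot n\|_{\ell^\infty}=\|g\cdot n_k-g^*\|_{\ell^\infty}\to 0$; combining this with compactness of $\mathcal H(c)$, it suffices to establish that the map $h\mapsto u(\cdot,0;h)\colon(\mathcal H(c),\|\cdot\|_{\ell^\infty})\to C_b(\mathbb R)$ is continuous, since continuity on a compact space is uniform continuity. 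I would then split $\mathbb R$ into a compact middle interval $[-T,T]$ and two tails, and prove uniform control on each.

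\textbf{Tail estimates.} Fix $\varepsilon>0$ and $h_k\to h^*$. For $t\leq -T$, the identity $\phi_E(0;h)=1$ gives $u(t,0;h)\leq\overline u(t,0;h)=\min\{1,e^{Et}\}\leq e^{-ET}$ uniformly in $h$, so $|u(t,0;h_k)-u(t,0;h^*)|\leq 2e^{-ET}<\varepsilon/3$. For $t\geq T$ I would extend Lemma \ref{lem:front0} to $\inf_{h\in\mathcal H(c)}u(t,0;h)\to 1$ as $t\to+\infty$. Setting $\vartheta:=\lim_{t\to+\infty}\inf_{h\in\mathcal H(c)}u(t,0;h)$ (monotonicity of $u$ in $t$ makes the limit exist), one uses the uniform subsolution bound
$$\underline u(-T_0,0;h)\geq e^{-ET_0}\bigl(1-AM_\theta e^{-\varepsilon ET_0}\bigr)$$
with $M_\theta:=\sup_{h\in\mathcal H(c)}\theta(0;h)<\infty$ (Proposition \ref{prop:phi_t_k_g}(5) together with compactness) to see $\vartheta>0$. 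Extracting $(t_j,h_j)$ with $u(t_j,0;h_j)\to\vartheta$ and $h_j\to h^{**}\in\mathcal H(c)$, parabolic compactness applied to $p_j(s,i):=u(s+t_j,i;h_j)$ produces a limit $p$ solving \eqref{h(c)} with potential $h^{**}$ and minimized at $(0,0)$ with value $\vartheta$; the equation at $(0,0)$ together with monotonicity gives $h^{**}(0)\vartheta(1-\vartheta)\leq 0$, hence $\vartheta=1$.

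\textbf{Middle interval.} On $t\in[-T,T]$ I argue by contradiction. If $|u(t_k,0;h_k)-u(t_k,0;h^*)|\geq\varepsilon$ for some $t_k\in[-T,T]$, extract $t_k\to t^*$; by ODE-in-$t$ equicontinuity and Arzel\`a--Ascoli pass to a subsequence with $u(\cdot,\cdot;h_k)\to U$ locally uniformly, $U$ solving \eqref{h(c)} with $h^*$. Passing the sandwich $\underline u(\cdot,\cdot;h_k)\leq u(\cdot,\cdot;h_k)\leq\overline u(\cdot,\cdot;h_k)$ to the limit (using pointwise continuity of $\phi_E(n;\cdot)$ and $\theta(n;\cdot)$ from Proposition \ref{prop:phi_t_k_g}(2),(5)) yields $U\in\mathcal S_{h^*}$, whence $U\leq u(\cdot,\cdot;h^*)$ by Proposition \ref{prop:max_of_u}. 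The crux is to show the reverse inequality $U\geq u(\cdot,\cdot;h^*)$.

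\textbf{Main obstacle: $U=u(\cdot,\cdot;h^*)$.} The key uniform-in-$n$ refinement comes from the explicit sub/super solutions in the co-moving frame: using $\phi_E(n;h^*)e^{Et(n;h^*)}=1$ and $\theta(n;h^*)=\theta(0;h^*\cdot n)\leq M_\theta$, a direct computation gives
$$0\leq\bigl(u(\cdot,\cdot;h^*)-U\bigr)\bigl(t+t(n;h^*),n\bigr)\leq(\overline u-\underline u)\bigl(t+t(n;h^*),n;h^*\bigr)\leq AM_\theta e^{(1+\varepsilon)Et},$$
uniformly in $n\in\mathbb Z$, so the quantity tends to $0$ as $t\to-\infty$ uniformly in $n$. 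Feeding this approximate-zero hypothesis into the Harnack-propagation step of Lemma \ref{lem:weak_Harnack} (where the exponential factor $e^{(1+\varepsilon)Et_0}$ dominates the $e^{M(t_0-t)}C(t_0-t)$ Harnack factor as $t_0\to-\infty$) forces $\inf_n(u(\cdot,\cdot;h^*)-U)(t+t(n;h^*),n)=0$ for every $t\in\mathbb R$. Combining with the strong maximum principle (Corollary \ref{StrongMaximum}) applied to the non-negative $u(\cdot,\cdot;h^*)-U$, which satisfies the linear parabolic equation $w_t-w(\cdot,n+1)-w(\cdot,n-1)+2w=h^*(n)(1-u(\cdot,\cdot;h^*)-U)w$ obtained by subtracting the two copies of \eqref{h(c)}, yields $U=u(\cdot,\cdot;h^*)$. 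This gives $u(t_k,0;h_k)\to u(t^*,0;h^*)$ by local uniform convergence, contradicting $|u(t_k,0;h_k)-u(t_k,0;h^*)|\geq\varepsilon$. The main technical difficulty is precisely making rigorous this passage from an approximate to an exact infimum, since Lemma \ref{lem:weak_Harnack} is stated for exact zeros along the co-moving rays.
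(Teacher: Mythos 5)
Your decomposition into tails plus a middle interval is a genuinely different organization from the paper (which runs a single direct contradiction argument), and much of it is sound: the reduction via Lemma~\ref{lem:APTF_of_2} to uniform continuity of $h\mapsto u(\cdot,0;h)$ on $\mathcal H(c)$, the two tail estimates, and the Arzel\`a--Ascoli step producing a limit $U\in\mathcal S_{h^*}$ with $U\le u(\cdot,\cdot;h^*)$ by Proposition~\ref{prop:max_of_u} are all fine.

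The ``Main obstacle'' paragraph, however, has a genuine gap, and it is not a technicality. First, the absolute-difference bound $0\le u-U\le\overline u-\underline u\le AM_\theta e^{(1+\varepsilon)Et}$ on the co-moving rays holds for \emph{every} $U\in\mathcal S_{h^*}$ by construction of $\mathcal S_{h^*}$, so it carries no information that could force $U=u(\cdot,\cdot;h^*)$; the uniform decay as $t\to-\infty$ is automatic. Second, the Harnack inequality in Proposition~\ref{Harnack}, and hence Lemma~\ref{lem:weak_Harnack}, bounds a solution at an \emph{earlier} time by its value at a \emph{later} time; it propagates $\inf_n=0$ backward ($t<t_0$), never forward. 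Sending $t_0\to-\infty$ does not control the difference at fixed $t$ --- the inequality $w(t,n)\le e^{M(t_0-t)}C(t_0-t)w(t_0,n)$ requires $t<t_0$, so the two temporal limits you invoke cannot both hold. Third, even granting $\inf_n(u-U)(t+t(n;h^*),n)=0$ for some $t$, the strong maximum principle (Corollary~\ref{StrongMaximum}) needs an actual zero of $u-U$ at a point, which an infimum over an unbounded $\mathbb Z$ need not supply.

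What actually closes the gap, and what the paper does, is a \emph{ratio} argument rather than a difference argument. Because $U$ is a limit of $u(\cdot,\cdot;h_k)$, the bounds \eqref{27}--\eqref{28} pass to it, so $U(t+t(n;h^*),n)\ge e^{Et}(1-Me^{\varepsilon Et})$ and $\to 1$ uniformly in $n$. Hence
$$\kappa':=\sup_{\mathbb R\times\mathbb Z}\frac{u(t+t(n;h^*),n;h^*)}{U(t+t(n;h^*),n)}$$
is finite (the ratio tends to $1$ uniformly in $n$ as $t\to\pm\infty$) and $>1$ if $U\not\equiv u(\cdot,\cdot;h^*)$. Since $\kappa'>1$ and the Fisher--KPP nonlinearity satisfies $\kappa'U(1-U)\ge(\kappa'U)(1-\kappa'U)$, the function $\kappa'U$ is a supersolution. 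By uniform continuity the sup is attained at some finite $\overline t$ in the sense $\inf_n(\kappa'U-u)(\overline t+t(n;h^*),n)=0$; Lemma~\ref{lem:weak_Harnack} with $u^1=u(\cdot,\cdot;h^*)$, $u^2=\kappa'U$ then yields $\inf_n(\kappa'U-u)(t+t(n;h^*),n)=0$ for all $t<\overline t$, which contradicts $\kappa'U-u\ge\tfrac{\kappa'-1}{2}e^{Et}>0$ uniformly in $n$ for $t$ sufficiently negative. You would need to replace the difference-plus-forward-Harnack step with this ratio argument for the middle-interval part to close.
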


\begin{proof}
    First, considering a sequence $\{n_k\}\subset\Z$ where $c(\cdot+n_k)$ converges uniformly in $\mathbb R$, we prove that $u(t+t(n;g\cdot n_k),n;g\cdot n_k)$ converges uniformly in $\mathbb{R}\times \mathbb{Z}$.
	Assume by contradiction that it is false. Then there exist $\{t_{k}\}_k\subset\mathbb R$, $\{m_{k}\}_k\subset\mathbb Z$, and two subsequences $\{n_{k}^{1}\}_k,\{n_{k}^{2}\}_k$ of $\{n_{k}\}_{k}$ such that
	$$\liminf\limits_{k\rightarrow \infty}\big(u(t_k+t(m_{k};g\cdot n^1_k),m_k;g\cdot n^1_k)-u(t_k+t(m_{k};g\cdot n^2_k),m_k;g\cdot n^2_k)\big)>0,$$
	i.e., $\liminf\limits_{k\rightarrow \infty}\big(u(t_k,0;g\cdot (m_k+n^1_k))-u(t_k,0;g\cdot (m_k+n^2_k))\big)>0$.
	By Lemma \ref{lem:front0}, $\{t_k\}$ is bounded. Let $\zeta$ be a limit point of $\{t_{k}\}_k$. Then by $0\leq u_t\leq 4+\|c\|_{l^\infty(\mathbb Z)}$ which follows from \eqref{h(c)}, $u$ is uniform continuous in $t$. Thus,
	$$\liminf\limits_{k\rightarrow \infty}\big(u(\zeta,0;g\cdot (m_k+n^1_k))-u(\zeta,0;g\cdot (m_k+n^2_k))\big)>0,$$
	 Consider for $i=1,2,$	
	$$p_{k}^{i}(t,n):=u(t,n;g\cdot (m_k+n^i_k))=u\big(t-t(n;g\cdot (m_{k}+n^{i}_{k})),0;g\cdot (n+m_k+n^i_k)\big),$$
	where the second equality follows from Lemma \ref{lem:APTF_of_2}.
Then, up to subsequences, $p_{k}^{i}(t,n)$ and $g\cdot (m_k+n^i_k) (i=1,2)$ converge  locally uniformly to $p^i$ and $g^{**}$ ,respectively, as $k\to\infty$. 
Moreover $p^{1}(\zeta,0)-p^{2}(\zeta,0)>0, \text{ and}$
	$$p^{i}_{t}(t,n)=p^{i}(t,n+1)+p^{i}(t,n-1)-2p^{i}(t,n)+g^{**}(n)p^{i}(t,n)(1-p^{i}(t,n)),\ i=1,2.$$
	Recall $t(n;g)=-\frac{1}{E}\ln\phi_E(n;g)$. Thanks to (2) of Proposition \ref{prop:phi_t_k_g} and the fact that $u$ is continuous in $t$, we have for $i=1,2$,
	$$
\begin{aligned}
\lim\limits_{k\to\infty}u(t,0;g\cdot (n+m_k+n^i_k))&=\lim\limits_{k\to\infty}p_{k}^{i}(t+t(n;g\cdot(m_{k}+n^{i}_{k})),n)\\
&=p^i(t+t(n;g^{**}),n)
\end{aligned}
$$
Combining this with \eqref{eq:front} and (\ref{27}), we have
	$p^{1}(t+t(n;g^{**}),n)/p^{2}(t+t(n;g^{**}),n)\rightarrow 1$ as $t\rightarrow \pm\infty$ uniformly in $n\in\mathbb{Z}$.
	Note that by \eqref{28},
	$$\kappa^{*}:=\sup_{\mathbb{R}\times\mathbb{Z}}\frac{p^{1}(t+t(n;g^{**}),n)}{p^{2}(t+t(n;g^{**}),n)}$$
	is finite and $\kappa^{*}>1$ since $p^{1}(\zeta,0)-p^{2}(\zeta,0)>0$.
	Moreover, by the uniform continuity of $p^1$ and $p^2$, we can find some finite $\overline t$ such that
	$$\sup_{n\in\mathbb{Z}}\frac{p^{1}(\overline t+t(n;g^{**}),n)}{p^{2}(\overline t+t(n;g^{**}),n)}=\sup\limits_{\mathbb{R}\times\mathbb{Z}}\frac{p^{1}(t+t(n;g^{**}),n)}{p^{2}(t+t(n;g^{**}),n)}=\kappa^{*}.$$
	By the direct computation, we can show that $\kappa^{*}p^{2}$ is a supersolution of \eqref{h(c)} with $g$ replaced by $g^{**}$ since $\kappa^{*}>1$.
	Now we can apply Lemma \ref{lem:weak_Harnack} 
to deduce that	$$\forall t<\overline t,\inf\limits_{n\in\mathbb{Z}}(\kappa^{*}p^{2}(t+t(n;g^{**}),n)-p^{1}(t+t(n;g^{**}),n))=0$$
	which contradicts (\ref{27}) when $t$ is sufficiently large. Hence $u(t+t(n;g\cdot n_k),n;g\cdot n_k)$ converges uniformly in $\mathbb{R}\times \mathbb{Z}$.
	
	Let us now show that $\lim\limits_{k\to\infty}u(t+t(n;g\cdot n_{k}),n;g\cdot n_{k})=u(t+t(n;g^*),n;g^*)$.
	Denote $v(t+t(n;g^*),n):=\lim\limits_{k\to\infty}u(t+t(n;g\cdot n_{k}),n;g\cdot n_{k})$. Using (2) of Proposition \ref{prop:phi_t_k_g} again, we have
	$$\lim\limits_{k\to\infty}u(t,n;g\cdot n_{k})=\lim\limits_{k\to\infty}u\big(t-t(n;g\cdot n_{k})+t(n;g\cdot n_{k}),n;g\cdot n_k\big)=v(t,n).$$
Hence $v\in\mathcal S_{g^*}$, and thus $v(t,n)\leq u(t,n;g^*)$ by Proposition \ref{prop:max_of_u}. We claim that $v(t,n)\equiv u(t,n;g^*)$.
	Otherwise, $v(t,n)<u(t,n;g^*)$. Note that by Lemma \ref{lem:APTF_of_2},
	$$v(t+t(n;g^*),n)=\lim\limits_{k\to\infty}u(t+t(n;g\cdot n_{k}),n;g\cdot n_{k})=\lim\limits_{k\to\infty}u(t,0;g\cdot (n+n_{k})).$$
	Then, by the similar reasons as before, we deduce that
	$$\kappa^{\prime}:=\sup_{\mathbb{R}\times\mathbb{Z}}\frac{u(t+t(n;g^{*}),n)}{v(t+t(n;g^{*}),n)}$$
	is finite, $\kappa^{\prime}>1$ since $v(t,n)<u(t,n;g^*)$, and $\kappa^{\prime}v$ is a supersolution of \eqref{h(c)} with $g$ replaced by $g^{*}$.
	Applying Lemma \ref{lem:weak_Harnack} as before, we obtain a contradiction. Hence $v(t,n)\equiv u(t,n;g^*)$, that is to say,
	$$u(t+t(n;g\cdot n_{k}),n;g\cdot n_k)\to u(t+t(n;g^*),n;g^*)\text{ as }k\to\infty$$ uniformly in $(t,n)\in\mathbb{R}\times\mathbb Z$.	
\end{proof}

We are now in the position to prove the existence of almost periodic traveling front.

    \begin{Theorem}\label{thm:APTWfront1}
    	$u(t,n;g)$ is an almost periodic traveling front with the average wave speed $w\in (w^*,\underline w)$.
    \end{Theorem}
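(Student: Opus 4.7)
The plan is to verify the four conditions of Definition \ref{def_of_APTW} for the entire solution $u(t,n;g)$ produced by Proposition \ref{prop:construct_sol}, with the explicit choice
\[
t(k;g):=-\tfrac{1}{E}\ln\phi_E(k;g),
\]
where $E\in(\lambda_1,\infty)$ is supplied by Lemma \ref{3.2}(2) so that $w:=E/L(E)$ lies in $(w^*,\underline w)$. Condition (4) will be read off directly from Lemma \ref{lem:APTF_of_2}, so no further argument is needed there.

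For condition (3) I will first check the cocycle identity $t(n;g\cdot k)=t(n+k;g)-t(k;g)$, which follows at once from Proposition \ref{prop:phi_t_k_g}(1) applied to $\phi_E(n;g\cdot k)=\phi_E(n+k;g)/\phi_E(k;g)$. The one-cover property of $\{t(\cdot;g)\}_{g\in\mathcal H(c)}$ in $l^\infty_{loc}(\mathbb Z)$ then transfers from Proposition \ref{prop:phi_t_k_g}(2) through the continuous map $\phi\mapsto -\tfrac{1}{E}\ln\phi$. To identify $w(g)$, I will rewrite $t(n;g)-t(k;g)=-\tfrac{1}{E}\ln\phi_E(n-k;g\cdot k)$ and invoke the uniform (in $g$) asymptotics $-\tfrac{1}{m}\ln\phi_E(m;\cdot)\to L(E)$ as $|m|\to\infty$ from Proposition \ref{prepare}; this yields $w(g)\equiv E/L(E)\in(w^*,\underline w)$, independent of $g$.

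Condition (1) amounts to continuity of $g\mapsto u(\cdot,\cdot;g)$ from $\mathcal H(c)$ to $C^0(\mathbb R\times\mathbb Z)$, which by Remark \ref{re:one-cover} reduces to showing $u(\cdot,\cdot;g\cdot n_k)\to u(\cdot,\cdot;g^*)$ locally uniformly whenever $g\cdot n_k\to g^*$. Using Lemma \ref{lem:APTF_of_2} I will write $u(t,n;g\cdot n_k)=U_k\bigl(t-t(n;g\cdot n_k),n\bigr)$ with $U_k(s,n):=u(s+t(n;g\cdot n_k),n;g\cdot n_k)$, so that Theorem \ref{thm:front1} gives $U_k\to U^*$ uniformly (with $U^*(s,n)=u(s+t(n;g^*),n;g^*)$) and Proposition \ref{prop:phi_t_k_g}(2) gives $t(n;g\cdot n_k)\to t(n;g^*)$ locally uniformly in $n$. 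Combined with the uniform $t$-continuity of $u$ that follows from $0\le u_t\le 4+\|c\|_{l^\infty(\mathbb Z)}$, this delivers the required convergence.

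Condition (2) will be deduced from Lemma \ref{lem:front0} via the same identity $u(t,n;g)=u(t-t(n;g),0;g\cdot n)$. Since Proposition \ref{prepare} shows $t(n;g)\to\pm\infty$ as $n\to\pm\infty$ with rate $L(E)/E$ uniformly in $g\in\mathcal H(c)$, the shift $t-t(n;g)$ drifts to $-\infty$ (resp.\ $+\infty$) on the right (resp.\ left) tail, locally uniformly in $t$ and uniformly in $g$, and Lemma \ref{lem:front0} then supplies the limits $0$ and $1$ respectively. Because Theorem \ref{thm:front1} and Lemma \ref{lem:front0} already absorb the genuine analytic difficulty---namely propagation of compactness under the shift and the one-sided boundary behaviour---the present statement is essentially a bookkeeping assembly, and I anticipate no serious obstacle beyond the organisation above.
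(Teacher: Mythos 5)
Your proposal is correct and follows essentially the same route as the paper: condition (4) from Lemma \ref{lem:APTF_of_2}, condition (1) from Theorem \ref{thm:front1} together with the one-cover property of $t(\cdot;g)$, condition (2) via $u(t,n;g)=u(t-t(n;g),0;g\cdot n)$ and Lemma \ref{lem:front0}, and the speed computation $w(g)=E/L(E)$ from Proposition \ref{prepare}. The only difference is cosmetic: you spell out slightly more explicitly the uniformity over $\mathcal H(c)$ when passing to the limit $|n-k|\to\infty$ in the speed formula and the role of uniform $t$-continuity in condition (1), which the paper's proof compresses into a single displayed limit.
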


\begin{proof}
	We prove that $u(t,n;g)$ satisfies (1) of Definition \ref{def_of_APTW} first.
	Consider $g\cdot n_k\to g^*$, and note that $\{t(n;g)|g\in\mathcal H(c)\}=\{-\frac{1}{E}\ln\phi_{E}(n;g)|g\in\mathcal H(c)\}$ is a one-cover of $\mathcal H(c)$ in $l^{\infty}_{loc}(\mathbb Z)$ by Proposition \ref{prop:phi_t_k_g} (2).
	Then combining Theorem \ref{thm:front1}, we have
	\begin{equation*}
	\begin{aligned}
	u(t,n;g\cdot n_k)&=u\big(t-t(n;g\cdot n_k)+t(n;g\cdot n_k),n;g\cdot n_k\big)\\
	&\to u\big(t-t(n;g^*)+t(n;g^*),n;g^*\big)\\
	&=u(t,n;g^*)
	\end{aligned}
	\end{equation*}
	locally uniformly in $(t,n)\in\R\times\Z$. 
Hence $\{u(t,n;g)|g\in\mathcal H(c)\}$ is a one-cover of $C(\mathbb R\times\mathbb Z)$.

    Note that $u(t,n;g)=u(t-t(n;g),0;g\cdot n)$
   by Lemma \ref{lem:APTF_of_2}. From the choice of $t(n,g)$, we have $t(n;g)\rightarrow \pm\infty$ as $n\rightarrow \pm\infty$. Then by Lemma \ref{lem:front0}, we deduce that $u(t,n;g)$ satisfies (2) of Definition \ref{def_of_APTW}.
   	
	By Proposition \ref{prepare}, one has
$$
\begin{aligned}
w(g)&=\lim\limits_{|n|\rightarrow\infty}\frac{n}{t(n+k;g)-t(k;g)}\\
&=\lim\limits_{|n|\rightarrow\infty}\frac{n}{t(n;g\cdot k)}=-\lim\limits_{|n|\rightarrow\infty}\frac{nE}{\ln\phi_E(n;g\cdot k)}=\frac{E}{L(E)}\in (w^*,\underline w).
\end{aligned}
$$ Now combining this with Proposition \ref{prop:phi_t_k_g}, (3) of Definition \ref{def_of_APTW} is proved. Finally, Lemma \ref{lem:APTF_of_2} gives rise to (4). Hence we finish the proof.
	\end{proof}

Now we ends the proof of (1) in Theorem \ref{Main1}.

\subsection{Non-existence of Fronts with Speed Less than $w^{*}$}
 Next we turn to prove (3) in Theorem \ref{Main1}, i.e., there is even no generalized transition front with average speed $w<w^*$. Compared to the previous section, we only consider the generalized transition front of \eqref{1}. However, the similar arguments can be applied to \eqref{h(c)} with minor modification.
 From now on, for the sake of simplicity, we denote $\mathcal D\phi(t,n)=\phi(t,n+1)+\phi(t,n-1)-2\phi(t,n),\ t,n\in\mathbb R\times\mathbb Z$ and $u(t,n;s,u_0)$, $t\geq s, n\in\mathbb Z$ a solution of \eqref{1} with initial value $u_{0}$ starting at time $s$.

 Now we begin with a lemma which will provide a lower bound of the average speed of generalized transition front.

\begin{Lemma}\label{lem:spreading_on_half_line}
	Let $u(t,n;0,u^{(0)})$ be a solution of \eqref{1} with its initial value $u(0,n;0,u^{(0)})=u^{(0)}$, where
	$$u^{(0)}(n)=0\ \text{if}\ n>0,\ \text{and}\ \inf\limits_{n\leq0}u^{(0)}(n)=\alpha\in(0,1].$$
	Then $\lim\limits_{t\to\infty}\inf\limits_{n\leq w t}u(t,n;0,u^{(0)})=1,\ \forall 0\leq w<w^{*}.$
\end{Lemma}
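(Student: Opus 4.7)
The plan is to prove the lower spreading bound $w\geq w^{*}$ by comparing $u(t,n;0,u^{(0)})$ from below with a slightly damped almost periodic traveling front of speed $w_{0}\in(w,w^{*})$, available from Theorem~\ref{Main1}(1), after first showing that $u$ saturates to values close to $1$ on a sufficiently large region.

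For the saturation step, $\inf c>0$ gives $\lambda_{1}\geq\inf c>0$ (Proposition~\ref{var}), so by Proposition~\ref{monotonicity} there is $N_{0}$ with $\lambda_{1}(\mathcal L,[-N_{0},N_{0}])>0$, with a positive principal eigenfunction $\varphi_{N_{0}}$. For $\varepsilon>0$ small, $\varepsilon\varphi_{N_{0}}$ extended by zero is a sub-solution of \eqref{1}; combined with Proposition~\ref{Strong comparison}, Proposition~\ref{Harnack}, and Corollary~\ref{StrongMaximum}, the Fisher--KPP hair-trigger effect yields saturation on any fixed bounded interval. Together with the uniform lower bound $u^{(0)}\geq\alpha$ on $(-\infty,0]$ and Harnack propagation made uniform in $n$ by the almost periodicity of $c$, this upgrades to: for every $R>0$ and $\eta>0$ there exists $T_{0}=T_{0}(R,\eta)$ with $u(T_{0},n;0,u^{(0)})\geq 1-\eta$ for all $n\leq R$.

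Now fix $w\in[0,w^{*})$, pick $w_{0}\in(w,w^{*})$, and let $v(t,n;c)$ be the almost periodic traveling front of average speed $w_{0}$ from Theorem~\ref{Main1}(1). A direct computation gives
\begin{equation*}
(\lambda v)_{t}-\mathcal D(\lambda v)-c(\lambda v)(1-\lambda v)=c v^{2}\lambda(\lambda-1)\leq 0 \quad\text{for every }\lambda\in(0,1],
\end{equation*}
so $\lambda v$ is a sub-solution of \eqref{1}. Choose $\lambda\in(0,1)$ together with $\eta>0$ satisfying $1-\eta>\lambda$, apply the saturation step with this $\eta$ and some large $R$ to obtain $T_{0}$, and then pick $k\in\mathbb Z$ (sufficiently negative) so that $\lambda v(0,n-k;c)\leq u(T_{0},n;0,u^{(0)})$ for every $n\in\mathbb Z$: on $n\leq R$ this holds because $u(T_{0},\cdot;0,u^{(0)})\geq 1-\eta>\lambda\geq\lambda v(0,n-k;c)$, and on $n>R$ the decay $v(0,m;c)\to 0$ as $m\to+\infty$ combined with the positivity $u(T_{0},\cdot;0,u^{(0)})>0$ (Corollary~\ref{StrongMaximum}) secures the inequality. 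Proposition~\ref{Strong comparison} then yields $u(t,n;0,u^{(0)})\geq\lambda v(t-T_{0},n-k;c)$ for every $t\geq T_{0}$ and $n\in\mathbb Z$.

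Finally, by Proposition~\ref{prop:transition front}, $v$ is a generalized transition front of average speed $w_{0}$, so there is $M(\eta)$ with $v(s,m;c)\geq 1-\eta$ whenever $m-w_{0}s\leq-M(\eta)$. For $n\leq wt$ one has $n-k-w_{0}(t-T_{0})\leq(w-w_{0})t+w_{0}T_{0}-k<-M(\eta)$ once $t$ is large enough, so $u(t,n;0,u^{(0)})\geq\lambda(1-\eta)$ uniformly on $\{n\leq wt\}$. Letting $\eta\to 0$ and $\lambda\nearrow 1$ completes the proof. The main obstacle is the \emph{uniform} half-line saturation on $(-\infty,R]$, which in the almost-periodic setting requires carrying out the hair-trigger argument with estimates uniform in the position of the underlying box; a secondary delicate point is the initial matching $\lambda v(0,\cdot-k;c)\leq u(T_{0},\cdot;0,u^{(0)})$ on all of $\mathbb Z$, handled by combining the multiplicative damping $\lambda<1$ with the decay of $v$ at $+\infty$.
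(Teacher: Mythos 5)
Your proposal takes a genuinely different route from the paper, and the route is in principle workable, but it contains one clear error and one acknowledged gap that is closer to the heart of the matter than you give it credit for.

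\emph{Error in the choice of $w_0$.} You write ``pick $w_0 \in (w, w^*)$, and let $v(t,n;c)$ be the almost periodic traveling front of average speed $w_0$ from Theorem~\ref{Main1}(1).'' No such front exists: Theorem~\ref{Main1}(1) produces fronts only for speeds in $(w^*,\underline w)$, and indeed Theorem~\ref{Main1}(3) (the very statement this section is building towards) asserts that no generalized transition front with average speed below $w^*$ exists. You must instead take $w_0 \in (w^*, \underline w)$; this still satisfies $w_0 > w$, so the downstream inequality $n-k-w_0(t-T_0) \leq (w-w_0)t + w_0 T_0 - k \to -\infty$ survives, and the argument is repairable. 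Note also that the availability of \emph{any} front from Theorem~\ref{Main1}(1) relies on the standing hypothesis $w^* < \underline w$ (imposed in the paper after Lemma~\ref{3.2}); your proof inherits that dependence.

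\emph{The saturation step.} You claim that ``for every $R>0$ and $\eta>0$ there exists $T_0$ with $u(T_0,n;0,u^{(0)})\geq 1-\eta$ for all $n\leq R$,'' and then correctly flag it as the main obstacle, but you do not supply a proof. This is not a peripheral technicality: taking $R=0$, this claim is exactly the $w=0$ case of the Lemma itself. The constant $\alpha$ is not globally a subsolution starting below $u^{(0)}$ (it fails for $n>0$), and the leakage from the right half-line into $n\leq 0$ must be controlled uniformly as $n\to -\infty$; in the almost periodic setting this uniformity is precisely what requires work, and the Harnack/hair-trigger tools you cite do not immediately give it on a half-line. To close this gap in the present framework one essentially needs a uniform spreading result over the hull $\mathcal H(c)$ with compactly supported initial data, which is what Proposition~\ref{prop:unformly-converge} in the paper provides.

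\emph{Comparison with the paper's proof.} The paper avoids the saturation step entirely: it proves Proposition~\ref{prop:unformly-converge} (uniform spreading, over all $g\in\mathcal H(c)$, from a single-point initial datum at $n=0$), then translates that point to $n=-k$ and observes that $u^{(0)}\geq u^{(0)}_k$ for every $k\in\mathbb N$. A union over $k$ of the resulting spreading regions covers $\{n\leq wt\}$, with the estimate uniform in $k$ because the translated potentials stay in the hull. Your approach buys a cleaner reduction once the traveling fronts are in hand — comparing $u$ from below with a damped front $\lambda v$ is a short argument — but it transfers the real difficulty to the half-line saturation, which in turn calls for essentially the same uniform-over-the-hull estimate the paper isolates as Proposition~\ref{prop:unformly-converge}. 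If you invoke that proposition (or reprove it), the saturation step becomes routine and your route works; without it, the gap is substantial.
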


To prove this lemma, we need
\begin{Proposition}\label{prop:unformly-converge}
	Let $g\in\mathcal H(c)$ and $u^g(t,n;0,u_0)$ be the solution of
	\begin{equation}\label{eq:u_g}
	\left\{
	\begin{aligned}
	u_{t}(t,n)-\mathcal Du(t,n)=g(n)u(t,n)(1-u(t,n)) &\text{ in }\mathbb{R}\times\mathbb Z,\\
	u(0,n)=u_0(n)\text{ with } u_0(0)=\alpha\in (0,1] \text{ and } u_0(n)=0, &\text{ if }n\neq 0.
	\end{aligned}
	\right.
	\end{equation}
 Then for any $0\leq w<w^{*}$, we have $\lim\limits_{t\to\infty}\inf\limits_{0\leq n\leq w t}u^g(t,n;0,u_0)=1$ exists uniformly in $g$.
\end{Proposition}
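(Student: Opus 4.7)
My plan is to decompose the proof into two independent ingredients: a classical KPP-type spreading argument giving the conclusion for each fixed $g$, and an upgrade to uniformity in $g$ via compactness of $\mathcal{H}(c)$.

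First, I would establish a local saturation estimate. By the strong maximum principle (Corollary \ref{StrongMaximum}) and the Harnack inequality (Proposition \ref{Harnack}), for any $g\in\mathcal{H}(c)$ the solution $u^g(\tau,\cdot;0,u_0)$ is bounded below by a positive constant on any bounded interval, and this bound is uniform in $g$ thanks to the continuous $g$-dependence of the coefficients and the compactness of $\mathcal{H}(c)$. Since $\lambda_1\ge \inf c>0$, Proposition \ref{monotonicity} ensures the Dirichlet principal eigenvalue $\lambda_1(\mathcal{L}_g,[-N,N])$ is positive for $N$ large enough, and a small multiple of the corresponding positive Dirichlet eigenfunction, extended by zero, yields a stationary strict subsolution with compact support. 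Comparison (Proposition \ref{Strong comparison}) and standard monotone-semiflow arguments then push $u^g(t,\cdot)$ towards $1$ on bounded intervals as $t\to\infty$, uniformly in $g$ (uniformity follows from Proposition \ref{gperelation}(2) together with a compactness extraction).

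Next, to propagate saturation to the moving region $\{0\le n\le wt\}$ with $w<w^*$, I would use a moving subsolution built from the positive eigenfunction $\phi_E(\cdot;g)$ of Proposition \ref{prepare}. Given $w<w^*$, choose $E>\lambda_1$ with $E/L(E)>w$ (possible since $E/L(E)\ge w^*$ for every $E>\lambda_1$). The subsolution $\underline u_E(t,n;g)=\max\{0,\phi_E(n;g)e^{Et}-A\theta(n;g)\phi_E^{1+\epsilon}(n;g)e^{(1+\epsilon)Et}\}$ of Proposition \ref{prop:construct_sol} tracks the trailing edge of a right-moving front with asymptotic speed $E/L(E)$. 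Once the local saturation stage places $u^g$ within $\eta$ of $1$ on a large bounded interval at some time $T_1$, I would compare $u^g(\cdot+T_1,\cdot;0,u_0)$ with a suitably translated copy of $\underline u_E$; the comparison then implies that the region where $u^g\ge 1-\eta$ expands to the right at speed $E/L(E)>w$, covering $\{0\le n\le wt\}$ for large $t$.

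Finally, uniformity in $g$ is obtained by a compactness-and-contradiction argument. If the convergence failed uniformly, there would exist $\epsilon>0$, $t_k\to\infty$, $g_k\in\mathcal{H}(c)$, and $n_k\in[0,wt_k]$ with $u^{g_k}(t_k,n_k;0,u_0)\le 1-\epsilon$. The local saturation forces $n_k\to\infty$, and hence $t_k\to\infty$. Setting $v_k(t,n):=u^{g_k}(t+t_k,n+n_k;0,u_0)$ and extracting a subsequence along which $g_k\cdot n_k\to g^\ast\in\mathcal{H}(c)$ (compactness), parabolic regularity lets $v_k$ converge locally uniformly to an entire solution $v^\ast$ of \eqref{h(c)} with coefficient $g^\ast$, satisfying $v^\ast(0,0)\le 1-\epsilon$. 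Tracking the moving-subsolution lower bound along the sequence, together with the continuity of $\phi_E(\cdot;g)$ and $\theta(\cdot;g)$ in $g$ proved in Proposition \ref{prop:phi_t_k_g}, one forces $v^\ast\equiv 1$ on $\mathbb{R}\times\mathbb{Z}$, a contradiction.

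The main technical obstacle is arranging the comparison in the second step: $\underline u_E$ is positive only on the trailing edge of the right-moving front, so it is too small to serve as a lower barrier on the bulk region $\{0\le n\le wt\}$ by itself. The fix is to bootstrap between the stationary compactly supported subsolution (which lifts $u^g$ close to $1$ on $[-N,N]$) and the moving subsolution (which then expands this near-$1$ region to the right at speed $E/L(E)$), all with constants that are uniform in $g$ thanks to Propositions \ref{gperelation}(2) and \ref{prop:phi_t_k_g}.
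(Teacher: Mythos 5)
Your high-level strategy (local saturation, then propagate saturation rightward via a moving subsolution, then upgrade to uniformity by compactness-contradiction) is a reasonable KPP-spreading blueprint, but as written Step 2 has two genuine gaps that the paper's proof avoids entirely.

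First, the subsolution $\underline u_E(t,n;g)=\max\{0,\phi_E(n;g)e^{Et}-A\theta(n;g)\phi_E^{1+\epsilon}(n;g)e^{(1+\epsilon)Et}\}$ from Proposition \ref{prop:construct_sol} is bounded above by $(A\inf\theta)^{-1/\epsilon}$, which is strictly less than $1$; it is nonzero only near the leading edge, and it \emph{vanishes} on the bulk $\{0\le n\le wt\}$ (there $\phi_E(n;g)e^{Et}\sim e^{(E-wL(E))t}\to\infty$, so the correction term dominates). Comparison against $\underline u_E$ therefore yields only a small positive lower bound at the leading edge, not that the region where $u^g\ge 1-\eta$ expands. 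Turning a small positive lower bound at a moving location into a near-$1$ lower bound requires a further "saturation at arbitrary positions" step (a local-in-space lifting argument applied at every $(t,n_0)$ where $u^g(t,n_0)\ge\eta$, uniformly in $g$ and $n_0$); you acknowledge this as "the main technical obstacle" but do not provide the iteration, and it is precisely the hard part. Second, the initial comparison $\underline u_E(0,\cdot)\le u^g(T_1,\cdot)$ needs control of exponential tails: $\underline u_E$ decays like $e^{-L(E)n}$ at its leading edge, while from the compactly supported $u_0$ the Harnack inequality gives a lower bound on $u^g(T_1,\cdot)$ whose geometric rate degrades with $T_1$ and may be faster than $e^{-L(E)n}$; you would need to verify the ordering holds for all $n$, which is not automatic and is not addressed.

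The paper sidesteps both issues. It does \emph{not} reprove the pointwise spreading estimate: it cites the spreading result of Liang--Zhou, Theorem 2.1, which gives $\lim_{t\to\infty}\inf_{0\le n\le wt}u^g=1$ for $w<\inf_{p>0}\underline\lambda_1(\mathcal L_{g,p})/p$, together with the identification $\inf_{p>0}\underline\lambda_1(\mathcal L_{g,p})/p=w^*$ (via the inverse relation $k=L^{-1}$). For uniformity in $g$, it runs a contradiction argument, but instead of passing to an entire limit solution it uses a potential-perturbation trick: setting $g^s=(1-s)g^*+s\,\inf c/2$, for $l$ large $g_l\ge g^s$, so $u^{g^s}$ is a subsolution of the $g_l$-equation; continuity of the spreading speed in $s$ then produces the contradiction without any normal-form analysis of the limiting entire solution. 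Your Step 3, by contrast, merely asserts that "tracking the moving-subsolution lower bound" forces $v^*\equiv1$; given that the moving subsolution is uniformly below $1$, this conclusion does not follow without the missing saturation iteration from Step 2. You should either supply that iteration in full or replace Steps 1--2 with a citation to the spreading result and replace Step 3 with the perturbation comparison.
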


\begin{proof}
	Step 1: Show that $\lim\limits_{t\to\infty}\inf\limits_{0\leq n\leq w t}u^g(t,n;0,u_0)=1$ for any $0\leq w<w^{*}$.
	
	Denote $\mathcal{L}_{g,p}\phi:=e^{p\cdot}\mathcal{L}_{g}(e^{-p\cdot}\phi)$, where $\mathcal{L}_{g}$ is the linearized operator given by $(\mathcal{L}_{g}\phi)(n)=\phi(n+1)+\phi(n-1)-2\phi(n)+g(n)\phi(n)$.
	Then as proved in \cite[Theorem 2.1]{Liang2018}, we have
	$$\lim\limits_{t\to+\infty}\inf\limits_{0\leq n\leq wt}u^g(t,n;0,u_0)=1 \text{ for any }0\leq w<\inf\limits_{p>0}\frac{\underline{\lambda}_{1}(\mathcal L_{g,p})}{p},$$
	where $\underline{\lambda}_{1}(\mathcal L_{g,p})$ is defined as \eqref{gpe12} with $\mathcal M_{a,b,c}$ replaced by $\mathcal L_{g,p}$.
Proposition \ref{gperelation} yields that 
	$\underline{\lambda}_{1}(\mathcal L_{g,p})={\lambda}_{1}(\mathcal L_{c,p})$ for any $g\in\mathcal H(c)$, where $\lambda_{1}(\mathcal L_{g,p})$  is defined as \eqref{gpe56}, and then we denote it by $k(p)$.
	It still needs to show that $\inf\limits_{p>0}\frac{k(p)}{p}=w^*=\inf\limits_{E>\lambda_1}\frac{E}{L(E)}$ with $L(E)$ given by \eqref{LE}.
	
	In fact, by similar arguments to the proof of \cite[Theorem 6.1]{Liang2018}, we can deduce that the map $L:(\lambda_{1},+\infty)\to(\underline{L},+\infty)$, where $\underline{L}=\lim\limits_{E\searrow \lambda_{1}}L(E)\geq0$, admits an inverse, which is exactly $k:(\underline{L},+\infty)\to(\lambda_{1},+\infty)$.
	If $\underline L>0$, then $k(p)\equiv k(0)=\lambda_{1}$ for any $p\in[0,\underline L]$.
	Hence
	$$w^*=\inf\limits_{E>\lambda_1}\frac{E}{L(E)}=\inf\limits_{p>\underline L}\frac{k(p)}{p}=\inf\limits_{p>0}\frac{k(p)}{p}.$$
	Step 2: Construct a appropriate subsolution to obtain the uniform convergence.
	
	Suppose by contradiction that there exist $\delta\in(0,1)$ and a sequence $\{g_{l},t_{l},n_{l}\}\subset\mathcal H(c)\times \mathbb R_{+}\times\mathbb {Z}$ with $0\leq n_{l}\leq wt_{l}$ and $t_l\to+\infty$ such that
	$$u^{g_l}(t_{l},n_{l};0,u_0)\leq1-\delta.$$
	Without loss of generality, we assume that $g_{l}\to g^*$ in $l^{\infty}(\mathbb Z)$. Let $g^{s}(n)=(1-s)g^*(n)+s\inf\limits_{n\in\mathbb Z}c(n)/2$ and $u^{g_s}(t,n;0,u_0)$ be the solution of
	\begin{equation}\label{eq:u_gs}
	\left\{
	\begin{aligned}
	&u_{t}(t,n)-\mathcal Du(t,n)=g^s(n)u(t,n)(1-u(t,n))\text{ in }\mathbb{R}\times\mathbb Z,\\
	&u(0,\cdot)=u_0.
	\end{aligned}
	\right.
	\end{equation}
	Then  we choose $N$ large enough such that for $l\geq N$ and $s\in(0,1)$, $\|g_{l}-g^*\|_{l^{\infty}}\leq s\inf\limits_{n\in\mathbb Z}c(n)/2$. Hence for $l\geq N$,
	\begin{equation}\label{eq:nonexistence}
	g_{l}(n)\geq g^*(n)-\|g_{l}-g^*\|_{l^{\infty}}\geq g^*(n)-s\inf\limits_{n\in\mathbb Z}c(n)/2\geq g^s(n)
	\end{equation}
	 since $\inf\limits_{n\in\mathbb Z}g^*(n)=\inf\limits_{n\in\mathbb Z}c(n)$. From this,  for $l\geq N$, we have
	\begin{equation*}
	\begin{aligned}
	u_{t}^{g^s}(t,n;0,u_0)-\mathcal Du^{g^s}(t,n;0,u_0)&=g^s(n)u^{g^s}(t,n;0,u_0)(1-u^{g^s}(t,n;0,u_0))\\
	&\leq g_l(n)u^{g^s}(t,n;0,u_0)(1-u^{g^s}(t,n;0,u_0)).\\
	\end{aligned}
	\end{equation*}
    That is to say, $u^{g^s}(t,n;0,u_0)$ is a subsolution of \eqref{eq:u_g} with $g$ replaced by $g_{l}$ for any $l\geq N$.\\
    Step 3: End the proof.

     As proved in \cite[Theorem 2.1]{Liang2018}, we have
    $$\lim\limits_{t\to+\infty}\inf\limits_{0\leq n\leq wt}u^{g^s}(t,n;0,u_0)=1 \text{ for any }0\leq w<\inf\limits_{p>0}\frac{\underline{\lambda}_{1}(\mathcal L_{g^s,p})}{p}.$$
    \cite[Proposition 3.1]{Liang2018} also tells us that
    $$\lim\limits_{s\to0}\inf\limits_{p>0}\frac{\underline{\lambda}_{1}(\mathcal L_{g^s,p})}{p}=\inf\limits_{p>0}\frac{\underline{\lambda}_{1}(\mathcal L_{g,p})}{p}=\inf\limits_{E>\lambda_1}\frac{E}{L(E)}=w^*.$$
    Hence for any $0\leq w<w^*$, we can take $s>0$ small such that  $0\leq w<\inf\limits_{p>0}\frac{\underline{\lambda}_{1}(\mathcal L_{g^{s},p})}{p}$.
    Moreover, we have $\lim\limits_{t\to\infty}\inf\limits_{0\leq n\leq w t}u^{g^{s}}(t,n;0,u_0)=1$.
    On the other hand, since $u^{g^s}(t,n;0,u_0)$ is a subsolution, it follows from Proposition \ref{Strong comparison} that we have $u^{g^{s}}(t,n;0,u_0)\leq u^{g_l}(t,n;0,u_0)$ for $l$ large enough. Therefore,
    $$
    \begin{aligned}
    \lim\limits_{t\to\infty}\inf\limits_{0\leq n\leq w t}u^{g^s}(t,n;0,u_0)&\leq\limsup\limits_{l\to\infty}u^{g^s}(t_l,n_l;0,u_0)\\
    &\leq\limsup\limits_{l\to\infty}u^{g_l}(t_l,n_l;0,u_0)\leq1-\delta.
    \end{aligned}$$
    That's impossible. Hence $\lim\limits_{t\to\infty}\inf\limits_{0\leq n\leq w t}u^g(t,n;0,u_0)=1$ exists uniformly in $g\in\mathcal H(c)$.	
\end{proof}

\begin{proof}[Proof of Lemma \ref{lem:spreading_on_half_line}]
	Consider the solution $u_{k}(t,n)$ of
	\begin{equation*}
	\left\{
	\begin{aligned}
	&u_{t}(t,n)-\mathcal Du(t,n)=c(n+k)u(t,n)(1-u(t,n)), &(t,n)\in\mathbb{R}\times\mathbb Z,\\
    &u(0,0)=\alpha,\ \text{and } u(0,n)=0 &\text{ if }n\neq 0.
	\end{aligned}
	\right.
	\end{equation*}
	Then for any $0\leq w<w^{*}$, we have $\lim\limits_{t\to\infty}\inf\limits_{0\leq n\leq w t}u_{k}(t,n)=1$ exists uniformly in $k$ by Proposition \ref{prop:unformly-converge}. Therefore, the solution $u(t,n;0,u^{(0)}_{k})$ of
	\begin{equation*}
	\left\{
	\begin{aligned}
	u_{t}(t,n)-\mathcal Du(t,n)&=c(n)u(t,n)(1-u(t,n)),&(t,n)\in\mathbb{R}\times\mathbb Z,\\
	u(0,n)&=u^{(0)}_{k}(n),&n\in\mathbb Z,
	\end{aligned}
	\right.
	\end{equation*}
	where $u^{(0)}_{k}(-k)=\alpha\ \text{and } u(0,n)=0 \text{ if }n\ne-k$,
	satisfies $$\lim\limits_{t\to\infty}\inf\limits_{-k\leq n\leq w t-k}u(t,n;0,u^{(0)}_{k})=1 \text{ for any } w<w^{*}\ \text{ uniformly in } k\in\mathbb N,$$
	since $u(t,n;0,u^{(0)}_{k})=u_{k}(t,n-k)$ by Theorem \ref{existence and uniqueness}. Then it follows from Proposition \ref{Strong comparison} that $$\lim\limits_{t\to\infty}\inf\limits_{n\leq w t}u(t,n;0,u^{(0)})=1,\ \forall 0\leq w<w^{*}.$$
\end{proof}

Using Lemma \ref{lem:spreading_on_half_line}, we finish the proof of (3) of Theorem \ref{Main1}.
\begin{Proposition}\label{prop:S>minS}
Let $u$ be a generalized transition front of equation (\ref{1}) and let $N$ be such that (\ref{2}) holds. Then
$$\liminf\limits_{t-s\rightarrow+\infty}\frac{N(t)-N(s)}{t-s}\geq w^{*}.$$
Particularly, there exists no generalized transition front with average speed $w<w^{*}$.
\end{Proposition}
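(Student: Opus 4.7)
The plan is to argue by contradiction, combining the uniform spreading estimate of Lemma~\ref{lem:spreading_on_half_line} with the comparison principle Proposition~\ref{Strong comparison} and the uniform front behaviour encoded in~\eqref{2}. Assume $\liminf_{t-s\to+\infty}(N(t)-N(s))/(t-s)<w^{*}$, so there are $w<w^{*}$ and sequences $s_{j}<t_{j}$ with $t_{j}-s_{j}\to\infty$ and $N(t_{j})-N(s_{j})\leq w(t_{j}-s_{j})$ for all $j$ large; fix an auxiliary $w'\in(w,w^{*})$.

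From~\eqref{2}, I choose $R,R'>0$ such that $u(t,n)\geq 3/4$ whenever $n\leq N(t)-R$ and $u(t,n)\leq 1/4$ whenever $n\geq N(t)+R'$, uniformly in $t\in\mathbb{R}$. For each index $j$, set $u^{(0)}_{j}(n):=\tfrac{3}{4}\mathbf{1}_{\{n\leq N(s_{j})-R\}}$ and let $v_{j}(t,n)$ solve~\eqref{1} with $v_{j}(0,\cdot)=u^{(0)}_{j}$. Since $u(s_{j},\cdot)\geq u^{(0)}_{j}$ on $\mathbb{Z}$, Proposition~\ref{Strong comparison} gives $u(s_{j}+t,n)\geq v_{j}(t,n)$ for all $t\geq 0$. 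The translate $\tilde{v}_{j}(t,n):=v_{j}(t,n+N(s_{j})-R)$ solves a Fisher-KPP lattice equation with reaction coefficient $c(\cdot+N(s_{j})-R)\in\mathcal{H}(c)$ and half-line initial datum $\tfrac{3}{4}\mathbf{1}_{\{n\leq 0\}}$, so Lemma~\ref{lem:spreading_on_half_line}, invoked through the uniform-in-$g\in\mathcal{H}(c)$ spreading of Proposition~\ref{prop:unformly-converge}, furnishes $T>0$ independent of $j$ with $\tilde{v}_{j}(t,n)\geq 1/2$ whenever $t\geq T$ and $n\leq w' t$; equivalently $v_{j}(t,n)\geq 1/2$ for $n\leq N(s_{j})-R+w' t$.

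Taking $t=t_{j}-s_{j}\geq T$ for $j$ large then yields $u(t_{j},n)\geq 1/2$ for every $n\leq N(s_{j})-R+w'(t_{j}-s_{j})$, and the upper tail estimate above forces every such $n$ to satisfy $n<N(t_{j})+R'$. Hence $N(t_{j})-N(s_{j})>w'(t_{j}-s_{j})-(R+R')$; dividing by $t_{j}-s_{j}$ and letting $j\to\infty$ produces $w\geq w'$, contradicting $w<w'$. The second assertion of the proposition, namely the non-existence of a generalized transition front with average speed $w<w^{*}$, is immediate from the $\liminf$ inequality and Definition~\ref{generalized}. The only delicate point is the uniformity in $j$ of the spreading estimate in the previous paragraph: the shift $N(s_{j})-R$ depends on $j$, but since the translated coefficient stays in the compact hull $\mathcal{H}(c)$, Proposition~\ref{prop:unformly-converge} supplies a spreading threshold $T$ that is independent of $j$.
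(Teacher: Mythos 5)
Your proof is correct and follows essentially the same route as the paper's: both argue by contradiction, dominating the front from below by a solution started from a half-line step behind $N(s_j)$, then invoking the uniform-in-hull spreading of Lemma~\ref{lem:spreading_on_half_line}/Proposition~\ref{prop:unformly-converge} and the uniform upper tail from~\eqref{2} to force a speed contradiction. The only differences are bookkeeping: you fix explicit levels $3/4,\ 1/4$ and widths $R,\ R'$ where the paper uses the envelope constants $\alpha,\ \beta$, and you make the translation $c(\cdot+N(s_j)-R)\in\mathcal{H}(c)$ explicit, which slightly cleans up the point (also implicit in the paper's proof) that the spreading threshold $T$ can be taken independent of $j$.
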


\begin{proof}
	First by (\ref{2}) and Proposition \ref{Strong comparison}, we can check that
	\begin{equation}\label{eq:of_nonexistence0}
	\alpha:=\inf\limits_{t\in \mathbb R, n\leq0}u(t,n+N(t))>0\ \text{and } \beta:=\sup\limits_{t\in \mathbb R, n>0}u(t,n+N(t))<1.
	\end{equation}
It is clear that $\alpha<1,\beta>0$.
	Assume that, by contradiction, there exist $t_{k}$ and $s_{k}$ such that
	$$\lim\limits_{k\rightarrow\infty}t_{k}-s_{k}=+\infty \text{ and } \lim\limits_{k\rightarrow\infty}\frac{N(t_{k})-N(s_{k})}{t_{k}-s_{k}}=w< w^{*}.$$ Set $v_{k}(t,n):=u(t+s_{k},n+N(s_{k}))$.
	It is clear that $v_{k}(0,n)\geq u^{(0)}(n)$, where $u^{(0)}(n)=0\ \text{if}\ n>0,\ \text{ and }\ u^{(0)}(n)=\alpha\ \text{if}\ n\leq 0$.
	Thus by Proposition \ref{Strong comparison}, we have $v_{k}(t,n)\geq u(t,n;0,u^{(0)})$ for $t\geq 0$ and $n\in\mathbb Z$, which yields
	\begin{equation}\label{eq:of_nonexistence}
	\begin{aligned}
	u(t_{k},N(t_{k})+1)&=v_{k}(t_{k}-s_{k},N(t_{k})-N(s_{k})+1)\\
	&\geq u(t_{k}-s_{k},N(t_{k})-N(s_{k})+1;0,u^{(0)}).
	\end{aligned}
	\end{equation}
    For the left-hand side of \eqref{eq:of_nonexistence}, we have $u(t_{k},N(t_{k})+1)\leq\beta<1$ by \eqref{eq:of_nonexistence0}.
    But from Lemma \ref{lem:spreading_on_half_line}, the right-hand side of \eqref{eq:of_nonexistence}
    converges to $1$ as $k\to\infty$ since $\lim\limits_{k\rightarrow\infty}\frac{N(t_{k})-N(s_{k})+1}{t_{k}-s_{k}}=w< w^{*},$ which is a contradiction.
\end{proof}
In all, we have proved (3) in Theorem \ref{Main1}.
\subsection{Construction of the critical fronts}
 At last, to verify (2) of Theorem \ref{Main1}, we only need to consider \eqref{h(c)} with $g=c$, i.e., (\ref{1}). First we want to construct the critical front with average speed $w^{*}$. By critical front we mean that

\begin{Definition}
We say that an entire solution $u$ of (\ref{1}) with $0<u<1$, is a critical traveling front (to the right) if for all $(t_{0},n_{0})\in\mathbb{R}\times\mathbb{Z}$, $v$ is an entire solution of (\ref{1}) such that $v(t_{0},n_{0})=u(t_{0},n_{0})$ and $0<v<1,$ then
$$u(t_{0},n)\geq v(t_{0},n) \text{ if }n\leq n_{0}\text{ and }u(t_{0},n)\leq v(t_{0},n)\text{ if }n>n_{0}.$$
\end{Definition}

 Before going any further, we introduce some useful lemmas.

\begin{Lemma}\label{lem:finite oscillation}
	Let $u(t,n)$ be an entire solution of \eqref{1}  with $0<u<1$. There exists $\delta\in(0,1),$ such that
	$|u(t,n)-u(t,n+1)|\leq1-\delta\ \text{for all}\ t,n\in\mathbb R\times\mathbb Z.$
\end{Lemma}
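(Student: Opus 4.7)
The plan is to argue by contradiction, construct a limit entire solution by translation, and then close with the strong maximum principle (Corollary~\ref{StrongMaximum}).

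Suppose, for contradiction, that no such $\delta$ exists. Then there are sequences $\{t_k\}\subset\mathbb R$, $\{n_k\}\subset\mathbb Z$ with $|u(t_k,n_k)-u(t_k,n_k+1)|\to 1$. I would introduce the translates
$$u_k(t,n):=u(t+t_k,\,n+n_k),\qquad c_k(n):=c(n+n_k),$$
so that each $u_k$ solves $u_{k,t}-\mathcal D u_k=c_k(n)u_k(1-u_k)$ on $\mathbb R\times\mathbb Z$. By compactness of $\mathcal H(c)$ in $\ell^\infty(\mathbb Z)$, I can pass to a subsequence along which $c_k\to c_\infty\in\mathcal H(c)$ uniformly. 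The equation itself furnishes the bound $|u_{k,t}|\le 4+\|c\|_{\ell^\infty(\mathbb Z)}$, so the $u_k$ are equicontinuous in $t$ and, being uniformly bounded between $0$ and $1$, a diagonal extraction gives a further subsequence converging locally uniformly to some $u_\infty\in C(\mathbb R\times\mathbb Z)$. The limit $u_\infty$ is an entire solution of \eqref{h(c)} with $g=c_\infty$, satisfies $0\le u_\infty\le 1$, and by construction $|u_\infty(0,0)-u_\infty(0,1)|=1$.

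Up to symmetry there are two cases. If $u_\infty(0,0)=0$ and $u_\infty(0,1)=1$, then $u_\infty\ge 0$ with
$$u_{\infty,t}-\mathcal D u_\infty=c_\infty u_\infty(1-u_\infty)\ge 0,$$
so Corollary~\ref{StrongMaximum} applied with $v\equiv 0$ on any strip $[s,T]\times\mathbb Z$ with $s<0<T$ forces $u_\infty\equiv 0$, contradicting $u_\infty(0,1)=1$. If instead $u_\infty(0,0)=1$ and $u_\infty(0,1)=0$, then $w:=1-u_\infty\ge 0$, $w(0,0)=0$, and a direct computation yields
$$w_t-\mathcal D w=-c_\infty w(1-w)\ge -c_\infty(n)\,w,$$
so $w$ satisfies the differential inequality of Proposition~\ref{Maximum} with $v=-c_\infty\in\ell^\infty(\mathbb Z)$. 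Corollary~\ref{StrongMaximum} together with $w(0,0)=0$ then forces $w\equiv 0$, contradicting $u_\infty(0,1)=0$.

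The main obstacle is the limiting step: one must simultaneously extract a converging translate of the almost periodic potential inside $\mathcal H(c)$ and a locally uniformly converging subsequence of the $u_k$, and verify that the limit profile solves an equation of the same form with the limiting potential. Once the limiting entire solution is in hand, the two standard linearizations (around the steady state $0$ via $u_\infty$ itself and around $1$ via $w=1-u_\infty$) let the strong maximum principle finish the argument cleanly.
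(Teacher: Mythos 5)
Your proof is correct, but the route is genuinely different from the paper's. The paper does not pass to a limit: it observes that along a violating sequence $u(s_k,m_k)\to 1$, $u(s_k,m_k+1)\to 0$, and uses the Lipschitz-in-$t$ bound $|u_t|\le 4+\|c\|_{\ell^\infty}$ to push back to a fixed earlier time $s_k-S$ where $u(s_k-S,m_k)\ge 1/2$, then applies the Harnack inequality (Proposition~\ref{Harnack}) directly to get $1/2 \le u(s_k-S,m_k)\le C(S)\,u(s_k,m_k+1)\to 0$, a contradiction in two lines. You instead extract a limiting entire solution $u_\infty$ of \eqref{h(c)} for some $c_\infty\in\mathcal H(c)$ (using compactness of $\mathcal H(c)$, the same Lipschitz bound for equicontinuity, and Arzel\`a--Ascoli with diagonalization), which forces the contact configuration $\{u_\infty(0,0),u_\infty(0,1)\}=\{0,1\}$, and then rule it out with the strong maximum principle (Corollary~\ref{StrongMaximum}), applied once to $u_\infty$ and once to $1-u_\infty$ with the linearization $v=-c_\infty$. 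Since the paper derives Corollary~\ref{StrongMaximum} precisely from Harnack plus the weak maximum principle, both arguments rest on the same underlying tool; yours is the more conceptual "blow-up to a degenerate profile" template and is a bit longer because of the compactness machinery, while the paper's is more direct and quantitative. One small point to keep sharp: in the limiting step you get $0\le u_\infty\le 1$ rather than strict inequalities, which is why you need the strong (not just weak) maximum principle; you do invoke the right result, so this is fine.
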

\begin{proof}
	Suppose that the conclusion fails. Then for any $k\in\mathbb Z_{+}$, there exist $t_{k}$ and $n_{k}$ such that $|u(t_{k},n_{k})-u(t_{k},n_{k}+1)|>1-1/k$. After passing a subsequence, we have
	$u(s_{k},m_{k})-u(s_{k},m_{k}+1)>1-\varepsilon_k$ or $u(s_{k},m_{k}+1)-u(s_{k},m_{k})>1-\varepsilon_k$ for some $s_{k}$, $m_{k}$, and $\varepsilon_{k}$ with $\varepsilon_{k}\to0$.
	We only prove the former case. Note that $0<u(t,n)<1$. Then
	\begin{equation}\label{eq:transition_wave}
	u(s_{k},m_{k})>1-\varepsilon_k \text{ and } u(s_{k},m_{k}+1)<\varepsilon_k.
	\end{equation}
	Note also that $|\frac{d}{dt}u|\leq4+\|g\|_{l^\infty(\mathbb Z)}$. Then there exists $S$ which is independent of $k$ such that $u(s_{k}-S,m_{k})\geq 3/4-\varepsilon_{k}>1/2$ for $k$ large. On the other hand, from Proposition \ref{Harnack}, there exists a constant $C(S)$ which only depends on $S$ such that
	$$1/2<u(s_{k}-S,m_{k})\leq C(S)u(s_{k},m_{k}+1)\leq C(S)\varepsilon_{k}\to0\ \text{as } k\to0.$$
This yields a contradiction.
\end{proof}

With Lemma \ref{lem:finite oscillation} at hand, we have the following equivalent definition of generalized transition front.

\begin{Lemma}\label{lem:equ_def of TW}
	Let $u(t,n)$ be a solution of \eqref{1}  with $0<u<1$ and
	$$u(t,n)\to0\ \text{as}\ n\to\infty, u(t,n)\to1\ \text{as}\ n\to-\infty\ \text{for any}\ t\in\mathbb R.$$
	Then $u$ is a generalized transition front if and only if
	\begin{equation}\label{eq:equ_of_transi}
	\sup\limits_{t\in\mathbb R}diam\{n\in\mathbb Z|\varepsilon\leq u(t,n)\leq1-\varepsilon\}<\infty\ \text{for any}\ \varepsilon\in(0,1/2).
	\end{equation}
\end{Lemma}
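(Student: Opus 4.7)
The plan is a direct verification in both directions; the nontrivial content is in the converse, where I need to extract a good wave--position function $N(t)$ from the mere diameter bound \eqref{eq:equ_of_transi}.

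The forward implication is essentially tautological: given the uniform limits $u(t,n+N(t))\to 0$ (resp.\ $1$) as $n\to+\infty$ (resp.\ $-\infty$) from Definition \ref{generalized}, I choose $M_\varepsilon>0$ such that $u(t,n+N(t))<\varepsilon$ for $n>M_\varepsilon$ and $u(t,n+N(t))>1-\varepsilon$ for $n<-M_\varepsilon$, uniformly in $t$. Then $\{n\mid\varepsilon\leq u(t,n)\leq 1-\varepsilon\}\subset[N(t)-M_\varepsilon,N(t)+M_\varepsilon]$, so its diameter is bounded by $2M_\varepsilon$ uniformly in $t$.

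For the converse, I would set $N(t):=\max\{n\in\mathbb Z\mid u(t,n)\geq 1/2\}$, which is finite by the pointwise limits $u(t,n)\to 1$ and $u(t,n)\to 0$ at $\mp\infty$. The goal is to produce, for every $\varepsilon\in(0,1/2)$, an $M_\varepsilon$ independent of $t$ such that $u(t,n+N(t))<\varepsilon$ for $n>M_\varepsilon$ and $u(t,n+N(t))>1-\varepsilon$ for $n<-M_\varepsilon$. It clearly suffices to treat $\varepsilon<\delta/2$, where $\delta$ is the oscillation constant of Lemma \ref{lem:finite oscillation}. Introduce $m^*(t):=\max\{n\mid u(t,n)\geq\varepsilon\}$ and $m_*(t):=\min\{n\mid u(t,n)\leq 1-\varepsilon\}$. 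Both are finite by the same limits, and the finite--oscillation bound forces $u(t,m^*(t))\leq u(t,m^*(t)+1)+(1-\delta)<\varepsilon+(1-\delta)<1-\varepsilon$, and symmetrically $u(t,m_*(t))>\varepsilon$, so both $m^*(t)$ and $m_*(t)$ lie in the transition set $E_\varepsilon(t):=\{n\mid\varepsilon\leq u(t,n)\leq 1-\varepsilon\}$.

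A short case analysis locates $N(t)$ relative to $E_\varepsilon(t)$: if $u(t,N(t))\leq 1-\varepsilon$ then $N(t)\in E_\varepsilon(t)$; otherwise $u(t,N(t))>1-\varepsilon$ and Lemma \ref{lem:finite oscillation} forces $u(t,N(t)+1)>(1-\varepsilon)-(1-\delta)=\delta-\varepsilon>\varepsilon$, while $u(t,N(t)+1)<1/2\leq 1-\varepsilon$ by the maximality of $N(t)$, so $N(t)+1\in E_\varepsilon(t)$. In either case the diameter assumption \eqref{eq:equ_of_transi} yields $|m^*(t)-N(t)|,\ |m_*(t)-N(t)|\leq D_\varepsilon+1$, where $D_\varepsilon:=\sup_t\mathrm{diam}\,E_\varepsilon(t)<\infty$. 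Therefore $M_\varepsilon:=D_\varepsilon+1$ does the job, which completes the proof. The main obstacle is the lack of monotonicity of $u(t,\cdot)$: super- and sublevel sets need not be half-lines, and a priori $u$ could oscillate repeatedly between values near $0$ and values near $1$. This is overcome by the discrete finite--oscillation bound of Lemma \ref{lem:finite oscillation}, which guarantees that the extreme crossings $m^*(t)$, $m_*(t)$ land inside the transition set and are thus controlled uniformly by $D_\varepsilon$.
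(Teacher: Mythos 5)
Your proof is correct, and it uses the same central ingredient as the paper — the finite-oscillation bound of Lemma \ref{lem:finite oscillation} — but organizes the converse direction more cleanly. The paper sets $N(t):=\sup\{n\mid u(t,n)\geq 1/4\}$ and then must establish $\sup_{t}u(t,N(t))<1$ via a compactness argument followed by the strong maximum principle (Corollary \ref{StrongMaximum}), so that $N(t)$ lands in the transition set $\{n\mid\varepsilon_0\leq u(t,n)\leq 1-\varepsilon_0\}$. Your case analysis sidesteps that entire step: if $u(t,N(t))>1-\varepsilon$, then the oscillation bound and the maximality of $N(t)$ already place $N(t)+1$ in the transition set, so no upper bound on $u(t,N(t))$ is needed and the strong maximum principle drops out. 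Introducing the extreme crossings $m^*(t)$ and $m_*(t)$ and verifying they lie in $E_\varepsilon(t)$ also makes explicit the "which side of the transition set" argument that the paper leaves implicit in its final sentence; this is a genuine clarification. The bound $|m^*(t)-N(t)|,\;|m_*(t)-N(t)|\leq D_\varepsilon+1$ and the resulting uniform $M_\varepsilon$ are both correct. In short: same key lemma, more elementary and slightly more detailed execution, avoiding one dependency.
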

\begin{proof}
	 As we see, (\ref{eq:equ_of_transi}) is satisfied if $u$ is a generalized transition front.
	Next we prove that $u$ is a generalized transition front provided \eqref{eq:equ_of_transi} holds. Set $N(t):=\sup\{n|u(t,n)\geq1/4\}$.

 Now we claim that
	$1/4\leq\inf\limits_{t\in\mathbb R}u(t,N(t))\leq\sup\limits_{t\in\mathbb R}u(t,N(t))<1.$ In fact, if $\sup\limits_{t\in\mathbb R}u(t,N(t))=1$, then there exists a sequence $\{t_{k}\}\subset \Z$ such that
	$u(t_{k},N(t_{k}))\to1$.  After passing to a subsequence, $u(t+t_{k},n+N(t_{k}))$ converges locally uniformly to a function $v(t,n)$. Then $v(0,0)=1$, and thus $v(t,n)\equiv1$ by Corollary \ref{StrongMaximum}. On the other hand, the definition of $N(t)$ gives $v(0,1)\leq1/4$, which is a contradiction.
	
	For any $\varepsilon<\varepsilon_{0}:=\min\{1-\sup\limits_{t\in\mathbb R}u(t,N(t)),1/4,\delta/2\}$, where $\delta$ was given in Lemma \ref{lem:finite oscillation}, we have
    $$N(t)\in\{n\in\mathbb Z|\varepsilon_{0}\leq u(t,n)\leq1-\varepsilon_{0}\}\subset\{n\in\mathbb Z|\varepsilon\leq u(t,n)\leq1-\varepsilon\}.$$
    Denote $L_\epsilon:=\sup\limits_{t}\mathrm{diam}\{n\in\mathbb Z|\varepsilon\leq u(t,n)\leq1-\varepsilon\}$.
    Therefore,
    $$n+N(t)\notin\{n\in\mathbb Z|\varepsilon\leq u(t,n)\leq1-\varepsilon\}\text{ for all }t\in\mathbb R$$
    if $n>L_{\varepsilon}+1$ or $n<-L_{\varepsilon}-1$. Note that $u(t,n)\to0\ \text{as}\ n\to\infty \text{ and } u(t,n)\to1\ \text{as}\ n\to-\infty\ \text{for any}\ t\in\mathbb R$.
    Combining this with Lemma \ref{lem:finite oscillation}, we have
    $u(t,n+N(t))\leq\varepsilon \text{ for } n>L_{\varepsilon}+1,$ and $u(t,n+N(t))\geq1-\varepsilon \text{ for } n<-L_{\varepsilon}-1.$
    Therefore, \eqref{2} holds.
\end{proof}

Let us now construct critical front. Fix any $\theta\in (0,1)$. For any $k\in\mathbb Z_{+}$, we define
$$
H_{k}(n)=
\left\{
\begin{aligned}
&1 &\text{ if } &n\leq -k\\
&0 &\text{ if } &n>-k.
\end{aligned}
\right.
$$
Then by Lemma \ref{lem:spreading_on_half_line} and the continuity of $u(t,n;0,H_k)$ with respect to $t$, we can define $s_{k}:=\min\{s|u(s,0;0,H_{k})=\theta\}>0$. In particular, $u(s_{k},0;0,H_{k})=\theta$. Note that by Theorem \ref{existence and uniqueness}, $u(t,n;s,H_{k})=u(t-s,n;0,H_{k})$ for any $t\geq s$.
Then $u(0,0;-s_{k},H_{k})=\theta$ for any $k\in\mathbb Z_+$. The idea is to take the limit of some subsequence from $\{u(t,n;-s_k,H_k)\}$, and prove the resulting function is exactly the critical traveling front. Moreover, it is exactly a generalized transition front with average speed $w^*$. Before that, an observation about some important properties of $s_k$ is given by 
	
\begin{Lemma}\label{lem:s_k_to_infty}
	$s_{k}$ is strictly increasing and converges to $+\infty$.
\end{Lemma}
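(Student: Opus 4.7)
The plan is to prove the two claims separately: strict monotonicity follows from the comparison principle, while the divergence $s_k \to \infty$ follows from a supersolution built out of the exponentially growing eigenfunction $\phi_E$ of the linearization.

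\textbf{Strict monotonicity.} Observe that $H_k(n) \ge H_{k+1}(n)$ for all $n \in \mathbb Z$, with strict inequality at $n = -k$ (where $H_k = 1$ but $H_{k+1} = 0$). Writing $u_j := u(\cdot,\cdot\,; 0, H_{k+j})$ for $j = 0, 1$, both functions are solutions of \eqref{1}, hence simultaneously sub- and supersolutions, and they satisfy $u_1(0,\cdot) \le u_0(0,\cdot)$. The strong comparison principle (Proposition \ref{Strong comparison}) then yields either $u_1 < u_0$ on $(0,\infty)\times\mathbb Z$ or $u_1 \equiv u_0$ there; the latter alternative is excluded by continuity in $t$ combined with $u_0(0,-k)\ne u_1(0,-k)$. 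In particular, $u_1(s_k, 0) < u_0(s_k, 0) = \theta$, and by the minimality in the definition of $s_{k+1}$ together with continuity in time, this forces $s_{k+1} > s_k$.

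\textbf{Divergence.} Fix any $E > \lambda_1$ and let $\phi_E := \phi_E(\cdot\,; c)$ be the positive solution from Proposition \ref{prepare}, normalized by $\phi_E(0) = 1$. A direct computation using $\mathcal L\phi_E = E\phi_E$ shows that, for every $A > 0$, the function $v(t,n) := A \phi_E(n) e^{Et}$ satisfies
$$v_t(t,n) - \mathcal D v(t,n) - c(n)v(t,n)(1 - v(t,n)) = c(n)v(t,n)^2 \ge 0,$$
so $\min\{1, v\}$ is a supersolution of \eqref{1}. By Proposition \ref{prepare}, $-\frac{1}{n}\ln\phi_E(n) \to L(E) > 0$ as $n \to -\infty$, so $\liminf_{n\to-\infty}\phi_E(n) = +\infty$, and therefore
$$m_k := \inf_{n \le -k} \phi_E(n) \longrightarrow +\infty \quad \text{as } k \to \infty.$$
Set $A_k := 1/m_k$. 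Then $A_k \phi_E(n) \ge 1 = H_k(n)$ for $n \le -k$ and $A_k\phi_E(n) > 0 = H_k(n)$ for $n > -k$, so $\min\{1, A_k\phi_E(\cdot)\} \ge H_k$ on $\mathbb Z$. The comparison principle applied to the supersolution $\min\{1, A_k\phi_E(n) e^{Et}\}$ and the solution $u(\cdot,\cdot\,; 0, H_k)$ yields
$$\theta = u(s_k, 0; 0, H_k) \le A_k \phi_E(0) e^{E s_k} = A_k e^{E s_k},$$
whence $s_k \ge \frac{1}{E}\ln(\theta m_k) \to +\infty$.

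The only substantive point is the step $m_k \to +\infty$: this is slightly stronger than the pointwise statement $\phi_E(n) \to +\infty$, since the infimum is taken over the whole left half-line. However, $\liminf_{n\to-\infty}\phi_E(n) = +\infty$ suffices, for given any $M > 0$ there exists $N = N(M)$ with $\phi_E(n) \ge M$ for all $n \le -N$, and then $m_k \ge M$ whenever $k \ge N$. Everything else is an application of Propositions \ref{Strong comparison} and \ref{prepare}.
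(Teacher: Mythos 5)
Your proof is correct, and the divergence part takes a genuinely different and cleaner route than the paper. For strict monotonicity, both you and the paper use the strong comparison principle, but the paper argues by contradiction, comparing the time-shifted solution $u(\cdot+s_k-s_{k+1},\cdot\,;-s_{k+1},H_{k+1})$ against $u(\cdot,\cdot\,;-s_k,H_k)$ and invoking the intermediate value theorem; your version is more direct. The only slightly terse step in your version is the conclusion ``this forces $s_{k+1}>s_k$'': from $u_1(s_k,0)<\theta$ alone one cannot rule out that $u_1(\cdot,0)$ crossed $\theta$ at some earlier time and came back down. To close the gap, note that the minimality in the definition of $s_k$ together with continuity gives $u_0(s,0)\leq\theta$ for all $s\in(0,s_k]$, hence $u_1(s,0)<u_0(s,0)\leq\theta$ on the whole interval, which combined with $u_1(0,0)=0$ yields $s_{k+1}>s_k$; alternatively argue by contradiction via IVT on $u_0$ as the paper does.

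The divergence argument is where you genuinely diverge from the paper. The paper proceeds by contradiction, assuming $s_k\to s_\infty<\infty$ along a subsequence, constructs for each $k$ a separate eigenfunction $\phi_{E,k}$ of the \emph{translated} operator with potential $c(\cdot-k)$, builds the corresponding supersolution, and uses the exponential decay estimate of Lemma~\ref{2.2} to derive $\theta\leq\theta/2$. You instead work with the single fixed eigenfunction $\phi_E$ of Proposition~\ref{prepare}, scale it by $A_k=1/m_k$ with $m_k=\inf_{n\leq -k}\phi_E(n)\to\infty$ so that $\min\{1,A_k\phi_E(\cdot)\}\geq H_k$ at $t=0$, and read off the explicit bound $s_k\geq\tfrac{1}{E}\ln(\theta m_k)\to\infty$. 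This is a direct proof that yields a quantitative lower bound on $s_k$, whereas the paper's is a qualitative argument by contradiction; both rest on the same two ingredients (Proposition~\ref{prepare} giving exponential growth of $\phi_E$ at $-\infty$, and Proposition~\ref{Strong comparison}), but your route avoids introducing the translated operators entirely.
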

\begin{proof}
	We first show that $s_{k}$ is strictly increasing.
	Assume by contradiction, that $s_{k}\geq s_{k+1}$ for some $k\in\mathbb Z_+$. Note that
	$$u(t+s_{k}-s_{k+1},n;-s_{k+1},H_{k+1})\Large|_{t=-s_{k}}=H_{k+1}\leq H_{k}=u(t,n;-s_{k},H_{k})\Large|_{t=-s_{k}}.$$
	Then by Proposition \ref{Strong comparison}, we have
	$$u(t+s_{k}-s_{k+1},n;-s_{k+1},H_{k+1})< u(t,n;-s_{k},H_{k}) \text{ for } t>-s_{k}.$$
	In particular, $\theta=u(0,0;-s_{k+1},H_{k+1})< u(s_{k+1}-s_{k},0;-s_{k},H_{k})$. Notice that $u(-s_{k},0;-s_{k},H_{k})=0$. Then by intermediate value theorem there exists $-s_k<\tau<s_{k+1}-s_{k}\leq 0$ such that $$\theta=u(\tau,0;-s_{k},H_{k})=u(\tau+s_{k},0;0,H_{k}).$$ Therefore the definition of $s_{k}$ gives $s_{k}\leq\tau+s_{k}$. That is impossible since $\tau<0$.
	Hence $s_{k}< s_{k+1}$.
	
    Next, we prove that  $\lim\limits_{k\to\infty}s_{k}=+\infty$.
	Suppose by contradiction that after passing to a subsequence, $\lim\limits_{k\to\infty}s_{k}=s_{\infty}<+\infty$. Let $\phi_{E,k}$ be a solution of
	\begin{equation*}
	\phi(n+1)+\phi(n-1)-2\phi(n)+c(n-k)\phi(n)=E\phi(n),\ n\in\mathbb{Z},
	\end{equation*}
with $\phi(0)=1,\ \lim\limits_{n\rightarrow +\infty}\phi(n)=0,$ where $E>\lambda_1.$
	Then $\overline{u}_{k}(t,n+k):=\min\{1,\phi_{E,k}(n+k)e^{Et}\}$ is a supersolution of
	$$u_{t}(n)- u(n+1)-u(n-1)+2u(n)=c(n)u(n)(1-u(n)),\ (t,n)\in\mathbb{R}\times\mathbb Z.$$
	Note that for any $t\leq0$,
	$$\overline{u}_{k}(t,k)=\min\{1,\phi_{E,k}(k)e^{Et}\}\leq\phi_{E,k}(k)\leq Ce^{-\delta k},$$
	for some constant $C$ only depending on $E$, $\lambda_1$ and $\|g\|_{l^\infty}$, and the last inequality follows from Lemma \ref{2.2}. Then we can take $K$ large such that $\overline{u}_{K}(t,K)\leq\theta/2$ for any $t<0$.
	Moreover, combining the Proposition \ref{prepare}, there exists $K_{1}$ such that for any $t\geq-s_{\infty}$ and $n\in\mathbb Z$,
	$$H_{K_{1}}(n)\leq\overline{u}_{K}(-s_{\infty},n+K)\leq\overline{u}_{K}(t,n+K).$$
	Then $u(-s_{K_{1}},n;-s_{K_{1}},H_{K_{1}})\leq\overline{u}_{K}(-s_{\infty},n+K)\leq\overline{u}_{K}(-s_{K_{1}},n+K)$. Thus for any $t\geq-s_{K_1}$ and  $n\in\mathbb Z,$ it follows from Proposition \ref{Strong comparison}
	$$u(t,n;-s_{K_{1}},H_{K_{1}})\leq\overline{u}_{K}(t,n+K).$$
	 In particular, we have $\theta=u(0,0;-s_{K_{1}},H_{K_{1}})\leq\overline{u}_{K}(0,K)\leq\theta/2$, which is a contradiction. Then we complete the proof.
\end{proof}

 As $s_k\to \infty$, there exists a subsequence of $\{u(t,n;-s_k,H_k)\}$ such that it converges to some entire solution $u(t,n)$. Moreover, $u(t,n)$ is "steeper" than any other entire solution in the following sense (see \cite{Ducrot2014} for continuous case).
\begin{Lemma}\label{lem:steepest}
	Let $u(t,n)$ be a limit of some subsequences of $u(t,n;-s_{k},H_{k})$.
	Assume that $v$ is an entire solution of \eqref{1} with $v(t,n)\in(0,1)$ on $\mathbb R\times\mathbb Z$. Then for any $t\in\mathbb R$,
	there exists $n_{t}\in\mathbb Z\cup\{\pm\infty\}$ such that
	$$ u(t,n)\geq v(t,n) \text{ if } n\leq n_{t}, \ \text{and } u(t,n)\leq v(t,n) \text{ if } n>n_{t}.$$
\end{Lemma}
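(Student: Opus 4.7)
The plan is to first establish the analogous statement for the approximating solutions $u^{k}(t,n) := u(t,n;-s_{k},H_{k})$, from which $u$ was extracted as a limit, and then pass to the limit. The difference $w^{k}(t,n) := u^{k}(t,n) - v(t,n)$ satisfies the linear lattice parabolic equation
\begin{equation*}
\partial_{t} w^{k}(t,n) = \mathcal{D} w^{k}(t,n) + c(n)\bigl(1 - u^{k}(t,n) - v(t,n)\bigr) w^{k}(t,n),
\end{equation*}
whose coefficient is bounded. Its initial datum $w^{k}(-s_{k},\cdot) = H_{k} - v(-s_{k},\cdot)$ has the \emph{initial-segment structure}: it equals $1 - v(-s_{k},n) \in (0,1)$ for $n \leq -k$ and $-v(-s_{k},n) \in (-1,0)$ for $n > -k$, so it is strictly positive on an initial segment and strictly negative on the complement.

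Step 1 (propagation of the structure). I claim that for each fixed $k$ and every $t \geq -s_{k}$ there exists $m^{k}(t) \in \mathbb{Z}\cup\{\pm\infty\}$ with $w^{k}(t,n) \geq 0$ for $n \leq m^{k}(t)$ and $w^{k}(t,n) \leq 0$ for $n > m^{k}(t)$. This is a discrete analogue of the Sturm--Angenent zero-number theorem: the number of sign changes $Z(t) := \#\{n \in \mathbb{Z} : w^{k}(t,n)w^{k}(t,n+1) < 0\}$ is non-increasing in $t$, and the orientation ``positives on the left'' is preserved as long as $Z(t) \leq 1$. I would prove this by a first-failure-time argument: if the property fails at some $t_{0} > -s_{k}$, let $t^{*}$ be the infimum of such failure times. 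By continuity of $w^{k}$ in $t$ and the initial datum structure, just after $t^{*}$ a new sign change appears or the orientation flips. Either scenario produces a finite lattice interval $[p,q]$ and a short time window $(t^{*},t^{*}+\varepsilon)$ on which $w^{k}$ (or $-w^{k}$) is of definite sign in the interior, vanishes on the boundary at time $t^{*}$, and changes sign at an interior point at some $t \in (t^{*},t^{*}+\varepsilon)$. Applying the strong maximum principle (Corollary \ref{StrongMaximum}) to the linear equation above over this finite region then forces $w^{k} \equiv 0$ there, contradicting the sign configuration.

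Step 2 (passage to the limit). By assumption there is a subsequence $k_{j} \to \infty$ with $u^{k_{j}}(t,n) \to u(t,n)$ locally uniformly, hence $w^{k_{j}}(t,n) \to w(t,n) := u(t,n) - v(t,n)$ pointwise. Fix $t \in \mathbb{R}$. If $n_{1}$ satisfies $w(t,n_{1}) > 0$ and $n_{2}$ satisfies $w(t,n_{2}) < 0$, then for $j$ large $w^{k_{j}}(t,n_{1}) > 0$ and $w^{k_{j}}(t,n_{2}) < 0$, so by Step 1 we must have $n_{1} \leq m^{k_{j}}(t) < n_{2}$, and thus $n_{1} < n_{2}$. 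Hence $P := \{n : w(t,n) > 0\}$ lies strictly to the left of $N := \{n : w(t,n) < 0\}$. Taking $n_{t} := \sup P \in \mathbb{Z} \cup \{\pm\infty\}$ (with the conventions $\sup \emptyset = -\infty$ and $\sup\mathbb{Z} = +\infty$), we obtain $u(t,n) \geq v(t,n)$ for $n \leq n_{t}$ and $u(t,n) \leq v(t,n)$ for $n > n_{t}$.

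The main obstacle is Step 1: in the continuous space setting it is handled cleanly by Angenent's zero-number theorem, but on the lattice one must execute the maximum-principle argument by hand, being careful that sign changes can occur at isolated lattice points (which has no analogue in $\mathbb{R}$). The boundedness of the coefficient $c(n)(1-u^{k}-v)$ and the strong maximum principle for the lattice parabolic equation (Corollary \ref{StrongMaximum}), together with the single-sign-change structure of the initial datum $H_{k} - v(-s_{k},\cdot)$, are the essential ingredients.
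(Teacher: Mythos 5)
Your two-step plan matches the paper's proof in structure: compare the approximating solutions $u^{k}(t,n)=u(t,n;-s_{k},H_{k})$ with $v$, establish a single-sign-change (``steeper than'') property for $w^{k}=u^{k}-v$, and pass to the limit. Step 2 is correct. But Step 1 is exactly where all the substance lies, and the paper handles it by invoking Proposition~\ref{wavefronts} (the Hankerson--Zinner lemma, cited from \cite{Hankerson1993}), which is stated immediately before Lemma~\ref{lem:steepest} precisely for this purpose: if the difference of two solutions of \eqref{1} is positive to the left and negative to the right of a point at time $t_{0}$, then for $t\geq t_{0}$ positivity propagates leftward and negativity rightward. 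Your attempt to re-derive this by hand has a genuine gap.

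Concretely: (i) the strong maximum principle as stated (Corollary~\ref{StrongMaximum}) applies to the whole lattice $\mathbb Z$; restricting to a finite interval $[p,q]$ requires boundary conditions and sign control at $\{p,q\}$ for all times in the window, which you do not supply. (ii) The first-failure-time argument is not carried out. Let $t^{*}$ be the supremum of $t$ such that the single-sign-change property holds on $[-s_{k},t]$; this set is closed (by continuity and the pointwise nature of the failure condition), but you would then need to describe what $w^{k}(t^{*},\cdot)$ looks like (a nonempty zero set sandwiched between the positive and negative parts), compute the $t$-derivatives of $w^{k}$ at those zeros using the tridiagonal structure, and show the orientation cannot flip --- none of which appears in the sketch. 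The phrase ``just after $t^{*}$ a new sign change appears or the orientation flips,'' followed by ``strong maximum principle forces $w^{k}\equiv0$,'' does not constitute an argument: the strong maximum principle forces $w^{k}\equiv0$ globally only if $w^{k}$ has a global sign, and that is precisely what has broken down. The clean repair is to replace Step 1 by a direct application of Proposition~\ref{wavefronts}, as the paper does: with $t_{0}=-s_{k}$, $n_{0}=-k$, the initial datum $w^{k}(-s_{k},n)=H_{k}(n)-v(-s_{k},n)$ is $>0$ for $n\leq-k$ and $<0$ for $n>-k$ (since $0<v<1$), so the conclusion of Proposition~\ref{wavefronts} gives exactly your claimed $m^{k}(t)$ for all $t\geq-s_{k}$. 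Your Step 2 then completes the proof unchanged.
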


First we need the following proposition whose proof can be found in \cite[Lemma 4]{Hankerson1993}:
\begin{Proposition}[\cite{Hankerson1993}]\label{wavefronts}
Consider the solution $u_1(t,n)$ and $u_2(t,n)$ of \eqref{1}. Denote $w(t,n)=u_1(t,n)-u_2(t,n).$  If $w(t_0,n_0)>0$ for some $(t_0,n_0)\in\mathbb R\times\mathbb Z$, $w(t_0,n)>0$ for $n<n_0$, $w(t_0,n)<0$ for $n>n_0$, then the following hold:
\begin{enumerate}
\item For any $t\geq t_0,$  if $w(t,n)>0$  for some $n\in\mathbb Z$, then $w(t,m)>0$ for any $m<n$.
\item For any $t\geq t_0,$  if $w(t,n)<0$  for some $n\in\mathbb Z$, then $w(t,m)<0$ for any $m>n$.
\end{enumerate}
\end{Proposition}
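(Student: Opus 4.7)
The plan is to linearize $w:=u_1-u_2$ and run a discrete maximum-principle argument at a first-violation time, in the spirit of the Sturm--Matano zero-number theory. Subtracting the two equations for $u_1,u_2$ from \eqref{1} and using the identity $u_1(1-u_1)-u_2(1-u_2)=w(1-u_1-u_2)$ gives
\begin{equation}\label{eq:wlinprop}
w_t(t,n)=w(t,n+1)+w(t,n-1)+\bigl(b(t,n)-2\bigr)w(t,n),\qquad b(t,n):=c(n)\bigl(1-u_1(t,n)-u_2(t,n)\bigr),
\end{equation}
a cooperative (positive off-diagonal) linear lattice parabolic equation with bounded coefficient $b$; in particular $w_t$ is uniformly bounded, so $t\mapsto w(t,n)$ is Lipschitz uniformly in $n$.

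To prove (1) I argue by contradiction. The hypothesis on $w(t_0,\cdot)$ makes (1) hold strictly at $t=t_0$, and by continuity on a neighborhood of $t_0$, hence the first violation time
$$
\tau:=\inf\bigl\{t\geq t_0 :\ \exists\,m<n \text{ with } w(t,n)>0 \text{ and } w(t,m)\leq 0\bigr\}
$$
satisfies $\tau>t_0$. Thanks to the uniform-in-$n$ Lipschitz continuity noted above, the failure at $\tau$ can be realized at a genuine zero: fix any $n$ with $w(\tau,n)>0$, whose openness gives $w(t,n)>0$ on $(\tau-\delta,\tau]$; then (1) on $[t_0,\tau)$ forces $w(t,k)>0$ for every $k<n$ and every such $t$, and letting $t\uparrow\tau$ yields $w(\tau,k)\geq0$ for all $k<n$, so any $m<n$ with $w(\tau,m)\leq 0$ must in fact satisfy $w(\tau,m)=0$.

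The contradiction now comes from choosing $m^*:=\max\{k<n :\ w(\tau,k)=0\}$, a finite integer by the previous paragraph. By maximality, $w(\tau,k)>0$ for $m^*<k\leq n$, and $w(\tau,m^*-1)\geq 0$. Substituting $w(\tau,m^*)=0$ into \eqref{eq:wlinprop} at $(\tau,m^*)$,
$$
w_t(\tau,m^*)=w(\tau,m^*+1)+w(\tau,m^*-1)>0,
$$
since $w(\tau,m^*+1)>0$ in either case $m^*+1<n$ or $m^*+1=n$. On the other hand, for $t\in[t_0,\tau)$ near $\tau$, (1) applied at $t$ with witness $n$ yields $w(t,m^*)>0$, while $w(\tau,m^*)=0$; hence the left $t$-derivative satisfies $w_t(\tau,m^*)\leq 0$, a contradiction.

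Part (2) follows by the mirror-image argument: take the first-violation time of (2), fix a witness $n$ with $w(\tau,n)<0$, choose $m^*:=\min\{k>n :\ w(\tau,k)=0\}$, and the symmetric computation yields $w_t(\tau,m^*)<0$ against a right-sided derivative inequality $\geq 0$. The hardest step is the localization in the second paragraph---ruling out witnesses escaping to $\pm\infty$ and showing that the failure at $\tau$ is realized at an actual zero of $w(\tau,\cdot)$---which rests on the uniform-in-$n$ Lipschitz bound afforded by \eqref{eq:wlinprop} together with the $L^\infty$-boundedness of the solutions $u_1,u_2$.
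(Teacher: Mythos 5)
Your strategy---linearizing to a cooperative lattice equation for $w$, taking a first violation time $\tau$, and extracting a sign contradiction on $w_t$ at a carefully chosen boundary zero $m^*$---has the right shape, and the $m^*$-computation in your third paragraph is correct provided that $\tau>t_0$, that the infimum is attained as a violation time, and that $\{k<n:w(\tau,k)=0\}$ is nonempty. But the localization step that you yourself flag as ``the hardest'' is genuinely unresolved, and the tools you invoke do not close it. The uniform-in-$n$ Lipschitz bound $|w(t,n)-w(t_0,n)|\leq C|t-t_0|$ together with $L^\infty$-boundedness of $u_1,u_2$ does not give $\tau>t_0$: the hypothesis yields $w(t_0,m)>0$ for each $m<n_0$ but not $\inf_{m<n_0}w(t_0,m)>0$, and in the paper's only use of this proposition (in the proof of Lemma~\ref{lem:steepest}, with $w(t_0,\cdot)=H_k-v(t_0,\cdot)$ and $v$ an arbitrary entire solution taking values in $(0,1)$) one can perfectly well have $w(t_0,n)\to 0$ as $n\to\pm\infty$. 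At a site where $|w(t_0,m)|<C|t-t_0|$, the Lipschitz estimate says nothing about the sign of $w(t,m)$, so a violation could appear at times arbitrarily close to $t_0$ at sites escaping to $\pm\infty$, and nothing in your write-up excludes this. You likewise never show that the infimum defining $\tau$ is attained, nor that $\{k<n:w(\tau,k)=0\}$ is nonempty for your chosen witness $n$, both of which you need before $m^*$ even makes sense.

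Closing the gap requires quantitative input beyond mere Lipschitz continuity in $t$, of a kind that rules out new sign changes appearing far out in the tails over a short time interval: for instance, a finite-propagation-speed estimate for the nearest-neighbor coupling (via Duhamel and the super-exponential off-diagonal decay of the discrete heat kernel) combined with a Harnack-type lower bound on how fast $|w(t_0,\cdot)|$ can decay. Only with such an estimate in hand do $\tau>t_0$ and the finiteness of $m^*$ follow, and then your contradiction at $(\tau,m^*)$ does finish part~(1), with part~(2) by symmetry. The paper itself supplies no proof---it cites \cite[Lemma~4]{Hankerson1993}---and the cited proof must carry out precisely this kind of localization. Your proposal correctly identifies the crux of the argument but leaves it unproved.
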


\begin{proof}[Proof of Lemma \ref{lem:steepest}]
	First we prove that if $u(t_{0},n_{0})<v(t_{0},n_{0})$ for some $(t_{0},n_{0})\in\mathbb R\times\mathbb Z$, then $u(t_{0},m)\leq v(t_{0},m)$ for any $m>n_{0}$.
	
	Suppose by contradiction that $u(t_{0},m)>v(t_{0},m)$ for some $m>n_{0}$. Then we have
	\begin{equation}\label{eq:change_of_sign1}
	u(t_{0},n_{0};-s_{k},H_{k})<v(t_{0},n_{0}), \text{ and } u(t_{0},m;-s_{k},H_{k})>v(t_{0},m)
	\end{equation}
	for some $k$ large enough. It is clear to check that
	\begin{equation*}
	\left\{
	\begin{aligned}
	1=u(-s_{k},n;-s_{k},H_{k})>v(-s_{k},n),\ &n\leq-k,\\
	0=u(-s_{k},n;-s_{k},H_{k})<v(-s_{k},n),\ &n>-k.
	\end{aligned}
	\right.
	\end{equation*}
	Applying Proposition \ref{wavefronts} with $w(t,n):=u(t,n;-s_{k},H_{k})-v(t,n)$, we can conclude that for $t\geq-s_{k}$ and $n\in\mathbb Z$ where $w(t,n)<0$,
	one has	$w(t,m)<0$ for any $m>n$. This contradicts \eqref{eq:change_of_sign1}.

	Similarly, if $u(t_{0},n_{0})>v(t_{0},n_{0})$ for some $(t_0,n_0)\in\mathbb R\times\mathbb Z$, then $u(t_{0},m)\geq v(t_{0},m)$ for any $m<n_{0}$.
	Therefore the existence of $n_t$ follows directly.
\end{proof}

Now we begin to construct an entire solution by taking the limit of $\{u(t,n;-s_k,H_k)\}$:

\begin{Lemma}\label{lem:construct_CV}
    The limit $u(t,n):=\lim\limits_{k\to\infty}u(t,n;-s_{k},H_{k})$ exists locally uniformly in $(t,n)\in\mathbb R\times\mathbb Z$. Moreover, $u$ is an entire solution of \eqref{1} with $u(0,0)=\theta$.
\end{Lemma}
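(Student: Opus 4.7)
The plan is to first extract a locally uniformly convergent subsequence by a compactness argument, then to upgrade this to convergence of the full sequence by showing that any two subsequential limits must coincide, combining the steepness property of Lemma~\ref{lem:steepest} with the strong comparison principle (Proposition~\ref{Strong comparison}) and forward uniqueness of the Cauchy problem (Theorem~\ref{existence and uniqueness}).

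For compactness, observe that $0\le u(t,n;-s_k,H_k)\le 1$ by comparison with the stationary sub- and supersolutions $0$ and $1$, and that $|\partial_t u(t,n;-s_k,H_k)|\le 4+\|c\|_{\ell^\infty(\Z)}$ uniformly in $k$ from the equation. Since $s_k\to+\infty$ by Lemma~\ref{lem:s_k_to_infty}, any fixed $(t,n)$ lies in the domain of definition of $u(\cdot,\cdot;-s_k,H_k)$ once $k$ is large enough. A diagonal Arzel\`a--Ascoli argument therefore produces, from any subsequence, a further subsequence that converges locally uniformly on $\R\times\Z$ to a function $u\colon\R\times\Z\to[0,1]$. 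Passing to the limit term by term in \eqref{1} shows that $u$ is an entire solution; the identity $u(0,0;-s_k,H_k)=\theta$ for every $k$ survives the limit and gives $u(0,0)=\theta$, and Corollary~\ref{StrongMaximum} then yields $0<u<1$.

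To upgrade this to convergence of the full sequence, let $u^{(1)}$ and $u^{(2)}$ be any two such subsequential limits, and apply Lemma~\ref{lem:steepest} twice, with the roles of ``limit'' and ``other entire solution'' interchanged. At each $t\in\R$ this yields two change-of-sign indices $n_t,m_t\in\Z\cup\{\pm\infty\}$, and overlaying the resulting one-sided orderings forces exactly one of the following alternatives at that $t$: (a)~$u^{(1)}(t,\cdot)\equiv u^{(2)}(t,\cdot)$; (b)~$u^{(1)}(t,\cdot)\le u^{(2)}(t,\cdot)$ pointwise; or (c)~$u^{(1)}(t,\cdot)\ge u^{(2)}(t,\cdot)$ pointwise. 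If case~(a) occurs at any single $t_0$, Theorem~\ref{existence and uniqueness} immediately gives $u^{(1)}\equiv u^{(2)}$; otherwise $u^{(1)}(0,0)=u^{(2)}(0,0)=\theta$ forces (b) or (c) at $t=0$, and we may assume (b) by symmetry. Forward propagation via Proposition~\ref{Strong comparison} then excludes (a) and (c) at every $t<0$ (either would force case~(a) at $t=0$), so $u^{(1)}(-T,\cdot)\le u^{(2)}(-T,\cdot)$ pointwise for every $T>0$; a second application of Proposition~\ref{Strong comparison} on $(-T,\infty)\times\Z$ gives either $u^{(1)}<u^{(2)}$ strictly there or $u^{(1)}\equiv u^{(2)}$, and the former is ruled out by $u^{(1)}(0,0)=u^{(2)}(0,0)$. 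The main subtlety is that Lemma~\ref{lem:steepest} by itself only controls the sign of $u^{(1)}-u^{(2)}$ outside a single transition index per time slice; extracting first the pointwise ordering and then global equality from a single coincidence at $(0,0)$ requires carefully combining the two steepness inequalities with both forward uniqueness and the strong comparison principle.
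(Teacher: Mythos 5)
Your overall strategy---Arzel\`a--Ascoli compactness plus uniqueness of subsequential limits---is a genuinely different route from the paper's. The paper's Step~1 applies Proposition~\ref{wavefronts} to $w:=u(\cdot,\cdot;-s_k,H_k)-u(\cdot,\cdot;-s_{k+1},H_{k+1})$ to show that $\{u(0,n;-s_k,H_k)\}_k$ is nonincreasing in $k$ for $n<0$ and nondecreasing for $n>0$, hence the \emph{full} sequence converges pointwise at $t=0$. Step~2 then propagates agreement of any two subsequential limits forward by uniqueness and backward by a single application of Lemma~\ref{lem:steepest}. You bypass the monotonicity entirely, paying with two applications of Lemma~\ref{lem:steepest} (once in each direction) and needing only the single-point coincidence $u^{(1)}(0,0)=u^{(2)}(0,0)=\theta$. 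Both routes reach the result; the paper's also yields the monotonicity structure as a byproduct.

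There is, however, a genuine misstep in your casework. The sentence ``If case (a) occurs at any single $t_0$, Theorem~\ref{existence and uniqueness} immediately gives $u^{(1)}\equiv u^{(2)}$'' is false: forward uniqueness of the Cauchy problem yields agreement only for $t\ge t_0$, and says nothing about $t<t_0$. Your Case~2 is structured so that it derives a contradiction (showing (a) must occur somewhere), and is therefore vacuous rather than conclusive; but the conclusion is then handed to the broken Case~1. As written, the argument does not close. Relatedly, ``exactly one of (a), (b), (c)'' is wrong, since (a) is a subcase of both (b) and (c).

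The fix is to drop the casework. For any $T>0$, your two applications of Lemma~\ref{lem:steepest} at $t=-T$ give indices $n^1_{-T},n^2_{-T}$; comparing them shows that either $u^{(1)}(-T,\cdot)\le u^{(2)}(-T,\cdot)$ pointwise or the reverse holds pointwise. In either case Proposition~\ref{Strong comparison}, applied on $(-T,\infty)\times\Z$, gives either strict separation there or identity there. Strict separation is impossible since $u^{(1)}(0,0)=u^{(2)}(0,0)$ and $0>-T$, so $u^{(1)}\equiv u^{(2)}$ on $(-T,\infty)\times\Z$. Letting $T\to\infty$ gives $u^{(1)}\equiv u^{(2)}$ on $\R\times\Z$, and the rest of your proof goes through.
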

\begin{proof}
	Define $w(t,n):=u(t,n;-s_{k},H_{k})-u(t,n;-s_{k+1},H_{k+1})$ for $t\geq-s_{k}$ and $n\in\mathbb Z$. Then for $t>-s_{k},\ n\in\mathbb{Z}$, $w$ satisfies
    	$$w_{t}(t,n)- w(t,n+1)-w(t,n-1)+2w(t,n)=c(n)f_k(t,n)w(t,n),$$
    where $f_k(t,n)=1-u(t,n;-s_{k},H_{k})-u(t,n;-s_{k+1},H_{k+1})$. Now we prove this lemma in the following two steps.\\
    Step 1: Show that $w(0,n)\geq0$ for $n<0$, and $w(0,n)\leq 0$ for $n>0$.

    Assume, by contradiction, that there is $n_{1}<0$ such that $w(0,n_{1})<0$ (The proof is similar in the case where there is $n_{1}>0$ such that $w(0,n_{1})>0$).
    Note that
    \begin{equation*}
    \left\{
    \begin{aligned}
    & \forall n\leq -k, w(-s_{k},n)=1-u(-s_{k},n;-s_{k+1},H_{k+1})>0,\\
    & \forall n>-k, w(-s_{k},n)=-u(-s_{k},n;-s_{k+1},H_{k+1})<0.
    \end{aligned}
    \right.
    \end{equation*}
    We can deduce from Proposition \ref{wavefronts} that if $w(t,n)<0$ for some $t>-s_{k}$ and $n\in\mathbb Z$, then $w(t,m)<0$ for any $m>n$
    (similarly, if $w(t,n)>0$ for some $t>-s_{k}$ and $n\in\mathbb Z$, then $w(t,m)>0$ for $m<n$).
    Therefore, $w(0,m)<0$ for $m>n_{1}$ contradicts with $w(0,0)=0$.\\
    Step 2: Show that $u_{1}(t,n)=u_{2}(t,n)$, where $u_{i}(t,n) (i=1,2)$ are any two limits of different subsequences of $u(t,n;-s_{k},H_{k})$.

    From Step 1, we have $u(0,n;-s_{k},H_{k})\geq u(0,n;-s_{k+1},H_{k+1})$ for $n<0$ and $u(0,n;-s_{k},H_{k})\leq u(0,n;-s_{k+1},H_{k+1})$ for $n>0$.
    That is to say, the sequence $\{u(0,n;-s_{k},H_{k})\}_{k}$ is nonincreasing if $n<0$ and is nondecreasing if $n>0$.
    Thus the limit $u_{0}(n):=\lim\limits_{k\to\infty}u(0,n;-s_{k},H_{k})$ is well defined for $n\in\mathbb Z$. Hence $u_{1}(0,n)=u_{2}(0,n)$, which yields that $u_{1}(t,n)=u_{2}(t,n)$ for $t>0$ and $n\in\mathbb Z$ by Proposition \ref{Strong comparison}.

    Now we prove $u_1(t,n)=u_2(t,n)$ for all $(t,n)\in\mathbb R\times\mathbb Z$. From Lemma \ref{lem:steepest}, there exist $n^{i}_{t}\in\mathbb Z\cup\{\pm\infty\},\ (i=1,2)$ such that
    $$ u_{1}(t,n)\geq u_{2}(t,n) \text{ if } n\leq n^{1}_{t}, \ \text{and } u_{1}(t,n)\leq u_{2}(t,n) \text{ if } n>n^{1}_{t},$$
    $$ u_{2}(t,n)\geq u_{1}(t,n) \text{ if } n\leq n^{2}_{t}, \ \text{and } u_{2}(t,n)\leq u_{1}(t,n) \text{ if } n>n^{2}_{t}.$$
     It's clear that $u_{1}(t,n)=u_{2}(t,n)$ if $n^{1}_{t}=n^{2}_t$ for any $t<0$. Now we assume, without loss of generality, that $n^{1}_{\tau}<n^{2}_{\tau}$ for some $\tau<0$. It follows directly that
    \begin{equation}\label{eq:uniq for sol}
    \left\{
    \begin{aligned}
    &u_{1}(\tau,n)=u_{2}(\tau,n) \text{ if } n\leq n^{1}_{\tau} \text{ or } n>n^{2}_{\tau},\\
    &u_{1}(\tau,n)\leq u_{2}(\tau,n) \text{ if } n^{1}_{\tau}<n\leq n^{2}_{\tau}.
    \end{aligned}
    \right.
    \end{equation}
   It's still needed to prove $u_{1}(\tau,n)= u_{2}(\tau,n)$ for $n^{1}_{\tau}<n\leq n^{2}_{\tau}$. If not, then $u_{1}(t,n)<u_{2}(t,n)$ for $t>\tau$ and $n\in\mathbb Z$ by Proposition \ref{Strong comparison}. That is impossible since $u_{1}(t,n)=u_{2}(t,n)$ for any $t>0$ and $n\in\mathbb Z$.
   Hence $u_{1}(t,n)= u_{2}(t,n)$ in $\mathbb R\times\mathbb Z$ with $u(0,0)=\theta$  which follows from $u(0,0;-s_{k},H_{k})=\theta$ for any $k\in\mathbb Z_+$.
\end{proof}

Now we prove (2) of Theorem \ref{Main1} by verifying that the above solution is exactly a time increasing generalized transition front with average speed $w^*$.

\begin{Theorem}\label{proveMain(3)}
	$u(t,n)$ in Lemma \ref{lem:construct_CV} is a critical traveling front and a time increasing generalized transition front with average speed $w^{*}$.
\end{Theorem}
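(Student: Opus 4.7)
The proof will proceed in three main thrusts: the critical-front inequality, the transition-front structure with the speed identity, and finally the time-monotonicity.

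\textbf{Critical traveling front.} For an arbitrary entire solution $v$ of \eqref{1} with $0<v<1$, I would form the difference $w_{k}(t,n):=u(t,n;-s_{k},H_{k})-v(t,n)$ on $[-s_{k},\infty)\times\mathbb{Z}$. At $t=-s_{k}$, using $0<v<1$, we have $w_{k}(-s_{k},\cdot)>0$ on $\{n\le -k\}$ and $w_{k}(-s_{k},\cdot)<0$ on $\{n>-k\}$, so Proposition~\ref{wavefronts} applies and the one-sided monotone sign-change pattern of $w_{k}(t,\cdot)$ is preserved for every $t\ge -s_{k}$. Passing $k\to\infty$ by Lemma~\ref{lem:construct_CV}, this structure descends to $u-v$: for every $t\in\mathbb{R}$, if $u(t,n)>v(t,n)$ at some $n$ then $u(t,m)\ge v(t,m)$ for all $m<n$, and symmetrically. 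Specializing to $(t_{0},n_{0})$ with $u(t_{0},n_{0})=v(t_{0},n_{0})$ gives the desired critical-front inequalities with splitting index $n_{0}$.

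\textbf{Transition-front structure and speed $w^{*}$.} First I would establish the asymptotic $s_{k}/k\to 1/w^{*}$. The upper bound $\limsup s_{k}/k\le 1/w^{*}$ follows from Lemma~\ref{lem:spreading_on_half_line} applied to $u(\cdot,0;0,H_{k})$. For the lower bound, the shifted exponential $\overline{U}_{k}^{E}(t,n):=\min\{1,\phi_{E}(n+k;c\cdot(-k))\,e^{Et}\}$ is a supersolution on $[0,\infty)\times[-k,\infty)$ dominating $u(\cdot,\cdot;0,H_{k})$ (it satisfies $\mathcal{L}_{c}\phi_{E}(\cdot+k;c\cdot(-k))=E\phi_{E}(\cdot+k;c\cdot(-k))$ and majorizes $H_{k}$ initially). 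Evaluating at $(s_{k},0)$ and using the decay $\phi_{E}(m;g)\le Ce^{-L(E)m+o(m)}$ from Proposition~\ref{prepare} gives $\theta\le Ce^{-L(E)k+Es_{k}+o(k)}$, whence $s_{k}/k\ge L(E)/E-o(1)$ for each $E>\lambda_{1}$; the supremum identity $\sup_{E>\lambda_{1}}L(E)/E=1/w^{*}$ closes the lower bound. Now choosing $E=E^{*}$, the attained minimizer from Lemma~\ref{3.2}(2), and passing $k\to\infty$ in the same supersolution bound, the $k$-dependent exponents cancel exactly, yielding
$$
u(t,n)\le\min\{1,\,Ce^{-L(E^{*})(n-w^{*}t)}\}\quad\text{for all }(t,n)\in\mathbb{R}\times\mathbb{Z}.
$$
A symmetric exponential subsolution built from the linearization at $u=1$ in parallel with Proposition~\ref{prop:construct_sol} yields $u(t,n)\to 1$ as $n\to-\infty$ uniformly in $t$. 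Combined with Lemma~\ref{lem:finite oscillation}, the bounded-width condition of Lemma~\ref{lem:equ_def of TW} holds, so $u$ is a generalized transition front. The exponential bound above gives $N(t)\le w^{*}t+O(1)$; coupled with the lower bound from Proposition~\ref{prop:S>minS}, the average speed is exactly $w^{*}$.

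\textbf{Time monotonicity.} For any $\tau>0$, set $v(t,n):=u(t+\tau,n)$; it is an entire solution with $0<v<1$. Lemma~\ref{lem:steepest} provides a splitting index $n_{t}\in\mathbb{Z}\cup\{\pm\infty\}$ with $u(t,\cdot)\ge v(t,\cdot)$ on $(-\infty,n_{t}]$ and $u(t,\cdot)\le v(t,\cdot)$ on $(n_{t},\infty)$. If $n_{t}=-\infty$ for every $t$, then $u(t,n)\le u(t+\tau,n)$ everywhere, the required monotonicity. To rule out the finite case, suppose $n_{t_{0}}$ is finite. Form the translates $\widetilde{u}_{j}(t,n):=u(t+t_{j},n+N(t_{j}))$ and $\widetilde{v}_{j}:=v(t+t_{j},n+N(t_{j}))$ along $t_{j}\to+\infty$; parabolic compactness yields a subsequential limit pair $(u_{\infty},v_{\infty})$ to which Lemma~\ref{lem:steepest} still applies. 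By the speed identity of the previous paragraph, $N(t_{j}+\tau)-N(t_{j})\to w^{*}\tau>0$, so the interface of $\widetilde{v}_{j}$ lies a positive distance to the right of that of $\widetilde{u}_{j}$; in the resulting bulk region $v_{\infty}$ is close to $1$ while $u_{\infty}$ is close to $0$, forcing $v_{\infty}>u_{\infty}$ on an unbounded set to the right of any admissible splitting point. This contradicts the propagation of the finite $n_{t_{0}}$ to the limit via Lemma~\ref{lem:steepest}, and hence $u$ is non-decreasing in $t$.

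The main obstacle is the last step: the critical property alone is compatible with a finite splitting, and only the quantitative interface shift carried by the average-speed identity of the second paragraph rules it out. Steps one and two are variations on the sign-change and exponential super/subsolution techniques already deployed earlier in the paper, tailored here to the pulled construction from $H_{k}$.
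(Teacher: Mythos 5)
Your Part 1 (the critical-front inequality) is essentially the paper's argument: you are re-deriving Lemma~\ref{lem:steepest} inline via Proposition~\ref{wavefronts} and then specializing the splitting index to $n_{0}$; the paper just cites Lemma~\ref{lem:steepest}. That part is fine.

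Part 2 has a genuine gap. Your intended route is legitimate for the \emph{speed identity}: the upper bound $\limsup s_{k}/k\le 1/w^{*}$ does follow from Lemma~\ref{lem:spreading_on_half_line}, and the supersolution estimate gives $\liminf s_{k}/k\ge L(E)/E$ for each $E>\lambda_{1}$. But the step ``the $k$-dependent exponents cancel exactly, yielding $u(t,n)\le\min\{1,Ce^{-L(E^{*})(n-w^{*}t)}\}$'' does not follow. What you have proved is $s_{k}/k\to 1/w^{*}$, i.e.\ $E^{*}s_{k}-L(E^{*})k=o(k)$. To pass to a \emph{uniform} exponential envelope for the limit $u$ you would need $E^{*}s_{k}-L(E^{*})k=O(1)$, and the spreading estimates available here only control the ratio, not the additive drift; moreover $\phi_{E}$ satisfies $\frac{1}{n}\ln\phi_{E}(n)\to -L(E)$, not an exact exponential, so there is an additional $o(n)$ error in $n$. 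Without that envelope, the bounded-width hypothesis of Lemma~\ref{lem:equ_def of TW} is not established by your argument. The paper avoids this entirely: it establishes bounded width and the upper bound on the speed by comparing $u$ with the already-constructed supercritical almost periodic front $u_{w}$ (through the steepness/critical property), following the scheme of Nadin \cite{Nadin2014}, and only then invokes Proposition~\ref{prop:S>minS} for the lower bound. You should either prove the $O(1)$ control on $E^{*}s_{k}-L(E^{*})k$ (which seems hard) or replace this step by the Nadin-type comparison.

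Part 3 also has a gap. You invoke ``Lemma~\ref{lem:steepest} still applies'' to the translated limit pair $(u_{\infty},v_{\infty})$. But Lemma~\ref{lem:steepest} is proved only for the specific pulled front $u=\lim u(\cdot,\cdot;-s_{k},H_{k})$ against an arbitrary entire solution; after translating along $(t_{j},N(t_{j}))$ and taking a subsequential limit, $u_{\infty}$ is merely some entire solution of a limiting equation and has not been shown to inherit the ``steepest'' property. You also leave the case $n_{t_{0}}=+\infty$ untreated, and the claim that $v_{\infty}>u_{\infty}$ ``on an unbounded set to the right of any admissible splitting point'' is not justified—the region where the interface shift forces $v>u$ is bounded (of width $\approx w^{*}\tau$), and far to the right both profiles decay to $0$ with no controlled ordering. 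The paper's argument is cleaner and sidesteps all of this: assuming non-monotonicity at $(t',n')$, it uses $\limsup_{t\to\infty}u(t,n')=1$ and the intermediate value theorem to produce $T_{0}>\tau$ with $u(t'+T_{0},n')=u(t',n')$; then $u$ and $u(\cdot+T_{0},\cdot)$ are both critical traveling fronts (the critical property is invariant under time translation since \eqref{1} is autonomous in $t$) touching at $(t',n')$, so the critical inequality applied in both directions forces $u(t',\cdot)\equiv u(t'+T_{0},\cdot)$, and the backward-uniqueness argument from Step~2 of Lemma~\ref{lem:construct_CV} gives $T_{0}$-periodicity of $u$ in $t$. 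Periodicity together with $\limsup_{t}u(t,0)=1$ forces the maximum $1$ to be attained, contradicting the strong maximum principle. I would recommend adopting this periodicity argument rather than the interface-translation scheme.
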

\begin{proof}
	We prove that $u(t,n)$ is a critical traveling front first.
	Assume that $v$ is an entire solution of (\ref{1}) such that $v(t_{0},n_{0})=u(t_{0},n_{0})$ and $0<v<1$. Then by Lemma \ref{lem:steepest}, we have 
	$$u(t_{0},n)\geq v(t_{0},n) \text{ if }n\leq n_{0}\text{ and }u(t_{0},n)\leq v(t_{0},n)\text{ if }n>n_{0}.$$
	That is to say, $u(t,n)$ is a critical traveling front.
	
	Let $u_{w}(t,n)$ be the almost periodic traveling front $u(t,n;c)$ obtained in Theorem \ref{thm:APTWfront1} with average wave speed $w>w^*$. Thus by Proposition \ref{prop:transition front}, $u_w(t,n)$ is also a generalized transition front with average speed $w$. Then a similar argument to that of \cite[Theorem 3.1]{Nadin2014} 
	yields that $\sup\limits_{t\in\mathbb R}diam\{n\in\mathbb Z|\varepsilon\leq u(t,n)\leq1-\varepsilon\}<\infty$ for any $\varepsilon\in(0,1/2)$. Therefore, it follows from Lemma \ref{lem:equ_def of TW} that $u(t,n)$ is a generalized transition front.
	
	It remains to show that $u(t,n)$ possesses an average speed $w^{*}$. Set $N(t):=\sup\{n|u(t,n)\geq1/4\}$ and  $M_w(t):=\sup\{n|u_{w}(t,n)\geq1/4\}$.
	Then by similar arguments to that of \cite[Theorem 3.6]{Nadin2014}, 
	we can find $L$ such that, for any $s\in\mathbb R$, there exists $\tilde{s}\in\mathbb R$ satisfying
	$$N(s+\tau)-N(s)\leq M_w(\tilde{s}+\tau)-M_w(\tilde{s})+L,\ \forall\ \tau>0.$$
	Then $$\lim\limits_{\tau\to+\infty}\sup\limits_{s}\frac{N(s+\tau)-N(s)}{\tau}\leq\lim\limits_{\tau\to+\infty}\sup\limits_{\tilde{s}}\frac{M_w(\tilde{s}+\tau)-M_w(\tilde{s})+L}{\tau}=w,$$
for all $w^*<w<\underline w$, which gives $\limsup\limits_{t-s\rightarrow+\infty}\frac{N(t)-N(s)}{t-s}\leq w^{*}$.
	On the other hand, since $u(t,n)$ is a generalized transition front and $N(t)$ satisfies (\ref{2}), we have
	$\liminf\limits_{t-s\rightarrow+\infty}\frac{N(t)-N(s)}{t-s}\geq w^{*}$ by Proposition \ref{prop:S>minS}.
    Hence $w^{*}$ is the average speed of $u$.

At last, we need to prove $u(t,n)$ is a time increasing traveling front. Otherwise, there exist some $t',\tau,n'$ such that $u(t'+\tau,n')<u(t',n')$. Since $\alpha:=\inf\limits_{n<0}u(0,n)\in (0,1)$, then by the similar argument in the proof of Proposition \ref{prop:S>minS}, we can deduce that $u(t,n)\geq u(t,n;0,u^{(0)})$ with $u^{(0)}(n)=0$ for $n>0$ and $u^{(0)}(n)=\alpha$ for $n\leq 0$.  
Thus, we have
$
\limsup\limits_{t\rightarrow \infty}u(t,n)=1.
$
Combining this with the intermediate value theorem, there exists $T_0>\tau$ such that $u(t'+T_0,n')=u(t',n')$.

As $u(t,n)$ is a critical traveling front, so is $v(t,n):=u(t+T_0,n)$. Combining $u(t',n')=v(t',n')$ and the definition of critical traveling front, one has
$$u(t',n)\geq v(t',n)\text{ if }n<n'\text{ and }u(t',n)\leq v(t',n) \text{ if } n>n',$$
$$v(t',n)\geq u(t',n)\text{ if }n<n'\text{ and }v(t',n)\geq u(t',n) \text{ if } n>n'.$$
Hence $u(t',n)=v(t',n)$ for $n\in\mathbb Z$. Now we get the conclusion that $u(t,n)=v(t,n)=u(t+T_0,n)$ for any $(t,n)\in\mathbb R\times\mathbb Z$ by the similar arguments in Step 2 of Lemma \ref{lem:construct_CV}. This means $u$ is a time periodic transition front.
Then the maximum 1 of $u(t,0)$ can be attained since $\lim\sup\limits_{t\in\mathbb R} u(t,0)=1$. This contradicts Corollary \ref{StrongMaximum}. Then the proof is complete.
\end{proof}

Summing up Theorem \ref{thm:APTWfront1}, Proposition \ref{prop:S>minS} and Theorem \ref{proveMain(3)}, we have proved Theorem \ref{Main1}.

\section{Positive almost periodic solution of the Discrete Schr\"odinger equation}\label{reducible}

\subsection{Criterion of existence of positive almost periodic solution}
 Theorem \ref{Main1} is an incomplete answer since we cannot determine whether \eqref{1} has an almost periodic traveling front with average wave speed $w\geq\underline w$, even the generalized transition front. However, in the case $\underline w=\infty$, we can get a complete answer by Theorem \ref{Main1}. In this section, we will provide some conditions on $c$ in \eqref{1} to guarantee $\underline w=\infty$. The idea is to apply Proposition \ref{criticalLE} and to use KAM method for constructing the positive almost periodic solution of $(\mathcal L_{V,\alpha,\theta}u)(n)=u(n+1)+u(n-1)-2u(n)+V(n\alpha+\theta)u(n)=\lambda_1 u(n)$ with $V,\alpha,\theta$ to be specified and $\lambda_1=\max\Sigma(\mathcal L_{V,\alpha,\theta})$.

We will first provide a simple criterion, which says that if the Schr\"odinger cocycle can be reduced to constant parabolic cocycle, and the conjugacy is close to the identity, then the corresponding Schr\"odinger equation has a positive almost-periodic solution. Recall that an almost-periodic cocycle $(\alpha,A)$ is said to be $C^s$ reducible where $\ 0\leq s\leq \infty,\omega$, if there exist $B(\cdot)\in C^s(\T^d,\mathrm{SL}(2,\mathbb R))$ and a constant matrix $\tilde A$ such that
$$B(\theta+\alpha)^{-1}A(\theta)B(\theta)=\tilde A.$$
Denote the norm in $C^s(\mathbb T^d,\mathrm{SL}(2,\mathbb R))$ as $\|F\|_{s}:=\sup\limits_{|l|\leq s,\theta\in\mathbb T^d}\|\partial^{l}F(\theta)\|$. Then we have the following:

\begin{Lemma}\label{criterion}
Suppose that there exists $B
 \in C^0(\T^d,\mathrm{SL}(2,\mathbb R))$ with $\|B-\id\|_{0}\leq \frac{1}{100}$ such that
  \begin{equation}\label{redu}B^{-1}(\theta+\alpha)S^V_E(\theta)B(\theta)=\tilde A,\end{equation}
 where $\tilde A\in \mathrm{SL}(2,\mathbb R)$ is a parabolic matrix (i.e. the trace $|tr(\tilde A)|=2$). Then \begin{equation}\label{schroeq}
 (\mathcal L_{V,\alpha,\theta}u)(n)=u(n+1)+u(n-1)-2u(n)+V(\theta+n\alpha)u(n)=Eu(n)\end{equation}
 has  an almost-periodic positive solution.
\end{Lemma}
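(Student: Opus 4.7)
The plan is to build the positive almost-periodic solution by transporting an invariant eigendirection of $\tilde A$ via $B$ into an invariant section of the Schr\"odinger cocycle, and then reading off its first coordinate. Since $\tilde A$ is parabolic with $|\tr\tilde A|=2$, its eigenvalues are $\pm 1$; only the eigenvalue $+1$ (i.e.\ $\tr\tilde A=+2$) is useful for producing a \emph{positive} solution, because eigenvalue $-1$ would at best yield a sign-alternating candidate. In the setting of interest ($E=\lambda_1$), this is the relevant case: by \lemref{rotinv} the reduction preserves the fibered rotation number, and $k(\lambda_1)=1$ combined with \thmref{idsrot} forces $\rot(\alpha,\tilde A)\equiv 0\pmod{\Z}$, which for a constant parabolic matrix means $\tr\tilde A=+2$. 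Pick a unit eigenvector $v\in\R^2$ with $\tilde A v=v$, and set $\ell(\theta):=B(\theta)v\in C^0(\T^d,\R^2)$. The reducibility identity then gives at once
$$ S^V_E(\theta)\,\ell(\theta) \;=\; B(\theta+\alpha)\,\tilde A\,v \;=\; \ell(\theta+\alpha).$$

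The next step is to decode this invariance in coordinates. Writing $\ell(\theta)=(p(\theta),q(\theta))^{\mathrm T}$ and using the explicit form of $S^V_E$, the bottom row yields $q(\theta+\alpha)=p(\theta)$ (equivalently $q(\theta)=p(\theta-\alpha)$), and the top row yields $p(\theta+\alpha)=(E+2-V(\theta))p(\theta)-q(\theta)$. Eliminating $q$ produces
$$ p(\theta+\alpha)+p(\theta-\alpha)-2\,p(\theta)+V(\theta)\,p(\theta)=E\,p(\theta),$$
so $u(n):=p(\theta+n\alpha)$ is a pointwise solution of $\mathcal L_{V,\alpha,\theta}u=Eu$. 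Because $p=[B(\cdot)v]_1$ is continuous on $\T^d$ and $n\mapsto\theta+n\alpha$ is a minimal rotation on a compact abelian group, $u$ is quasi-periodic (for finite $d$) or almost-periodic in the sense of \secref{aps} (for $d=\infty$).

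For positivity, normalize $v$ so that $[v]_1>0$; the closeness $\|B-\id\|_0\le 1/100$ then gives
$$\bigl|p(\theta)-[v]_1\bigr|=\bigl|[(B(\theta)-\id)v]_1\bigr|\le \|B-\id\|_0\,\|v\|\le \|v\|/100,$$
so $p(\theta)\ge [v]_1-\|v\|/100>0$ as soon as $[v]_1>\|v\|/100$. The main obstacle is guaranteeing this transversality of the eigenline of $\tilde A$ to $\R e_2$: a bare closeness estimate on $B$ controls the oscillation of $\ell$ around $v$ but not the direction of $v$ itself. In the KAM applications where the lemma is invoked, $\tilde A$ is delivered in upper-triangular Jordan form $\bigl(\begin{smallmatrix}1&b\\0&1\end{smallmatrix}\bigr)$ whose unique eigenline is $\R e_1$, so $[v]_1=1$ after normalization and the estimate is immediate; for a general $\tilde A$ one can compose $B$ on the right with a constant $\mathrm{SL}(2,\R)$-matrix conjugating $\tilde A$ to this canonical form and absorb the resulting small loss into the $1/100$ margin.
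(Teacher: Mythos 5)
Your algebraic reduction is exactly the paper's: from $B(\theta+\alpha)^{-1}S_E^V(\theta)B(\theta)=\tilde A$ with $\tilde A v=v$ you pass to the invariant section $\ell(\theta)=B(\theta)v$, whose components satisfy $\ell_1(\theta)=\ell_2(\theta+\alpha)$ by the form of the transfer matrix, and read off the solution $u(n)=\ell_1(\theta+n\alpha)$. That part is correct and coincides with what the paper does after conjugating $\tilde A$ to the Jordan block $\bigl(\begin{smallmatrix}1&p\\0&1\end{smallmatrix}\bigr)$ by a rotation $R_\eta$ (so the eigenvector is $v=(\cos\eta,\sin\eta)$).

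The gap is in the positivity step, and your proposed repair does not close it. You need $[v]_1=|\cos\eta|>\|v\|/100$, and you correctly identify that the hypothesis $\|B-\id\|_0\le 1/100$ alone does not control the direction of $v$. Your fix---compose $B$ on the right with a constant $P$ with $P^{-1}\tilde A P$ upper-triangular---fails because $P$ must carry $v$ to $e_1$, i.e.\ $P$ is essentially the rotation $R_\eta$, which is not close to the identity when $\eta$ is bounded away from $0$. Then $\|BP-\id\|_0$ has no reason to be $\le 1/100$, the ``small loss'' is in fact $O(|\eta|)$, and the transversality estimate is not recovered; the step is circular.

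The paper's resolution is a simple case distinction that you should have made instead: look at \emph{both} components of the invariant section. Writing $C=BR_\eta$, one has $C_{11}(\theta)\approx\cos\eta$ and $C_{21}(\theta)\approx\sin\eta$ to within $O(1/100)$, while the structure of $S_E^V$ forces the global identity $C_{11}(\theta)=C_{21}(\theta+\alpha)$, so $C_{21}(\theta+n\alpha)$ is exactly the same solution shifted by one. Since $(\cos\eta,\sin\eta)$ is a unit vector, at least one of $|\cos\eta|,|\sin\eta|$ is $\ge 1/\sqrt2$, which dominates the $O(1/100)$ error, so at least one of $C_{11},C_{21}$ has a definite sign and is bounded away from $0$ on all of $\mathbb T^d$; by the shift identity, so is the other, and the positive solution follows. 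In short, you were missing the observation that the second row of the cocycle lets you read the solution off from $\ell_2$ as well as $\ell_1$, which removes the transversality issue entirely.
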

\begin{proof}
Without loss of generality, we assume $tr(\tilde A)=2$. Then one can find some $R_\eta:=\begin{pmatrix}
\cos\eta &-\sin\eta\\
\sin\eta &\cos\eta
\end{pmatrix}\in SO(2,\mathbb R)$ with $\eta\in[0,2\pi)$ such that \begin{equation}\label{parabolic}
R_\eta\begin{pmatrix}
1 &p\\
0 &1
\end{pmatrix}R_{-\eta}
=\tilde A
\end{equation}
for some $p\in\mathbb R.$
Denote $B(\theta):=
 \begin{pmatrix}
 B_{11} &B_{12}\\
 B_{21} &B_{22}
 \end{pmatrix}$ and $C(\theta):=R_{\eta}B(\theta)=
\begin{pmatrix}
 C_{11}(\theta) &C_{12}(\theta)\\
 C_{21}(\theta) &C_{22}(\theta)
 \end{pmatrix}$. It directly follows that
 \begin{equation}\label{redu1}
 C^{-1}(\theta+\alpha)S^{V}_E(\theta)C(\theta)= \begin{pmatrix}
 1 & p\\
 0 & 1
 \end{pmatrix}.
 \end{equation}
Now we can write \eqref{redu1} as  $$
\begin{pmatrix}
E+2-V(\theta) &-1\\
1   &0
\end{pmatrix}
C(\theta)
=
C(\theta+\alpha)
\begin{pmatrix}
1 &p\\
0 &1
\end{pmatrix}
,
$$
which gives us
\begin{eqnarray*}
(E+2-V(\theta))C_{11}(\theta)-C_{21}(\theta)&=&C_{11}(\theta+\alpha),\\
C_{11}(\theta)&=&C_{21}(\theta+\alpha),
\end{eqnarray*}
that is $$C_{11}(\theta+\alpha)+C_{11}(\theta-\alpha)-2C_{11}(\theta)+V(\theta)C_{11}(\theta)=EC_{11}(\theta).$$
Hence $u(n)=C_{11}(n\alpha+\theta)=C_{21}((n+1)\alpha+\theta)$ is an almost periodic solution.
Now we distinguish this into the following two cases:

\textbf{Case 1:} If $\cos^2\eta>\frac{1}{2}$, $\sin^2\eta<\frac{1}{2}$. Moreover if $\cos\eta>0$, then by the assumption $\|B-\id\|_{C^s}\leq \frac{1}{100}$, $$C_{11}(\theta)=\cos\eta B_{11}(\theta)-\sin\eta B_{21}(\theta)>\frac{\sqrt{2}}{4}\text{ for any } \theta\in\mathbb T^d.$$ This implies $C_{11}(n\alpha+\theta)$ is an positive almost periodic solution of \eqref{schroeq}; If $\cos\eta<0$, then $-C_{11}(n\alpha+\theta)$ is an positive almost periodic solution of \eqref{schroeq}.

\textbf{Case 2:} If $\cos^2\eta\leq \frac{1}{2}$, $\sin^2\eta\geq\frac{1}{2}$. Moreover if $\sin\eta>0$, one can similarly verify that $C_{21}(\theta)=\cos\eta B_{21}(\theta)+\sin\eta B_{11}(\theta)\geq\frac{\sqrt{2}}{4}$ for any $\theta\in\mathbb T^d$. Hence $C_{21}(n\alpha+\theta)$ is an positive almost periodic solution; If $\sin\eta<0$, $-C_{21}(n\alpha+\theta)$ is an positive almost periodic solution of \eqref{schroeq}.

In all, the proof is complete.
\end{proof}

\subsection{Analytic quasi-periodic potential}

Motivated by Lemma \ref{criterion}, to prove the existence of positive almost periodic solution of \eqref{schroeq}, we only need to prove that the corresponding  Schr\"odinger cocycle is reducible to a parabolic constant cocycle:
$$\mathrm{e}^{-Y(\theta+\alpha)}S^{V}_{E}(\theta)\me^{Y(\theta)}=\tilde{A}.$$
Moreover, the conjugation is close to constant.

First we state the reducibility result for analytic quasi-periodic potential. To prove this, we first need a  non-resonance cancelation lemma. The result will be the basis of our proof, and we will also use this when we deal with analytic almost periodic potentials.

Let $\mathcal B$ be a $\mathrm{sl}(2,\mathbb R)$ valued Banach algebra. Assume that for any given $\eta>0,\alpha\in\mathbb{T}^d$ where $d\in \mathbb N_+\cup\{\infty\}$ and $A\in \mathrm{SL}(2,\mathbb R),$ we have a decomposition of the Banach space $\mathcal B$ into non-resonant spaces and resonant spaces, i.e.
$\mathcal{B}=\mathcal{B}^{nre}(\eta)\oplus\mathcal{B}^{re}(\eta)$.  Here $\mathcal{B}^{nre}(\eta)$ is defined in the following way:  for any $Y\in\mathcal{B}^{nre}(\eta),$ we have
$$A^{-1}Y(\theta+\alpha)A\in \mathcal{B}^{nre}(\eta),\ |A^{-1}Y(\theta+\alpha)A-Y(\theta)|\geq \eta|Y(\theta)|,$$
where $|\cdot|$ is the norm of the Banach space $\mathcal B$.

Once we have this, we have the following:

\begin{Lemma}\label{basic}
Assume that $A\in \mathrm{SL}(2,\mathbb R),\epsilon\leq (4\bigr\|A\bigr\|)^{-4}$ and $\eta\geq 13\|A\|^2\epsilon^{\frac{1}{2}}$. For any $F\in\mathcal{B}$ with $|F|\leq\epsilon,$ there exist $Y\in \mathcal{B}$ and $F^{re}\in \mathcal{B}^{re}(\eta)$ such that
$$e^{-Y(\theta+\alpha)}Ae^{F(\theta)}e^{Y(\theta)}=Ae^{F^{re}(\theta)}.$$
Moreover, we have the estimates $|Y|\leq \epsilon^{\frac{1}{2}}$ and $|F^{re}|\leq 2\epsilon.$
\end{Lemma}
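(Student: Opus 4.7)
The plan is to reduce the identity to a fixed-point equation in the non-resonant subspace $\mathcal{B}^{nre}(\eta)$ and close the argument by contraction on a ball of radius $\epsilon^{1/2}$. Multiplying on the left by $A^{-1}$ and setting $\tilde Y(\theta):=A^{-1}Y(\theta+\alpha)A$, the equation becomes
$$e^{-\tilde Y(\theta)}\,e^{F(\theta)}\,e^{Y(\theta)}=e^{F^{re}(\theta)}.$$
Note that $|\tilde Y|\le \|A\|^2|Y|$, and by the hypothesis defining $\mathcal{B}^{nre}(\eta)$, the linear operator $\mathcal{L}\colon \mathcal{B}^{nre}(\eta)\to \mathcal{B}^{nre}(\eta)$ sending $Y$ to $Y-\tilde Y$ is bounded below by $\eta$, hence invertible with $\|\mathcal{L}^{-1}\|\le \eta^{-1}$. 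This is the key spectral input that allows one to solve the non-resonant part of the equation.

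Next, the Baker--Campbell--Hausdorff formula expands the left-hand side as $\exp H(Y,F)$ with $H(Y,F)=F+(Y-\tilde Y)+R(Y,F)$, where the commutator remainder satisfies
$$|R(Y,F)|\le C\bigl(|F||Y|+\|A\|^2|F||Y|+\|A\|^2|Y|^2\bigr)+C\bigl(|F|+(1+\|A\|^2)|Y|\bigr)^3,$$
for an absolute constant $C$, provided the BCH series converges -- ensured by $\epsilon\le (4\|A\|)^{-4}$ together with the ansatz $|Y|\le \epsilon^{1/2}$. Projecting the requirement $H(Y,F)\in \mathcal{B}^{re}(\eta)$ onto $\mathcal{B}^{nre}(\eta)$, and using that $\mathbb{P}_{nre}(Y-\tilde Y)=Y-\tilde Y$ (since $Y$ and $\tilde Y$ both lie in $\mathcal{B}^{nre}(\eta)$), yields the fixed-point equation
$$Y=\mathcal{T}(Y):=-\mathcal{L}^{-1}\bigl(\mathbb{P}_{nre}F+\mathbb{P}_{nre}R(Y,F)\bigr)\quad\text{in }\mathcal{B}^{nre}(\eta).$$

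It remains to check that $\mathcal{T}$ is a self-map of the closed ball of radius $\epsilon^{1/2}$ and a contraction there. On this ball the dominant term in $R$ is $\|A\|^2|Y|^2\le \|A\|^2\epsilon$, so $|\mathcal{T}(Y)|\le \eta^{-1}(\epsilon+C\|A\|^2\epsilon)$, which is $\le \epsilon^{1/2}$ as soon as $\eta\ge 13\|A\|^2\epsilon^{1/2}$ (the numerical factor $13$ being exactly what absorbs the BCH constant $C$ after using $\|A\|\ge 1$); an analogous computation on the differential of $\mathcal{T}$ gives a Lipschitz constant $\le C'\|A\|^2\epsilon^{1/2}/\eta<1$. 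The unique fixed point $Y$ automatically satisfies $|Y|\le \epsilon^{1/2}$, and from the fixed-point relation one extracts the sharper a posteriori bound $|Y|\le 2\epsilon^{1/2}/(13\|A\|^2)$, which plugged back into $R$ gives $|R(Y,F)|\le \epsilon$. Defining $F^{re}:=H(Y,F)=\log(e^{-\tilde Y}e^{F}e^{Y})$, which lies in $\mathcal{B}^{re}(\eta)$ by construction, yields $|F^{re}|\le |F|+|R(Y,F)|\le 2\epsilon$ as required. The main obstacle is precisely the bookkeeping of constants: one must control the tail of the BCH expansion quantitatively (e.g.\ by Dynkin's explicit formula giving $|\log(e^Xe^Y)-X-Y|\le \sum_{k\ge 1}(|X|+|Y|)^{k+1}/(k+1)$) so that every absolute constant fits inside the numerical factor $13$ prescribed by the hypothesis on $\eta$.
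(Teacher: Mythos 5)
Your overall approach is the same one the paper intends: the paper does not prove this lemma but refers to \cite[Lemma 3.1]{Cai2019}, whose proof is exactly the fixed-point/implicit-function scheme you describe, namely conjugating by $A^{-1}$, expanding by Baker--Campbell--Hausdorff, projecting to $\mathcal B^{nre}(\eta)$, inverting the operator $Y\mapsto Y-\tilde Y$ there, and closing a contraction. Your identification of the linear part $F+(Y-\tilde Y)$, the projection structure, the role of the lower bound $\eta$ for invertibility of $\mathcal L$ on $\mathcal B^{nre}$, and the derivation of $F^{re}=\mathbb P_{re}(F+R)$ are all correct and match the reference.

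There is, however, a quantitative gap in the BCH estimate that would make your contraction fail to close on the ball $\{|Y|\le\epsilon^{1/2}\}$ when $\|A\|$ is large. Your cubic-and-higher bound $C\bigl(|F|+(1+\|A\|^2)|Y|\bigr)^3$, obtained from the Dynkin-type majorant $\sum_k(|X|+|Y|)^{k+1}/(k+1)$, is too crude: it contains a pure $\|A\|^6|Y|^3$ contribution (i.e.\ a $|\tilde Y|^3$ term), whereas every BCH term of order $\ge 2$ is an iterated commutator containing \emph{at least one} $\tilde Y$-factor \emph{and at least one} $Z_1$-factor (with $Z_1=\log(e^Fe^Y)$), so the true remainder obeys $|R|\lesssim|\tilde Y|\,|Z_1|\le\|A\|^2|Y|(|F|+|Y|)$ once $|\tilde Y|+|Z_1|\le 1/4$. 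On the ball $|Y|\le\epsilon^{1/2}$ your stated bound gives, after dividing by $\eta\ge 13\|A\|^2\epsilon^{1/2}$, a contribution of size $\sim\|A\|^4\epsilon/\eta\sim\|A\|^2\epsilon^{1/2}/13$; under the hypothesis $\epsilon\le(4\|A\|)^{-4}$ one only has $\|A\|^2\epsilon^{1/2}\le 1/16$, which is \emph{not} $O(\epsilon^{1/2})$, so $|\mathcal T(Y)|\le\epsilon^{1/2}$ does not follow for $\|A\|$ large from the estimate as written. Two standard fixes, either of which preserves the rest of your argument: (i) replace the lossy majorant by the commutator-respecting bound $|R|\le C\|A\|^2|Y|(|F|+|Y|)$, which on the $\epsilon^{1/2}$-ball gives $|R|\le C\|A\|^2\epsilon$ and closes the self-map exactly as you computed; or (ii) run the contraction on the smaller ball $\{|Y|\le 2\eta^{-1}\epsilon\}\subset\{|Y|\le\epsilon^{1/2}\}$, where even the lossy bound yields $|R|\lesssim\epsilon/40$. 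With either fix your a posteriori step $|Y|\le 2\eta^{-1}\epsilon$ and hence $|R|<\epsilon$, $|F^{re}|\le|F|+|R|\le 2\epsilon$ goes through, so the conclusion is salvaged; but as written, the key self-map inequality ``$|\mathcal T(Y)|\le\epsilon^{1/2}$ on the $\epsilon^{1/2}$-ball'' is not justified by your displayed remainder bound.
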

\begin{Remark}
The proof of the lemma for $\mathcal B:=C^\omega_r(\mathbb T^d,\mathrm{su}(1,1))$ with $d\in \mathbb N_+$ could be found in \cite[Lemma 3.1]{Cai2019}, and we can easily see that the proof works  for any other Banach algebra.
\end{Remark}

In our application, we will set $\mathcal B:=C^\omega_r(\mathbb T^d,\mathrm{sl}(2,\mathbb R))$ where $d\in \mathbb N_+\cup\{\infty\},r>0$ (the definition of $C^\omega_r(\T^\infty,\mathrm{sl}(2,\R))$ will be introduced later). Define the norm in $\mathcal B$ as $|F|_r:=\sup\limits_{|\Im\theta|\leq r}|F(\theta)|$. The non-resonant space $\mathcal{B}^{nre}$ will take the truncating operator $\mathcal{T}_{K}$ on $\mathcal{B}$: For any $K>0$, we define
$$\mathcal{T}_{K}F(\theta)=\sum\limits_{k\in\mathbb{Z}^{d},|k|<K}\hat F(k)\me^{\mi\langle k,\theta\rangle}$$
and define $\mathcal{R}_{K}$ as
$$\mathcal{R}_{K}F(\theta)=\sum\limits_{k\in\mathbb{Z}^{d},|k|\geq K}\hat F(k)\me^{\mi\langle k,\theta\rangle}.$$
Obviously, $\mathcal{T}_{K}F+\mathcal{R}_{K}F=F$.
Now as a direct application of Lemma \ref{basic}, we have the following:

\begin{Proposition}\label{KAM}
Let $\alpha \in \mathrm{DC}_{d}(\gamma,\tau),\ r, \delta,\gamma>0,\ \tau > d\geq 1.$ Suppose that $A\in \mathrm{SL(2,\mathbb{R})}$, $F\in C^\omega_r(\mathbb T^d,\mathrm{sl(2,\mathbb{R})})$. For any $r'\in (0,r),$ there exists $c=c(\gamma,\tau,d)$ such that if $|\mathrm{rot}(\alpha,A)|\leq 2\|A\|\epsilon^{\frac{1}{2}}$, and
\begin{equation}\label{small1}
|F|_{r}\leq \epsilon< \frac{c(r-r')^{6(1+\delta)\tau}}{\|A\|^{6}},
\end{equation}
 then there exist $Y,F'\in C^{\omega}_{r'}(\mathbb{T}^{d},\mathrm{sl(2,\mathbb{R})})$ and $A'\in\mathrm{SL}(2,\mathbb R)$ such that
$$\me^{-Y(\theta+\alpha)}A\me^{F(\theta)}\me^{Y(\theta)}=A'\me^{F'(\theta)}.$$
Moreover, we have the following estimates:
$$|Y|_{r'}\leq \epsilon^{\frac{1}{2}},\quad |F'|_{r'}\leq 4\epsilon^{2}, \quad \|A-A'\|\leq 2\epsilon\|A\|.$$
\end{Proposition}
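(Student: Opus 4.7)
The plan is to perform a single KAM step that decomposes the perturbation into ``low non-resonant'' and ``resonant'' pieces, uses Lemma \ref{basic} to kill the former by conjugation, and absorbs the latter into the new constant cocycle $A'$ plus a much smaller error $F'$. Concretely, I will take the resonant subspace $\mathcal{B}^{re}(\eta)$ to consist of functions whose Fourier support lies in $\{0\}\cup\{|k|\geq K\}$ and the non-resonant subspace $\mathcal{B}^{nre}(\eta)$ to consist of functions whose Fourier support lies in $\{0<|k|<K\}$, for a truncation parameter $K$ to be chosen in the process.

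First I would handle the tail via analyticity: the standard Cauchy estimate gives $|\mathcal{R}_K F|_{r'}\leq e^{-K(r-r')}|F|_r$, so choosing $K\sim |\log\epsilon|/(r-r')$ forces $|\mathcal{R}_K F|_{r'}=O(\epsilon^2)$. Next I would verify the non-resonance lower bound on the low-frequency modes. For a Fourier atom $e^{i\langle k,\theta\rangle}X$ with $0<|k|<K$ and $X\in\mathrm{sl}(2,\mathbb{R})$, the linear operator $Y\mapsto A^{-1}Y(\cdot+\alpha)A-Y$ produces the small divisor $|e^{i(\langle k,\alpha\rangle+2s\rho)}-1|$ after diagonalizing the conjugation map $X\mapsto A^{-1}XA$ (whose eigenvalues on $\mathrm{sl}(2,\mathbb{R})$ are $1$ and $e^{\pm 2i\rho}$, with $\rho=\mathrm{rot}(\alpha,A)$ and $s\in\{0,\pm 1\}$). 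The Diophantine bound $|\langle k,\alpha\rangle|\geq \gamma/|k|^\tau\geq \gamma/K^\tau$, combined with the hypothesis $|\rho|\leq 2\|A\|\epsilon^{1/2}$, ensures this divisor exceeds $\eta=13\|A\|^2\epsilon^{1/2}$ for the chosen $K$, so Lemma \ref{basic} applies with this decomposition.

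Step three is to invoke Lemma \ref{basic}: it produces $Y\in C^\omega_{r'}$ with $|Y|_{r'}\leq \epsilon^{1/2}$ and $F^{re}$ supported on $\{0\}\cup\{|k|\geq K\}$ with $|F^{re}|_{r'}\leq 2\epsilon$. Setting $A':=A\exp(\hat{F}^{re}(0))$, the bound $\|A'-A\|\leq 2\epsilon\|A\|$ is immediate, and a Baker-Campbell-Hausdorff expansion of $(A')^{-1}Ae^{F^{re}}$ produces $F'$ whose norm is controlled by the tail $|\mathcal{R}_K F|_{r'}$ together with quadratic BCH corrections of size $|F^{re}|_{r'}^2\leq 4\epsilon^2$; this yields $|F'|_{r'}\leq 4\epsilon^2$ as required.

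The main technical obstacle lies in the simultaneous compatibility of the three thresholds: $\eta\geq 13\|A\|^2\epsilon^{1/2}$ from Lemma \ref{basic}, $\eta\lesssim \gamma/K^\tau$ from the Diophantine lower bound on low modes, and $K$ of order $|\log\epsilon|/(r-r')$ from the analytic tail. Eliminating $K$ yields a condition of the shape $\epsilon^{1/2}|\log\epsilon|^\tau\lesssim \gamma(r-r')^\tau/\|A\|^2$, and absorbing the logarithmic loss into a polynomial one via the slack parameter $\delta$ (using $|\log\epsilon|^\tau\leq C_\delta\epsilon^{-\delta}$ together with the additional losses coming from the adjoint-representation factors $\|A\|^2$ and the power $(4\|A\|)^{-4}$ required in Lemma \ref{basic}) produces the stated smallness hypothesis $\epsilon<c(r-r')^{6(1+\delta)\tau}/\|A\|^6$. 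Pinning down the exponents precisely is delicate but involves no conceptual difficulty beyond careful bookkeeping of the constants arising from each ingredient.
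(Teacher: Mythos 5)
Your proposal follows the paper's proof essentially verbatim: same decomposition of $\mathcal{B}$ into the low-frequency non-resonant block $\Lambda_K=\{0<|k|<K\}$ and the resonant block $\{0\}\cup\{|k|\geq K\}$, same choice $K\sim|\ln\epsilon|/(r-r')$, same small-divisor verification using the Diophantine condition together with the hypothesis $|\mathrm{rot}(\alpha,A)|\leq 2\|A\|\epsilon^{1/2}$, same application of Lemma~\ref{basic}, and the same definitions $A'=Ae^{\hat F^{re}(0)}$ and $F'$ from splitting off the tail. The only cosmetic differences are that the paper solves the cohomological equation using the upper-triangular form of $A$ (which also covers the parabolic case) rather than full diagonalization of the adjoint, and that it tracks the cubed small-divisor loss $\eta=\epsilon^{1/(2+\delta)}$ explicitly; neither changes the argument.
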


\begin{proof}
We only need to apply the non-resonance cancelation lemma (Lemma \ref{basic}).  In this case, we will define
\begin{equation*}
\Lambda_K=\{f\in C^{\omega}_{r}(\T^{d},\mathrm{sl}(2,\R))\mid f(\theta)=\sum_{k\in \Z^{d},0<\lvert k \rvert<K}\hat{f}(k)e^{i\la k,\theta\ra}\},
\end{equation*}
where $K=\frac{2}{r-r'}|\ln\epsilon|$,  and prove that for any $Y\in \Lambda_K$, the operator
$$Y\rightarrow  A^{-1}Y(\theta+\alpha)A-Y(\theta)$$ has a bounded inverse.

Thus we only need to consider the equation
$$A^{-1}Y(\theta+\alpha)A-Y(\theta)=\mathcal{T}_{K}G(\theta)-\hat G(0).$$
Without loss of generality, we assume that
$A=
\begin{pmatrix}
\me^{\mi\rho} &p\\
0 &\me^{-\mi\rho}
\end{pmatrix}
,$ where $\me^{\pm\mathrm i\rho}$ are the two eigenvalues of $A$, $p\in\mathbb R$, and write
$$  Y=
\begin{pmatrix}
Y_{11} & Y_{12}\\
Y_{21} & -Y_{11}
\end{pmatrix},\qquad G=
\begin{pmatrix}
G_{11} & G_{12}\\
G_{21} & -G_{11}
\end{pmatrix}.$$
Taking the Fourier transformation for the above equation and comparing the Fourier coefficients, we can get
\begin{equation*}
\left\{
\begin{aligned}
\hat Y_{11}(k)&=\frac{\hat G_{11}(k)+p\me^{\mi(\rho+\langle k,\alpha\rangle)}\hat Y_{21}(k)}{\me^{\mathrm i \langle k,\alpha\rangle}-1},\\
\hat Y_{12}(k)&=\frac{\hat G_{12}(k)+p^2\me^{\mi\langle k,\alpha\rangle}\hat Y_{21}(k)-2p\me^{\mi(\langle k,\alpha\rangle-\rho)}\hat Y_{11}(k)}{e^{\mi\langle k,\alpha\rangle-2\rho}-1},\\
\hat Y_{21}(k)&=\frac{\hat G_{21}(k)}{\me^{\mi(\langle k,\alpha\rangle+2\rho)}-1}.
\end{aligned}
\right.
\end{equation*}

 Note for any $k\in\mathbb{Z}^d$ with $0<|k|\leq K$, if $\epsilon$ satisfies (\ref{small1}), we have
 \begin{equation*}
 |\langle k,\alpha\rangle| \geq \frac{\gamma}{|k|^{\tau}} \geq \frac{\gamma}{|K|^{\tau}} \geq \frac{\gamma\epsilon^{\frac{1}{6(1+\delta)}}}{|\ln\epsilon|^{\tau}}\geq  \epsilon^{\frac{1}{6(1+\frac{\delta}{2})}},
 \end{equation*}
\begin{equation*}
\begin{aligned}
&|2\mathrm{rot}(\alpha,A)-\langle k,\alpha\rangle|
\geq\frac{\gamma}{|K|^{\tau}}-4\|A\|\epsilon^{\frac{1}{2}}
\geq \epsilon^{\frac{1}{6(1+\frac{\delta}{2})}},
\end{aligned}
\end{equation*}
which implies that
$$|Y(\theta)|_r\leq \epsilon^{-\frac{1}{2+\delta}}|\mathcal{T}_K G(\theta)|_r,$$
and then $\Lambda_K \subset \mathcal{B}_r^{nre}(\epsilon^{\frac{1}{2+\delta}})$.

Since $\epsilon^{\frac{1}{2+\delta}}\geq 13\|A\|^2\epsilon^{\frac{1}{2}}$, it follows from Lemma \ref{basic} there exist $Y,F^{re}\in C^{\omega}_{r}(\mathbb{T}^{d},\mathrm{sl(2,\mathbb{R})})$
such that $$e^{Y(\theta+\alpha)}A\me^{F(\theta)}\me^{-Y(\theta)}=A\me^{F^{re}(\theta)},$$
where $\mathcal{T}_{K}F^{re}=\hat F^{re}(0),\ \mathcal{R}_KF^{re}=\sum\limits_{|k|> K}\hat F^{re}(k)\me^{\mi\langle k,\theta\rangle}.$ Moreover, we have the estimates
 $|Y|_{r}\leq \epsilon^{\frac{1}{2}},|F^{re}|_{r}\leq 2\epsilon$.
 Consequently,  for any $r' \in (0,r)$, we have
\begin{equation*}
\begin{aligned}
|\mathcal{R}_KF^{re}(\theta)|_{r'}&=\biggr|\sum\limits_{|k|>K}\hat F^{re}(k)\me^{\mi\langle k,\theta\rangle}\biggr|_{r'}\leq 2\epsilon \me^{-K(r-r')}K^d\leq 2\epsilon^{2}.
\end{aligned}
\end{equation*}
Furthermore,   we can compute that
$$e^{\hat F^{re}(0)+\mathcal{R}_K F^{re}(\theta)}=\me^{\hat F^{re}(0)}(\id+\me^{-\hat F^{re}(0)}O(\mathcal{R}_KF^{re}(\theta)))=e^{\hat F^{re}(0)}\me^{F'(\theta)},$$
with the estimate
$$|F'(\theta)|_{r'}\leq 2|\mathcal{R}_KF^{re}|_{r'}\leq 4\epsilon^{2}.$$
Finally, if we denote $A'=A\me^{\hat F^{re}(0)},$ then we get
$$\|A'-A\|\leq 2\|A\|\|\id-\me^{\hat F^{re}(0)}\|\leq 2\|A\|\epsilon.$$
\end{proof}

\subsection{Finitely differentiable quasi-periodic potential}
Now we want to get the reducibility result for finitely differentiable case. Note that for any $f\in C^{s}(\mathbb{T}^{d},\mathrm{sl(2,\mathbb{R})})$,
by \cite[Lemma 2.1]{Zehnder1975}, there exist an analytic sequence $\{f_{j}\}_{j\geq 1}, f_{j}\in C^{\omega}_{\frac{1}{j}}(\mathbb{T}^{d}, \mathrm{sl(2,\mathbb{R})})$ and a universal constant $C'>1$ such that
\begin{equation}\label{appro}
\begin{aligned}
&\|f_{j}-f\|_{s}\rightarrow 0, \text{ if } j\rightarrow +\infty,\\
&|f_{j}|_{\frac{1}{j}}\leq C'\|f\|_{s},\\
&|f_{j+1}-f_{j}|_{\frac{1}{j+1}}\leq C'(\frac{1}{j})^{s}\|f\|_{s}.
\end{aligned}
\end{equation}

The basic idea is we approximate a  finitely differentiable  cocycle by an analytic cocycle. If the analytic cocycle is reducible, then the  finitely differentiable cocycle is also reducible.  In our case, we will set
\begin{equation}\label{initial constant}
l_{1}=M>\max\biggr\{(8\|A\|)^2,4^{\frac{1}{\delta}}\biggr\}, \qquad  l_{j}=[M^{(1+\delta)^{j-1}}], j\geq 2.
\end{equation}
If we assume
\begin{equation*}
C'\|F\|_{s}\leq\frac{c}{\|A\|^6l_{1}^{\frac{s-1}{1+\delta}}}=\frac{c}{\|A\|^6M^{\frac{s-1}{1+\delta}}},
\end{equation*}
then  by \eqref{appro} we have
\begin{eqnarray}
| F_{l_{k+1}}-F_{l_k}|_{\frac{1}{l_{k+1}}} &\leq& \frac{c}{\|A\|^6l_k^{s}M^{\frac{s-1}{1+\delta}}},\label{appro_inv1}\\
|F_{l_k}|_{\frac{1}{l_{k}}} &\leq& \frac{c}{\|A\|^6M^{\frac{s-1}{1+\delta}}}\label{appro_inv2}.
\end{eqnarray}
Consequently,
\begin{equation}\label{diff}
\|F-F_{l_{k}}\|_{0} \leq  \sum\limits_{m=k}^\infty|F_{l_m}-F_{l_{m+1}}|_{\frac{1}{l_{m+1}}}
\leq \frac{c}{\|A\|^6M^{\frac{s-1}{1+\delta}}l_{k+1}^{\frac{s-1}{1+\delta}}}.
\end{equation}

We also define  $$\epsilon_{0}(r,r')=\frac{c(r-r')^{6\tau(1+\delta)}}{\|A\|^6}.$$ Then for any $s> 6\tau(1+\delta)^3+1$, where $0<\delta<1,$ we can compute that  for  any $m\geq 2^{\frac{1}{\delta}}$,
$$ \epsilon_m:=\frac{c}{\|A\|^6m^{\frac{s-1}{1+\delta}}}\leq \epsilon_{0}(\frac{1}{m},\frac{1}{m^{1+\delta}}).$$
With these parameters, we have the following:

\begin{Corollary}\label{Differential}
Let $\alpha\in \mathrm{DC}_d(\gamma,\tau),\ \gamma>0,\ \tau> d, A\in \mathrm{SL(2,\mathbb{R})},\ F\in C^{s}(\mathbb{T}^{d},\mathrm{sl(2,\mathbb{R})})$ with $s\geq 6\tau(1+\delta)^3+1$, where $0<\delta<1$. If  $\mathrm{rot}(\alpha,A\me^F)=0$, and  \begin{equation}\label{norm_initial}
\|F\|_{s}\leq \epsilon \leq  \frac{c}{C'\|A\|^6M^{\frac{s-1}{1+\delta}}},
\end{equation}
then there exist $\tilde A\in\mathrm{SL}(2,\mathbb R)$, $Y\in C(\mathbb T^d,\mathrm{sl}(2,\mathbb R))$ with $\|Y\|_{0}\leq 2\epsilon^{\frac{1}{2}}$  such that
 $$\me^{-Y(\theta+\alpha)}Ae^{F(\theta)}\me^{Y(\theta)}=\tilde A.$$
\end{Corollary}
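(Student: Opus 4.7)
The plan is to reduce the finitely differentiable setting to an iterated analytic KAM scheme via the Jackson--Moser--Zehnder approximation recorded in (\ref{appro}). I would regard $F$ as the $C^0$-limit of the analytic functions $F_{l_j}$, each defined on a strip of radius $r_j := 1/l_j$ with $l_j$ as in (\ref{initial constant}), and construct inductively analytic conjugations $e^{Y^{(j)}}$, constants $A_j \in \mathrm{SL}(2,\mathbb{R})$, and residual perturbations $F^{(j)}$ on $\{|\Im\theta|<r_j\}$ satisfying $|F^{(j)}|_{r_j} \leq \epsilon_j := c/(\|A\|^6 l_j^{(s-1)/(1+\delta)})$, consistent with the partial conjugation of $A e^{F_{l_j}}$. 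In the limit, the compositions $e^{Y^{(0)}}\cdots e^{Y^{(j)}}$ will converge in $C^0$ to the desired $e^Y$ and the constants $A_j$ to the desired $\tilde A$.

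The inductive step would proceed as follows. Given $A_j$ and $F^{(j)}$ with $|F^{(j)}|_{r_j}\leq \epsilon_j$ and the rotation-number bound $|\mathrm{rot}(\alpha,A_j e^{F^{(j)}})| \leq 2\|A_j\|\epsilon_j^{1/2}$, I would first verify the KAM smallness condition (\ref{small1}) with $r=r_j$, $r'=r_{j+1}$: since $r_j-r_{j+1} \sim 1/l_j$, this requires $\epsilon_j \lesssim \|A_j\|^{-6} l_j^{-6(1+\delta)\tau}$, which is exactly what the hypothesis $s \geq 6\tau(1+\delta)^3 + 1$ delivers. \propref{KAM} then produces $Y^{(j)}$ with $|Y^{(j)}|_{r_{j+1}}\leq \epsilon_j^{1/2}$, a new constant $A_{j+1}$ with $\|A_{j+1}-A_j\|\leq 2\epsilon_j\|A_j\|$, and an analytic residual of size $4\epsilon_j^2$. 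I would then switch from $F_{l_j}$ to $F_{l_{j+1}}$ via the Baker--Campbell--Hausdorff formula, absorbing the approximation error $|F_{l_{j+1}}-F_{l_j}|_{r_{j+1}}$ controlled by (\ref{appro_inv1}); the resulting residual obeys $|F^{(j+1)}|_{r_{j+1}} \leq 4\epsilon_j^2 + O(\epsilon_{j+1}) \leq \epsilon_{j+1}$, the last inequality being exactly where $\delta<1$ is used. Rotation-number smallness propagates through the iteration by combining \lemref{rotinv} (conjugation invariance) with \lemref{error} (perturbation estimate), starting from the hypothesis $\mathrm{rot}(\alpha, Ae^F)=0$. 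Passing to the limit in the conjugation identity then yields $e^{-Y(\theta+\alpha)}A e^{F(\theta)}e^{Y(\theta)} = \tilde A$, with $\|Y\|_0$ controlled by the super-exponentially convergent series $\sum_j \|Y^{(j)}\|_0 \leq \sum_j \epsilon_j^{1/2} \leq 2\epsilon^{1/2}$.

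The main obstacle is the parameter bookkeeping that reconciles two incompatible scales: the analytic KAM threshold (\ref{small1}) tightens polynomially in the radius drop $r_j-r_{j+1}$, whereas the $C^s$ approximation error $\epsilon_{j+1}$ only decays polynomially in $l_{j+1}$. The precise regularity $s \geq 6\tau(1+\delta)^3+1$ is calibrated so that both inequalities survive every step and the rotation number remains inside the KAM window $2\|A_j\|\epsilon_j^{1/2}$ throughout. A secondary delicate point is preventing the analyticity radius from shrinking too rapidly while still keeping $\sum_j Y^{(j)}$ summable in $C^0$, so that the limit $Y$ exists as a continuous function even though it cannot be expected to be any more regular than that.
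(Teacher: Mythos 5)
Your proposal follows essentially the same scheme as the paper's proof of Corollary~\ref{Differential}: approximate $F$ by analytic truncations $F_{l_j}$ on strips of radius $1/l_j$, iterate Proposition~\ref{KAM} while absorbing the increments $F_{l_{j+1}}-F_{l_j}$, and propagate the rotation-number smallness of the partial constants $A_j$ via Lemma~\ref{rotinv} and Lemma~\ref{error}, starting from $\mathrm{rot}(\alpha,A\me^F)=0$. The only cosmetic difference is the ordering inside each inductive step---the paper switches from $F_{l_k}$ to $F_{l_{k+1}}$ before invoking the KAM step, whereas you apply the KAM step first and then absorb the approximation increment---but this reordering does not change the estimates or the parameter bookkeeping.
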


\begin{proof}
To prove this, we only need to show inductively that
 there exist $Y_{l_k},\ F^{'}_{l_{k}}\in C^{\omega}_{\frac{1}{l_{k+1}}}(\mathbb{T}^{d},\mathrm{sl(2,\mathbb{R})})$ and $A_{l_k}\in \mathrm{SL(2,\mathbb{R})}$ such that
\begin{equation}\label{s1}\me^{-Y_{l_k}(\theta+\alpha)}A\me^{F_{l_{k}}(\theta)}\me^{Y_{l_{k}}(\theta)}=A_{l_k}\me^{F'_{l_{k}}(\theta)},\end{equation}
with the estimates
$$|Y_{l_k}|_{\frac{1}{l_{k+1}}}\leq \sum\limits_{i=1}^k\epsilon_{l_i}^{\frac{1}{2}},\ |F'_{l_{k}}|_{\frac{1}{l_{k+1}}}\leq \epsilon_{l_{k}}^{2},\ \|A_{l_k}-A\|\leq 2\|A\|\epsilon_{l_k}.
$$
Once this holds, $\|Y_{l_k}\|_{0} \leq |Y_{l_k}|_{\frac{1}{l_{k+1}}}\leq 2\epsilon^{\frac{1}{2}},$ and
as a consequence of  \eqref{s1},
 \begin{equation}
\begin{aligned}
&\ \me^{-Y_{l_k}(\theta+\alpha)}Ae^{F(\theta)}\me^{Y_{l_k}(\theta)}  \\
=&A_{l_k}\me^{F'_{l_k}(\theta)}+\me^{-Y_{l_k}(\theta+\alpha)}(Ae^{F(\theta)}-Ae^{F_{l_k}(\theta)})e^{Y_{l_k}(\theta)}\\
=&A_{l_k} \bar G_{l_k} \label{fico}.
\end{aligned}
\end{equation}
By \eqref{diff}, we have
\begin{equation}
\begin{aligned}\label{differ}
\| \bar G_{l_k}-\id \|_{0}&\leq \|{F^{'}_{l_k}}\|_{0}+\|A_{l_k}^{-1}\|\|\me^{-Y_{l_k}(\theta+\alpha)}(A\me^{F(\theta)}-A\me^{F_{l_k}(\theta)})\me^{Y_{l_k}(\theta)}\|_{0} \nonumber \\
&\leq 4\epsilon^{2}_{l_k}+2\|A\|^2\times\frac{c}{\|A\|^6M^{\frac{s-1}{1+\delta}}l_{k+1}^{\frac{s-1}{1+\delta}}} \leq \epsilon_{l_{k+1}}.
\end{aligned}
\end{equation}
Taking limits of \eqref{fico}, we then have the desired results.

Now let's finish the iteration.

 \textbf{First step:} First by our assumption \eqref{norm_initial}, and then  by (\ref{appro_inv2}) we have $$|F_{l_1}|_{\frac{1}{l_1}}\leq\epsilon_{l_1}\leq \epsilon_{0}(\frac{1}{l_1},\frac{1}{l_2}),$$ and by Lemma \ref{error}, we have
 $$|\mathrm{rot}(\alpha,A)|=|\mathrm{rot}(\alpha,A\me^{F})-\mathrm{rot}(\alpha,A)|\leq 2\|A\|\|F\|_{0}^{\frac{1}{2}}\leq 2\|A\|\epsilon_{l_1}^{\frac{1}{2}}.$$
  It follows from Proposition \ref{KAM} that there exist $Y_{l_1},F'_{l_1}\in C^\omega_{\frac{1}{l_2}}(\mathbb T^d,\mathrm{sl}(2,\mathbb R)),A_{l_1}\in\mathrm{SL}(2,\mathbb R)$ such that $$\me^{-Y_{l_1}(\theta+\alpha)}A\me^{F_{l_1}(\theta)}\me^{Y_{l_1}(\theta)}=A_{l_1}\me^{F_{l_1}^{'}(\theta)},$$ with the estimates $|Y_{l_1}|_{\frac{1}{l_2}}\leq \epsilon_{l_1}^{\frac{1}{2}},\ |F'_{l_1}|_{\frac{1}{l_2}}\leq 4\epsilon_{l_1}^{2},\|A_{l_1}-A\|\leq 2\|A\|\epsilon_{l_1}.$

\textbf{Induction step:}
Now at the $(k+1)$-th step, first notice that
if we write
\begin{equation*}
\begin{aligned}
&\me^{-Y_{l_k}(\theta+\alpha)}A\me^{F_{l_{k+1}}(\theta)}\me^{Y_{l_k}(\theta)} \\ =&A_{l_k}\me^{F^{'}_{l_k}(\theta)}+\me^{-Y_{l_k}(\theta+\alpha)}(A\me^{F_{l_{k+1}}(\theta)}-A\me^{F_{l_k}(\theta)})\me^{Y_{l_k}(\theta)}\\
=&A_{l_k} G_{l_k}(\theta),
\end{aligned}
\end{equation*}
then we have
\begin{equation*}
\begin{aligned}
|G_{l_k}-\id |_{\frac{1}{l_{k+1}}}&\leq |F^{'}_{l_{k}}|_{\frac{1}{l_{k+1}}}+\|A_{l_k}^{-1}\||\me^{-Y_{l_k}(\theta+\alpha)}(A\me^{F_{l_{k+1}}(\theta)}-A\me^{F_{l_k}(\theta)})\me^{Y_{l_k}(\theta)}|_{\frac{1}{l_{k+1}}}\\
&\leq 4\epsilon_{l_k}^{2}+4\|A\|^2\times\frac{c}{\|A\|^6l_k^{s}}\leq  \frac{1}{2}\epsilon_{l_{k+1}}.
\end{aligned}
\end{equation*}
Then by implicit function theorem,  there exists $\tilde F_{l_k}\in C^\omega_{\frac{1}{l_{k+1}}}(\mathbb T^d,\mathrm{sl}(2,\mathbb R))$
with $$|\tilde F_{l_k} |_{\frac{1}{l_{k+1}}} \leq 2 |G_{l_k}-\id |_{\frac{1}{l_{k+1}}} \leq \epsilon_{l_{k+1}}
\leq \epsilon_{0}(\frac{1}{l_{k+1}},\frac{1}{l_{k+2}}),
 $$
 such that $G_{l_k}(\theta)=\me^{\tilde F_{l_k}(\theta)}$.

On the other hand, by Lemma \ref{rotinv}, we have
$$\mathrm{rot}(\alpha,A_{l_k}\bar G_{l_k}(\theta))= \mathrm{rot}(\alpha, \me^{-Y_{l_k}(\theta+\alpha)}Ae^{F(\theta)}\me^{Y_{l_k}(\theta)})= \mathrm{rot}(\alpha, Ae^{F(\theta)})=0. $$
Then by Lemma \ref{error} and \eqref{differ}, we  have
$$
\begin{aligned}
|\mathrm{rot}(\alpha,A_{l_k})| \leq|\mathrm{rot}(\alpha,A_{l_k})-\mathrm{rot}(\alpha,A_{l_k}\bar G_{l_k}(\theta))|
\leq 2 \| \bar G_{l_k}-\id \|_{0}^{\frac{1}{2}}
\leq 2\|A\|\epsilon_{l_{k+1}}^{\frac{1}{2}}.
\end{aligned}
$$

Consequently, we can apply Proposition \ref{KAM} to  the cocycle $(\alpha,A_{l_k}\me^{\tilde F_{l_k}})$,  and  there exist $\tilde Y_{l_{k+1}},F'_{l_{k+1}}\in C^{\omega}_{\frac{1}{l_{k+2}}}(\mathbb{T}^{d},\mathrm{sl(2,\mathbb{R})})$ such that
\begin{equation}\label{s2}
\me^{-\tilde Y_{l_{k+1}}(\theta+\alpha)}A_{l_k}\me^{\tilde F_{l_k}(\theta)}\me^{\tilde Y_{l_{k+1}}(\theta)}=A_{l_{k+1}}\me^{F'_{l_{k+1}}(\theta)},\end{equation}
 with $|\tilde Y_{l_{k+1}}|_{\frac{1}{l_{k+2}}}\leq \epsilon_{l_{k+1}}^{\frac{1}{2}}, |F'_{l_{k+1}}|_{\frac{1}{l_{k+2}}}\leq \epsilon_{l_{k+1}}^{2},\ \|A_{l_{k+1}}-A_{l_{k}}\|\leq 2\|A_{l_{k}}\|\epsilon_{l_{k+1}}$.
Also note  if $B, D$ are small $\mathrm{sl}(2, \mathbb{R})$ matrices,
then there exists $C\in \mathrm{sl}(2, \mathbb{R})$ such that
\begin{equation*}
\me^{B}\me^{D}=\me^{B+D+C},
\end{equation*}
where $C$ is a sum of terms at least 2 orders in $B,D.$ Thus there exists $Y_{l_{k+1}}\in C^\omega_{\frac{1}{l_{k+2}}}(\mathbb T^d,\mathrm{sl}(2,\mathbb R))$
 such that $\me^{Y_{l_{k+1}}}=\me^{\tilde Y_{l_{k+1}}}\me^{Y_{l_k}},$ with the estimate
 $|Y_{l_{k+1}}|_{\frac{1}{l_{k+2}}}\leq \sum\limits_{i=1}^{k+1}\epsilon_{i}^{\frac{1}{2}}$. Moreover, by \eqref{s1} and \eqref{s2}, we have
   $$\me^{-Y_{l_{k+1}}(\theta+\alpha)}A\me^{F_{l_{k+1}}(\theta)}\me^{Y_{l_{k+1}}(\theta)}=A_{l_{k+1}}\me^{F^{'}_{l_{k+1}}(\theta)},$$
   and thus we finish the iteration.

   \end{proof}

\begin{Theorem}\label{differentialreduce}
Let $\alpha\in \mathrm{DC}_d(\gamma,\tau),\ \gamma>0,\ \tau>d,$ 
$V\in C^s(\mathbb{T}^d,\mathbb R)$ with $s> 6\tau+2$. There exists $\epsilon=\epsilon(\gamma,\tau,d,s)$ such that if $\|V\|_{s}\leq \epsilon$, then
$$(\mathcal L_{V,\alpha,\theta}u)(n)=u(n+1)+u(n-1)-2u(n)+V(n\alpha+\theta)u(n)=\lambda_1 u(n)$$ has a positive quasi-periodic solution.
\end{Theorem}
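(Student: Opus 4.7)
The plan is to combine the criterion in Lemma \ref{criterion} with the finitely differentiable reducibility of Corollary \ref{Differential}. It suffices to show that, when $\|V\|_s$ is small enough, the Schrödinger cocycle $(\alpha, S^V_{\lambda_1})$ can be conjugated to a constant parabolic cocycle via a continuous conjugacy $B(\theta) = e^{Y(\theta)}$ with $\|B-\id\|_0 \le 1/100$; then Lemma \ref{criterion} at $E = \lambda_1$ immediately produces the desired positive quasi-periodic solution.

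First I would record the obvious bound $|\lambda_1| \le \|V\|_0 \le \|V\|_s$, which follows from the variational formula in Proposition \ref{var}. Writing $A := \begin{pmatrix} 2 & -1 \\ 1 & 0 \end{pmatrix}$ (which is parabolic with $\mathrm{tr}(A)=2$) and $F(\theta) := \log\bigl(A^{-1} S^V_{\lambda_1}(\theta)\bigr)$, the matrix $A^{-1} S^V_{\lambda_1}(\theta) = \id + O(|\lambda_1|+|V(\theta)|)$ is close to the identity when $\|V\|_s$ is small, so the logarithm is well defined and $\|F\|_s \le C(\|V\|_s + |\lambda_1|) \le C'\|V\|_s$. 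Thus we have the decomposition $S^V_{\lambda_1}(\theta) = A\, e^{F(\theta)}$ with $A$ parabolic and $F$ as small as we wish in $C^s$.

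Next I would verify the zero rotation number hypothesis of Corollary \ref{Differential}. Since $\lambda_1$ is the supremum of the spectrum, Remark \ref{idsrot} gives $k(\lambda_1) = 1$, and then Theorem \ref{idsrot} forces $\mathrm{rot}(\alpha, S^V_{\lambda_1}) \in \{0, 1/2\} \pmod{\Z}$. At $V \equiv 0$ the cocycle equals the parabolic $S^0_0 = A$, which has rotation number $0$; by the continuity estimate of Lemma \ref{error} (applied to $Ae^F$ versus $A$), we conclude $\mathrm{rot}(\alpha, S^V_{\lambda_1}) = 0$ for small $\|V\|_s$. Corollary \ref{Differential} then yields $\tilde A \in \mathrm{SL}(2,\R)$ and $Y\in C(\T^d, \mathrm{sl}(2,\R))$ with $\|Y\|_0 \le 2\epsilon^{1/2}$ such that $e^{-Y(\theta+\alpha)} A\,e^{F(\theta)}\, e^{Y(\theta)} = \tilde A$. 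Choosing $\epsilon$ small enough, $B := e^Y$ satisfies $\|B - \id\|_0 \le 1/100$.

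The final — and main — step is to rule out the possibility that $\tilde A$ is elliptic or hyperbolic; this is where I expect the only genuine obstacle. By Lemma \ref{rotinv}, $\mathrm{rot}(\alpha, \tilde A) = \mathrm{rot}(\alpha, S^V_{\lambda_1}) = 0$, which forces $\mathrm{tr}(\tilde A) \ge 2$ (elliptic matrices have nonzero fractional rotation number, and negatively-traced hyperbolic matrices give $1/2$). If $\mathrm{tr}(\tilde A) > 2$, then $\tilde A$ is hyperbolic and the constant cocycle $(\alpha, \tilde A)$ is uniformly hyperbolic, hence so is $(\alpha, S^V_{\lambda_1})$; by Theorem \ref{resolvent} this contradicts $\lambda_1 \in \Sigma(\mathcal L_{V,\alpha})$. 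Therefore $\mathrm{tr}(\tilde A) = 2$, i.e.\ $\tilde A$ is parabolic. Applying Lemma \ref{criterion} with $E = \lambda_1$ now produces an almost-periodic — indeed quasi-periodic, since the conjugacy factors through $\T^d$ — positive solution of $\mathcal L_{V,\alpha,\theta} u = \lambda_1 u$, completing the proof.
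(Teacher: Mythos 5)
Your proof is correct and follows essentially the same strategy as the paper's: reduce via Corollary~\ref{Differential}, use Remark/Theorem~\ref{idsrot} and Lemma~\ref{rotinv} to pin down the rotation number, rule out hyperbolic $\tilde A$ via Theorem~\ref{resolvent}, and finish with Lemma~\ref{criterion}. The one technical difference is cosmetic: the paper takes $A=\begin{pmatrix}\lambda_1+2 & -1\\ 1 & 0\end{pmatrix}$ and $F=\begin{pmatrix}0&0\\V&0\end{pmatrix}$, which gives $S^V_{\lambda_1}=Ae^F$ \emph{exactly} (no matrix logarithm needed), whereas you freeze $A=\begin{pmatrix}2&-1\\1&0\end{pmatrix}$ and absorb both $\lambda_1$ and $V$ into $F=\log(A^{-1}S^V_{\lambda_1})$; your choice has the small advantage that $\mathrm{rot}(\alpha,A)=0$ on the nose, and you are more explicit than the paper about ruling out $\mathrm{rot}=1/2$ and the elliptic case for $\tilde A$ (the paper passes over these silently), but the estimates and conclusion are the same either way.
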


\begin{proof}
Write $$A=\begin{pmatrix}
E+2 &-1\\
1&0
\end{pmatrix}, \qquad
F=
\begin{pmatrix}
0 &0\\
V &0
\end{pmatrix}.$$
Then $S_V^E(\theta)=A\me^{F(\theta)}$ which is close to constant.   Now we consider the energy $E$ which lies in the extreme right endpoint of the spectrum.
Since  the spectrum is compact, and it is included in $[-4+\inf V,\sup V]$, then
$\|A\|\leq 6$.

By the assumption that  $s> 6\tau+2,$ there exists $0<\delta<1,$   such that $s> 6\tau(1+\delta)^3+1$. For such selected  $\delta,$ we can take
$$\epsilon \leq  \frac{c}{6^6C'M^{\frac{s-1}{1+\delta}}}.$$
Since  the energy $E$ lies in the extreme right endpoint of the spectrum,  by Remark \ref{idsrot}, we have $\mathrm{rot}(\alpha,A\me^F)=0$. Then by Corollary \ref{Differential},
there exist $\tilde A\in\mathrm{SL}(2,\mathbb R)$, $Y\in C^{0}(\mathbb T^d,\mathrm{sl}(2,\mathbb R))$ with $\|Y\|_{0}\leq 2\epsilon^{\frac{1}{2}}$,   such that
 $$\me^{-Y(\theta+\alpha)}Ae^{F(\theta)}\me^{Y(\theta)}=\tilde A.$$
 By Lemma \ref{rotinv},
 $$\mathrm{rot}(\alpha,\tilde A) = \mathrm{rot}(\alpha,A\me^F)=0.$$
 Then  we can distinguish the following two cases:

\textbf{Case 1:} If $\tilde A$ is hyperbolic, then $(\alpha,\tilde A)$ is uniformly hyperbolic, which implies that  $(\alpha,Ae^{F(\theta)})$ is uniformly hyperbolic, since uniformly hyperbolic is conjugacy invariant. However, this contradicts with Theorem \ref{resolvent}.

\textbf{Case 2:} If $\tilde A$ is parabolic, since $\|\me^{Y}-\id\|_0\leq 2\|Y\|_{0}\leq 4\epsilon^{\frac{1}{2}}<\frac{1}{100}$, then the result follows from Lemma \ref{criterion}. Thus we finish the proof.
\end{proof}

\subsection{Analytic Almost-periodic potential}

Now we consider the reducibility results in the almost periodic case. First we define the almost periodic functions in the context of analytic functions on a thickened infinite dimensional torus $\mathbb T_r^\infty$, where $\mathbb T_r^\infty$ is defined as
$$\theta=(\theta_j)_{j\in\mathbb N},\ \theta_j\in \mathbb C:\Re(\theta_j)\in \mathbb T,|\Im(\theta_j)|\leq r\langle j\rangle.$$

\begin{Definition}
For any $r>0$, we define the space of analytic functions $\mathbb T^\infty_r\rightarrow \mathrm{sl}(2,\mathbb R)$ as
\begin{equation}\label{almostinfinite}
\begin{aligned}
&C^\omega_r(\mathbb T^\infty,\mathrm{sl}(2,\mathbb R)):=\\
&\left\{F(\theta)=\sum\limits_{k\in\mathbb Z_*^\infty}\hat F(k)\mathrm e^{\mathrm i\langle k,\theta\rangle}\in\mathcal F:|F|_r:=\sum\limits_{k\in\mathbb Z_*^\infty}\mathrm e^{r|k|_1}|\hat F(k)|<\infty\right\},
\end{aligned}
\end{equation}
where $\mathcal F$ denotes the space of pointwise absolutely convergent formal Fourier series $\mathbb T_r^\infty\rightarrow \mathrm{sl}(2,\mathbb R)$ as
$$F(\theta)=\sum\limits_{k\in\mathbb Z_*^\infty}\hat F(k)\mathrm e^{\mathrm i\langle k,\theta\rangle},\quad \hat F(k)\in \mathrm{sl}(2,\mathbb R),$$
and $\mathbb Z_*^\infty:=\{k\in\mathbb Z^\infty:|k|_1:=\sum\limits_{j\in\mathbb N}\langle j\rangle|k_j|<\infty\}$
denotes  the set of infinite integer vectors with finite support.
\end{Definition}


Now we state the following Proposition:

\begin{Proposition}\label{ALKAM}
Let $ r,\gamma>0, \tau>1 $. Suppose that $\alpha \in \mathrm{DC_{\infty}(\gamma,\tau)}, $ $A\in \mathrm{SL(2,\mathbb{R})}$, $F\in C^\omega_r(\mathbb T^\infty,\mathrm{sl(2,\mathbb{R})})$. For any $r'\in (0,r),$ there exists $c=c(\gamma,\tau)$ such that if  $|\mathrm{rot}(\alpha,A)|\leq 2\|A\|\epsilon^{\frac{1}{2}}$, and
\begin{equation}\label{epsilonalmost}
|F|_{r}\leq \epsilon< \frac{c\me^{-\frac{1}{(r-r')^2}}}{\|A\|^{6}},
\end{equation}
then there exist $Y, F'\in C^{\omega}_{r'}(\mathbb{T}^{\infty},\mathrm{sl(2,\mathbb{R})})$ and $A'\in \mathrm{SL}(2,\mathbb R)$ such that
$$\me^{-Y(\theta+\alpha)}A\me^{F(\theta)}\me^{Y(\theta)}=A'\me^{F'(\theta)}.$$
Moreover, we have the estimates
$$|Y|_{r'}\leq \epsilon^{\frac{1}{2}},\quad |F'|_{r'}\leq \epsilon^{2}, \quad \|A-A'\|\leq 2\epsilon\|A\|.$$
\end{Proposition}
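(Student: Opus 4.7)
The plan is to mimic the KAM step for Proposition \ref{KAM}, replacing the finite dimensional Diophantine condition by the Bourgain condition $\alpha\in DC_\infty(\gamma,\tau)$ and working with analytic functions on $\mathbb T^\infty_r$. Set up the non-resonant and resonant decomposition by means of the truncation
$$
\mathcal T_K F(\theta)=\sum_{k\in\mathbb Z^\infty_*,\,|k|_1<K}\hat F(k)\me^{\mi\langle k,\theta\rangle},\qquad \mathcal R_K F=F-\mathcal T_K F,
$$
where $|k|_1=\sum_j\langle j\rangle|k_j|$ is the weight used to define $C^\omega_r(\mathbb T^\infty,\mathrm{sl}(2,\mathbb R))$, and choose the truncation level $K=K(\epsilon,r-r')$ to be specified below (of the order $|\log\epsilon|/(r-r')$, as in the quasi-periodic case).

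The first step is to solve the cohomological equation
$$
A^{-1}Y(\theta+\alpha)A-Y(\theta)=\mathcal T_K G(\theta)-\hat G(0)
$$
for $Y$ supported on modes with $0<|k|_1<K$. Diagonalizing $A$ and taking Fourier coefficients as in Proposition \ref{KAM}, the divisors that appear are $\me^{\mi\langle k,\alpha\rangle}-1$ and $\me^{\mi(\langle k,\alpha\rangle\pm 2\rho)}-1$. The Bourgain Diophantine condition gives
$$
|\langle k,\alpha\rangle|\geq \gamma\prod_{j\in\mathbb N}\frac{1}{1+|k_j|^\tau\langle j\rangle^\tau},
$$
and the standard Bourgain-type combinatorial bound (using that $|k|_1\leq K$ forces the support of $k$ to have cardinality at most $\sqrt{2K}$ and that $\sum_i |k_{j_i}|\langle j_i\rangle\leq K$) yields
$$
\prod_{j\in\mathbb N}(1+|k_j|^\tau\langle j\rangle^\tau)\leq \exp\bigl(C(\tau)\sqrt{K}\log K\bigr).
$$
Combining this with Lemma \ref{error} to bound $|\mathrm{rot}(\alpha,A)|$, one obtains $\Lambda_K\subset\mathcal B^{nre}(\epsilon^{1/(2+\delta)})$ provided that $C(\tau)\sqrt{K}\log K\leq \frac{|\log\epsilon|}{2+\delta}$. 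Taking $K=\frac{2}{r-r'}|\log\epsilon|$, this last inequality is guaranteed exactly by the smallness hypothesis (\ref{epsilonalmost}), which is the reason the condition must be of the stretched-exponential form $\epsilon<c\me^{-1/(r-r')^2}/\|A\|^6$ rather than the polynomial one of Proposition \ref{KAM}.

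Once the non-resonance decomposition is in place, apply Lemma \ref{basic} in the Banach algebra $\mathcal B=C^\omega_r(\mathbb T^\infty,\mathrm{sl}(2,\mathbb R))$ to obtain $Y$ with $|Y|_r\leq \epsilon^{1/2}$ and a resonant remainder $F^{re}$ with $|F^{re}|_r\leq 2\epsilon$ such that
$$
\me^{-Y(\theta+\alpha)}A\me^{F(\theta)}\me^{Y(\theta)}=A\me^{F^{re}(\theta)}.
$$
The tail $\mathcal R_K F^{re}$ is controlled by analyticity: since $\sum_{|k|_1\geq K}\me^{-(r-r')|k|_1}$ converges (the number of $k\in\mathbb Z^\infty_*$ with $|k|_1=m$ is bounded by $\me^{C\sqrt{m}\log m}$, which is absorbed by $\me^{-(r-r')m/2}$), one gets $|\mathcal R_K F^{re}|_{r'}\leq C|F^{re}|_r\me^{-K(r-r')/2}\leq \epsilon^2$ by the choice of $K$. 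Splitting $F^{re}=\hat F^{re}(0)+\mathcal R_K F^{re}$ and absorbing $\hat F^{re}(0)$ into $A'=A\me^{\hat F^{re}(0)}$ yields the claimed $A'$ and $F'$ with the stated estimates.

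The principal obstacle, and the point where one genuinely departs from the quasi-periodic case, is the combinatorial bound on $\prod_j(1+|k_j|^\tau\langle j\rangle^\tau)$ for $|k|_1\leq K$; this is the Bourgain-type lemma that converts the infinite-dimensional Diophantine condition into a usable lower bound on small divisors and dictates the quantitative form of the smallness assumption (\ref{epsilonalmost}). Everything else is a routine transcription of the argument of Proposition \ref{KAM} to the present Banach algebra.
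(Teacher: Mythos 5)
Your proposal follows the same strategy as the paper: reduce to the non-resonance cancelation lemma (Lemma \ref{basic}) in the Banach algebra $C^\omega_r(\mathbb T^\infty,\mathrm{sl}(2,\mathbb R))$ with the truncation $\Lambda_K$ at $K\sim |\log\epsilon|/(r-r')$, use the Bourgain-type combinatorial bound on $\prod_j(1+|k_j|^\tau\langle j\rangle^\tau)$ (the paper cites \cite[Lemma C.2]{Riccardo2021}, equivalent to the $\exp(C(\tau)\sqrt K\log K)$ bound you quote) to show $\Lambda_K$ is non-resonant, and absorb $\hat F^{re}(0)$ into the constant $A'$. The choices of non-resonance threshold ($\epsilon^{1/(2+\delta)}$ versus the paper's $\epsilon^{1/3}$) are cosmetically different but interchangeable. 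You correctly identify that the stretched-exponential smallness condition is forced by the loss in the Bourgain small-divisor bound, which is the genuine point of departure from Proposition \ref{KAM}.

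One inaccuracy worth fixing: your tail estimate for $\mathcal R_K F^{re}$ invokes a count of multi-indices with $|k|_1=m$ and absorbs it into $\me^{-(r-r')m/2}$, giving $|\mathcal R_K F^{re}|_{r'}\leq C|F^{re}|_r\me^{-K(r-r')/2}$. That counting argument is both unnecessary and slightly lossy here. The norm $|F|_r=\sum_k\me^{r|k|_1}|\hat F(k)|$ is a weighted $\ell^1$-norm in Fourier space, so the tail factors directly:
\begin{equation*}
|\mathcal R_K F|_{r'}=\sum_{|k|_1\geq K}\me^{-(r-r')|k|_1}\me^{r|k|_1}|\hat F(k)|\leq \me^{-(r-r')K}|F|_r,
\end{equation*}
with no constant $C$ and with the full exponent $(r-r')K$ (this is exactly \cite[Lemma 2.3]{Riccardo2021}, which the paper cites). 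With your halved exponent and unspecified $C$, the chain $|F'|_{r'}\leq 2|\mathcal R_K F^{re}|_{r'}\leq 4C\epsilon^2$ does not literally close to $\leq\epsilon^2$ unless $C\leq 1/4$; the direct factorization gives $2\epsilon\cdot\epsilon^2=2\epsilon^3$ and then $|F'|_{r'}\leq 4\epsilon^3\leq\epsilon^2$ with room to spare. A mode-count would be needed for a sup-norm on the thickened torus, not for the $\ell^1$ Fourier norm used here.
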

\begin{proof}
Similar to Proposition \ref{KAM}, what we need is to apply the non-resonance cancelation lemma (Lemma \ref{basic}).  In this case, we will take $\mathcal{B}=C^\omega_r(\mathbb T^\infty,\mathrm{sl}(2,\R))$, and define
\begin{equation*}
\Lambda_K=\{f\in C^\omega_r(\mathbb T^\infty,\mathrm{sl}(2,\R)) \mid f(\theta)=\sum_{k\in \mathbb Z_*^\infty,|k|_1\leq K}\hat{f}(k)e^{i\la k,\theta\ra}\},
\end{equation*}
where $K=\frac{2}{r-r'}|\ln\epsilon|$. By \cite[Lemma 2.5]{Riccardo2021},   $C^\omega_r(\mathbb T^\infty,\mathrm{sl}(2,\R))$ is Banach algebra.
Moreover, since $\alpha \in \mathrm{DC_{\infty}(\gamma,\tau)}$, we have the following estimate:

\begin{Lemma}(\cite[Lemma C.2]{Riccardo2021})\label{K}
Let $\alpha \in \mathrm{DC_\infty(\gamma,\tau)}$.  Then there holds the estimate
$$\sup\limits_{k\in \mathbb Z_*^\infty,|k|_1\leq K}\prod\limits_{i\in\mathbb Z}(1+|k_i|^\tau\langle i\rangle ^\tau)\leq (1+K)^{2\tau K^{\frac{1}{2}}}.$$
\end{Lemma}

Note by (\ref{epsilonalmost}),  one has $(r-r')^2 \geq \frac{1}{|\ln\epsilon|}$. Hence $K=\frac{2|\ln\epsilon|}{r-r'} \leq 2|\ln\epsilon|^{\frac{3}{2}}$.
Then as a consequence of Lemma \ref{K}, we have
\begin{equation*}
\begin{aligned}
|\langle k,\alpha\rangle|
& \geq \gamma\prod\limits_{j\in\mathbb N}\frac{1}{1+|k_j|^\tau\langle j\rangle^\tau} \geq \frac{\gamma}{(1+K)^{2\tau K^{\frac{1}{2}}}} \\
&\geq \frac{\gamma}{(1+2|\ln\epsilon|^{\frac{3}{2}})^{2\tau|\ln\epsilon|^{\frac{3}{4}}}} \geq 2 \epsilon^{\frac{1}{9}},
\end{aligned}
\end{equation*}
which implies that \begin{equation*}
\begin{aligned}
|2\mathrm{rot}(\alpha,A)-\langle k,&\alpha\rangle|
 \geq 2 \epsilon^{\frac{1}{9}} -4\|A\|\epsilon^{\frac{1}{2}} \geq \epsilon^{\frac{1}{9}}.
\end{aligned}
\end{equation*}

By similar calculation in Proposition \ref{KAM},
these facts imply that for any $Y\in \Lambda_K$,
$$|A^{-1}Y(\theta+\alpha)A-Y(\theta)|_r\geq \epsilon^{\frac{1}{3}}|Y(\theta)|_r,$$
and then $\Lambda_K \subset \mathcal{B}_r^{nre}(\epsilon^{\frac{1}{3}})$.

Since $\epsilon^{\frac{1}{3}}\geq 13\|A\|^2\epsilon^{\frac{1}{2}}$, by Lemma \ref{basic}, there exist $Y,F^{re}\in C^{\omega}_{r}(\mathbb{T}^\infty,\mathrm{sl(2,\mathbb{R})})$,  such that
$$e^{-Y(\theta+\alpha)}A\me^{F(\theta)}\me^{Y(\theta)}=A\me^{F^{re}(\theta)},$$
where $\mathcal{T}_{K}F^{re}=\hat F^{re}(0),\ \mathcal{R}_KF^{re}=\sum\limits_{|k|_1> K}\hat F^{re}(k)\me^{\mi\langle k,\theta\rangle}.$ Moreover, we have the estimates  $|Y|_{r}\leq \epsilon^{\frac{1}{2}},|F^{re}|_{r}\leq 2\epsilon$. Meanwhile,  by \cite[Lemma 2.3]{Riccardo2021}, one has
$|\mathcal R_K F|_{r'}\leq \me^{-(r-r')K}|F|_{r}$. Then for any $r' \in (0,r)$, we have
\begin{equation*}
\begin{aligned}
|\mathcal{R}_KF^{re}(\theta)|_{r'}&=\biggr|\sum\limits_{|k|_1>K}\hat F^{re}(k)\me^{\mi\langle k,\theta\rangle}\biggr|_{r'}\leq 2\epsilon \me^{-K(r-r')}\leq 2\epsilon^{3}.
\end{aligned}
\end{equation*}
Furthermore,  one can compute that
$$\me^{\hat F^{re}(0)+\mathcal{R}_K F^{re}(\theta)}=\me^{\hat F^{re}(0)}(\mathrm{id}+\me^{-\hat F^{re}(0)}O(\mathcal{R}_KF^{re}(\theta)))=\me^{\hat F^{re}(0)}\me^{F'(\theta)},$$
with the estimate
$$|F'(\theta)|_{r'}\leq 2|\mathcal{R}_KF^{re}|_{r'}\leq 4\epsilon^{3}\leq \epsilon^2.$$
Finally, if we denote $A'=A\me^{\hat F^{re}(0)},$ then we get
$$\|A'-A\|\leq 2\|A\|\|\id-\me^{\hat F^{re}(0)}\|\leq 2\|A\|\epsilon.$$
\end{proof}

As a consequence, we have the following:

\begin{Corollary}\label{ALreducibility}
Let $\alpha\in \mathrm{DC}_{\infty}(\gamma,\tau)$, $A\in \mathrm{SL(2,\mathbb{R})},\ F\in C^{\omega}_r(\mathbb T^\infty,\mathrm{sl}(2,\mathbb R))$. For any $0<\tilde r<r$, there exists $\epsilon=\epsilon(\tau,\gamma,r)>0$, such that if  $\mathrm{rot}(\alpha, A\mathrm e^{F})=0,$ and
\begin{equation}\label{small}
|F|_{r}\leq \epsilon\leq \frac{c\me^{-\frac{1}{(r-\tilde r)^2}}}{\|A\|^{6}},
\end{equation}
then there exist $\tilde A\in\mathrm{SL}(2,\mathbb R)$,   $Y\in C^{\omega}_{\tilde r}(\mathbb T^\infty,\mathrm{sl}(2,\mathbb R))$ with $|Y|_{\tilde r}\leq 2\epsilon^{\frac{1}{2}}$,   such that
 $$\me^{-Y(\theta+\alpha)}Ae^{F(\theta)}\me^{Y(\theta)}=\tilde A.$$
\end{Corollary}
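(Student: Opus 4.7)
\smallskip
\noindent\textbf{Proof proposal for Corollary \ref{ALreducibility}.}

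The plan is to iterate Proposition \ref{ALKAM} along a carefully chosen sequence of shrinking analyticity radii $r=r_0>r_1>r_2>\cdots\searrow \tilde r$, in direct analogy with the iterative scheme used in Corollary \ref{Differential}. At the $k$-th step we will have produced $A_k\in\mathrm{SL}(2,\R)$ and $F_k\in C^\omega_{r_k}(\T^\infty,\mathrm{sl}(2,\R))$ together with a conjugacy $\bar Y_k\in C^\omega_{r_k}(\T^\infty,\mathrm{sl}(2,\R))$ realizing
\[
\me^{-\bar Y_k(\theta+\alpha)}A\me^{F(\theta)}\me^{\bar Y_k(\theta)}=A_k\me^{F_k(\theta)},
\]
with quantitative control $|F_k|_{r_k}\le \epsilon_k$, $|\bar Y_k|_{r_k}\le \sum_{i\le k}\epsilon_i^{1/2}$, $\|A_k-A_{k-1}\|\le 2\|A_{k-1}\|\epsilon_{k-1}$, and $\epsilon_{k+1}=\epsilon_k^2$. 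The initial data is $A_0=A$, $F_0=F$, $\bar Y_0=0$, $\epsilon_0=\epsilon$.

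The key design decision is the schedule $\{r_k\}$. Because Proposition \ref{ALKAM} requires $\epsilon_k<c\,\me^{-1/(r_k-r_{k+1})^2}/\|A_k\|^6$ — a far more restrictive super-exponential loss than the polynomial loss available in the quasi-periodic case — we must take $r_k-r_{k+1}$ to shrink quite slowly. A convenient choice is
\[
r_k-r_{k+1}=\frac{r-\tilde r}{C_0}\,2^{-k/2},
\]
with $C_0=\sum_{k\ge 0}2^{-k/2}$, so that $\sum_{k\ge 0}(r_k-r_{k+1})=r-\tilde r$ and $\me^{-1/(r_k-r_{k+1})^2}=\me^{-C_0^2 2^k/(r-\tilde r)^2}$. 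With this choice the inductive smallness requirement becomes $\epsilon_k=\epsilon^{2^k}\le c\me^{-C_0^2 2^k/(r-\tilde r)^2}/\|A_k\|^6$, which is ensured inductively by the initial bound (\ref{small}) and the fact that $\|A_k\|\le 2\|A\|$ (absorbed into constants by choosing $c$ smaller if necessary), since $\sum_{i<k}\|A_i-A_{i-1}\|\le 2\|A\|\sum\epsilon_i\le 4\|A\|\epsilon$.

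To run Proposition \ref{ALKAM} at each step we also need the rotation-number hypothesis $|\mathrm{rot}(\alpha,A_k)|\le 2\|A_k\|\epsilon_k^{1/2}$. This is the same argument as in Corollary \ref{Differential}: by Lemma \ref{rotinv} the rotation number is invariant under the conjugation, so $\mathrm{rot}(\alpha,A_k\me^{F_k})=\mathrm{rot}(\alpha,A\me^F)=0$ by hypothesis, and then Lemma \ref{error} gives $|\mathrm{rot}(\alpha,A_k)|=|\mathrm{rot}(\alpha,A_k)-\mathrm{rot}(\alpha,A_k\me^{F_k})|\le 2\|A_k\|\,|F_k|_0^{1/2}\le 2\|A_k\|\epsilon_k^{1/2}$. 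The new conjugacy $\bar Y_{k+1}$ is built by composing $\bar Y_k$ with the $Y_{k+1}$ produced by Proposition \ref{ALKAM}, using the Baker–Campbell–Hausdorff type identity invoked in Corollary \ref{Differential} to keep $\bar Y_{k+1}$ in $\mathrm{sl}(2,\R)$, with the additive bound $|\bar Y_{k+1}|_{r_{k+1}}\le \sum_{i\le k+1}\epsilon_i^{1/2}\le 2\epsilon^{1/2}$.

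Passing to the limit, $\bar Y_k$ converges in $C^\omega_{\tilde r}$ to some $Y$ with $|Y|_{\tilde r}\le 2\epsilon^{1/2}$, $A_k$ converges to some $\tilde A\in\mathrm{SL}(2,\R)$, and $F_k\to 0$ since $\epsilon_k\to 0$ super-exponentially; taking limits in the conjugation identity yields $\me^{-Y(\theta+\alpha)}A\me^{F(\theta)}\me^{Y(\theta)}=\tilde A$, which is the desired conclusion. The principal technical obstacle is the balancing just described: the exponential decay $\me^{-1/(r-r')^2}$ in Proposition \ref{ALKAM} (reflecting the fact that $DC_\infty(\gamma,\tau)$ frequencies enjoy only $\me^{-cK^{1/2}\log K}$-type small-divisor bounds, via Lemma \ref{K}) is much worse than the polynomial loss in the finite-dimensional KAM, and one must verify that the geometric schedule $r_k-r_{k+1}\sim 2^{-k/2}$ is compatible with the quadratic convergence $\epsilon_{k+1}=\epsilon_k^2$; everything else is a routine adaptation of the finite-dimensional argument from Corollary \ref{Differential}.
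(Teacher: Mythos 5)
Your proposal is correct and follows essentially the same route as the paper: iterate Proposition \ref{ALKAM} with a shrinking sequence of radii $r_k\searrow$, use Lemmas \ref{rotinv} and \ref{error} at each stage to control $\mathrm{rot}(\alpha,A_k)$, compose the conjugacies via the Baker--Campbell--Hausdorff identity, and pass to the limit. The only genuine divergence is the radius schedule --- you take $r_k-r_{k+1}\sim 2^{-k/2}$ while the paper takes $r_k-r_{k+1}=(r-\tilde r)/(k+2)^2$ --- but this is cosmetic: both choices make $\sum(r_k-r_{k+1})$ finite and yield a valid threshold $\epsilon$ of the form $c\,\me^{-C/(r-\tilde r)^2}/\|A\|^6$ after the quadratic decay $\epsilon_{k+1}=\epsilon_k^2$ beats the loss $\me^{-1/(r_k-r_{k+1})^2}$ from Proposition \ref{ALKAM} (your geometric schedule even makes the inductive smallness condition uniform in $k$, which is arguably cleaner than the paper's ``one can check'').
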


\begin{proof} We will prove this by induction. First we define the sequence
 $$r_0=r, \quad \epsilon_0=\epsilon,  \quad r_k-r_{k+1}=\frac{r-\tilde r}{(k+2)^{2}}, \quad \epsilon_k=\epsilon^{2^{k}}.$$
 Now assume that we are at
the $(k+1)$-step, i.e.,  we already construct
$Y_k,F_k\in C^\omega_{r_k}(\mathbb T^\infty,\mathrm{sl}(2,\mathbb R))$, such that
\begin{equation}\label{ss1}\me^{-Y_k(\theta+\alpha)}A\me^{F(\theta)}\me^{Y_k(\theta)}=A_{k}\me^{F_k(\theta)},\end{equation}
with the estimates $$|Y_{k}|_{r_k}\leq  \sum\limits_{i=1}^{k-1}\epsilon_{i}^{\frac{1}{2}},\ |F_k|_{r_{k}}\leq \epsilon_k,\ \|A_k-A_{k-1}\|\leq 2\|A_{k-1}\|\epsilon_{k-1}.$$

First by Lemma \ref{rotinv}, we have
$$\mathrm{rot}(\alpha,A_ke^{F_k})=\mathrm{rot}(\alpha,Ae^{F})=0,$$
since the conjugacy $e^{Y_k}$ is homotopic to the identity.  Then by  Lemma \ref{error}, we have
 $$|\mathrm{rot}(\alpha,A_k)|\leq 2\|A_k\|\epsilon_k^{\frac{1}{2}}.$$
 By our selection of  $\epsilon_0$ and the sequence $r_k$, one can check that
 \begin{equation*}
\epsilon_k \leq \frac{c\me^{-\frac{1}{(r_k- r_{k+1})^2}}}{\|A_k\|^{6}}.
\end{equation*}

 Therefore  by Proposition \ref{ALKAM}, there exist  $A_{k+1}\in \mathrm{SL}(2,\mathbb R), \tilde Y_{k+1}, F_{k+1}\in C^\omega_{r_{k+1}}(\mathbb T^\infty,\mathrm{sl}(2,\mathbb R))$ such that
\begin{equation}\label{ss2}\me^{-\tilde Y_{k+1}(\theta+\alpha)}A_{k}\me^{F_k(\theta)}\me^{\tilde Y_{k+1}(\theta)}=A_{k+1}\me^{F_{k+1}(\theta)}\end{equation}
with
$$|\tilde Y_{k+1}|_{r_{k+1}}\leq \epsilon_{k}^{\frac{1}{2}},\ |F_{k+1}|_{r_{k+1}}\leq \epsilon_{k}^2=\epsilon_{k+1},\ \|A_k-A_{k+1}\|\leq 2\|A_k\|\epsilon_{k}.$$
 Thus there exists $Y_{k+1}\in C^\omega_{r_{k+1}}(\mathbb T^\infty,\mathrm{sl}(2,\mathbb R))$
 such that $\me^{Y_{k+1}}=\me^{\tilde Y_{k+1}}\me^{Y_{k}},$ with the estimate
 $|Y_{k+1}|_{r_{k+1}}\leq  \sum\limits_{i=1}^{k}\epsilon_{i}^{\frac{1}{2}}.$ Moreover, by \eqref{ss1} and \eqref{ss2}, we have
 \begin{equation}\label{ss3}\me^{-Y_{k+1}(\theta+\alpha)}A\me^{F(\theta)}\me^{Y_{k+1}(\theta)}=A_{k+1}\me^{F_{k+1}(\theta)}.\end{equation}
Taking limits of \eqref{ss3}, we get the desired results.
\end{proof}

\begin{Corollary}\label{groundstate}
 Let $c(n)=V(n\alpha+\theta)$ be an almost periodic sequence with frequency $\alpha\in DC_{\infty}(\gamma,\tau)$ and analytic in the strip $r>0$. There exists $\epsilon=\epsilon(\gamma,\tau,r)>0$, such that if $|V|_r\leq \epsilon$, then  $$\mathcal (L_{V,\alpha,\theta}u)(n)=u(n+1)+u(n-1)-2u(n)+V(n\alpha+\theta)u(n)=\lambda_1 u(n)$$ has a positive almost periodic solution.
\end{Corollary}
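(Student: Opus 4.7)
The plan is to mimic closely the proof of Theorem \ref{differentialreduce}, replacing the finitely differentiable reducibility result by its analytic almost-periodic analogue (Corollary \ref{ALreducibility}), and then appealing to the criterion Lemma \ref{criterion} to turn a parabolic reducible cocycle into a positive almost-periodic solution.

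First I would take $E=\lambda_1=\max\Sigma(\mathcal L_{V,\alpha,\theta})$ and rewrite the Schr\"odinger cocycle as $S^V_E(\theta)=A\me^{F(\theta)}$, where
\[
A=\begin{pmatrix} E+2 & -1\\ 1 & 0\end{pmatrix},\qquad F(\theta)=\begin{pmatrix} 0 & 0\\ V(\theta) & 0\end{pmatrix}.
\]
Since the spectrum is contained in $[-4+\inf V,\sup V]$, we have a uniform bound $\|A\|\le 6$ (after possibly shrinking $\epsilon$). The assumption $|V|_r\le\epsilon$ gives $|F|_r\le\epsilon$ in $C^\omega_r(\T^\infty,\mathrm{sl}(2,\R))$.

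Next, fix any $\tilde r\in(0,r)$ and choose $\epsilon=\epsilon(\gamma,\tau,r)>0$ small enough so that the smallness condition \eqref{small} holds for $A$ and $F$ above, and additionally $2\epsilon^{1/2}<\tfrac{1}{200}$ (so that $\|\me^Y-\id\|_0\le 2|Y|_{\tilde r}\le 4\epsilon^{1/2}<\tfrac{1}{100}$). Because $E$ lies at the rightmost point of the spectrum, Remark \ref{idsrot} and Theorem \ref{idsrot} yield $k(E)=1$ and hence $\mathrm{rot}(\alpha,S^V_E)=0$. Then Corollary \ref{ALreducibility} applies: there exist $\tilde A\in\mathrm{SL}(2,\R)$ and $Y\in C^\omega_{\tilde r}(\T^\infty,\mathrm{sl}(2,\R))$ with $|Y|_{\tilde r}\le 2\epsilon^{1/2}$ such that
\[
\me^{-Y(\theta+\alpha)}A\me^{F(\theta)}\me^{Y(\theta)}=\tilde A.
\]

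Finally I would rule out the hyperbolic case and invoke the criterion. By Lemma \ref{rotinv}, $\mathrm{rot}(\alpha,\tilde A)=\mathrm{rot}(\alpha,A\me^{F})=0$. If $\tilde A$ were hyperbolic, then $(\alpha,\tilde A)$ would be uniformly hyperbolic and thus so would $(\alpha,S^V_E)$ (uniform hyperbolicity is a conjugacy invariant), contradicting Theorem \ref{resolvent} since $E\in\Sigma$. Therefore $\tilde A$ is parabolic, and with $B(\theta):=\me^{Y(\theta)}$ satisfying $\|B-\id\|_0\le 4\epsilon^{1/2}<\tfrac{1}{100}$, Lemma \ref{criterion} produces a positive almost-periodic solution of $\mathcal L_{V,\alpha,\theta}u=\lambda_1 u$.

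The main technical obstacle is purely the choice of smallness: one must simultaneously satisfy \eqref{small} of Corollary \ref{ALreducibility} (which depends on $(r-\tilde r)$ and $\|A\|$) and keep the final conjugacy so close to the identity that Lemma \ref{criterion}'s $1/100$ threshold applies. This is a standard balance since $|Y|_{\tilde r}\le 2\epsilon^{1/2}\to0$ as $\epsilon\to 0$, but it forces $\epsilon$ to depend on $\gamma,\tau,r$ (through the fixed choice of $\tilde r$, say $\tilde r=r/2$). No new dynamical input beyond what is already assembled in Section \ref{reducible} is required.
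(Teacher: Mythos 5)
Your proposal is correct and is exactly the paper's approach: the paper's proof is literally the one-line statement "The proof is same as Theorem \ref{differentialreduce} if we replace Corollary \ref{Differential} by Corollary \ref{ALreducibility}," and you have carried out precisely that substitution, including the same decomposition $S^V_E=A\me^F$, the same use of Remark \ref{idsrot} and Lemma \ref{rotinv} to force $\mathrm{rot}(\alpha,\tilde A)=0$, the same dichotomy ruling out the hyperbolic case via Theorem \ref{resolvent}, and the same invocation of Lemma \ref{criterion} with the $1/100$ threshold.
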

\begin{proof}
The proof is same as Theorem \ref{differentialreduce} if we replace Corollary \ref{Differential}  by Corollary \ref{ALreducibility}. \end{proof}

\subsection{Proof of the applications}
In the final subsection, we will give the applications of  Theorem \ref{Main1} in various settings, including the quasi-periodic case and almost-periodic case. First, we give the proof of
Corollary \ref{Corollary_Main1}.

\bigskip
\textit{Proof of Corollary \ref{Corollary_Main1}}.
By Theorem \ref{Main1}, \eqref{1} has a time increasing almost periodic traveling front with average wave speed $w\in (w^*,\underline w)$, where $w^*=\inf\limits_{E>\lambda_1}\frac{E}{L(E)}$ and $\underline w=\lim\limits_{E\searrow \lambda_1}\frac{E}{L(E)}$.

Recall that $$L(E)=\lim\limits_{n\rightarrow+\infty}\frac{1}{n}\int_{\mathcal H(c)}\log\|A_n(g)\|d\mu$$ with $A_n=A(n-1)\cdots A(0),$ where $ A(n)=\begin{pmatrix}
E+2-g(n) &-1\\
1        &0
\end{pmatrix}$. In the quasi-periodic case, it is well known that any $g\in\mathcal H(c)$ has the form
$$g(n)= V(n\alpha+\theta),$$
 for some $\theta\in\mathbb T^d,$ where  $V$ is a  continuous function on $\mathbb T^d$.  Then it is straightforward to check that for any $g(\cdot)=V(\theta+\cdot\alpha)$, one has
$$\begin{pmatrix}
E+2-g(\cdot) &-1\\
1        &0
\end{pmatrix}=\begin{pmatrix}
E+2-V(\theta+\cdot\alpha) &-1\\
1                   &0
\end{pmatrix}.$$
It follows directly that
 $$A_n(g)=A(n-1)\cdots A(0)=S_V^E(\theta+(n-1)\alpha)\cdots S_V^E(\theta+\alpha)S_V^E(\theta).$$
Then $L(E)=L(\alpha,S_V^E)$,  and Corollary \ref{Corollary_Main1}  follows from Theorem \ref{Main1}  and the following well-known result of Bourgain-Jitomirskaya:

\begin{Theorem}[\cite{B2,Bourgain2002}]\label{LEcontinuous}
For any $d\in\mathbb N_+$, let $\alpha\in\mathbb T^d$ be rationally independent and $V$ be an analytic function on $\mathbb T^d$.  Then $L(\alpha, S_V^E)$ is a continuous function of $E$.
\end{Theorem}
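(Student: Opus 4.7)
The plan is to follow the Goldstein--Schlag--Bourgain--Jitomirskaya strategy. Upper semi-continuity of $L$ is automatic: by Fekete subadditivity, $L(E)=\inf_n L_n(E)$ where $L_n(E):=\tfrac1n\int_{\T^d}\log\|A_n(E,\theta)\|\,d\theta$, and each $L_n$ is continuous (indeed analytic) in $E$ by dominated convergence. The task therefore reduces to proving lower semi-continuity, which I would obtain by establishing a quantitative convergence $|L_n(E)-L(E)|\le\eta_n$ with $\eta_n\to 0$ locally uniformly in $E$. A first reduction: on the set $\{L=0\}$, upper semi-continuity together with non-negativity already yields continuity, so it suffices to prove the quantitative estimate on compact subsets of $\{L>0\}$.

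The two tools are the \emph{Large Deviation Theorem} (LDT) and the \emph{Avalanche Principle} (AP). Since $V$ is analytic, extend it to a strip $\T^d_\rho=\{|\Im\theta_j|<\rho\}$, so that $A_n(E,\theta)=S_V^E(\theta+(n-1)\alpha)\cdots S_V^E(\theta)$ is holomorphic in $(E,\theta)$ and $u_n(E,\theta):=\tfrac1n\log\|A_n(E,\theta)\|$ is pluri-subharmonic in $\theta$ with sub-mean-value bounds uniform in $E$ on compact sets. A BMO estimate for subharmonic functions (Riesz representation plus John--Nirenberg, the central technical input of \cite{B2,Bourgain2002}) yields the LDT
\[\mathrm{Leb}\bigl\{\theta\in\T^d:|u_n(E,\theta)-L_n(E)|>\delta\bigr\}<Ce^{-c\delta n^{\sigma}},\]
for constants $\sigma,c>0$ depending only on $\rho,\|V\|_\rho$, and the compact $E$-range. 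The AP of Goldstein--Schlag states that, for an $\mathrm{SL}(2,\R)$-product $A^{(m)}=A_m\cdots A_1$ with $\|A_j\|\ge\mu$ and $\|A_{j+1}A_j\|\ge\varepsilon\|A_{j+1}\|\|A_j\|$ (and $\mu\gg\varepsilon^{-1}$),
\[\Bigl|\log\|A^{(m)}\|+\sum_{j=2}^{m-1}\log\|A_j\|-\sum_{j=1}^{m-1}\log\|A_{j+1}A_j\|\Bigr|\le C\frac{m}{\mu\varepsilon^{2}}.\]
Applying AP to a block-decomposition of $A_{2n}$ into $A_n$-type blocks, with LDT supplying the gap hypothesis on a set of $\theta$ of measure $1-o(1)$, yields $|L_{2n}(E)-L_n(E)|\le e^{-cn^{\sigma'}}$ locally uniformly on compact subsets of $\{L>0\}$. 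Telescoping along $n,2n,4n,\dots$ gives $|L_n-L|\le e^{-cn^{\sigma'}}$; hence $L_n\to L$ uniformly on compacta, and since each $L_n$ is continuous in $E$, so is $L$.

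The principal obstacle is the multi-frequency regime $d\ge 2$: the statement assumes only rational independence of $\alpha$, with no Diophantine control. Without small-denominator estimates one cannot directly Fourier-analyze $\log\|A_n\|$ on $\T^d$. Bourgain's resolution, which I would follow, is to derive the LDT purely via pluri-subharmonic analysis on the polydisc $\T^d_\rho$: BMO estimates for PSH functions produce constants depending only on the analyticity parameters $\rho,\|V\|_\rho$, not on Diophantine ones. Rational independence enters only through qualitative unique ergodicity of the translation by $\alpha$, which is automatic for a minimal translation. Analyticity of $V$ is essential and cannot be relaxed to $C^\infty$: continuity of $L$ genuinely fails in lower regularity by the Wang--You construction \cite{WY} cited above, which is precisely why the hypothesis $V\in C^\omega(\T^d,\R)$ is imposed in Corollary \ref{Corollary_Main1}.
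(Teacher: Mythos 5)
Your proposal correctly reproduces the Goldstein--Schlag/Bourgain--Jitomirskaya strategy of the cited references \cite{B2,Bourgain2002}, which is exactly the source the paper appeals to for this theorem: upper semicontinuity from subadditivity, trivial continuity on $\{L=0\}$, and on $\{L>0\}$ the LDT established via plurisubharmonic/BMO estimates on the complexified torus (thereby avoiding any Diophantine hypothesis on $\alpha$ for $d\geq 2$) combined with the Avalanche Principle to get locally uniform convergence of the continuous functions $L_n$ to $L$. One imprecision worth noting: the AP applied to $A_{2n}$ split into just two $A_n$-blocks is vacuous ($m=2$ makes the AP identity trivial), and the actual argument applies AP to $A_N$ with $N/n$ blocks and lets $N/n\to\infty$ to obtain the three-term relation $|L(E)+L_n(E)-2L_{2n}(E)|\lesssim e^{-cn^{\sigma}}$, which is then iterated along a super-exponential ladder of scales $n_{k+1}\sim e^{cn_k^{\sigma}}$; the resulting rate of convergence in $n$ is slower than the $e^{-cn^{\sigma'}}$ you claim, but this is harmless since locally uniform convergence of $L_n\to L$ is all that continuity requires.
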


Now we begin to  prove  Corollary \ref{Corollary_Main1} (2).
Define function $U$ on $\mathbb R\times \mathbb T^d$ as $$U(t,\theta):=u(t,0;V(\cdot\alpha+\theta)).$$
Moreover, from Remark \ref{re:one-cover} and Theorem \ref{thm:front1}, one can  prove that $U$ is continuous on $\mathbb R\times \mathbb T^d$.
Denote $T(n)=-t(n;c)$, one has
$$
u(t,n;c)=u\bigr(t-t(n;c),0;V\big((n+\cdot)\alpha\big)\bigr)=U(t+T(n),n\alpha)
$$
which follows from Lemma \ref{lem:APTF_of_2}, i.e., $u(t,n;c)=u(t-t(n;c),0;c\cdot n)$.
Thus the proof is complete. \qed

\bigskip
As a direct consequence of Corollary \ref{Corollary_Main1}, we give the following:

\textit{Proof of Corollary \ref{AMO}}.
By \cite[Theorem 10]{Avila2015} and \cite[Corollary 2]{Bourgain2002}, it follows that for any $E$ which belongs to the spectrum of almost Mathieu operator
\begin{equation*}
(\mathcal{L}_{2\kappa-2,\alpha,\theta}u)(n)= u(n+1)+ u(n-1) + 2\kappa\cos(\theta+ n\alpha) u(n),
\end{equation*}
 its Lyapunov exponent satisfies
$$L(E)=\max(0,\ln|\kappa|).$$
Then the result  follows from Corollary \ref{Corollary_Main1}.  \qed

\bigskip
Next we give some typical examples such that $\underline L=\lim\limits_{E\rightarrow\lambda_1}L(E)=0$ which implies that $\underline w=\infty$. First we consider the quasi-periodic case, if the potential $V$ is  finitely differentiable, then we have the following:
\begin{Corollary}\label{differential_corollary}
Let $\alpha\in \mathrm{DC_d(\gamma,\tau)},\ \gamma>0,\ \tau>d,$ 
$V\in C^s(\mathbb{T}^d,\mathbb R)$ with $s> 6\tau+2$. There exists $\epsilon=\epsilon(\gamma,\tau,d,s)$ such that if $\|V\|_{s}\leq \epsilon$,  then \eqref{1} with  $c(n)=V(n\alpha)$ has a time increasing almost periodic traveling front with average wave speed $w\in (w^{*}, \infty)$.
\end{Corollary}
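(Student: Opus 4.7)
The plan is to reduce the statement to Theorem~\ref{Main1}(1) by showing that under the hypotheses of the corollary, the quantity $\underline w=\lim_{E\searrow\lambda_1}\frac{E}{L(E)}$ equals $+\infty$, so that the interval $(w^*,\underline w)$ coincides with $(w^*,+\infty)$. Since $\lambda_1\geq\inf c>0$ is strictly positive, it suffices to verify that the limiting Lyapunov exponent $\underline L=\lim_{E\searrow\lambda_1}L(E)$ vanishes.

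First, I would invoke Theorem~\ref{differentialreduce}: for any $V\in C^s(\mathbb{T}^d,\mathbb{R})$ with $s>6\tau+2$ and $\|V\|_s\leq\epsilon(\gamma,\tau,d,s)$ sufficiently small, the discrete Schr\"odinger equation
\[
u(n+1)+u(n-1)-2u(n)+V(n\alpha+\theta)u(n)=\lambda_1 u(n)
\]
admits a positive quasi-periodic solution, since the reducibility argument produces a conjugacy $\me^{Y}$ that is $C^0$-close to the identity, and Lemma~\ref{criterion} then yields a positive almost-periodic (here, quasi-periodic) solution. In particular, this positive solution is bounded and has positive infimum, so it is an almost periodic function on $\mathbb Z$ in the sense required.

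With the positive almost periodic solution in hand, I would apply Proposition~\ref{criticalLE} directly to conclude $\underline L=0$. Therefore, using Lemma~\ref{2.5} (concavity and monotonicity of $L$ on $(\lambda_1,\infty)$) together with $\lambda_1>0$, we obtain
\[
\underline w=\lim_{E\searrow\lambda_1}\frac{E}{L(E)}=+\infty,
\]
so the hypothesis $w^*<\underline w$ of Theorem~\ref{Main1} is automatically satisfied and $(w^*,\underline w)=(w^*,+\infty)$. Applying Theorem~\ref{Main1}(1) produces, for every $w\in(w^*,\infty)$, a time-increasing almost periodic traveling front with average wave speed $w$.

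The only nontrivial input here is Theorem~\ref{differentialreduce}, which is the main technical work already carried out in the preceding subsections via the KAM scheme of Corollary~\ref{Differential}; the rest of the argument is a short concatenation of Proposition~\ref{criticalLE} and Theorem~\ref{Main1}. No genuinely new obstacle should arise at this stage, so the proof amounts essentially to citing the four ingredients (Theorem~\ref{differentialreduce}, Proposition~\ref{criticalLE}, Lemma~\ref{2.5}, Theorem~\ref{Main1}(1)) in the order above.
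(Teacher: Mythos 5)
Your proposal is correct and follows essentially the same chain of reasoning as the paper's own proof: apply Theorem~\ref{differentialreduce} to obtain a positive quasi-periodic solution at $E=\lambda_1$, invoke Proposition~\ref{criticalLE} to get $\underline L=0$ (hence $\underline w=\infty$ since $\lambda_1\geq\inf c>0$), and then conclude via Theorem~\ref{Main1}(1). The only difference is that you spell out the step $\underline L=0\Rightarrow\underline w=\infty$ explicitly, which the paper leaves implicit.
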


\begin{proof}
By the assumption and Corollary \ref{differentialreduce}, we know
 $$(\mathcal L_{V,\alpha,0}u)(n)=u(n+1)+u(n-1)-2u(n)+V(n\alpha)u(n)=\lambda_1 u(n)$$ has a positive almost periodic solution. It follows that $\underline  L=\lim\limits_{E\rightarrow\lambda_1}L(E)=0$ by Proposition \ref{criticalLE}. Then the result follows from Theorem \ref{Main1}.
\end{proof}

 Lemma \ref{criterion}  and Proposition \ref{criticalLE}  shows that if the corresponding cocycle is reducible, and the conjugacy is close to constant,  then  $\underline L=\lim\limits_{E\rightarrow\lambda_1}L(E)=0$.
However, in some cases, we can relax the condition, and the right concept is   ``\textit{Almost reducible}".

\begin{Definition}
An analytic cocycle $(\alpha,A)$ is $C^\omega$-almost reducible if the closure of its analytic conjugacy class contains a constant.
\end{Definition}

\begin{Corollary}\label{Maini}
Let $r>0$, $\alpha\in\mathbb R\backslash\mathbb Q.$  There exists $\epsilon=\epsilon(r)$ such that  if   $V\in C^\omega_r(\mathbb T,\R)$, and  $|V|_r\leq \epsilon$, then  \eqref{1} with $c(n)=V(n\alpha)$ has a time increasing almost periodic traveling front with average wave speed $w\in (w^*,\infty)$.
\end{Corollary}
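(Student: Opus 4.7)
The strategy parallels the proof of Corollary \ref{Corollary_Main1}: deduce the statement from Theorem \ref{Main1}(1) by showing that $\underline w = \infty$, equivalently $\underline L := \lim_{E \searrow \lambda_1} L(E) = 0$. Via Proposition \ref{criticalLE}, this would follow from the existence of a positive almost periodic solution of $\mathcal L \phi = \lambda_1 \phi$. Since $c(n) = V(n\alpha)$ is analytic quasi-periodic, my plan is to produce such a solution by applying Lemma \ref{criterion}: it suffices to conjugate the Schr\"odinger cocycle $(\alpha, S^V_{\lambda_1})$ by a continuous $B:\mathbb T \to \mathrm{SL}(2,\mathbb R)$ with $\|B - \mathrm{id}\|_0 \leq \tfrac{1}{100}$ to a constant parabolic matrix. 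Note that the previous quasi-periodic corollaries relied on Diophantine conditions (Corollary \ref{differential_corollary}) or analytic continuity of the Lyapunov exponent without quantitative control of the conjugacy; here, by contrast, no arithmetic hypothesis on $\alpha$ is imposed, so a genuinely $\alpha$-independent reducibility scheme is required.

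The conjugacy should come from Avila's $\alpha$-independent almost reducibility theorem \cite{Avila2010a}. The crucial feature of that theorem is that its smallness threshold on $|V|_r$ depends only on $r$, not on the arithmetic properties of $\alpha$, so it applies to every irrational $\alpha$. Almost reducibility produces a sequence of analytic conjugations bringing the cocycle arbitrarily close to a constant cocycle. At the upper spectral edge, Theorem \ref{idsrot} together with Remark \ref{idsrot} gives $\mathrm{rot}(\alpha, S^V_{\lambda_1}) = 0 \pmod{\mathbb Z}$, while Theorem \ref{resolvent} forbids uniform hyperbolicity, so the limiting constant matrix can be neither hyperbolic nor genuinely elliptic and must therefore be parabolic. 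The spectral-edge reducibility argument from \cite{Zhou2013} then upgrades almost reducibility to genuine reducibility at $E = \lambda_1$, producing a single analytic conjugacy $B$ landing on a constant parabolic matrix.

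The main obstacle is the quantitative closeness $\|B - \mathrm{id}\|_0 \leq \tfrac{1}{100}$ required by Lemma \ref{criterion}; abstract almost reducibility merely gives a conjugacy to some near-constant cocycle, with no a priori bound on how far that conjugacy sits from the identity. The way around this is to track the KAM iteration underlying Avila's theorem along the lines of Proposition \ref{KAM}: the first step produces a correction of size $|V|_r^{1/2}$, and successive steps contribute corrections decaying geometrically in the sequence $\epsilon_k = \epsilon^{2^k}$, so the telescoped conjugacy is bounded in $C^0$ by $\sum_{k \geq 0} \epsilon_k^{1/2} \lesssim |V|_r^{1/2}$. Choosing $\epsilon(r)$ so that this sum stays below $\tfrac{1}{100}$ makes Lemma \ref{criterion} applicable. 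Once the positive almost periodic solution is obtained, Proposition \ref{criticalLE} forces $\underline L = 0$, hence $\underline w = \infty$, and Theorem \ref{Main1}(1) delivers the time-increasing almost periodic traveling front for every $w \in (w^*, \infty)$.
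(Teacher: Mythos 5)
Your proposal takes a genuinely different and strictly harder route than the paper, and it has a gap that the paper's proof is specifically designed to avoid. You aim to produce a positive almost periodic solution of $\mathcal L\phi=\lambda_1\phi$ via full reducibility to a constant parabolic matrix with a conjugacy $B$ satisfying $\|B-\id\|_0\leq\frac{1}{100}$, so that Lemma~\ref{criterion} and then Proposition~\ref{criticalLE} apply. The quantitative bound on $B$ is the sticking point. The telescoping estimate $\sum_k\epsilon_k^{1/2}\lesssim|V|_r^{1/2}$ that you invoke comes from the monotone KAM scheme of Proposition~\ref{KAM}/Corollary~\ref{Differential}, and that scheme only runs when $\alpha\in\mathrm{DC}_d(\gamma,\tau)$: at each step the small-divisor bound $|\langle k,\alpha\rangle|\geq\gamma|k|^{-\tau}$ is what forces the resonant space to collapse to constants and the errors to square. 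For a Liouville $\alpha$ the scheme breaks, and Avila's $\alpha$-independent almost reducibility \cite{Avila2010a} (or the embedding method of \cite{Zhou2013}) only delivers a \emph{sequence} of conjugacies $B_n$ bringing the cocycle close to constants; those $B_n$ pass through resonance regimes and there is no reason for them to stay $C^0$-close to the identity, nor for them to converge to a single analytic $B$. Your appeal to a "spectral-edge reducibility argument" upgrading almost reducibility to genuine reducibility at $\lambda_1$ for arbitrary irrational $\alpha$ with the required quantitative control is not something the cited references provide, and it would be a substantially stronger statement than what is needed.

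The paper avoids all of this by using only a soft consequence of almost reducibility: an almost reducible cocycle cannot be non-uniformly hyperbolic, so either $L(E)=0$ or $(\alpha,S_V^E)$ is uniformly hyperbolic. Since $\lambda_1\in\Sigma(\mathcal L)$, Theorem~\ref{resolvent} rules out uniform hyperbolicity at $E=\lambda_1$, whence $L(\lambda_1)=0$. Continuity of the Lyapunov exponent (Theorem~\ref{LEcontinuous}, already packaged into Corollary~\ref{Corollary_Main1}) then gives $\underline L=\lim_{E\searrow\lambda_1}L(E)=L(\lambda_1)=0$, i.e.\ $\underline w=\infty$, and Theorem~\ref{Main1}(1) concludes. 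No reducibility, no quantitative conjugacy, no Lemma~\ref{criterion}, and no Proposition~\ref{criticalLE} are needed. Your plan would work verbatim under a Diophantine assumption on $\alpha$ (this is essentially Corollary~\ref{differential_corollary} and Corollary~\ref{groundstate}), but the whole point of Corollary~\ref{Maini} is to drop the arithmetic condition, which is exactly what your KAM step cannot do.
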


\begin{proof}
By \cite[Corollary 1.3]{Zhou2013} and \cite[Corollary 1.2]{Avila2010a}, there exists $\epsilon=\epsilon(r)$ such that  if   $|V|_r\leq \epsilon$,  then
 one frequency analytic quasi-periodic Schr\"odinger cocycle $(\alpha, S_V^E)$ is almost reducible. Clearly by its  definition, any almost-reducible cocycle is not non-uniformly hyperbolic. Thus either  $L(E)=0$ or  $(\alpha, S_V^
 {E})$ is uniformly hyperbolic. Then $L(\lambda_1)=0$ follows from Theorem \ref{resolvent} since we only consider $\lambda_1$ which is the right endpoint of the spectrum. By Corollary \ref{Corollary_Main1}, the result follows directly.
\end{proof}

Finally we give the application for almost-periodic potentials:

\textit{Proof of Corollary \ref{Main2}}.
Choose $\epsilon=\epsilon(\gamma,\tau,r)$ defined in Corollary \ref{groundstate}
such that $\sum\limits_{k\in\mathbb Z_*^\infty}|\hat c(k)|\me^{r|k|_1}< \epsilon(\gamma,\tau,r).$
Then by Corollary \ref{groundstate},   we know
 $$(\mathcal L_{V,\alpha,0}u)(n)=u(n+1)+u(n-1)-2u(n)+V(n\alpha)u(n)=\lambda_1u(n)$$ has a positive almost periodic solution. It follows from Proposition \ref{criticalLE} that $\underline  L=\lim\limits_{E\rightarrow\lambda_1}L(E)=0$. Then Corollary \ref{Main2} (1) follows from Theorem \ref{Main1}.  By the similar arguments in the proof of Corollary \ref{Corollary_Main1}, (2) can be obtained.
\qed

\section*{Acknowledgments}
 X. Liang is  partially supported by NSFC grant (11971454).

 Q. Zhou is partially supported by   National Key R\&D Program of China (2020YFA0713300), NSFC grant (12071232), The Science Fund for Distinguished Young Scholars of Tianjin (No. 19JCJQJC61300) and Nankai Zhide Foundation.
 
T.Zhou is partially supported by NSFC grant (12001514).

\end{document}